\newtheorem{theorem}{Theorem}
\newtheorem{lemma}[theorem]{Lemma}
\newtheorem{proposition}[theorem]{Proposition}
\newtheorem*{theoremI}{Theorem}
\newtheorem*{propositionI}{Proposition}
\newtheorem{corollary}[theorem]{Corollary}
\newtheorem*{corollaryI}{Corollary}
\newtheorem*{conjectureI}{Conjecture}
\newtheorem{question}[theorem]{Question}
\newtheorem{conjecture}[theorem]{Conjecture}
\theoremstyle{definition}
\newtheorem{definition}[theorem]{Definition}
\newtheorem{remark}[theorem]{Remark}
\numberwithin{equation}{section}
\numberwithin{theorem}{section}
\newcommand{\msc}[1]{\mathscr{#1}}
\newcommand{\mrm}[1]{\mathrm{#1}}
\newcommand{\mbb}[1]{\mathbb{#1}}
\newcommand{\ot}{\otimes}
\newcommand{\Hom}{\mathrm{Hom}}
\newcommand{\sHom}{\mathscr{H}om}
\newcommand{\sEnd}{\mathscr{E}nd}
\newcommand{\uHH}{\underline{H\! H}}
\newcommand{\HH}{H\! H}
\newcommand{\Ext}{\mathrm{Ext}}
\newcommand{\sExt}{\mathscr{E}xt}
\newcommand{\sRHom}{\mathrm{R}\mathscr{H}om}
\newcommand{\sREnd}{\mathrm{R}\mathscr{E}nd}
\newcommand{\Aut}{\mathrm{Aut}}
\newcommand{\Qcoh}{\mathrm{Qcoh}}
\renewcommand{\O}{\mathscr{O}}
\newcommand{\X}{\mathscr{X}}
\newcommand{\uSA}{\underline{\mathrm{SA}}}
\newcommand{\Spec}{\operatorname{Spec}}
\newcommand{\codim}{\mathrm{codim}}
\renewcommand{\hat}{\widehat}
\newcommand{\mfm}{\mathfrak{m}}
\newcommand{\End}{\operatorname{End}}
\newcommand{\rk}{\operatorname{rk}}
\newcommand{\gr}{\operatorname{gr}}
\newcommand{\pr}{\operatorname{pr}}
\newcommand{\DD}{\mathbb{D}}
\title[Hochschild cohomology of quotient orbifolds]{The Hochschild cohomology ring of a global quotient orbifold}
\author{Cris Negron}
\thanks{The first author was supported by NSF Postdoctoral Fellowship DMS-1503147}
\address{Department of Mathematics\\Massachusetts Institute of Technology\\
Cambridge, MA 02139, USA}
\email{negronc@mit.edu}
\author{Travis Schedler\\
with appendices by Pieter Belmans, Pavel Etingof and the authors}
\thanks{The second author was supported by NSF Grant DMS-1406553}
\address{Huxley 622\\Imperial College London\\South Kensington Campus\\London SW7 2AZ, UK}
\email{trasched@gmail.com}
\date{\today}
\begin{document}

\maketitle

\begin{abstract}
We study the cup product on the Hochschild cohomology of the stack quotient $[X/G]$ of a smooth quasi-projective variety $X$ by a finite group $G$.  More specifically, we construct a $G$-equivariant sheaf of graded algebras on $X$ whose $G$-invariant global sections recover the associated graded algebra of the Hochschild cohomology of $[X/G]$, under a natural filtration.  This sheaf is an algebra over the polyvector fields $T^{poly}_X$ on $X$, and is generated as a $T^{poly}_X$-algebra by the sum of the determinants $\det(N_{X^g})$ of the normal bundles of the fixed loci in $X$.  
% We completely describe the multiplication on this sum of determinants in terms of the geometry of the fixed loci. 
 We employ our understanding of Hochschild cohomology to conclude that the analog of Kontsevich's formality theorem, for the cup product, does not hold for Deligne--Mumford stacks in general. We discuss
relationships with orbifold cohomology, extending Ruan's cohomological conjectures.  This employs a trivialization of the determinants in the case of a symplectic group action on a symplectic variety $X$, 
which requires (for the cup product) a nontrivial normalization missing in previous literature.
\end{abstract}

\section{Introduction}

Fix a base field $k$ of characteristic $0$ (or of a prime
characteristic $p$ which is larger than the dimension of the variety $X$ and
 does not divide the size of the group $G$).  All
varieties will be quasi-projective over $k$.  We often take
$X^2=X\times X$ for $X$ a variety, or stack.
\par

In this paper we describe the multiplicative structure on the Hochschild cohomology $\HH^\bullet([X/G])$ of the quotient stack of a smooth quasi-projective variety $X$ by a finite group $G$, which is assumed to act by automorphisms over $\operatorname{Spec} k$.  We show first that the multiplication on Hochschild cohomology can be understood in terms of the geometry of $X$ as a $G$-variety, then proceed to discuss a number of applications to formality questions and orbifold cohomology.  Such an object $[X/G]$ is also known as a global quotient orbifold.
\par

Our description of the cup product can be seen as a version of the Hochschild-Kostant-Rosenberg theorem for such
quotient stacks, taking into account the multiplicative structure. In
its simplest form, for smooth affine varieties $X$, this result states
that $\HH^\bullet(X) \cong \wedge_{\mathscr{O}(X)}^\bullet T(X)$,
called polyvector fields on $X$~\cite{HKR} (as algebras), and it has
been generalized for example to the global setting in~\cite{yekutieli02,calaquevandenbergh10}.  There are many applications
of understanding this together with its algebraic structure, such as
to deformation theory.  In the case at hand of a global quotient orbifold $[X/G]$, the
Hochschild cohomology controls the deformation theory of the $G$-equivariant
geometry of $X$ (see Section~\ref{sect:remdefo} below).
\par

As a consequence of our description of the cup product we find that the multiplicative form of Kontsevich's formality theorem does not hold for smooth Deligne-Mumford stacks in general.  We also observe a vector space identification between a variant of the Hochschild cohomology and the orbifold cohomology of $[X/G]$, in the particular instance in which $X$ is symplectic and $G$ acts by symplectic automorphisms.  When $X$ is projective, in addition to being symplectic, we conjecture that there is furthermore an {\it algebra} isomorphism $\HH^\bullet([X/G])\cong H^\bullet_\mrm{orb}([X/G])$.  When the quotient variety $X/G$ admits a crepant resolution this conjectural algebra isomorphism reduces to Ruan's Cohomological Hyperk\"ahler Resolution Conjecture, and hence provides a resolution free variant of Ruan's conjecture.

We elaborate on our description of Hochschild cohomology, and subsequent applications, below.  For the remainder of the introduction take $\X=[X/G]$, for $X$ a smooth quasi-projective variety and $G$ a finite group.

\subsection{The structure of Hochschild cohomology}

For us, the Hochschild cohomology of $\mathscr{X}$ will be the algebra of self-extensions
\[
\HH^\bullet(\X)=\Ext^\bullet_{\X^2}(\Delta_\ast\O_\X,\Delta_\ast\O_\X).
\]
Motivated by methods from noncommutative algebra, we show that the cohomology can be given as an algebra of extensions between the pushforward $\Delta_\ast\O_X$ on $X^2$ and a certain smash product construction $\Delta_\ast\O_X\rtimes G$.  Specifically, we explain below that the graded group of extensions $\Ext^\bullet_{X^2}(\Delta_\ast\O_X,\Delta_\ast\O_X\rtimes G)$ is naturally a graded algebra with an action of $G$ by algebra automorphisms.  We provide further an algebra identification $\HH^\bullet(\X)=\Ext^\bullet_{X^2}(\Delta_\ast\O_X,\Delta_\ast\O_X\rtimes G)^G$ (see Section~\ref{sect:loc}).
\par

Our Hochschild cohomology with coefficients in the smash product has an obvious sheaf analog.  For $\pi: X^2 \to X$ the first projection, we take
\[
\uHH^\bullet(\msc{X})=\pi_* \sExt^\bullet_{X^2}(\Delta_* \O_X,\Delta_* \O_X\rtimes G),
\]
which we will refer to as the \emph{local equivariant Hochschild cohomology} of $\msc{X}$.
We will explain that the ``sheaf-valued'' cohomology $\uHH^\bullet(\msc{X})$ is naturally a $G$-equivariant sheaf of graded algebras on $X$, i.e.\ a sheaf of algebras on $\msc{X}$ (see Remark~\ref{rem:intrinsic}), and that the usual local-to-global spectral sequence 
\begin{equation}\label{eq:105}
E_2=
H^\bullet(X,\uHH^\bullet(\msc{X}))^G\Rightarrow \HH^\bullet(\mathscr{X})
\end{equation}
is multiplicative.  By \cite{arinkinetal14}, this spectral sequence
degenerates at the $E_2$-page (see Section~\ref{ss:form-deg}).  In this sense, the hypercohomology of $\uHH^\bullet(\msc{X})$ provides an approximation of the Hochschild cohomology ring of the stack quotient.
\par

Our first main result can be summarized as follows. For every scheme $Y$, let $T^{poly}_{Y} := \bigwedge_{\O_Y} T_Y$ be the sheaf of polyvector fields on $Y$.

\begin{theorem}\label{thm:intro}
Let $G$ be a finite group acting on a smooth quasi-projective variety $X$, and consider the quotient orbifold $\msc{X}=[X/G]$.  Then there is a canonical identification of equivariant sheaves
\begin{equation}\label{eq:166}
\uHH^\bullet(\msc{X})=\bigoplus_{g\in G}T^{poly}_{X^g}\ot_{\O_X}\det(N_{X^g}).
\end{equation}
The induced algebra structure on the sum $\oplus_{g\in G} T^{poly}_{X^g}\otimes_{\O_X} \det(N_{X^g})$ can be understood as follows:
\begin{enumerate}
\item[(i)] The identity component $T^{poly}_X$ is in the center of $\uHH^\bullet(\msc{X})$.
\item[(ii)] As a $T^{poly}_X$-algebra, $\uHH^\bullet(\msc{X})$ is generated by the determinants of the normal bundles $\oplus_{g\in G}\det(N_{X^g})$.
\item[(iii)] The sum of the determinants themselves form a $G$-graded, equivariant subalgebra in $\uHH^\bullet(\msc{X})$.
\item[(iv)] Each multiplication
\[
m:\det(N_{X^g})\otimes_{\O_X} \det(N_{X^h})\to \det(N_{X^{gh}})
\]
is nonvanishing exactly on those components $Y$ of the intersection $X^g\cap X^h$ which are also components of $X^{gh}$, and along which $X^g$ and $X^h$ intersect transversely.
\end{enumerate}
Furthermore, there is a filtration on the global cohomology $\HH^\bullet(\msc{X})$ under which we have an identification of algebras
\begin{equation}\label{eq:151}
\gr \HH^\bullet(\X)=H^\bullet\left(X,\oplus_{g\in G}\!\ T^{poly}_{X^g}\otimes_{\O_X} \det(N_{X^g})\right)^G.
\end{equation}
\end{theorem}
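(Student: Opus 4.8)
The plan is to compute the sheaf $\sExt^\bullet_{X^2}(\Delta_*\O_X, \Delta_*\O_X \rtimes G)$ by localizing on $X$ and decomposing the smash product according to the group grading. Writing $\Delta_*\O_X \rtimes G = \bigoplus_{g\in G} \Delta_*\O_X \cdot g$, the functoriality of $\sExt$ gives a decomposition
\[
\uHH^\bullet(\msc{X}) = \bigoplus_{g\in G} \pi_*\sExt^\bullet_{X^2}(\Delta_*\O_X, \Delta_*\O_X \cdot g),
\]
so it suffices to identify the $g$-component with $T^{poly}_{X^g}\ot_{\O_X}\det(N_{X^g})$. The key geometric input is that the support of $\Delta_*\O_X \cdot g$, viewed as a sheaf on $X^2$, is the graph $\Gamma_g = \{(x, gx)\}$, and the scheme-theoretic intersection $\Delta(X) \cap \Gamma_g$ is (up to the action of $g$) the fixed locus $X^g$, which is smooth since $G$ is finite and $k$ has the stated characteristic. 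First I would reduce to an affine (or formal) neighborhood of a point of $X^g$, where one can choose $g$-eigencoordinates; then a Koszul resolution of $\Delta_*\O_X$ on $X^2$ computes the local $\sExt$, and the normal-bundle directions to $X^g$ contribute the exterior algebra on the conormal bundle twisted appropriately, while the tangential directions contribute $T^{poly}_{X^g}$. The determinant $\det(N_{X^g})$ appears as the top exterior power because the Koszul differential, twisted by $g$, is an isomorphism in all but the top conormal degree along the transverse (normal) directions — this is where the eigenvalue-$\neq 1$ condition forces the complex to have cohomology concentrated in the determinant line. This identification should be checked to be $G$-equivariant (conjugation $h: X^g \to X^{hgh^{-1}}$) and independent of the coordinate choices, giving the canonical identification \eqref{eq:166}.

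For the algebra structure, the multiplication on $\uHH^\bullet(\msc{X})$ is the Yoneda/composition product on $\sExt_{X^2}$ coming from the algebra structure on $\Delta_*\O_X \rtimes G$. I would trace this through the local Koszul models. Part (i) follows because the identity component is computed with coefficients in $\Delta_*\O_X$ itself, recovering the classical (sheafy) HKR isomorphism $\pi_*\sExt^\bullet_{X^2}(\Delta_*\O_X, \Delta_*\O_X) = T^{poly}_X$, and its centrality is the statement that polyvector fields act compatibly on all the twisted pieces — which is visible from the Koszul model since the $T^{poly}_X$-action restricts to the $T^{poly}_{X^g}$-action on each summand together with the obvious module structure on $\det(N_{X^g})$. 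For (ii), generation as a $T^{poly}_X$-algebra by $\bigoplus_g \det(N_{X^g})$ amounts to observing that $T^{poly}_{X^g}\ot\det(N_{X^g})$ is generated over $T^{poly}_X$ (which surjects onto $T^{poly}_{X^g}$ by restriction) by the single summand $\det(N_{X^g})$ sitting in cohomological degree equal to $\codim X^g$. Parts (iii) and (iv) are the heart of the multiplicative analysis: (iii) says the "bottom" pieces $\det(N_{X^g})$ (degree $= \codim X^g$, no tangential polyvector part) multiply among themselves, and (iv) computes this product. The product $m: \det(N_{X^g})\ot\det(N_{X^h}) \to \det(N_{X^{gh}})$ should be computed from the composition of Koszul/Ext classes: restricting everything to a component $Y \subseteq X^g \cap X^h$, the product is governed by the interplay of the three normal bundles $N_{X^g}, N_{X^h}, N_{X^{gh}}$ along $Y$, and it is nonzero precisely when $N_{X^g}|_Y \oplus N_{X^h}|_Y \cong N_Y$ inside $N_{X^{gh}}$-compatible data and $N_{X^{gh}}|_Y$ agrees with the remaining directions — i.e. the transversality condition, together with $Y$ being a component of $X^{gh}$. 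Concretely I expect this to reduce to the statement that a certain map of determinant lines built from the obstruction class of the intersection is an isomorphism iff the intersection is transverse and of the expected dimension; this is an "excess intersection = 0" computation, which the paper's earlier sections presumably set up.

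Finally, the passage to global cohomology \eqref{eq:151}: the filtration on $\HH^\bullet(\msc{X})$ is the one induced by the local-to-global spectral sequence \eqref{eq:105}, whose $E_2$-page is $H^\bullet(X, \uHH^\bullet(\msc{X}))^G$ and which, by \cite{arinkinetal14}, degenerates at $E_2$ and is multiplicative. Degeneration plus multiplicativity gives $\gr\HH^\bullet(\msc{X}) \cong E_2 = H^\bullet(X, \uHH^\bullet(\msc{X}))^G$ as graded algebras, and substituting \eqref{eq:166} yields \eqref{eq:151}.

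I expect the main obstacle to be part (iv) — pinning down the precise multiplicative behavior of the determinant lines, in particular getting the \emph{normalization} right so that $m$ is genuinely nonvanishing (not just generically nonzero up to a scalar) exactly on the transverse components and identifying the excess-intersection contribution that kills it elsewhere. The subtlety is that naive identifications of $\det(N_{X^g})\ot\det(N_{X^h})$ with a sub- or quotient-line of $\det(N_{X^{gh}})$ differ by invertible functions, and controlling these requires a careful choice of local trivializations compatible across all three graphs $\Gamma_g, \Gamma_h, \Gamma_{gh}$ in $X^2$ — essentially a coherence check for the Koszul-model computation of a triple intersection. The identity-component centrality (i) and the decomposition \eqref{eq:166} itself should be comparatively routine given a good local model, as should the global statement \eqref{eq:151} once the spectral sequence input is invoked.
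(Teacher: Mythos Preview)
Your proposal is correct and follows essentially the same route as the paper: decompose $\uHH^\bullet(\msc{X})$ into $g$-graded pieces, identify each piece complete-locally (reducing to the linear case) with $T^{poly}_{X^g}\otimes\det(N_{X^g})$, and then analyze the cup product on the Koszul model, again by complete-local reduction; the global statement follows from multiplicativity and degeneration of the local-to-global spectral sequence via \cite{arinkinetal14}.

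Two minor differences worth noting. First, rather than carrying out the Koszul computations from scratch, the paper systematically reduces every local check (the isomorphism $\mrm{A}_g$, centrality of $T^{poly}_X$, the description of the $T^{poly}_X$-action, and the product on $\uSA(\msc{X})$) to the known affine/linear case treated in \cite{anno,SW}; this makes the argument shorter and avoids redoing the combinatorics you sketch. Second, your worry about normalization in (iv) is resolved cleanly in the paper: Lemma~\ref{lem:esslem} shows that the transversality/codimension condition is equivalent to the canonical map $\kappa:N_{X^g}|_Y\oplus N_{X^h}|_Y\to N_{X^{gh}}|_Y$ being an isomorphism, and Theorem~\ref{thm:SA} then identifies the multiplication $m_{\uSA}$ with $\det(\kappa)$ on such components and with zero elsewhere. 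So the ``coherence check'' you anticipate is absorbed into a single geometric lemma rather than requiring ad hoc trivialization choices.
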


In the unpublished manuscript~\cite{anno}, a version of the theorem above
appears in the affine setting (in terms of the obvious explicit
formula for cup product, rather than (i)--(iv); see Section~\ref{sect:altprod}).  We note that the {\it vector space} identification~\eqref{eq:151} is due to Ginzburg-Kaledin in the affine case, and Arinkin--C\u{a}ld\u{a}raru--Hablicsek in general~\cite{arinkinetal14}.
\par

%We describe the subalgebra alluded to in points (iii) and (iv) in more detail below.
The subalgebra of (iii) is denoted $\uSA(\msc{X}):=\oplus_{g\in G}\det(N_{X^g})$ throughout.  We see from (ii), or more precisely Theorem~\ref{thm:Tact} below, that this subalgebra provides all of the novel features of the Hochschild cohomology of the orbifold $\msc{X}$, as compared with that of a smooth variety.  Furthermore, as the points of the fixed spaces $X^g$ account for the points of $\msc{X}$ which admit automorphisms, this algebra also represents a direct contribution of the stacky points of $\msc{X}$ to the Hochschild cohomology.

An explicit description of the multiplicative structure on the subalgebra $\uSA(\msc{X})$ is given in Theorem~\ref{thm:SA}, and the multiplicative structure on the entire cohomology $\uHH^\bullet(\msc{X})$ is subsequently obtained from Theorem~\ref{thm:Tact}.  The algebra identification~\eqref{eq:151} appears in Corollary~\ref{cor:arinkinetal_mult}.

\begin{remark}
Since $\uSA(\msc{X})$ is an equivariant sheaf of algebras on $X$, we may view it as a sheaf of algebras on the quotient $\msc{X}$.  However, the reader should be aware that the algebra $\uSA(\msc{X})$ is not known to be an invariant of $\msc{X}$ at the moment.  So a notation $\uSA(X\to \msc{X})$ may be more appropriate here.
\end{remark}

Of course, in the affine case the derived global sections vanish in positive degree.  As a result, $\Gamma \uHH^\bullet(\msc{X}) = \HH^\bullet(\msc{X})$, and
we do not need a spectral sequence.
Hence Theorem~\ref{thm:intro}, in conjunction with the description of the algebra $\uSA(\msc{X})$ below, provides a complete description of the Hochschild cohomology in this case.
\begin{corollaryI}
Let $G$ be a finite group acting on a smooth affine variety $X$, and take $\X=[X/G]$.  There is an algebra identification
\begin{equation}\label{eq:151-nogr}
\HH^\bullet(\X)=\Gamma\left(X,\oplus_{g\in G}\!\ T^{poly}_{X^g}\otimes_{\O_X} \det(N_{X^g})\right)^G
\end{equation}
where the multiplication on the right hand side is as described above.
\end{corollaryI}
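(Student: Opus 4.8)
The plan is to deduce this from the sheaf-level identification in Theorem~\ref{thm:intro} together with the algebra identification $\HH^\bullet(\X)=\Ext^\bullet_{X^2}(\Delta_\ast\O_X,\Delta_\ast\O_X\rtimes G)^G$ recorded in Section~\ref{sect:loc}, the point being that over an affine base no genuine spectral sequence intervenes.

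First I would record the relevant cohomological vanishing. When $X$ is affine, $X^2=X\times X$ is affine, and $\sExt^\bullet_{X^2}(\Delta_\ast\O_X,\Delta_\ast\O_X\rtimes G)$ is a graded quasi-coherent (indeed coherent) sheaf on $X^2$, so its higher cohomology vanishes. Hence the multiplicative local-to-global spectral sequence $H^p\big(X^2,\sExt^q_{X^2}(\Delta_\ast\O_X,\Delta_\ast\O_X\rtimes G)\big)\Rightarrow \Ext^{p+q}_{X^2}(\Delta_\ast\O_X,\Delta_\ast\O_X\rtimes G)$ is concentrated in the row $p=0$ and degenerates, yielding an isomorphism of graded algebras
\[
\Ext^\bullet_{X^2}(\Delta_\ast\O_X,\Delta_\ast\O_X\rtimes G)=\Gamma\big(X^2,\sExt^\bullet_{X^2}(\Delta_\ast\O_X,\Delta_\ast\O_X\rtimes G)\big)=\Gamma\big(X,\uHH^\bullet(\msc X)\big),
\]
where the last equality is just the definition $\uHH^\bullet(\msc X)=\pi_\ast\sExt^\bullet_{X^2}(\Delta_\ast\O_X,\Delta_\ast\O_X\rtimes G)$. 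This isomorphism is $G$-equivariant, so taking $G$-invariants and using $\HH^\bullet(\X)=\Ext^\bullet_{X^2}(\Delta_\ast\O_X,\Delta_\ast\O_X\rtimes G)^G$ gives $\HH^\bullet(\X)=\Gamma(X,\uHH^\bullet(\msc X))^G$ as algebras.

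It then remains only to substitute the $G$-equivariant sheaf-of-algebras identification~\eqref{eq:166} of Theorem~\ref{thm:intro}: applying the lax-monoidal functor $\Gamma(X,-)^G$ to both sides of $\uHH^\bullet(\msc X)=\oplus_{g\in G}T^{poly}_{X^g}\otimes_{\O_X}\det(N_{X^g})$ transports the algebra structure described in parts (i)--(iv) (equivalently, the product of Theorems~\ref{thm:SA} and~\ref{thm:Tact}) to the right-hand side of~\eqref{eq:151-nogr}, yielding the asserted identification. Alternatively one can read this off~\eqref{eq:151} directly: in the affine case $\oplus_{g}T^{poly}_{X^g}\otimes_{\O_X}\det(N_{X^g})$ has no higher cohomology, so the filtration on $\HH^\bullet(\X)$ arising from~\eqref{eq:105} has a single nontrivial step, whence $\gr\HH^\bullet(\X)=\HH^\bullet(\X)$ as algebras and~\eqref{eq:151} collapses to~\eqref{eq:151-nogr}. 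There is no real obstacle here: all of the content sits in Theorem~\ref{thm:intro} and in the local model for $\HH^\bullet(\X)$, and the only thing to verify is the vanishing of higher coherent cohomology on the affine scheme $X^2$ — which is precisely what ensures that passing from $\Ext$ to $\Gamma\sExt$, and from $\HH^\bullet$ to $\gr\HH^\bullet$, loses no algebra-structure information.
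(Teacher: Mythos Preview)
Your argument is correct and matches the paper's reasoning: in the affine case the higher sheaf cohomology vanishes, so the multiplicative local-to-global spectral sequence~\eqref{eq:105} collapses to the row $p=0$, the filtration is trivial (so $\gr\HH^\bullet(\X)=\HH^\bullet(\X)$), and one simply feeds in the sheaf-of-algebras identification of Theorem~\ref{thm:intro}. The paper states this in a sentence just before the corollary; you have spelled out the same steps with a bit more care.
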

Note that there is no ``gr'' appearing in the corollary since the filtration is now trivial (it matches the cohomological grading).

The above corollary was established in work of Shepler and Witherspoon~\cite{SW} in the specific instance of a finite group acting linearly on affine space (cf.~\cite{anno}).  We note that the description of cohomology given in the Corollary, as the global sections of our algebra of polyvector fields, is not exclusive to affine varieties.  Indeed, we provide some projective examples in Section~\ref{sect:rem_deg} for which we have a similar equality~\eqref{eq:151-nogr}.

\subsection{Formality, symplectic quotients, and orbifold cohomology}

We say that the Hochschild cohomology of a Deligne--Mumford stack $\msc{X}$ is formal if the sheaf-extensions $\msc{E}xt_\msc{X}^\bullet(\Delta_\ast\O_\msc{X},\Delta_\ast\O_\msc{X})$ and the derived endomorphisms $\mrm{R}\msc{H}om_\msc{X}(\Delta_\ast\O_\msc{X},\Delta_\ast\O_\msc{X})$ are connected by a sequence of $A_\infty$ quasi-isomorphisms (see Section~\ref{sect:formality}).  In characteristic zero, there is a highly nontrivial isomorphism between $T^{poly}_X$ and the derived endomorphisms of $\Delta_\ast\O_X$, for smooth $X$, using the composition of the Hochschild--Kostant--Rosenberg isomorphism with exterior multiplication by the square-root of the Todd class of $T_X$ (claimed on the level of global sections by Kontsevich~\cite[Claim, Section 8.4]{kontsevich03}, see also~\cite[Claim 5.1]{Cal-mp2}, and proved also by Calaque-Van den
    Bergh~\cite[Corollary 1.5]{calaquevandenbergh10}; the local statement is a consequence of \cite[Theorem 1.1]{calaquevandenbergh10II}).  We show that the analog of this result does not hold for projective Calabi--Yau quotient orbifolds with singular coarse space.

\begin{theoremI}[\ref{thm:informal}]
Suppose $X$ is projective and that $G$ is a finite subgroup in $\mrm{Aut}(X)$.  Suppose additionally that the action of $G$ is not free, and that the quotient $\msc{X}=[X/G]$ is Calabi--Yau. Then the Hochschild cohomology of the orbifold $\msc{X}$ is not formal.
\end{theoremI}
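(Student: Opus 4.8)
The plan is to argue by contradiction: suppose the Hochschild cohomology of $\msc{X} = [X/G]$ were formal. Formality would give an $A_\infty$ quasi-isomorphism between $\mrm{R}\msc{H}om_\msc{X}(\Delta_\ast\O_\msc{X},\Delta_\ast\O_\msc{X})$ and its cohomology sheaf $\uHH^\bullet(\msc{X})$; passing to ($G$-invariant) derived global sections and using the multiplicativity and degeneration of the local-to-global spectral sequence \eqref{eq:105}, this would force an isomorphism of graded algebras $\HH^\bullet(\msc{X}) \cong \gr\HH^\bullet(\msc{X})$, i.e.\ the filtration of Theorem~\ref{thm:intro} would split multiplicatively and $\HH^\bullet(\msc{X})$ would be identified, as an algebra, with $H^\bullet(X, \oplus_{g\in G} T^{poly}_{X^g}\otimes_{\O_X}\det(N_{X^g}))^G$. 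The strategy is then to exhibit an obstruction to such a splitting coming from the interplay of the Calabi--Yau hypothesis and the existence of a fixed locus, detected already at the level of a single group element $g \ne e$ with $X^g \ne \emptyset$.

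First I would reduce to studying one nontrivial $g$. Pick $g \in G$ acting non-freely and a connected component $Z \subseteq X^g$ of positive codimension $c = \codim(Z) > 0$. The Calabi--Yau condition on $\msc{X}$ means $\omega_X$ is $G$-equivariantly trivial, which constrains the $g$-action on $\det N_Z$: the eigenvalue of $g$ on $\det N_Z$ is determined by (in fact, forced to be compatible with) the triviality of the canonical bundle, and the relevant ``age''-type integer is an honest integer rather than a fraction. This is precisely the input that makes the summand $\det(N_Z)$ sit in a definite cohomological degree and interact with the polyvector fields $T^{poly}_X$ in the identity component in a way that the associated graded algebra cannot see.

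The key step is to locate a cup-product relation in $\HH^\bullet(\msc{X})$ that is \emph{not} homogeneous for the filtration: concretely, I would take a class $\alpha$ supported on the $g$-component (a section of $T^{poly}_Z \otimes \det N_Z$, placed in degree $\deg + c$) and a class $\beta$ on the $g^{-1}$-component, and compute $\alpha \cup \beta$. By part (iv) of Theorem~\ref{thm:intro} the top-order term of this product lands in the identity component $T^{poly}_X$, but the Calabi--Yau/Todd-class correction — which is exactly the normalization discrepancy flagged in the abstract — contributes a strictly lower-order term that is nonzero precisely because $\mathrm{Td}(T_X)$ (equivalently $\mathrm{Td}(N_Z)$) is nontrivial when $X$ is a nontrivial projective Calabi--Yau. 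In the associated graded algebra $\gr\HH^\bullet(\msc{X}) = H^\bullet(X,\uSA)^G$-module, this lower-order term is killed, so the product $\alpha\cup\beta$ cannot be matched under any algebra isomorphism $\HH^\bullet(\msc{X}) \cong \gr\HH^\bullet(\msc{X})$; this contradicts formality. Projectivity enters to guarantee the relevant cohomology classes are nonzero (via Serre duality / the Calabi--Yau pairing on $H^\bullet(X,\O_X)$ and on the fixed loci) and that the hypercohomology is finite-dimensional so the filtration argument is well-behaved.

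The main obstacle I anticipate is making the obstruction argument genuinely \emph{algebraic} and basis-free: one must show that \emph{no} algebra isomorphism $\HH^\bullet(\msc{X}) \xrightarrow{\sim} \gr\HH^\bullet(\msc{X})$ exists, not merely that the ``obvious'' one fails. I would handle this by identifying an invariant of the algebra — for instance, the image of a suitable power or bracket of canonical classes, or the failure of a specific product of fixed-locus classes to be a zero divisor — that is preserved by any isomorphism and that differs between $\HH^\bullet(\msc{X})$ and $\gr\HH^\bullet(\msc{X})$. Pinning down such an invariant, and checking it is nonzero using the projective Calabi--Yau hypothesis and a concrete non-free element $g$, is where the real work lies; everything else is formal manipulation of the spectral sequence \eqref{eq:105} and Theorem~\ref{thm:intro}.
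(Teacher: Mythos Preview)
Your opening reduction is right: formality would yield an algebra isomorphism $\HH^\bullet(\msc{X})\cong H^\bullet(X,\uHH^\bullet(\msc{X}))^G=\gr\HH^\bullet(\msc{X})$, and the task is to find an algebra-theoretic invariant distinguishing these two. But the specific obstruction you propose does not work. The ``normalization discrepancy'' in the abstract refers to the $\det(1-g)^{1/2}$ rescaling needed to trivialize $\det(N_{X^g})$ in the symplectic case; it is a correction to the cup product formula on $\uHH^\bullet(\msc{X})$ itself, not a source of lower-order terms in the filtration. There is no Todd-class mechanism in play here, and no reason to expect that $\alpha\cup\beta$ for $\alpha$ in the $g$-summand and $\beta$ in the $g^{-1}$-summand picks up filtration-inhomogeneous contributions of the kind you describe. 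Your own final paragraph correctly diagnoses the gap: you have not produced an invariant.

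The paper's invariant is much simpler and requires no computation of products. Since $\msc{X}$ is proper and Calabi--Yau, so is $\msc{X}\times\msc{X}$, and Serre duality forces $\HH^\bullet(\msc{X})=\Ext^\bullet_{\msc{X}^2}(\Delta_\ast\O_\msc{X},\Delta_\ast\O_\msc{X})$ to be a graded Frobenius algebra. On the other hand, $\gr\HH^\bullet(\msc{X})=H^\bullet(X,\oplus_g T^{poly}_{X^g}\otimes\det N_{X^g})^G$ carries an \emph{additional} algebra grading by codimension, and its top cohomological degree $2\dim X$ occurs only in the codimension-$0$ (identity) component $H^{\dim X}(X,\omega_X^\vee)^G$. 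Any Frobenius form, being determined by a functional on the top degree, would then vanish identically on every positive-codimension summand and hence be degenerate. Since $G$ acts non-freely there \emph{are} nonzero positive-codimension summands, so $\gr\HH^\bullet(\msc{X})$ admits no Frobenius structure at all. That is the contradiction.
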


One can construct such Calabi--Yau quotients via symplectic varieties equipped with symplectic group actions.  Symplectic orbifolds feature prominently in the literature, particularly in studies of crepant resolutions and quantization.

\begin{corollaryI}[\ref{cor:symp_informal}]
Let $X$ be a projective symplectic variety and $G\subset \mrm{Sp}(X)$ be a finite subgroup for which the action is not free.  Then the Hochschild cohomology of the corresponding orbifold quotient $\msc{X}$ is not formal.
\end{corollaryI}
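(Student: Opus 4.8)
The plan is to deduce the Corollary directly from Theorem~\ref{thm:informal} by checking that a symplectic quotient of the indicated form is automatically Calabi--Yau. So the only real content is the verification that $\msc{X}=[X/G]$ has trivial canonical class as a stack, given that $X$ carries a $G$-invariant symplectic form; everything else is then immediate.

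First I would recall the relevant notion: $X$ being symplectic means it is a smooth projective variety of even dimension $\dim X=2n$ equipped with a closed, everywhere nondegenerate algebraic $2$-form $\omega\in H^0(X,\Omega^2_X)$. Nondegeneracy says precisely that the top exterior power $\omega^{\wedge n}\in H^0(X,\Omega^{2n}_X)=H^0(X,K_X)$ is nowhere vanishing, so it trivializes the canonical bundle, $K_X\cong\O_X$. The hypothesis $G\subset\mrm{Sp}(X)$ means by definition that each $g\in G$ acts by an automorphism with $g^\ast\omega=\omega$; hence $g^\ast(\omega^{\wedge n})=(g^\ast\omega)^{\wedge n}=\omega^{\wedge n}$, so the trivializing section of $K_X$ is $G$-invariant.

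Next I would translate this into a statement about the stack. The atlas $X\to[X/G]$ is a $G$-torsor, hence finite \'etale, so $\Omega^1$ pulls back and $K_{[X/G]}$ corresponds, as an object of $\Qcoh([X/G])=\Qcoh^G(X)$, to $K_X$ with its natural $G$-linearization; a line bundle on $\msc{X}$ is trivial exactly when the underlying $G$-equivariant line bundle on $X$ admits a $G$-invariant nowhere-vanishing global section. The section $\omega^{\wedge n}$ is such a section, so $K_{\msc{X}}\cong\O_{\msc{X}}$, i.e.\ $\msc{X}$ is Calabi--Yau in the sense used in Theorem~\ref{thm:informal}. Since $\msc{X}$ is projective (as $X$ is) and the $G$-action is not free by hypothesis, Theorem~\ref{thm:informal} applies verbatim and shows that the Hochschild cohomology of $\msc{X}$ is not formal.

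There is essentially no obstacle here: the argument is a short bookkeeping of equivariant structures. The only point requiring minor care is fixing conventions --- that ``symplectic variety'' is meant in the holomorphic/algebraic sense so that $\omega^{\wedge n}$ is a genuine nowhere-vanishing section of $K_X$, and that $\mrm{Sp}(X)$ denotes automorphisms preserving $\omega$ on the nose rather than up to a scalar. (If one merely assumed $g^\ast\omega=\lambda(g)\,\omega$ for a character $\lambda\colon G\to k^\times$, one would further need $\lambda^n$ trivial, or would pass to $\ker\lambda$; but the stated hypothesis $G\subset\mrm{Sp}(X)$ excludes this.) With conventions pinned down, the Corollary is an immediate consequence of Theorem~\ref{thm:informal}.
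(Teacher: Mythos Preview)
Your proposal is correct and follows the same approach as the paper: the paper explicitly notes (just before Theorem~\ref{thm:informal}) that for a projective symplectic variety with a symplectic group action, $G$-invariance of the symplectic form implies that the isomorphism $\O_X\cong\omega_X$ given by the symplectic volume form is $G$-equivariant, so $\msc{X}=[X/G]$ is Calabi--Yau; the corollary is then immediate from Theorem~\ref{thm:informal}. Your more careful phrasing in terms of the atlas and $G$-equivariant line bundles spells out exactly this point.
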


Kummer varieties, i.e.\ quotients of abelian surfaces by the $\mbb{Z}/2\mbb{Z}$-inversion action, provide explicit examples of $\msc{X}$ as in the corollary, and hence explicit examples of smooth Deligne--Mumford stacks with non-formal Hochschild cohomology.
\par

We continue our analysis of symplectic quotients in Section~\ref{sect:symplectic}, where we provide a simplification of Theorem~\ref{thm:intro} in the symplectic setting over $\mbb{C}$.  In this case one can produce simultaneous trivializions of the determinants of the normal bundles $\det(N_{X^g})$ which provide the following algebra identification.

\begin{theoremI}[\ref{thm:HHsymp}]
Suppose $X$ is a symplectic variety over $\mbb{C}$, and that $G$ acts on $X$ by symplectic automorphisms.  Then there is an algebra isomorphism
\begin{equation}\label{eq:267}
\uHH^\bullet(\msc{X})\cong \bigoplus_{g\in G} \Sigma^{-\codim(X^g)}T^{poly}_{X^g},
\end{equation}
where the (shifted) polyvector fields multiply in the expected manner.
\end{theoremI}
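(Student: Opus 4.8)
The plan is to deduce Theorem~\ref{thm:HHsymp} from Theorem~\ref{thm:intro} by simultaneously trivializing the line bundles $\det(N_{X^g})$ in a manner compatible with the multiplication $m$ of part~(iv). Since $G$ is finite and $k=\mbb{C}$, each $g$ acts semisimply on the tangent spaces along its fixed locus; as $g$ is symplectic, the $1$-eigenspace $TX^g$ is a symplectic subbundle of $TX|_{X^g}$, and its $\omega$-orthogonal complement is a symplectic vector bundle canonically identified with $N_{X^g}$. Let $\omega_g$ be the resulting symplectic form on $N_{X^g}$ and $r_g=\tfrac12\codim(X^g)$ (an integer, understood component-wise on $X^g$). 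I would then fix the trivializing section
\[
\sigma_g:=\tfrac{1}{r_g!}\,\omega_g^{\wedge r_g}\ \in\ \det(N_{X^g}).
\]
The divided-power factor $1/r_g!$ is the normalization alluded to in the abstract: the unnormalized section $\omega_g^{\wedge r_g}$ trivializes $\det(N_{X^g})$ equally well and is all one needs for the vector-space identification of~\cite{arinkinetal14}, but it fails to be multiplicative by a binomial coefficient and would destroy the algebra structure.

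Letting $\tau_g\colon\O_{X^g}\xrightarrow{\sim}\det(N_{X^g})$ send $1\mapsto\sigma_g$, I would define
\[
\Phi\colon\ \uHH^\bullet(\msc{X})=\bigoplus_{g\in G}T^{poly}_{X^g}\ot_{\O_X}\det(N_{X^g})\ \longrightarrow\ \bigoplus_{g\in G}\Sigma^{-\codim(X^g)}T^{poly}_{X^g}
\]
to be $\mathrm{id}\ot\tau_g^{-1}$ on each summand; the shift $\Sigma^{-\codim(X^g)}$ records that $\det(N_{X^g})$ sits in cohomological degree $\codim(X^g)$, and since this codimension is even the shifts carry no Koszul signs. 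Then $\Phi$ is manifestly an isomorphism of graded sheaves, it is $T^{poly}_X$-linear because each $\tau_g$ is $\O$-linear, and it is $G$-equivariant because $h$ carries $\det(N_{X^g})$ to $\det(N_{X^{hgh^{-1}}})$ and, as $h^*\omega=\omega$, carries $\sigma_g$ to $\sigma_{hgh^{-1}}$.

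It remains to check that $\Phi$ respects products. By Theorem~\ref{thm:intro}(i) the summand $T^{poly}_X$ is central, and by (ii)--(iii) every local section of $\uHH^\bullet(\msc{X})$ is a $T^{poly}_X$-linear combination of sections of $\uSA(\msc{X})=\oplus_g\det(N_{X^g})$; the same structural facts hold on the right-hand side with its ``expected'' product, and $\Phi$ restricts to the identity on $T^{poly}_X$. A formal bookkeeping argument then reduces multiplicativity of $\Phi$ to multiplicativity on $\uSA(\msc{X})$, i.e.\ to the claim that for all $g,h$ the map $m\colon\det(N_{X^g})\ot_{\O_X}\det(N_{X^h})\to\det(N_{X^{gh}})$ of Theorem~\ref{thm:intro}(iv) intertwines $\tau_g\ot\tau_h$ with $\tau_{gh}$ and the ``expected'' product $\O_{X^g}\ot_{\O_X}\O_{X^h}\to\O_{X^{gh}}$. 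By part~(iv), $m$ and the expected product both vanish off the components $Y$ of $X^g\cap X^h$ that are transverse components of $X^{gh}$, so it suffices to work over such a $Y$, on which $m$ is the isomorphism induced by $N_{X^{gh}}|_Y=N_{X^g}|_Y\oplus N_{X^h}|_Y$ and $r_g+r_h=\tfrac12\codim(X^{gh})=:r_{gh}$. The claim thus reduces to a linear-algebra identity: under this decomposition, $\sigma_{gh}|_Y=\pm\,\sigma_g|_Y\wedge\sigma_h|_Y$. The decomposition need not be $\omega$-orthogonal, but because $\omega_g^{\wedge r_g}$ and $\omega_h^{\wedge r_h}$ are \emph{top} exterior powers on the two summands, every cross-term of $\omega_{gh}$ is annihilated upon wedging against them, leaving $\omega_{gh}^{\wedge r_{gh}}=\pm\binom{r_{gh}}{r_g}\,\omega_g^{\wedge r_g}\wedge\omega_h^{\wedge r_h}$; since $\binom{r_{gh}}{r_g}/r_{gh}!=1/(r_g!\,r_h!)$, the divided powers make the identity hold, up to sign.

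The crux of the argument is this last step, and in particular the sign: one must spell out the canonical isomorphisms $N_{X^g}|_Y\cong TX^h|_Y/TY$ coming from transversality, check that the symplectic volume forms are matched under them despite the fact that the $\omega$-orthogonal complements of $TY$ inside $TX^g$, $TX^h$ and $TX|_Y$ are pairwise distinct subspaces, and verify that the permutation signs cancel (or, failing that, absorb the residual sign into the shift convention). Everything else---the semisimplicity and symplecticity of the normal bundles, the $G$-equivariance, and the reduction to $\uSA(\msc{X})$---is formal once Theorem~\ref{thm:intro} is in hand.
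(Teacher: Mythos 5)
Your overall framework---trivialize each $\det(N_{X^g})$ by a relative symplectic volume element and reduce multiplicativity, via Theorem~\ref{thm:intro}, to the subalgebra $\uSA(\msc{X})$ on transverse shared components---is the same as the paper's, but the step you call the crux is false. The identity $\sigma_{gh}|_Y=\pm\,\sigma_g|_Y\wedge\sigma_h|_Y$ does not hold in general, even with the divided-power normalization: the discrepancy is neither a sign nor a binomial coefficient but the positive scalar
\[
a(g,h)=\left(\frac{\det\bigl(1-gh|_{N_{X^{gh}}}\bigr)}{\det\bigl(1-g|_{N_{X^g}}\bigr)\,\det\bigl(1-h|_{N_{X^h}}\bigr)}\right)^{1/2},
\]
which is typically $\neq 1$ for nonabelian $G$; e.g.\ for $G=S_n$ acting on $T^*\mbb{C}^n$ one gets $a\bigl((1\cdots k),(k\cdots m)\bigr)=\tfrac{m}{k(m-k)}$ (Remark~\ref{r:agh-nz}), so no sign or shift convention can absorb it. Your ``cross-terms are annihilated'' argument is where it breaks: writing the normal component of the Poisson bivector as $\pi_{gh}^{\perp}=\alpha+\beta+\gamma$ with respect to $\wedge^2\bigl(N_{X^g}|_Y\oplus N_{X^h}|_Y\bigr)$, the top power $(\pi_{gh}^{\perp})^{r_{gh}}$ receives contributions from mixed terms such as $\alpha^{r_g-1}\beta^{2}\gamma^{r_h-1}$ because the splitting is not $\omega$-orthogonal, and moreover $\alpha,\gamma$ are not $\pi_g^{\perp},\pi_h^{\perp}$, since those are defined by the $g$- and $h$-eigenspace decompositions of $T_X$ rather than by the splitting $N_{X^{gh}}=N_{X^g}\oplus N_{X^h}$. (A minor slip besides: powers of the two-form $\omega_g$ trivialize $\det(N^\vee_{X^g})$; for $\det(N_{X^g})$ one uses the normal part $\pi_g^{\perp}$ of the Poisson bivector.) The identity you assert is precisely the recurring error in the literature that this paper corrects (see Section~\ref{sect:remark} and the abstract).

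What the paper actually does is rescale the trivializations to $\mu_g=\det(1-g|_{N_{X^g}})^{1/2}\psi_g$ and prove that these are multiplicative. That is Proposition~\ref{prop:coboundary}: the scalars $\lambda_g=\det(1-g|_{N_{X^g}})^{1/2}$ bound the coefficients $a(g,h)$ in a conjugation-equivariant way, and this is the real content of Theorem~\ref{thm:HHsymp}. Its proof is not formal: one reduces complete-locally to a symplectic vector space, where a Pfaffian computation (Lemma~\ref{lem:pfaffian}, Lemma~\ref{le1}, Proposition~\ref{p1}) identifies $a(g,h)$ with the displayed ratio of determinants up to a choice of square root, and a separate positivity argument (Proposition~\ref{p2}), via connectedness of a space of pairs in the compact form $U(n,\mbb{H})$, pins down the positive root. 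So the formal reductions in your proposal (centrality of $T^{poly}_X$, reduction to $\uSA(\msc{X})$, equivariance) are fine and match the paper, but the deferred linear-algebra identity must be replaced by the rescaled trivializations together with the Appendix~\ref{sect:pav} analysis.
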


By the ``expected manner" we mean, vaguely, the multiplication one would expect given the vanishing result of Theorem~\ref{thm:intro} (iv) and the standard multiplication of polyvector fields.  A precise description of this multiplication is given in the discussion following Theorem~\ref{thm:HHsymp}.
\par

We note one {\it cannot} achieve the above isomorphism by simply employing the trivializations $\det(N_{X^g})\cong \O_{X^g}$ provided by the symplectic form on $X$.  One has to instead rescale these trivializations by the square roots of the determinants of the elements $(1-g)$, which we interpret as invertible operators on their respective normal bundles $N_{X^g}$.  In Appendix~\ref{sect:pav}, with Pavel Etingof, we show in the particular case of a finite group acting linearly on a symplectic vector space that the rescaled trivializations produce the desired isomorphism~\eqref{eq:267}.  For arbitrary $\msc{X}$ we reduce to the linear case to obtain the isomorphism~\eqref{eq:267}.  We note that the introduction of these scaling factors corrects a persistent error in the literature (see Section~\ref{sect:remark}).

For a symplectic quotient $\msc{X}$, we also propose a number of relations between Hochschild cohomology and orbifold cohomology $H^\bullet_\mrm{orb}(\msc{X})$ for $\msc{X}$.  In particular, we provide a linear identification between the Hochschild cohomology and the orbifold cohomology of a projective symplectic quotient orbifold at Corollary~\ref{cor:501}.  We conjecture that there is furthermore an algebra identification between these two cohomologies.

\begin{conjectureI}[\ref{conj:HH_H_orb}]
For any projective, symplectic, quotient orbifold $\msc{X}$ over $\mbb{C}$ there is an algebra isomorphism $\HH^\bullet(\msc{X})\cong H^\bullet_\mrm{orb}(\msc{X})$.
\end{conjectureI}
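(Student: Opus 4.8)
\emph{Strategy towards the conjecture.} The plan is to combine the symplectic computation of Theorem~\ref{thm:HHsymp} with Hodge theory on the fixed loci, reducing the conjecture to a local comparison of two ``push--pull with correction class'' products. Since $X$ is projective, the spectral sequence~\eqref{eq:105} degenerates by~\cite{arinkinetal14}, so Theorem~\ref{thm:intro} together with the symplectic simplification produces an \emph{algebra} identification $\gr\HH^\bullet(\msc{X})=H^\bullet\!\big(X,\oplus_{g\in G}\Sigma^{-\codim(X^g)}T^{poly}_{X^g}\big)^G$. Each fixed locus $X^g$ is a smooth closed subvariety of the projective symplectic $X$, hence itself smooth, projective, and symplectic --- the symplectic form restricts nondegenerately to $X^g$ exactly because $g$ is symplectic, with $N_{X^g}$ the symplectic complement; this also forces $\codim(X^g)$ to be even and $\mathrm{age}(g)=\tfrac12\codim(X^g)$. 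The form on $X^g$ gives $\bigwedge^p T_{X^g}\cong\Omega^p_{X^g}$, and since $X^g$ is smooth projective, $\bigoplus_{p+q=d}H^q(X^g,\Omega^p_{X^g})=H^d(X^g;\mbb{C})$. Grouping the sum over $g$ by conjugacy class turns $G$-invariance into $C_G(g)$-invariance, and $\Sigma^{-\codim(X^g)}$ becomes the orbifold age shift; this reproves the linear identification of Corollary~\ref{cor:501}, now as the first step of a \emph{graded-algebra} comparison with $H^\bullet_{\mrm{orb}}(\msc{X})=\bigoplus_{[g]}H^{\bullet-2\,\mathrm{age}(g)}(X^g;\mbb{C})^{C_G(g)}$.

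The heart of the matter is to show this identification is multiplicative. Both products have the same shape: to multiply $\alpha\in H^\bullet(X^g)$ and $\beta\in H^\bullet(X^h)$, restrict to $X^g\cap X^h$, multiply the restrictions against a correction class on $X^g\cap X^h$, and push forward (Gysin) to $X^{gh}$. On the Hochschild side, the correction is governed by the determinant multiplication $m\colon\det(N_{X^g})\ot_{\O_X}\det(N_{X^h})\to\det(N_{X^{gh}})$ of Theorems~\ref{thm:intro}(iv), \ref{thm:SA}, and~\ref{thm:Tact}; after the symplectic rescaling of Theorem~\ref{thm:HHsymp}, with the $\sqrt{\det(1-g)}$ normalization of Appendix~\ref{sect:pav}, its structure constants become explicit, and $m$ is an isomorphism precisely on the components of $X^g\cap X^h$ that are components of $X^{gh}$ along which $X^g,X^h$ meet transversely. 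On the orbifold side, the correction is the Euler class $e(E_{g,h})$ of the Chen--Ruan obstruction bundle, whose rank vanishes in exactly that transverse situation. The goal is to prove these two correction classes coincide.

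I would do this locally. Both sets of structure constants are pulled back from a formal (equivalently \'etale, or analytic) slice of $X$ at a point of $X^g\cap X^h$, where the $G$-action is linearized to a symplectic action of the stabilizer on a symplectic vector space $V$; on such a linear model the Hochschild product is computed by Appendix~\ref{sect:pav}, while the orbifold obstruction bundle is a concrete representation of the stabilizer built from the age gradings of $g$, $h$, $(gh)^{-1}$ on $V$, and one checks directly that its Euler class matches the determinant correction under the rescaled trivializations, with the $\sqrt{\det(1-g)}$ factors propagating consistently along $(g,h,gh)$. Gluing this over $X^g\cap X^h$ and combining with the graded comparison above gives $\gr\HH^\bullet(\msc{X})\cong H^\bullet_{\mrm{orb}}(\msc{X})$ as graded algebras. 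Removing the ``$\gr$'' requires splitting the filtration of Theorem~\ref{thm:intro} multiplicatively: in the projective symplectic case I would attempt a weight argument, the associated graded pieces $H^q(X^g,\Omega^p_{X^g})^{C_G(g)}$ sitting in distinct Hodge bidegrees respected (in the graded sense) by the product, with the extension problem controlled using the hard Lefschetz $\mrm{SL}_2$-action on the smooth projective symplectic $X^g$. Alternatively, when $X/G$ admits a crepant resolution $Y$, one transports the question through the derived McKay equivalence $D^b(\msc{X})\simeq D^b(Y)$: it carries $\HH^\bullet(\msc{X})$ to $\HH^\bullet(Y)$, which for projective holomorphic symplectic $Y$ is isomorphic as a ring to $H^\bullet(Y;\mbb{C})$ (multiplicative HKR plus Hodge theory), and Ruan's Cohomological Hyperk\"ahler Resolution Conjecture identifies $H^\bullet(Y;\mbb{C})$ with $H^\bullet_{\mrm{orb}}(\msc{X})$; this route settles the conjecture whenever such a $Y$ exists and Ruan's conjecture is known for it.

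The main obstacle is the local matching of $e(E_{g,h})$ with the Hochschild determinant correction. The obstruction bundle is defined by an indirect moduli/cohomological construction, and relating it --- with the correct normalization --- to the honest normal-bundle splitting $N_{X^{gh}}=N_{X^g}|_{X^{gh}}\oplus N_{X^h}|_{X^{gh}}$ that governs the Hochschild product is delicate: the appearance of the nonobvious $\sqrt{\det(1-g)}$ scaling factors (Section~\ref{sect:remark}, Appendix~\ref{sect:pav}), absent from earlier literature, is precisely the symptom that the naive comparison fails, and tracking their propagation across a triple product is where the real work sits. A secondary difficulty is that the conjecture as stated may conceivably need an extra hypothesis, such as existence of a crepant resolution; the symplectic normalization introduced here is designed so that none should be necessary, but confirming this is part of the problem.
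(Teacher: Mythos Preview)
This statement is a \emph{conjecture} in the paper, not a theorem; the paper offers no proof. What the paper does provide is (a) the weaker Conjecture~\ref{conj:575}, which proposes $\gr\HH^\bullet(\msc{X})\cong \gr_F H^\bullet_\mrm{orb}(\msc{X})$ with associated graded on \emph{both} sides, and (b) the observation that in the presence of a crepant resolution the full conjecture reduces to Ruan's Cohomological Hyperk\"ahler Resolution Conjecture via Bridgeland--King--Reid and C\u{a}ld\u{a}raru. Your proposal correctly reproduces route (b), so on that front you are aligned with the paper.

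There is, however, a genuine error in your direct strategy. You claim that the local matching of correction classes would yield $\gr\HH^\bullet(\msc{X})\cong H^\bullet_\mrm{orb}(\msc{X})$ as graded algebras, and then propose to ``remove the $\gr$'' by a Hodge/weight argument. But the intermediate statement is already false whenever the $G$-action is not free: by Theorem~\ref{thm:informal} (and the discussion in Section~\ref{sect:rem_deg}), the algebra $H^\bullet(X,\uHH^\bullet(\msc{X}))^G=\gr\HH^\bullet(\msc{X})$ is graded by codimension and therefore admits \emph{no} graded Frobenius structure, while $H^\bullet_\mrm{orb}(\msc{X})$ is Frobenius for compact $X$. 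They cannot be isomorphic as algebras. The point you are missing is that the Chen--Ruan product is only \emph{filtered}, not graded, by codimension: the Gysin pushforward from $X^g\cap X^h$ to $X^{gh}$ can land in components of $X^{gh}$ of strictly smaller codimension than $\codim(X^g)+\codim(X^h)$, and this is exactly where the obstruction bundle $E_{g,h}$ has positive rank and contributes a nontrivial Euler class. Your transversality dichotomy (``$e(E_{g,h})$ vanishes iff the intersection is transverse along a shared component'') kills precisely these terms on the Hochschild side but not on the orbifold side. So the honest target of your local comparison is $\gr_F H^\bullet_\mrm{orb}(\msc{X})$, which is the content of Conjecture~\ref{conj:575}(ii), not the full Conjecture~\ref{conj:HH_H_orb}.

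For the same reason, your proposed ``splitting the filtration multiplicatively via Hodge bidegrees and hard Lefschetz'' cannot work as stated: Theorem~\ref{thm:informal} shows the filtration on $\HH^\bullet(\msc{X})$ is genuinely nonsplit as algebras in the projective symplectic non-free case. Any correct approach must produce a filtered isomorphism $\HH^\bullet(\msc{X})\cong H^\bullet_\mrm{orb}(\msc{X})$ directly, not pass through $\gr$ and then lift. The paper does not indicate how to do this beyond the crepant-resolution reduction, and the conjecture remains open.
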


If the (generally singular) quotient variety $X/G$ admits a crepant resolution then our conjecture reduces to Ruan's Cohomological
Hyperk\"{a}hler Resolution Conjecture.  (See Section~\ref{sect:orb_prod}.)  As Ruan's conjecture relies on the existence of a crepant resolution, Conjecture~\ref{conj:HH_H_orb} represents a resolution free variant of the Cohomological
Hyperk\"{a}hler Resolution Conjecture.
%Generalizations of Corollary~\ref{cor:501} and Conjecture~\ref{conj:HH_H_orb} for arbitrary quasi-projective symplectic orbifolds are given in Theorem~\ref{thm:HHorb} and Conjecture~\ref{conj:575}.  Our investigations of orbifold cohomology are inspired by results of Ginzburg and Kaledin in the affine setting~\cite{ginzburgkaledin04}.

\begin{remark}
%In considering our analyses of Hochschild cohomology and orbifold cohomology for projective symplectic orbifolds $\msc{X}$ over $\mbb{C}$, one should n
Note that projective complex symplectic varieties (which pertain to the last two sections) are, from the differential geometric perspective, compact hyperk\"ahler manifolds.  
%The explicit identification of the two structures 
This is due to Yau's solution to the Calabi conjecture~\cite{yau78}. The
converse also holds under our quasi-projectivity assumption.
% (we only need to assume that the variety is analytic
% quasi-projective, due to Chow's theorem).
\end{remark}

\subsection{Generalities for the Hochschild cohomology of algebraic stacks}
\label{sect:remdefo}

In principle, for a general algebraic stack $\msc{X}$, the Hochschild cohomology $\HH^\bullet(\msc{X})$ defined via self-extensions of the pushforward $\Delta_\ast\O_\msc{X}$ need not be identified with the Hochschild
cohomology $\HH^\bullet_\mrm{ab}(\mrm{Qcoh}(\msc{X}))$ of the category of quasi-coherent sheaves on $\X$, as defined
in~\cite{lowenvandenbergh05}.  However, such an identification is desirable as the cohomology  $\HH^\bullet_\mrm{ab}(\mrm{Qcoh}(\msc{X}))$ has the correct theoretical interpretation, namely as the cohomology of the dg Lie algebra controlling the deformation theory of the category of sheaves on $\msc{X}$, while the self-extension algebra $\HH^\bullet(\msc{X})$ is more accessible to computation.
\par

In an appendix by Pieter Belmans, Appendix~\ref{sect:cat_coho}, it is shown that for many reasonable stacks the algebra of self-extensions $\HH^\bullet(\msc{X})$ does in fact agree with the Hochschild cohomology $\HH^\bullet_\mrm{ab}(\mrm{Qcoh}(\msc{X}))$ of the abelian category $\mrm{Qcoh}(\msc{X})$, as desired.

\begin{propositionI}[\ref{prop:belmans}]
  Let $\msc{X}$ be a perfect algebraic stack over
  $\operatorname{Spec}k$.  Then there exists an isomorphism of graded
  algebras $\HH^\bullet(\msc{X})\cong
  \HH^\bullet_\mrm{ab}(\mrm{Qcoh}(\msc{X}))$.  In particular, we have
  such an isomorphism for any global quotient orbifold $\msc{X}$.
\end{propositionI}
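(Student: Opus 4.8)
The plan is to reduce the statement to a comparison theorem for Hochschild cohomology of dg categories, and to invoke the characterization of perfect stacks due to Ben-Zvi–Francis–Nadler. First I would recall that for a perfect stack $\msc{X}$, the derived category $D_{\mrm{qc}}(\msc{X})$ is compactly generated by a single perfect complex, and it is equivalent to the derived category of modules over a suitable dg algebra (or small dg category) $\mscr{A}$; concretely, one can take $\mscr{A}$ to be the derived endomorphism algebra of a compact generator. Then I would use the result (going back to Lowen–Van den Bergh, and in the dg setting to Keller and to Lunts–Orlov) that the Hochschild cohomology of the abelian category $\mrm{Qcoh}(\msc{X})$ agrees with the Hochschild cohomology of any dg enhancement of $D_{\mrm{qc}}(\msc{X})$, which in turn agrees with the Hochschild cohomology of the dg algebra $\mscr{A}$, i.e. with $\RHom_{\mscr{A}\otimes\mscr{A}^{\op}}(\mscr{A},\mscr{A})$. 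This handles the ``categorical'' side $\HH^\bullet_{\mrm{ab}}(\mrm{Qcoh}(\msc{X}))$.

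Next I would identify the self-extension side. One knows that for a perfect stack $\msc{X}$, the diagonal $\Delta\colon\msc{X}\to\msc{X}^2$ is such that $\Delta_\ast\O_\msc{X}$ is a perfect complex on $\msc{X}^2$, and that $D_{\mrm{qc}}(\msc{X}^2)\simeq D_{\mrm{qc}}(\msc{X})\otimes D_{\mrm{qc}}(\msc{X})$, the tensor product of cocomplete dg categories; under the identification $D_{\mrm{qc}}(\msc{X})\simeq D(\mscr{A})$ this becomes $D(\mscr{A}\otimes\mscr{A}^{\op})$ (using a compact generator of the product, together with the fact that, for perfect stacks, products behave well). Under this equivalence the structure sheaf of the diagonal $\Delta_\ast\O_\msc{X}$ corresponds to the diagonal bimodule $\mscr{A}$. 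Therefore
\[
\HH^\bullet(\msc{X})=\Ext^\bullet_{\msc{X}^2}(\Delta_\ast\O_\msc{X},\Delta_\ast\O_\msc{X})\cong \Ext^\bullet_{\mscr{A}\otimes\mscr{A}^{\op}}(\mscr{A},\mscr{A}),
\]
and this isomorphism is compatible with the algebra structures (the Yoneda product on the left, and the composition/Yoneda product on the right, which computes Hochschild cohomology of $\mscr{A}$). Combining the two identifications yields the desired algebra isomorphism $\HH^\bullet(\msc{X})\cong\HH^\bullet_{\mrm{ab}}(\mrm{Qcoh}(\msc{X}))$.

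The main obstacle, and the point requiring the most care, is the \emph{multiplicativity} of these comparison isomorphisms: each individual step (abelian category vs.\ dg enhancement; dg category vs.\ compact-generator endomorphism algebra; sheaf Ext on $\msc{X}^2$ vs.\ bimodule Ext; the identification $D(\msc{X}^2)\simeq D(\mscr{A}\otimes\mscr{A}^{\op})$ matching $\Delta_\ast\O_\msc{X}$ with the diagonal bimodule) is known to be an isomorphism of graded vector spaces, but I must track that the cup products match. For the last step one uses that an exact monoidal equivalence of derived categories sending $\Delta_\ast\O_\msc{X}$ to $\mscr{A}$ automatically intertwines the Yoneda algebra structures; for the abelian-versus-dg comparison one appeals to the fact, established by Lowen–Van den Bergh and sharpened by Keller, that this is an isomorphism of $B_\infty$- (in particular of graded) algebras. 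A secondary technical point is to confirm that the product $\msc{X}^2=\msc{X}\times\msc{X}$ of a perfect stack with itself is again perfect and that its quasi-coherent derived category is the (Lurie) tensor product, so that a compact generator can be taken of the form $\mscr{G}\boxtimes\mscr{G}$; this is in the Ben-Zvi–Francis–Nadler framework. Finally, since any global quotient orbifold $[X/G]$ with $X$ quasi-projective is a perfect stack (it has the resolution property, hence affine diagonal and a single compact generator), the last sentence of the proposition is immediate.
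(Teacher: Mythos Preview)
Your proposal is broadly correct and shares its architecture with the paper's proof: both reduce to the Ben-Zvi--Francis--Nadler identification of integral kernels with functors, combined with a comparison between the Hochschild cohomology of $\mrm{Qcoh}(\msc{X})$ and that of a dg enhancement of its derived category.  The paper, however, avoids choosing a compact generator and works abstractly: it passes from $\mrm{C}^\bullet(\operatorname{Inj}\Qcoh(\msc{X}))$ to $\mrm{C}^\bullet(\mrm{D}_{\mrm{dg}}(\Qcoh(\msc{X})))$ via a $B_\infty$-isomorphism due to De~Deken--Lowen, then invokes To\"en's theorem identifying the Hochschild complex of a dg category with derived endomorphisms of the identity in the internal $\mrm{R}\mathcal{H}om$ of dg categories, and finally applies BFN to identify continuous endofunctors with $\mrm{D}_{\mrm{dg}}(\Qcoh(\msc{X}\times\msc{X}))$, matching the identity to $\Delta_\ast\O_\msc{X}$.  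Your route via a compact generator $\msc{A}$ is a concrete unwinding of the same chain, and in the global-quotient case a single compact generator indeed exists.

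One genuine slip to flag: the line ``$D_{\mrm{qc}}(\msc{X}^2)\simeq D_{\mrm{qc}}(\msc{X})\otimes D_{\mrm{qc}}(\msc{X})$ becomes $D(\msc{A}\otimes\msc{A}^{\op})$'' is not quite right.  The Lurie tensor product of module categories gives $D(\msc{A})\otimes D(\msc{A})\simeq D(\msc{A}\otimes\msc{A})$, with no $\op$; under that equivalence $\Delta_\ast\O_\msc{X}$ does not obviously correspond to the diagonal bimodule.  What you actually need---and what the paper uses---is the \emph{functor category} interpretation: $\mrm{R}\mathcal{H}om_{\mrm{cont}}(D(\msc{A}),D(\msc{A}))\simeq D(\msc{A}^{\op}\otimes\msc{A})$, and BFN identifies this functor category with $D_{\mrm{qc}}(\msc{X}^2)$ while sending the identity functor to $\Delta_\ast\O_\msc{X}$ and to the diagonal bimodule respectively.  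Once you phrase it this way the multiplicativity is immediate (endomorphisms of the identity in a monoidal category), and your argument goes through.
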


The above result implies that there is a natural graded Lie structure
on the self-extension algebra $\HH^\bullet(\msc{X})$.  Although we do
not pursue the topic here, one can attempt to construct a sufficiently
natural Lie algebra structure on the local Hochschild cohomology
$\uHH^\bullet(\msc{X})$ and ask whether Kontsevich formality for the
{\it Lie} structure on the Hochschild cohomology of $\msc{X}$ is
similarly obstructed, as in Theorem~\ref{thm:informal}.  However, we
expect that one has to at least push this sheaf
$\uHH^\bullet(\msc{X})$ forward to the coarse space $X/G$ in order
to construct such an appropriate Lie structure.  (Here, $X/G$ is
the categorical quotient scheme; in the case that $X$ is affine,
$X/G = \Spec \O(X)^G$.)

%We remark that,
\begin{remark}
If instead of considering the Hochschild cohomology of
the abelian category, one works in the setting of derived algebraic
geometry and takes the Hochschild cohomology of the DG category
$\operatorname{QC}(\X)$ of quasi-coherent sheaves on $\X$, then the
fact that one recovers self-extensions of $\Delta_\ast\O_\X$ is a direct consequence of
 \cite[Theorem 1.2.(2)]{BFN-dag}, which identifies endofunctors of
$\operatorname{QC}(\X)$ with quasi-coherent sheaves on $\X^2$, sending
the identity to $\Delta_\ast \O_\X$. For more details, see the last
two paragraphs of the proof of Proposition \ref{prop:belmans}.
\end{remark}

\subsection{References to related literature}

\begin{comment}
As stated previously, the sheaf identification~\eqref{eq:166} is due to Anno~\cite{anno}, and the vector space identification~\eqref{eq:151} appeared in work of Ginzburg--Kaledin and Arinkin--C\u{a}ld\u{a}raru--Hablicksek~\cite{ginzburgkaledin04,arinkinetal14}.  For the case of a finite group acting linearly on affine space, the cup product on Hochschild cohomology was described explicitly in work of Shepler and Witherspoon~\cite{SW}, and an alternate formula for the cup product was given by Anno in the general affine setting~\cite{anno}.  Our algebra $\uSA(\msc{X})$ is a generalization of the ``volume subalgebra" considered in~\cite{SW}.  We note that the material presented here relies on~\cite{arinkinetal14} only to establish degeneration of the spectral sequence~\eqref{eq:105}.
\end{comment}

In the case of a finite group $G$ acting linearly on a vector space $V$ (considered here as an affine space) the Gerstenhaber structure on Hochschild cohomology $\HH^\bullet([V/G])$ is understood.  A concise algebraic description of the Gerstenhaber bracket can be found in work of the first author and Witherspoon~\cite{negronwitherspoon17} (see also~\cite{SW2,halbouttang10}).  For more general orbifolds, even affine quotient orbifolds, an analogous description of the bracket has yet to appear.
\par

A general analysis of Hochschild homology and cohomology for orbifolds, from the perspective of the derived category, can be found in a sequence of papers by C\u{a}ld\u{a}raru and Willerton~\cite{caldararuI,Cal-mp2,caldararuwillerton10}.
\par

A $C^\infty$-analog of Theorem~\ref{thm:intro} appears in work of Pflaum, Posthuma, Tang, and Tseng~\cite{pflaumetal11}.  In~\cite{pflaumetal11} the authors consider arbitrary ($C^\infty$-)orbifolds.  Such objects may not be given as global quotients in general.  

\subsection{Organization of the paper}

In Section~\ref{sect:rem_deg} we discuss some projective examples.  Section~\ref{sect:bg} is dedicated to background material.  In Section~\ref{sect:ss} we introduce the smash product $\Delta_\ast\O_X\rtimes G$ and establish the necessary relationships between the local Hochschild cohomology $\uHH^\bullet(\msc{X})$ and the cohomology of the stack quotient $\X$.  In Section~\ref{sect:SPprelim} we establish some generic relations between normal bundles and tangent bundles for the fixed spaces, and in Sections~\ref{sect:A} and~\ref{sect:product} we use these relations to give a geometric description the Hochschild cohomology $\uHH^\bullet(\msc{X})$ as a sheaf of algebras on $\msc{X}$.
\par

We discuss formality and Calabi--Yau orbifolds in Section~\ref{sect:formality}.  In Section~\ref{sect:symplectic} we describe the Hochschild cohomology of the quotient orbifold of a symplectic variety by symplectic automorphisms.  Finally, in Section~\ref{sect:H_orb} we establish some linear identifications between Hochschild cohomology and orbifold cohomology, and provide a number of conjectures regarding the relationship between the cup product on Hochschild cohomology and the orbifold product on orbifold cohomology.

In Appendix~\ref{sect:cat_coho}, by Pieter Belmans, the Hochschild cohomology of the category $\mrm{Qcoh}(\msc{X})$ is identified with the derived endomorphisms of the pushforward $\Delta_\ast\O_\msc{X}$.  In Appendix~\ref{sect:lin_alg} we cover some basic facts regarding linear representations of finite groups.  In Appendix~\ref{sect:pav}, with Pavel Etingof, we show that a certain group cocycle which appears in the Hochschild cohomology of a symplectic quotient orbifolds is canonically bounded.  This corrects a reccurring error in the literature. 

\subsection{Acknowledgements}

Thanks to the participants and administrators of the Hochschild Cohomology in Algebra, Geometry, and Topology Workshop held at the Oberwolfach Research Institute for Mathematics in February 2016, where this project began in earnest.  Thanks especially to Dmitry Kaledin and Damien Calaque for many helpful conversations, and to Pieter Belmans for helpful comments on a draft and pointing out references.  We would like to thank Andrei C\u{a}ld\u{a}raru for many correspondences and useful clarifications, and Wendy Lowen and Michel Van den Bergh for explaining some aspects of categorical Hochschild cohomology.  Thanks also to Heather Macbeth for offering many useful insights regarding symplectic geometry, to Allen Knutson for writing programs relevant to Remark \ref{r:other-hom}, to Haiping Yang for comments on this remark, and to Lie Fu for helpful comments on the Cohomological Hyperk\"{a}hler Resolution Conjecture. 
\par

Thanks to the Hausdorff Institute for Mathematics in Bonn, Germany, where this project was continued during the Fall 2017 Junior Trimester Program on Symplectic Geometry and Representation Theory. The second author would also like to thank the Max Planck Institute for Mathematics for its hospitality on two research visits.

\section{Remarks on degeneration}
\label{sect:rem_deg}

Before we begin our main endeavor of computing Hochschild cohomology, let us make a few specific remarks on degeneration of the spectral sequence~\eqref{eq:105} and subsequent identification of the associated graded algebra of Hochschild cohomology.
%We fix $X$ a smooth quasi-projective variety equipped with an action of a finite group $G$, and take $\msc{X}=[X/G]$.

\subsection{Examples of degeneration}

As mentioned in the introduction, for affine $X$, derived global sections vanish and we have $\HH^\bullet(\msc{X})=\Gamma(X,\oplus_{g\in G}\!\ T^{poly}_{X^g}\otimes_{\O_X} \det(N_{X^g}))^G$.  Of course, such vanishing holds in many non-affine examples:

\begin{corollary}\label{cor-van}
Suppose that $G$ is a finite group acting on a smooth quasi-projective variety $X$ such that $H^i(X^g, 
%(\wedge^j T_X)|_{X^g}
\wedge^j T_X \otimes \det(N_{X^g})
) = 0$ for $i > 0$ and all $j \geq 0$ and $g \in G$. Then there is an algebra isomorphism \eqref{eq:151-nogr}.
For instance, these conditions hold when $X=\mathbb{P}^n$.
\end{corollary}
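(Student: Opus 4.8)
The plan is to verify the cohomology-vanishing hypothesis and then invoke the general statement of Theorem~\ref{thm:intro} together with the degeneration of \eqref{eq:105}. Given the hypothesis $H^i(X^g, \wedge^j T_X\otimes \det(N_{X^g}))=0$ for all $i>0$, we restrict the exact sequence $0\to T_{X^g}\to T_X|_{X^g}\to N_{X^g}\to 0$ to see that, after tensoring with $\det(N_{X^g})$, the sheaf $\wedge^\bullet T_{X^g}\otimes_{\O_X}\det(N_{X^g})$ is a subquotient (in fact a direct summand, since over a field of characteristic zero or $p$ large the exterior-algebra filtration splits Zariski-locally and the associated graded is $\bigoplus_j \wedge^j T_{X^g}\otimes \wedge^{\bullet-j}N_{X^g}$) of $\wedge^\bullet T_X|_{X^g}\otimes \det(N_{X^g})$. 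Hence $H^i(X^g, T^{poly}_{X^g}\otimes_{\O_X}\det(N_{X^g}))=0$ for $i>0$ as well. Summing over $g$ and taking $G$-invariants, the right-hand side of \eqref{eq:151} reduces to degree-zero cohomology, i.e.\ global sections, so the filtration on $\HH^\bullet(\msc{X})$ is trivial and \eqref{eq:151-nogr} holds as an algebra isomorphism. (One should be slightly careful that the splitting of the exterior-algebra filtration need only be local for the vanishing argument, since $H^i$ of a sheaf with a finite filtration whose graded pieces have vanishing $H^i$ also vanishes; so even a global non-split filtration suffices.)

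For the case $X=\mbb{P}^n$, the fixed locus $X^g$ of any $g\in G\subset\Aut(\mbb{P}^n)=\mrm{PGL}_{n+1}$ is a disjoint union of linear subspaces $\mbb{P}^{m}\subset\mbb{P}^n$ (diagonalize a lift of $g$; the fixed locus is the union of the projectivized eigenspaces). On each such $\mbb{P}^m$, the restriction $T_{\mbb{P}^n}|_{\mbb{P}^m}$ and the normal bundle $N_{\mbb{P}^m/\mbb{P}^n}$ are direct sums of line bundles of nonnegative degree: indeed $N_{\mbb{P}^m/\mbb{P}^n}\cong \O_{\mbb{P}^m}(1)^{\oplus(n-m)}$, and from the Euler sequence $T_{\mbb{P}^n}|_{\mbb{P}^m}$ sits in $0\to\O\to\O(1)^{n+1}\to T_{\mbb{P}^n}|_{\mbb{P}^m}\to 0$, so every exterior power $\wedge^j T_{\mbb{P}^n}|_{\mbb{P}^m}$ has a filtration by sums of $\O(a)$ with $a\geq 0$. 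Tensoring with $\det(N_{\mbb{P}^m/\mbb{P}^n})\cong\O(n-m)$ keeps all summands of nonnegative degree, and $H^i(\mbb{P}^m,\O(a))=0$ for $i>0$ and $a\geq 0$ (in fact for $a\geq -m$). This gives the required vanishing.

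The main obstacle is really just bookkeeping rather than any conceptual difficulty: one must make sure the exterior-algebra filtration argument is airtight in positive characteristic $p>\dim X$ (so that divided-power subtleties do not arise and $\wedge^\bullet$ of a short exact sequence of locally free sheaves still has the expected graded pieces), and one must handle the fact that $X^g$ may be disconnected with components of varying dimension — but this only means one applies the line-bundle-positivity argument componentwise. A secondary point worth stating explicitly is that the spectral sequence \eqref{eq:105} degenerates by \cite{arinkinetal14} regardless, so the only content of the corollary beyond Theorem~\ref{thm:intro} is the collapse of the outer cohomology; once the vanishing is in hand, the algebra identification \eqref{eq:151-nogr} is immediate because $\gr$ with respect to a trivial filtration is the identity.
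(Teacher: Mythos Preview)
Your proof is correct and follows the same route as the paper: the first assertion reduces to the canonical splitting $T_X|_{X^g}=T_{X^g}\oplus N_{X^g}$ (making $T^{poly}_{X^g}\otimes\det(N_{X^g})$ a summand of $\wedge^\bullet T_X|_{X^g}\otimes\det(N_{X^g})$), and for $\mathbb{P}^n$ one identifies the fixed loci as disjoint unions of linear $\mathbb{P}^m$, computes $\det(N_{X^g})\cong\O(n-m)$, and invokes the Euler sequence.

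One wording point: from the restricted Euler sequence $0\to\O\to\O(1)^{n+1}\to T_{\mathbb{P}^n}|_{\mathbb{P}^m}\to 0$ you do not literally obtain a \emph{filtration} of $\wedge^j T_{\mathbb{P}^n}|_{\mathbb{P}^m}$ by sums of $\O(a)$ (for $m\geq 2$ the tangent bundle does not split as line bundles). What you get---and what the paper uses---is a two-term \emph{resolution}: replace $T_{\mathbb{P}^n}|_{\mathbb{P}^m}$ by the complex $[\O\to\O(1)^{n+1}]$ in degrees $-1,0$ and take its $j$-th exterior power, obtaining a complex with terms sums of $\O(a)$, $a\geq 0$, quasi-isomorphic to $\wedge^j T_{\mathbb{P}^n}|_{\mathbb{P}^m}$. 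Derived global sections of this complex (twisted by $\O(n-m)$) then sit in nonpositive degrees, giving the vanishing. This is a cosmetic fix; your argument is otherwise the paper's.
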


The first assertion of this corollary is immediate; we sketch the
proof of the second assertion.  We can assume that $G$ acts faithfully
on $X=\mathbb{P}^n$, so that $G < \operatorname{PGL}_{n+1}$.  Every
element $g \in G$ has finite order, so is represented by a semisimple
element of $\operatorname{GL}_{n+1}$. Then $X^g$ is a disjoint union
of the projectivizations of the eigenspaces of $g$ in $k^{n+1}$.
Therefore $X^g$ is smooth and each connected component is
$\mathbb{P}^m \subseteq \mathbb{P}^n$. There is an isomorphism
$T_{\mathbb{P}^m} \oplus \mathscr{O}_{\mathbb{P}^m}(1)^{n-m} \to
T_{\mathbb{P}^n}|_{\mathbb{P}^m}$,
where the first component maps via the inclusion, and the second maps
by restricting the Euler surjection
$\msc{O}_{\mathbb{P}^n}(1)^{n+1} \twoheadrightarrow
T_{\mathbb{P}^n}$
to the last $n-m$ components and then restricting from $\mathbb{P}^n$
to $\mathbb{P}^m$. Thus $\det(N_{X^g}) = \mathscr{O}_{\mathbb{P}^m}(n-m)$. We therefore
have to show that all higher cohomology of $\wedge^j T_{\mathbb{P}^m}(n-m)$ vanishes.
%$\wedge^j T_{\mathbb{P}^n}|_{\mathbb{P}^m}$ is
%a direct sum of bundles of the form
%$\wedge^{j'} T_{\mathbb{P}^m}(j-j')$ for $j' \leq j$.  It is
%well-known that these bundles all have vanishing higher cohomology 
This is well-known (see, e.g., \cite{Bot-hvb}).
We give an elementary proof using the Euler exact sequence
$0 \to \mathscr{O}_{\mathbb{P}^m} \to
\mathscr{O}_{\mathbb{P}^m}(1)^{m+1} \to T_{\mathbb{P}^m} \to 0$:
Replace $T_{\mathbb{P}^m}$ by the complex
$\mathscr{O}_{\mathbb{P}^m} \to \mathscr{O}_{\mathbb{P}^m}(1)^{m+1}$,
placing in degrees $-1$ and $0$, and take its $j$-th exterior
power. We get a complex in nonpositive degrees quasi-isomorphic to
$\wedge^{j} T_{\mathbb{P}^m}$.  All terms of this complex are sums of
line bundles $\mathscr{O}_{\mathbb{P}^m}(\ell)$ for $\ell \geq 0$.
 The same is true after twisting by
$\mathscr{O}_{\mathbb{P}^m}(n-m)$ with $n-m \geq 0$.  Therefore taking derived global sections of the
complex results in a complex of vector spaces concentrated in
nonpositive degrees. So
$H^i(\mathbb{P}^m, \wedge^{j} T_{\mathbb{P}^m}(n-m))=0$ for $i \geq 1$.

\begin{remark}\label{r:other-hom}
It seems likely that the conditions of the corollary hold for some other  homogeneous spaces.  Let $G$ be a connected reductive group, $P<G$ a parabolic subgroup, and $X=G/P$. Then any finite subgroup $\Gamma < G$ acts by automorphisms on $G/P$. 
%% Note: in many cases the condition $\Gamma < G$ is vacuous, but not
%% always: see
%% \url{https://mathoverflow.net/questions/160292/automorphism-group-of-flag-manifolds}
Now let $g \in G$. In the case $g = 1 \in \Gamma$, so $X^g=X$, then
the relevant cohomology $H^i(X, \wedge^j T_X)$ is described in
\cite{Bot-hvb} (although making it explicit runs into the difficulty
that $\wedge^j T_X$ is not, in general, induced by a semisimple
representation of $P$; see also \cite[Exercise 4, Chapter 4]{Wey-cvbs}
for the vanishing for $i > 0$ in the Grassmannian case).  Next consider an element
$g \in \Gamma$.  Since it has finite order, it is semisimple, so its
centralizer $L:=Z_G(g)$ is reductive and contains a maximal torus $T$
of $G$. Replacing $P$ by a conjugate parabolic we may assume that
$T < P$. We have
%  whose centralizer is a Levi subgroup $L < G$
% (this condition is vacuous when $G=\operatorname{GL}_n$).
% %% But not vacuous in general, see, e.g.,
% %% \url{https://mathoverflow.net/questions/118389/center-of-the-centralizer-of-semisimple-element}
$X^g = \{hP \in X \mid ghP=hP\}$. 
%$LP/P \cong L / (L \cap P)$. 
The connected components of this are all isomorphic to $L^\circ/(L^\circ \cap P)$, where $L^\circ$ is the connected component of the identity of $L$. This is itself a homogeneous space of the same form, since $L^\circ \cap P$ is a parabolic subgroup of $L^\circ$.
Thus, one can again try to apply \cite{Bot-hvb} to each of these spaces.

We note, however, that the higher cohomology appearing in Corollary
\ref{cor-van} does not vanish for all homogeneous spaces (possibly not
even for most).  Indeed, already for $g=1$ and $X$ the complete flag
variety ($G=\operatorname{GL}_n$ and $P=B$ a Borel subgroup), the
higher cohomology fails to vanish in general.  For example, A.~Knutson
provided Macaulay2 code which demonstrates that for $n=6$, the Euler
characteristic of an isotypic component of $H^*(\wedge^4 T_{G/B})$
under the action of G is negative, and hence there exists a
nonvanishing cohomology group in odd degree.  We have been informed
that P.~Belmans extended this using SageMath to flag varieties for
several other groups: in type $D_5$,
$H^{\text{odd}}(\wedge^k T_{G/B})$ does not vanish for
$k \in \{4,5,6\}$, and in type $F_4$,
$H^{\text{odd}}(\wedge^k T_{G/B})$ does not vanish for
$k \in \{3,4,5,6\}$. Moreover, ongoing work of P.~Belmans and
M.~Smirnov is investigating the case where $P$ is a maximal parabolic.
%In particular the case $G=\operatorname{GL}_n$ is tractable, and it would be interesting to determine if the conditions of the corollary apply.
\end{remark}

\subsection{$\gr\HH^\bullet(\msc{X})$ versus $\HH^\bullet(\msc{X})$ in general}

For general $\msc{X}$ there does not exist an algebra isomorphism $\HH^\bullet(\msc{X})\cong \gr\HH^\bullet(\msc{X})$ (as in the above projective examples).  Indeed, in our proof of the non-formality of Hochschild cohomology for projective Calabi--Yau quotient orbifolds (Theorem~\ref{thm:informal}) we observe an explicit obstruction to the existence of such an algebra isomorphism.  So, for example, if we consider a projective symplectic quotient $\msc{X}$ with singular coarse space $X/G$ then we have explicitly $\HH^\bullet(\msc{X})\ncong \gr\HH^\bullet(\msc{X})$.  We refer the reader to Section~\ref{sect:formality}, and in particular the proof of Theorem~\ref{thm:informal}, for more details.
\par

We note that in the case of a projective symplectic quotient $\msc{X}$ we expect that the Hochschild cohomology $\HH^\bullet(\msc{X})$ is strongly related to the orbifold cohomology of $\msc{X}$.  The associated graded ring $\gr\HH^\bullet(\msc{X})$ in this case is expected to be the associated graded ring with respect to a natural ``codimension" filtration on orbifold cohomology.  We discuss this topic in depth in Section~\ref{sect:H_orb}.

\section{Preliminaries}
\label{sect:bg}

%Throughout $k$ is a field either of characteristic $0$
%or of a characteristic larger
%than the dimension of $X$ which does not divide the order of the finite
%group $G$.
%(except, for the degeneration result, Section
%\ref{ss:form-deg}, the characteristic must also fit the needs of
%\cite{arinkinetal14}).
Here we fix notations and record some basic
and well-known facts about fixed loci and resolutions which we will use throughout.

\subsection{Notations}
\label{sect:global_notes}

A {\it variety} is a quasi-projective scheme over $\operatorname{Spec} k$.  For any variety $X$ we let $\O(X)$
denote the algebra of global functions.  For
any variety map $\varphi:X\to Y$ we let
$\bar{\varphi}:\O_Y\to \varphi_\ast\O_X$ denote the underlying map of
sheaves of rings. By a vector bundle we will always mean a locally
free sheaf of {\it finite rank}.
\par

For a subvariety $i:Z\subset X$ and a quasi-coherent sheaf $M$ on $X$ we always let
$M|_Z$ denote the pullback $i^\ast M$ on $Z$, as opposed to $i^{-1}M$.  At a geometric point $p:\operatorname{Spec}\bar{k}\to X$ we let $\O_{X,p}$ denote the corresponding local ring of the base change $X_{\bar{k}}$.
\par

Suppose $X$ is a variety with the action of a finite group $G$, which
we always assume to be an action by automorphisms of $X$ over
$\operatorname{Spec} k$.  By applying the contravariant functor $\Gamma(X,?)$, the
action map $G\to \Aut_{\mathrm{sch}}(X)$ produces a map
$G^{op}\to \Aut_{\mathrm{alg}}(\O(X))$.  Whence we get an action of
the opposite group $G^{op}$ on the global sections of $\O_X$, and an
action of $G$ by precomposing with the inversion operation on $G$.
Similarly, for any $G$-stable open $U$ we will get an action of $G$ on
$\Gamma(U,\O_X)$.  For a group $G$ and $G$-representation $V$ in
general we often take ${^gv}=g\cdot v$, for $g\in G$ and $v\in V$.

Throughout $X$ will be a smooth (quasi-projective) variety equipped with
the action of a finite group $G$.  We assume that the characteristic of
the base field $k$ does not divide $G$ and is greater than the
dimension of $X$.
% $G$, and in fact all finite groups, have order coprime to the
% characteristic o%f the base field $k$, and moreover we .
We will often use the fact that such an $X$ admits a covering by
affines $\{U_i\}_i$ such that each $U_i$ is preserved under the action
of $G$.  See~\cite[Appendix A]{mustata} or~\cite{grothendieck57}.
\par

To ease notation we omit the bullet from Ext and Hom notation, e.g.\ $\sExt_Y(M,N):=\sExt^\bullet_Y(M,N)$.

\subsection{Fixed locus under group actions}

We recall that a given variety $Y$ is smooth (over $k$) if and only if for any open affine subset $U\subset Y$ and any closed embedding $U\to \mathbb{A}^n_k$ with ideal of definition $\mathscr{I}$, the sequence
\[
0\to \mathscr{I}/\mathscr{I}^2\to \Omega_{\mbb{A}^n}|_U\to \Omega_U\to 0
\]
is split exact.  A closed subvariety $Z\to Y$ in a smooth variety $Y$ is smooth if and only if the analogous sequence
\[
0\to \mathscr{I}_Z/\mathscr{I}_Z^2\to \Omega_Y|_Z\to \Omega_Z\to 0
\]
is exact with $\Omega_Z$ locally free.
\par

%As in Section~\ref{sect:RG}, for
For any $g\in G$ we take $X^g$ to be the subvariety in $X$ fixed by $g$.  This is the preimage of the graph of $g$ along $\Delta$, $X^g=\Delta^{-1}(\mathrm{graph}(g))$.  Alternatively, it is the fiber product of $X$ with itself over $X^2$ along the two maps $\Delta:X\to X^2$ and $(1\times g)\Delta:X\to X^2$.  For an arbitrary subset $S\subset G$ we let $X^S$ denote the intersection $\cap_{g\in S} X^g$ in $X$.

\begin{lemma}\label{lem:gbasics}
\begin{enumerate}
\item[(i)] Suppose $U$ is any open in $X$ with $g(U)=U$ for some $g\in G$.  Then $U^g=X^g\cap U$.
\item[(ii)] Any point $p$ fixed by a subgroup $G'\subset G$ has an affine neighborhood $U$ with $g(U)=U$ for all $g\in G'$.
\item[(iii)] When $U$ is affine we have $\O(U^g)=\O(U)/(f-{^gf})_{f\in \O(U)}$.
\item[(iv)] The points of $X^g$ are all the points $p:\Spec k\to X$ with $g(p)=p$.
\end{enumerate}
\end{lemma}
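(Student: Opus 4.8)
The plan is to prove the four parts essentially in order, reducing everything to the local/affine picture via the scheme-theoretic description $X^g = \Delta^{-1}(\mathrm{graph}(g))$ and the standard fact (cited in the preliminaries) that $X$ admits a $G$-stable affine cover. For (ii), I would invoke the result referenced in Section~\ref{sect:global_notes} (\cite[Appendix A]{mustata} or \cite{grothendieck57}): given a point $p$ fixed by $G' \subset G$, one first takes any affine neighborhood $V$ of $p$, then replaces it by $U = \bigcap_{g \in G'} g(V)$, which is an affine open (finite intersection of affines in a separated scheme), contains $p$ since $p$ is $G'$-fixed, and is $G'$-stable by construction. Part (i) is a statement about fiber products / preimages commuting with open restriction: since $\mathrm{graph}(g) \subset X^2$ and $\Delta: X \to X^2$ restricts to $\Delta|_U : U \to U^2$, and $U^2$ is open in $X^2$, we get $U^g = (\Delta|_U)^{-1}(\mathrm{graph}(g) \cap U^2) = \Delta^{-1}(\mathrm{graph}(g)) \cap U = X^g \cap U$; the only thing to check carefully is that $\mathrm{graph}(g) \cap U^2 = \mathrm{graph}(g|_U)$, which holds because $g(U) = U$.

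For (iii), with $U$ affine and $g(U) = U$, I would compute $\O(U^g)$ directly. Writing $A = \O(U)$, the automorphism $g$ induces an algebra automorphism (via the conventions of Section~\ref{sect:global_notes}) which I also call $g$, i.e. $f \mapsto {}^g f$. The subscheme $U^g$ is the fiber product $U \times_{U^2} U$ along $\Delta$ and $(1 \times g)\Delta$; on rings this is the pushout $A \otimes_{A \otimes A} A$, where the first map $A \otimes A \to A$ is multiplication and the second sends $a \otimes b \mapsto a \cdot {}^g b$. A short diagram chase identifies this pushout with $A / I$, where $I$ is the ideal generated by $\{a \cdot {}^g b - ab\}$; setting $b = 1$ and using that ${}^g$ is an automorphism, one sees $I = (f - {}^g f)_{f \in A}$. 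Alternatively and more transparently: $\mathrm{graph}(g) \subset U^2$ has coordinate ring $A \otimes A / (a \otimes 1 - 1 \otimes {}^{g^{-1}}a)$ (or the appropriate variant matching the paper's handedness conventions), and pulling back along $\Delta^\sharp$ kills exactly $f - {}^g f$. I would present whichever version is cleanest given the sign/handedness conventions fixed earlier, and remark that this is what makes $X^g$ smooth in characteristic coprime to $|G|$ (though smoothness is not asserted here).

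Finally, (iv) follows from (iii) together with the Nullstellensatz-type observation that for a smooth (hence reduced) variety over $k$ with $\mathrm{char}\,k$ as assumed, a $k$-point $p$ (with $\bar k = k$ where needed, or after base change) lies in the closed subscheme $X^g$ iff it is killed by its defining ideal, i.e. iff $f(p) = ({}^g f)(p) = f(g^{-1}p)$ for all local functions $f$, which by separatedness of $X$ is equivalent to $g(p) = p$. One subtlety: since $X^g$ is a priori only a closed \emph{subscheme}, I should note it is reduced — this is immediate from the exact-sequence criterion for smoothness recalled just before the lemma (or can simply be folded into the remark that we only ever care about $X^g$ as a reduced variety, cf.\ the conventions). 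The main obstacle, such as it is, is bookkeeping: keeping the two group actions ($G$ versus $G^{\mathrm{op}}$, precomposition with inversion) straight so that the formula in (iii) comes out with $f - {}^g f$ rather than $f - {}^{g^{-1}} f$, and making sure the fiber-product description with maps $\Delta$ and $(1 \times g)\Delta$ is matched consistently to the ring-level pushout. None of the steps is deep; the lemma is a collection of standard facts, and the proof is a matter of assembling them carefully.
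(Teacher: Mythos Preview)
Your proposal is correct and follows essentially the same approach as the paper: (ii) is identical (intersect translates of an affine neighborhood), (i) and (iii) both unwind the fiber-product definition of $X^g$ on the affine/ring level, and (iv) is the immediate consequence that a $k$-point of a fiber product is a point where the two maps agree. One small remark: the reducedness discussion you insert into (iv) is unnecessary---a $k$-point factors through a closed subscheme iff its defining ideal is contained in the corresponding maximal ideal, with no reducedness hypothesis needed---so you can drop that paragraph; the paper simply says (iv) ``follows by definition.''
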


\begin{proof}
(i) One simply shows that $X^g\cap U$, along with its inclusion into $U$, has the universal property of the fiber product.  (ii) For an arbitrary affine neighborhood $W$ of $p$ we can take $U=\cap_{g\in G'}g(W)$.  (iii) First note that $U^g$ will be affine, so we need only check that the map $\O(U)\to \O(U)/(f-{^gf})_{f\in \O(U)}$ has the appropriate universal property among maps to rings.
\par

For any algebra map $\phi:\O(U)\to B$ such that the two morphisms
\begin{equation}\label{eq:115}
\O(U)\ot \O(U)\overset{1\ot 1}{\underset{1\ot \bar{g}}\rightrightarrows} \O(U)\ot \O(U)\overset{\mathrm{mult}}\to \O(U)\overset{\phi}\to B
\end{equation}
agree we will have $\phi(a(f-\bar{g}(f)))=0$ for all $a,f\in \O(U)$.  Replace $f$ with $-{^gf}=-\bar{g}^{-1}(f)$ to see that $\phi$ factors uniquely through $\O(U)/(f-{^gf})$.  Conversely, for any map $\phi:\O(U)\to B$ factoring through the quotient we will have that the two maps~\eqref{eq:115} agree.  So $\O(U)/(f-{^gf})$ has the appropriate universal property, $U^g=\Spec(\O(U)/(f-{^gf}))$, and $\O(U^g)=\O(U)/(f-{^gf})$.  Claim (iv) follows by definition
(alternatively, one can reduce
to the affine case by (ii) and (i) and apply (iii)).
\end{proof}

\begin{lemma}\label{lem:smoothint}
For any subset $S\subset G$ the fixed locus $X^S$ is smooth.  In particular, at any geometric point $p$ of the fixed space $X^S$ there is an isomorphism between the completed symmetric algebra $\hat{\mrm{Sym}}((T_p^\ast)_{S})$ generated by the $\langle S\rangle$-coinvariants of the tangent space at $p$ and the complete local ring $\hat{\O}_{X^S,p}$.
\end{lemma}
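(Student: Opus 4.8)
The plan is to reduce to the case where $S$ is a single element (by intersecting) and then to a single cyclic subgroup, and to work locally at a fixed geometric point $p$ so that we may pass to the affine situation via Lemma~\ref{lem:gbasics}(ii). First I would recall that $X^S = \bigcap_{g\in S}X^g = X^{\langle S\rangle}$, so there is no loss in assuming $S = \langle S \rangle$ is a finite subgroup $G'\subset G$; and since everything in sight is formal-local, I would fix a geometric point $p\in X^{G'}$, choose a $G'$-stable affine neighborhood $U$ of $p$ via Lemma~\ref{lem:gbasics}(ii), and work with the complete local ring $R := \hat\O_{X,p}$, which carries an action of $G'$ and is (since $X$ is smooth of dimension $d$) isomorphic to a power series ring $k[[t_1,\dots,t_d]]$. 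The key point is that, because $|G'|$ is invertible in $k$, the $G'$-action on the cotangent space $\mathfrak m/\mathfrak m^2 = T_p^\ast$ is linearizable: by averaging one produces a $G'$-equivariant splitting of $\mathfrak m/\mathfrak m^2 \to \mathfrak m$ whose image $V\subset \mathfrak m$ generates $R$ topologically, giving a $G'$-equivariant isomorphism $R\cong \hat\Sym(T_p^\ast)$ (this is Cartan's lemma / the formal-local linearization of finite group actions in the tame case; I would cite Appendix~\ref{sect:lin_alg} for the elementary representation-theoretic input).

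Next I would identify the fixed-locus ideal. By Lemma~\ref{lem:gbasics}(iii) applied to $U$, the ideal of $X^{G'}$ in $R$ is generated by $\{f - {}^g f : f\in R,\ g\in G'\}$; after the linearization this ideal is generated by $\{v - {}^g v : v\in V,\ g\in G'\}$, i.e.\ by the image in $\hat\Sym(V)$ of the subspace $\sum_{g\in G'}(1-g)V \subseteq V$. Since $V\cong T_p^\ast$ decomposes $G'$-equivariantly as $(T_p^\ast)^{G'}\oplus W$, where $W$ is the unique $G'$-stable complement (the span of the nontrivial isotypic components), one checks that $\sum_{g}(1-g)V = W$: the containment $\subseteq$ is clear, and for $\supseteq$ one uses that on $W$ the averaging operator $\tfrac1{|G'|}\sum_g g$ is zero, so $\tfrac1{|G'|}\sum_g (1-g)$ is the identity on $W$, exhibiting every $w\in W$ as $\tfrac1{|G'|}\sum_g(1-g)w \in \sum_g(1-g)V$. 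Hence the ideal of $X^{G'}$ is the ideal generated by a subspace of linear forms, so the quotient is
\[
\hat\O_{X^{G'},p} \;\cong\; \hat\Sym(V)/(W) \;\cong\; \hat\Sym\bigl((T_p^\ast)^{G'}\bigr).
\]
Finally, since $|G'|$ is invertible the invariants $(T_p^\ast)^{G'}$ are canonically identified with the coinvariants $(T_p^\ast)_{G'} = (T_p^\ast)_S$ via averaging, giving the stated isomorphism $\hat\O_{X^S,p}\cong \hat\Sym\bigl((T_p^\ast)_S\bigr)$; as a formal power series ring over a field this is visibly regular, so $X^S$ is smooth at $p$, and since $p$ was arbitrary, $X^S$ is smooth.

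The main obstacle — really the only nontrivial ingredient — is the formal-local equivariant linearization of the $G'$-action on $\hat\O_{X,p}$, i.e.\ lifting the $G'$-representation $T_p^\ast$ to a $G'$-stable subspace of $\mathfrak m$ topologically generating $R$. Everything after that is a routine manipulation with a quotient of a power series ring by linear forms and with the averaging idempotent. I should be careful about two points: that the positive-characteristic hypotheses ($\mathrm{char}\,k \nmid |G|$ and $\mathrm{char}\,k > \dim X$) are exactly what makes the averaging arguments valid, and that the reduction to a geometric point is harmless because smoothness and the structure of the complete local ring can both be checked after base change to $\bar k$ and formal completion. I would also remark that the passage from the affine statement to the scheme $X$ uses Lemma~\ref{lem:gbasics}(i)–(ii) to glue, but since smoothness is local this gluing is immediate.
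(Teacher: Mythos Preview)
Your proof is correct and follows essentially the same approach as the paper: both reduce to the subgroup $\langle S\rangle$, pass to a geometric point, use semisimplicity of $k[G']$ to produce a $G'$-equivariant splitting $T_p^\ast\to\mathfrak m$ (i.e.\ formal-local linearization), and then identify the ideal of the fixed locus with the linear subspace $\sum_g(1-g)V$, yielding the completed symmetric algebra on the coinvariants. The only cosmetic differences are that the paper works at finite levels $\mathrm{Sym}(T_p^\ast)/(T_p^\ast)^n$ and takes an inverse limit, and passes directly to coinvariants rather than via invariants, whereas you work with the completion throughout and invoke the invariants--coinvariants identification at the end.
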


\begin{proof}
By changing base we may assume $k$ is algebraically closed.  Note that $X^S=X^H$, where $H$ is the subgroup of $G$ generated by $S$.  So we may also assume $S=G$.  For smoothness, it suffices to show now that the complete local rings $\hat{\O}_{X^{G},p}$ at closed points $p$ on the fixed locus are power series rings~\cite{matsumura}.
\par

We may assume $X$ is affine.  For any such point $p$, with corresponding maximal ideal $\mfm_p\subset \O(X)$, we have the exact sequence $0\to \mfm_p^2\to \mfm_p\to \mfm_p/\mfm_p^2\to 0$ which admits a splitting over $G$, as $k[G]$ is semisimple.  Any such splitting $T^\ast_p=\mfm_p/\mfm_p^2\to \mfm_p$ gives compatible $G$-module algebra isomorphisms $\mathrm{Sym}(T^\ast_p)/(T^\ast_p)^n\to \O_{X,p}/\mfm_p^n$ for each $n>0$, where $G$ acts linearly on the domain.
\par

Take $R_n=\mathrm{Sym}(T^\ast_p)/(T^\ast_p)^n$ and let $I=(f-{^gf})_{f\in \O(X),g\in G}$ to be the ideal of definition for $X^G$.  Let $\bar{\mfm}_p$ be the maximal ideal for the point $p$ in $X^G$.  Then we have
\[
\frac{\mathrm{Sym}((T^\ast_p)_G)}{((T^\ast_p)_G)^n}=\frac{R_n}{(v-{^gv})_{v\in T^\ast_p,g}}=\frac{R_n}{(f-{^gf})_{f\in R,g}}\overset{\cong}\to \frac{\O(X)}{(I+\mfm_p^n)}=\frac{\O(X^G)}{\bar{\mfm}_p^n}
\]
and 
\[
\hat{\mathrm{Sym}}((T^\ast_p)_G)\cong \varprojlim_n\O(X^G)/\bar{\mfm}_p^n=\hat{\O}_{X^G,p}.
\]
Thus $X^G$ is smooth, and the complete local rings are as described.
\end{proof}

\subsection{Resolution property for stacks and the diagonal}

For a stack $\msc{Y}$ we say $\msc{Y}$ has the resolution property if any coherent sheaf $M$ on $\msc{Y}$ admits a surjection $E^0\to M$ from a vector bundle on $\msc{Y}$.  The following lemma is well-known.  One can see for example the comments of the introduction to~\cite{thomason87}.

\begin{lemma}\label{lem:resolprop}
Let $Y$ be a quasi-projective variety equipped with the action of a finite group $H$.  Then the quotient stack $[Y/H]$ has the resolution property. 
\end{lemma}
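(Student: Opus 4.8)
The plan is to reduce the statement about the quotient stack $[Y/H]$ to a known fact about $H$-equivariant sheaves on the quasi-projective variety $Y$. Recall that a coherent sheaf $M$ on $[Y/H]$ is the same datum as an $H$-equivariant coherent sheaf on $Y$, and a vector bundle on $[Y/H]$ is the same as an $H$-equivariant vector bundle on $Y$; likewise a surjection of sheaves on $[Y/H]$ is the same as an $H$-equivariant surjection on $Y$. So it suffices to show that every $H$-equivariant coherent sheaf $M$ on $Y$ receives an $H$-equivariant surjection from an $H$-equivariant vector bundle.

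The first step is to fix an $H$-equivariant ample line bundle $\O_Y(1)$ on $Y$. Since $Y$ is quasi-projective with an $H$-action by automorphisms, one has an $H$-equivariant locally closed embedding $Y\hookrightarrow \bP^N$ for a suitable linear $H$-action on $\bP^N$ (one can, for instance, start from any ample $\mathscr{L}$, replace it by $\bigotimes_{h\in H} h^\ast\mathscr{L}$ to make it $H$-equivariant, and pass to a sufficiently high power so that it is very ample); restricting $\O_{\bP^N}(1)$ gives the desired $H$-equivariant very ample $\O_Y(1)$. Next, by Serre's theorem applied to the coherent sheaf $M$ on $Y$ (using quasi-projectivity to extend $M$ to the projective closure if necessary, or working directly with ample line bundles on quasi-projective schemes), for $n \gg 0$ the twist $M(n) = M\otimes\O_Y(n)$ is globally generated, i.e.\ there is a surjection $\O_Y^{\oplus r}\twoheadrightarrow M(n)$, equivalently a surjection $\O_Y(-n)^{\oplus r}\twoheadrightarrow M$ for some $r$.

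The final and main step is to make this surjection $H$-equivariant. Given the surjection $\O_Y(-n)^{\oplus r}\twoheadrightarrow M$ coming from a choice of generating global sections $s_1,\dots,s_r\in \Gamma(Y,M(n))$, one averages over the group: the $H$-submodule of $\Gamma(Y,M(n))$ generated by $\{h\cdot s_i : h\in H,\ 1\le i\le r\}$ is still finite-dimensional and still generates $M(n)$ (it contains a generating set), and it is an $H$-representation $W$. Then the evaluation map $W\otimes_k \O_Y(-n)\to M$ is $H$-equivariant for the diagonal $H$-action on the source (combining the action on $W$ with the natural equivariant structure on $\O_Y(-n)=\O_Y(-1)^{\otimes n}$, which is equivariant since $\O_Y(1)$ was chosen equivariant) and is surjective; here $E^0 := W\otimes_k \O_Y(-n)$ is an $H$-equivariant vector bundle. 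This descends to the required surjection $E^0\to M$ on $[Y/H]$, proving the resolution property. The one point requiring care — the ``main obstacle'' such as it is — is the very first reduction: making sure the equivariant structures match up correctly, namely that $H$-equivariant coherent sheaves (resp.\ vector bundles, resp.\ surjections) on $Y$ are exactly coherent sheaves (resp.\ vector bundles, resp.\ surjections) on $[Y/H]$, and that one genuinely has an $H$-equivariant ample line bundle to twist by; once these are in place the averaging argument is routine.
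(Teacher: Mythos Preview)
Your proposal is correct and follows essentially the same approach as the paper: obtain an $H$-equivariant locally closed embedding $Y\hookrightarrow\bP^N$ with $H$ acting linearly, pull back $\O(1)$ to get an equivariant ample line bundle, and then run the standard Serre-type argument equivariantly. The paper simply cites Knutson for the equivariant embedding and gestures at ``a slight variant of the usual argument'' (Hartshorne, Cor.~5.18), whereas you spell out that variant via the averaging step $W\otimes_k\O_Y(-n)\twoheadrightarrow M$; this is exactly the content the paper leaves implicit.
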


\begin{proof}
As in the proof of~\cite[Proposition IV.1.5]{knutson}, one can show that $Y$ admits a locally closed $H$-equivariant embedding $Y\to \mathbb{P}^n$ where $H$ acts linearly on projective space.  Linearity of the $H$-action implies that the twist $\O(1)$ is an equivariant ample line bundle on $\mathbb{P}^n$, and the pullback $\O_Y(1)$ is therefore an equivariant ample line bundle on $Y$.  Then by a slight variant of the usual argument (see e.g.~\cite[Corollary 5.18]{hartshorne}) one concludes that any equivariant coherent sheaf $M$ on $Y$ admits a equivariant surjection $E^0\to M$ from an equivariant vector bundle $E^0$ on $Y$.
\end{proof}

The resolution property allows us to derive the sheaf hom functor $\sHom_{[Y/H]}$ via resolutions by vector bundles.  Furthermore, when $Y$ is smooth, as will be the case for us, these resolutions can be taken to be bounded and the derived functor $\mrm{R}\sHom_{[Y/H]}$ takes values in the derived category of coherent sheaves on $[Y/H]$.  We will be applying this in the case $Y=X\times X$ and $H=G\times G$, so that $[Y/H]\cong \msc{X}\times\msc{X}$.  We also include the following well-known result for the convenience of the reader.

\begin{lemma}\label{lem:aff_diag}
Let $\msc{Y}=[Y/H]$ be the quotient stack of a quasi-projective variety by a finite group action.  Then the diagonal $\Delta:\msc{Y}\to \msc{Y}\times\msc{Y}$ is finite, affine, and separated.
\end{lemma}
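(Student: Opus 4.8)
The plan is to verify each of the three properties — finite, affine, and separated — by reducing to the quotient presentation $\msc{Y} = [Y/H]$ and working with the atlas $Y \to \msc{Y}$. The key observation is that since $H$ is finite, the natural map $Y \times Y \to \msc{Y} \times \msc{Y}$ is itself a quotient by the finite group $H \times H$, and the fiber product of the diagonal along this atlas admits an explicit description. First I would compute this fiber product: $\msc{Y} \times_{\msc{Y} \times \msc{Y}} (Y \times Y)$. A point of $\msc{Y}$ together with a lift of its image in $\msc{Y} \times \msc{Y}$ to $Y \times Y$ is the data of a point $y \in Y$ (lifting the point of $\msc{Y}$) and group elements $(h_1, h_2) \in H \times H$ identifying the image in $\msc{Y}^2$ with $(h_1 y, h_2 y)$; modding out by the residual $H$-action this is the same as $\coprod_{h \in H} Y$, where the copy indexed by $h$ maps to $Y \times Y$ via $y \mapsto (y, hy)$. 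In other words, the base change of $\Delta$ along the atlas is the map
\[
\coprod_{h\in H} Y \longrightarrow Y\times Y, \qquad (h, y)\longmapsto (y, hy),
\]
which is the inertia-type presentation of the diagonal.

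Given this presentation, all three properties follow from descent, since finiteness, affineness, and separatedness are all fppf-local on the target and the atlas $Y \times Y \to \msc{Y} \times \msc{Y}$ is an fppf (indeed finite étale-locally trivial) cover. So it suffices to check that the displayed map $\coprod_{h \in H} Y \to Y \times Y$ has each property. For each fixed $h$, the component map $Y \to Y \times Y$, $y \mapsto (y, hy)$, is the graph of the automorphism $y \mapsto hy$; graphs of morphisms to separated schemes are closed immersions, hence finite, affine, and separated. Taking the disjoint union over the finite set $H$ preserves all three (a finite disjoint union of closed immersions is finite and affine, and any scheme map whose source is a finite disjoint union of closed subschemes of the target is separated). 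Therefore the base-changed diagonal is finite, affine, and separated, and by descent so is $\Delta : \msc{Y} \to \msc{Y} \times \msc{Y}$ itself. One can alternatively phrase the separatedness half without descent: $[Y/H]$ is a separated Deligne--Mumford stack because $Y$ is a separated scheme and $H$ is finite (so the action is proper), and separatedness of a stack is by definition properness — in particular finiteness — of the diagonal; affineness of $\Delta$ is the extra input requiring the graph computation above.

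The main obstacle — really the only subtlety — is making the identification of the fiber product $\msc{Y} \times_{\msc{Y}^2} Y^2 \cong \coprod_{h \in H} Y$ rigorous at the level of stacks rather than just on $k$-points, i.e. checking it as an isomorphism of the relevant $2$-fiber-product groupoid-valued functor. This is a standard but slightly fiddly groupoid computation: an object over a test scheme $T$ consists of an $H$-torsor $P \to T$ with an $H$-equivariant map $P \to Y$, together with a trivialization of the two induced $H\times H$-torsors over $T$ that are pulled back from $\msc{Y}^2$; untwisting shows $P$ must be the trivial torsor and the data reduces to a map $T \to Y$ plus an element of $H$ recording the discrepancy between the two trivializations. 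Once this is done carefully the rest is formal. I would present the argument in the order: (1) recall the atlas $Y^2 \to \msc{Y}^2$ and that the listed properties are fppf-local on the base; (2) compute the base change of $\Delta$ as $\coprod_{h\in H} Y \to Y^2$; (3) observe each component is a graph, hence a closed immersion, and conclude.
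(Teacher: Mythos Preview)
Your proposal is correct and follows essentially the same route as the paper: base change the diagonal along the atlas $Y\times Y\to \msc{Y}\times\msc{Y}$, identify the resulting fiber product with $\coprod_{h\in H} Y\cong Y\times H$ mapping to $Y\times Y$ via the graphs of the $h$-translations, and conclude since a finite disjoint union of closed immersions is finite, affine, and separated. The paper invokes \cite[\href{https://stacks.math.columbia.edu/tag/04XD}{tag 04XD}]{stacks-project} for the descent step and is otherwise terser, but the argument is the same.
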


\begin{proof}
By~\cite[\href{https://stacks.math.columbia.edu/tag/04XD}{tag 04XD}]{stacks-project} it suffices to show that the second projection $\msc{Y}\times_{\msc{Y}\times\msc{Y}}(Y\times Y)\to Y\times Y$ is finite, affine, and separated.  But one calculates directly that the pullback $\msc{Y}\times_{\msc{Y}\times\msc{Y}}(Y\times Y)$ is isomorphic to the product $Y\times H$, and that the projection $Y\times H\to Y\times Y$ in this case is the coproduct of the graphs $(1\times h)\Delta:Y\to Y\times Y$.  This map is indeed finite, affine, and separated.
\end{proof}

\section{The spectral sequence relating $\uHH^\bullet(\msc{X})$ to $\HH^\bullet(\msc{X})$} \label{sect:ss}

In this section we introduce the smash product sheaf $\Delta_\ast\O_X\rtimes G$ on $X^2$ and an equivariant dg algebra $\msc{E}(\msc{X})$ (defined up to quasi-isomorphism) for which the cohomology $H^\bullet(\msc{E}(\msc{X}))$ is the sheaf valued Hochschild cohomology $\uHH^\bullet(\msc{X})$.  We apply results of~\cite{arinkinetal14} to find that $\msc{E}(\msc{X})$ is formal, as a sheaf, and subsequently deduce degeneration of the local-to-global spectral sequence~\eqref{eq:105}.

\subsection{The local algebras $\msc{E}(\msc{X})$ and $\uHH^\bullet(\msc{X})$ on $X$}
\label{sect:loc}

Using descent, the local equivariant Hochschild cohomology and its relations to the Hochschild cohomology of $\msc{X}$ are explained as follows. Observe first that sheaves $\mathscr{F}$ on $\X$ are the same as $G$-equivariant sheaves $p^* \mathscr{F}$ on $X$ via the \'etale covering map $p:X \to \X$. We have the following cartesian diagram:
\begin{equation}
\xymatrix{
X \times G \ar[d] \ar[r]^{\widetilde \Delta} & X \times X \ar[d] \\
\X \ar[r]^{\Delta} & \X \times \X,
}
\end{equation}
where $\widetilde \Delta(x,g) = (x,gx)$, i.e., $\Delta_g := \widetilde \Delta|_{X \times \{g\}}$ is the graph of $g: X \to X$.
%{\color{blue}where $\widetilde{\Delta}$ is such that the restriction to the $g$-th copy of $X$ in $X\times G$ is the graph of $g$, i.e. the map $X\to X\times X$ which recovers the identity on the first factor and the action of $g$ on the second factor.}  
Therefore, applying base change, we have
\begin{equation}
  (p^* \times p^*) \sREnd_{\X^2}(\Delta_*\O_{\X}) = 
\sREnd_{X^2}(\widetilde \Delta_* (\O_X \otimes \O_G)).
\end{equation}
We can identify $\widetilde \Delta_*(\O_X \otimes \O_G)$ with
the sheaf  $\Delta_* \O_X \rtimes G$ (with $\O_{X^2}$ action given by the 
usual $\O_X$-bimodule structure).  

 We can therefore study the Hochschild cohomology of $\X$ via the
 $G \times G$-equivariant algebra $\sREnd_{X^2}(\Delta_* \O_X \rtimes G)$
 in the derived category of quasi-coherent sheaves on $X^2$. In particular, $\HH^\bullet(\X) = \mrm{R}\Gamma(\X^2, \sREnd_{\X^2}(\O_{\X})) = \mrm{R}\Gamma(X^2, \sREnd_{X^2}(\Delta_* \O_X \rtimes G))^{G \times G}
= \Ext_{X^2}(\Delta_* \O_X \rtimes G,\Delta_* \O_X \rtimes G)^{G \times G}$. Note that we can
rewrite this as $\Ext_{X^2}(\Delta_* \O_X, \Delta_* \O_X \rtimes G)^G$, where the $G$ acts via the diagonal action.
%We therefore get a multiplicative spectral sequence from $\Ext_{X^2}(\Delta_* O_X \rtimes G, \Delta_* \O_X \rtimes G)^{G \times G}$ to $\HH^\bullet(\X)$.

On the derived level, we also have
%To simplify this and  connect back to our original expression, 
%observe that 
$\sREnd_{X^2}(\Delta_* \O_X \rtimes G) \cong \sRHom_{X^2}(\Delta_* \O_X, \Delta_* \O_X \rtimes G) \rtimes G$ in the derived category of quasi-coherent sheaves on $X^2$. Pushing forward under
the projection $\widetilde \pi: X^2 \to X \times X/G$
%${\color{blue}\pi}: X^2 \to X \times X/G$ and
 and taking invariants under $\{1\} \times G$ yields the $G$-equivariant algebra \begin{equation}
\msc{E}(\msc{X}) := \widetilde{\pi}_* \sREnd_{X^2}(\Delta_* \O_X \rtimes G)^G \cong 
\widetilde{\pi}_* \sRHom_{X^2}(\Delta_* \O_X, \Delta_* \O_X \rtimes G)
\end{equation}
in the derived category of quasi-coherent sheaves on $X \times X/G$. Note that
the map $X \to X/G$ is finite, and hence the pushforward $\widetilde{\pi}_*$ is underived.

Observe that the cohomology sheaves $H^\bullet (\msc{E}(\msc{X}))$ are scheme-theoretically supported on the image of the diagonal map $\overline{\Delta}: X \to X \times X/G$.  Moreover their restriction to the diagonal is precisely $\uHH^\bullet(\msc{X})$. We obtain the multiplicative spectral sequence \eqref{eq:105}.

Explicitly, the multiplication on $\msc{E}(\msc{X})$ can be
described as the sum over $g,h \in G$ of the bilinear maps
\begin{multline}\label{e:exp-mult}
  \widetilde{\pi}_* \sRHom_{X^2}(\Delta_* \mathscr{O}_X, (\Delta_g)_*\mathscr{O}_X ) \times  \widetilde{\pi}_* \sRHom_{X^2}(\Delta_* \mathscr{O}_X, (\Delta_h)_*\mathscr{O}_X ) \\ \to \widetilde{\pi}_* \sRHom_{X^2}(\Delta_* \mathscr{O}_X, \Delta_*(\Delta_{gh})_* \mathscr{O}_X )
\end{multline}
given by the identification
$\widetilde{\pi}_* \sRHom_{X^2}(\Delta_* \mathscr{O}_X, \Delta_*\mathscr{O}_X
\rtimes g) \cong \widetilde{\pi}_* \sRHom_{X^2}(\Delta_* \mathscr{O}_X \rtimes h,
\Delta_*\mathscr{O}_X \rtimes gh)$
from the diagonal action of $g$ followed by composition.  Taking
cohomology sheaves and restricting to the diagonal yields the same statements for
$\uHH^\bullet(\msc{X})$.

\begin{remark}
\label{rem:intrinsic}

We remark that the dg algebra $\msc{E}(\msc{X})$ is intrinsic to $\msc{X}$, as is the cohomology $\uHH^\bullet(\msc{X})$.  As noted above, sheaves on $\msc{X}\times X/G$ are equivalent to equivariant sheaves on $X\times X/G$, under the $G$-action on the first factor.  Hence $\msc{E}(\msc{X})$ can be viewed as a sheaf of algebras on $\msc{X}\times X/G$.  Indeed, $\msc{E}(\msc{X})$ is the pushforward of the derived endomorphism ring $\mrm{R}\sEnd_{\msc{X}^2}(\Delta_\ast\O_\msc{X})$ along the finite map $\msc{X}^2\to \msc{X}\times X/G$.  (Pushing forward from $\msc{X}$ to $X/G$ in the second factor automatically takes $\{1\}\times G$-invariants.)  Since $X/G$ is the coarse space of $\msc{X}$, this construction of $\msc{E}(\msc{X})$ does not rely on the \'etale covering from $X$.  Similarly, $\uHH^\bullet(\msc{X})$ can be given as the pushforward of $\sExt_{\msc{X}^2}(\Delta_\ast\O_\msc{X},\Delta_\ast\O_\msc{X})$ to $\msc{X}\times X/G$, or as the pushforward to $\msc{X}$ via the first projection.
\end{remark}

% \begin{remark}
%   We did not need to project all the way to $X$: it is enough to
%   consider the finite map $\tilde \pi: X^2 \to X \times X/G$. Since
%   $\tilde \pi_* \sREnd_{X^2}(\Delta_* \O_X \rtimes G)^G$ is
%   scheme-theoretically supported on the image of the diagonal
%   $i: X \to X \times X/G$, we have
%   $\tilde \pi_* \sREnd_{X^2}(\Delta_* \O_X \rtimes G)^G \cong i_*
%   \msc{E}(\msc{X})$.
% \end{remark}

\subsection{Formality and degeneration}\label{ss:form-deg}
By \cite[Theorem 4.4]{arinkinetal14} (see also Theorem 4.9), the
object in the derived category of quasicoherent complexes on $X^2$,
\begin{equation}\label{eq:dsi}
(\Delta_* \O_X \rtimes G) \otimes^L_{\O_{X^2}} (\Delta_* \O_X \rtimes G) 
\end{equation}
is formal. This object is $G \times G$-equivariant for the action of $G \times G$ on $X^2$. The second factor acts freely in the sense that, for $R^{(2)}_g: X \times X \to X \times X$ the action map on this factor, $R^{(2)}_g(x_1,x_2) = (x_1, g \cdot x_2)$, we have
\begin{equation}
(\Delta_* \O_X \rtimes G) \otimes^L_{\O_{X^2}} (\Delta_* \O_X \rtimes G) 
\cong \bigoplus_{g \in G}  (R_g)_* (\Delta_* \O_X \otimes^L_{\O_{X^2}} (\Delta_* \O_X \rtimes G)).
\end{equation}
Thus, it suffices to show that the following object is formal:
% and since the second factor of $G$ acts freely, we equivalently have on invariants the formality of
\begin{equation}
\Delta_* \O_X \otimes^L_{\O_{X^2}} (\Delta_* \O_X \rtimes G).
\end{equation}
%As observed after \cite[Theorem 4.9]{arinkinetal14}, 
We do this using coherent duality.  For every scheme $Y$, let $\pr_Y:  \to \Spec k$ be the
projection to a point. Let $\DD(\mathcal{F}) :=
\sRHom_Y(\mathcal{F}, \pr_Y^! k)$. If $Y$ is smooth, then the (shifted) dualizing
complex $\pr_Y^! k$ equals $\omega_Y[\dim Y]$,
where $\omega_Y$ is the canonical line bundle.  For $f: Y \to Z$ a proper
map, Grothendieck--Serre duality provides
a quasi-isomorphism  $Rf_* \sRHom_Y(\mathcal{F}, f^! \mathcal{G}) \cong \sRHom_Z (Rf_* \mathcal{F}, \mathcal{G})$. We then have
\[
Rf_* \DD \mathcal{F} := Rf_* \sRHom(\mathcal{F}, f^! \pr^! k)
\cong  \sRHom(Rf_* \mathcal{F}, \pr^! k) =: \DD(Rf_* \mathcal{F}).
\]
Thus $Rf_*$ preserves the class of self-dual objects under $\DD$.

Now let us specialize to the case $Y=X, Z=X^2$.
Let  $\Delta_g: X \to X \times X$,
the graph of $g: X \to X$. Then $\Delta_* \O_X \rtimes G = \bigoplus_{g \in G}
(\Delta_g)_* \O_X$, with each term self-dual under $\DD$ (note that $R\Delta_* = \Delta_*$ since $\Delta$ is an embedding). Therefore,
\begin{equation}\label{e:selfdual-doxg}
\DD(\Delta_* \O_X \rtimes G) \cong \Delta_* \O_X \rtimes G.
\end{equation}

Using the preceding identity in the second isomorphism, we obtain:
\begin{equation}
(\Delta_* \O_X \otimes^L_{\O_{X^2}} \DD(\Delta_* \O_X \rtimes G))
\cong \sRHom_{X^2}(\Delta_* \O_X, \DD (\Delta_* \O_X \rtimes G)) \cong
\sRHom_{X^2}(\Delta_* \O_X, \Delta_* \O_X \rtimes G),
\end{equation}
with the first isomorphism following from adjunction,
$\sRHom(\mathscr{E} \otimes \mathscr{F}, \mathscr{G}) \cong
\sRHom(\mathscr{E}, \sRHom(\mathscr{F}, \mathscr{G}))$.

%% So $\Delta_* \O_X \rtimes G)$
%%s self-dual, as it is the sum of pushforwards of sel
%%If $Y$ is smooth and $\mathcal{F}$ is a vector bundle, this is the vector bundle dual
%%%and $Y$ 
% \begin{multline}\label{eq:dvb}
% (i_* \mathcal{F})^\vee := 
% \sRHom_{Z}(i_* \mathcal{F}, \omega_Z[\dim Z]) \cong
% i_*\sRHom_Z(\mathcal{F}, i^! \omega_Z[\dim Z]) \\ \cong
% i_* (\mathcal{F}^\vee \otimes^L i^! \omega_Z[\dim Z])
% \cong i_*(\mathcal{F}^\vee \otimes^L (Li^*(\omega_Z[\dim Z]^\vee))^\vee)
% \cong i_*(\mathcal{F}^\vee).
% \end{multline}
% \begin{multline}\label{eq:dodual}
%   (\Delta_* \O_X)^\vee := \sRHom_{X^2}(L\Delta_* \O_X, \omega_{X^2}[\dim X^2]) \cong \sRHom_X(\O_X, \Delta^! \omega_{X^2}[\dim X^2])
%   \\ \cong \Delta^! \omega_{X^2}[\dim X^2] \cong (L\Delta^* (\omega_{X^2}[\dim X^2])^\vee)^{\vee}
%   \cong (L\Delta^* \O_{X^2})^{\vee} \cong \Delta_* \O_X.
% \end{multline}
%using the subscript of $X$ to denote duality on $X$.
% Taking $\mathcal{F} = \O_X \rtimes G$ and $i=\Delta$,
% it follows that $\Delta_* \O_X \rtimes G$ is self-dual: $(\Delta_* \O_X \rtimes
% G)^\vee \cong (\Delta_* \O_X \rtimes G)$.
% either applying the same argument for
%each summand $\Delta_* \O_X \cdot g$, or applying the action of $G$.

Next, the cohomology sheaves of \eqref{eq:dsi}, which is just the sum
of derived intersections of graphs of elements $g \in G$, consists of
vector bundles on the fixed loci $X^g$, which are smooth closed
subvarieties of $X$. This is well-known (see, e.g., \cite[Proposition
A.5]{caldararuetal03}) and is also part of the statement of
\cite[Theorem 4.4]{arinkinetal14}.  Then as above, the
 dual of each cohomology sheaf is also a sum of vector bundles, the duals
to the ones before.  Therefore
% For any vector bundle
% $\mathscr{F}$ on $X^g$, for $i_g: X^g \subseteq X$ the inclusion, then
% the same computation as in \eqref{eq:dodual} shows that
% $((i_g)_* \mathscr{F})^\vee \cong (i_g)_* (\mathscr{F}^\vee)$. Hence
formality of \eqref{eq:dsi} implies formality of its dual,
$\sRHom_{X^2}(\Delta_* \O_X, \Delta_* \O_X \rtimes G)$.

% Next, by \eqref{eq:123} the cohomology sheaves $H^\bullet
% \msc{E}(\msc{X}) =: \uHH^\bullet(\O_X, \O_X \rtimes G)$ is a direct
% sum of vector bundles on fixed-point loci $X^g \subseteq X$, which are
% smooth closed subvarieties of $X$ (the proof of \eqref{eq:123} is
% given in Section \ref{sect:polyveder} without using any results of
% this section, and furthermore we only need the statement to hold
% without the algebraic structure, where it is known and also a
% consequence of \cite[Theorem 4.4]{arinkinetal14}).

% From this we conclude that
% $\sRHom_{X^2}(\Delta_* \O_X, \Delta_* \O_X \rtimes G)$ is formal in
% the category of complexes of quasi-coherent sheaves on $X^2$.
% Since $\sRHom_{X^2}(\Delta_* \O_X, \Delta_* \O_X \rtimes G)$ is
% supported on the union of the graphs of $g: X \to X$, which is finite
% over $X$, it follows that its $G$-invariant pushforward
% $\msc{E}(\msc{X})$ to $X$ is also a formal complex.  
Since the cohomology sheaves of $\msc{E}(\msc{X})$ are canonically isomorphic to $\overline{\Delta}_* \uHH^\bullet(\msc{X})$ (for $\overline{\Delta}: X \to X \times X /G$ the diagonal map), we conclude that the spectral sequence \eqref{eq:105} is quasi-isomorphic to the
one computing the hypercohomology of $\overline{\Delta}_* \uHH^\bullet(\msc{X})$. The quasi-isomorphism induces an isomorphism of bigraded vector spaces on the $E_2$-page, and for the hypercohomology of $\overline{\Delta}_* \uHH^\bullet(\msc{X})$, we have $E_2=E_\infty$. Thus the same must be true for the spectral sequence computing the hypercohomology of $\msc{E}(\msc{X})$.

\section{Necessary relations between normal bundles and tangent sheaves}
\label{sect:SPprelim}

We cover some relations between the normal bundles for the fixed spaces $X^g$ and the restrictions of the tangent sheaf $T_X|_{X^g}$.  These relations are required for our description of the cup product on the cohomology $\uHH^\bullet(\msc{X})$.  In particular, these relations ensure that our geometric description of the cup product is well-defined.

\subsection{Decompositions of the (co)tangent sheaf}
\label{sect:decomp}

For each $g\in G$ we consider the restriction $T_X|_{X^g}$.  Note that $g$ now acts by sheaf automorphisms on $T_X|_{X^g}$.  Since $X^g$ is a smooth embedded subvariety in $X$, we have the exact sequence
\[
0\to T_{X^g}\to T_X|_{X^g}\to N_{X^g}\to 0.
\]
We claim that this sequence identifies $T_{X^g}$ with the $g$-invariants in $T_X|_{X^g}$, i.e. the kernel of the endomorphism $(1-g):T_X|_{X^g}\to T_X|_{X^g}$.  Hence $N_{X^g}$ is canonically identified with the image $(1-g)T_X|_{X^g}$.  This fact can readily be checked complete locally, as we do below.  It follows that we have canonical decompositions
\[
T_X|_{X^g}\cong T_{X^g}\oplus N_{X^g}\ \ \mrm{and}\ \ T^{poly}_X|_{X^g}\cong T^{poly}_{X^g}\ot_{\O_{X^g}}(\wedge_{\O_{X^g}}^\bullet N_{X^g}).
\]
In particular, we have a canonical sheaf projection
\[
p_g:T^{poly}_X|_{X^g}\to T^{poly}_{X^g}\ot_{\O_{X^g}}\det(N_{X^g}).
\]
(Cf.~\cite{anno} and~\cite[Sect.~1.4]{negronwitherspoon17}.)  There is a similar decomposition for the cotangent sheaf $\Omega_X|_{X^g}\cong \Omega_{X^g}\oplus N^\vee_{X^g}$.
\par

Since we would like to make similar complete local reductions throughout the remainder of this document, let us provide an argument for the above claim in detail.  The reader will not be harmed by skipping this on a first reading
and proceeding immediately to Section~\ref{sect:codimension}.
\par

We have the exact sequence $0\to N^\vee_{X^g}\to \Omega_X|_{X^g}\to \Omega_{X^g}\to 0$, which is locally split because $\Omega_{X^g}$ is locally free.  Since $g$ has finite order, $\Omega_X|_{X^g}$ splits into eigensheaves $\Omega_1\oplus \Omega_1^\perp$, where $\Omega_1$ is the trivial eigensheaf for the action of $g$ and $\Omega_1^\perp$ is the sum of all non-trivial eigensheaves.  Since the projection $\Omega_X|_{X^g}\to \Omega_{X^g}$ is $g$-linear, we have that $\Omega_1^\perp$ is in the kernel.  Hence we have the proposed splitting $\Omega_X|_{X^g}\cong \Omega_{X^g}\oplus N^\vee_{X^g}$ if and only if $\Omega_{X^g}$ and $\Omega_1$ have the same rank.  This property can be checked locally, or complete locally.
\par

At a geometric point $p$ of $X^g$ we take a $g$-linear section $T_p^\ast\to m_p$, where $m_p\subset \O_{X,p}$ is the maximal ideal, and let $V$ denote the image of this section.  Note that $V$ is a subrepresentation in $\O_{X,p}$ for the action of the semisimple operator $g$.  We are free to take a basis $\{x_1,\dots, x_n\}$ of $g$-eigenvectors for $V$.
\par

If $k$ is already algebraically closed, the elements $d_i=d_{x_i}$ provide a basis of $g$-eigenvectors for $\Omega_X|_{X^g}$ in an affine neighborhood around $p$.  In general we may work complete locally to obtain a $g$-linear isomorphism
\begin{equation}\label{eq:cotang}
\hat{\O}_{X^g,p}\ot V\overset{\cong}\longrightarrow \Omega_{\hat{X}_p}|_{\hat{X^g}_p},\ \ f\ot x_i\mapsto fd_i.
\end{equation}
Dually, we have a $g$-linear isomorphism
\begin{equation}
\hat{\O}_{X^g,p}\ot V^\ast\overset{\cong}\longrightarrow T_{\hat{X}_p}|_{\hat{X^g}_p},\ \ f\ot x_i^\ast\mapsto fd_i^\vee
\end{equation}
where $d_i^\vee:\Omega_{\hat{X}_p}\to \hat{\O}_{X,p}$ is the dual function to $d_i$.
\par

From the above isomorphisms we see that the invariants $\Omega_1$ and $T_1$ are both of rank $\dim(V^g)$ around $p$.  From Lemma~\ref{lem:smoothint} we see that $\Omega_{X^g}$ is of rank $\dim(V^g)=\dim(X^g)|_p$ around $p$ as well.  Therefore
\[
\mrm{rank}(\Omega_{X^g})=\mrm{rank}(\Omega_1),\ \ \mrm{rank}(\Omega_1^\perp)=\mrm{rank}(N_{X^g}^\vee),\ \ \mrm{rank}(T_{X^g})=\mrm{rank}(T_1),\ \ \mrm{rank}(T_1^\perp)=\mrm{rank}(N_{X^g}).
\]
This implies that the inclusions $T_{X^g}\to T_X|_{X^g}$ is an isomorphism onto the invariants and, since $(1-g)T_X|_{X^g}=T_1^\perp$, the projection $T_X|_{X^g}\to N_{X^g}$ provides an isomorphism from the perpendicular
\begin{equation}\label{eq:793}
(1-g)T_X|_{X^g}\cong N_{X^g}.
\end{equation}
A similar result holds for the cotangent sheaf, and hence we have the proposed splittings.

\subsection{Codimensions and components of the fixed spaces}
\label{sect:codimension}

Recall that the codimension of a smooth subvariety $Y\subset X$ is a section of the sheaf of integers on $Y$, i.e. a continuous function $\mathrm{codim}(Y):Y\to \mathbb{Z}$.  For sections of the sheaf of integers over a topological space we write $s_1<s_2$, $s_1\leq s_2$, etc. if the comparison holds pointwise.  For any subspace $Z\subset Y$ we can restrict to get a section $\mathrm{codim}(Y)|_Z:Z\to Y\to \mathbb{Z}$ of the sheaf of integers over $Z$.  Note that when $Z$ is connected $\codim(Y)|_Z$ is a constant.

\begin{lemma}\label{lem:codimineqs}
For any $g,h\in G$, and component $Y$ of $X^g\cap X^h$, we have
\[
\codim(X^{gh})|_Y\leq \codim(X^g\cap X^h)|_Y\leq \codim(X^g)|_Y+\codim(X^h)|_Y.
\]
If, additionally, $\codim(X^{gh})|_Y= \codim(X^g)|_Y+\codim(X^h)|_Y$ then $Y$ is also a component of $X^{gh}$.
\end{lemma}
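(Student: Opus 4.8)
The plan is to reduce everything to a single geometric point $p \in Y$ and argue complete-locally, invoking Lemma~\ref{lem:smoothint} to replace each fixed locus by the corresponding coinvariant subspace of the cotangent (or tangent) space. Fix such a $p$ and let $V = T_p^\ast$, equipped with its action of the finite group $\langle g, h\rangle$, so that near $p$ we have, complete-locally, $\hat{\O}_{X^g,p} \cong \hat{\Sym}(V_g)$, $\hat{\O}_{X^h,p}\cong \hat{\Sym}(V_h)$, $\hat{\O}_{X^{gh},p} \cong \hat{\Sym}(V_{gh})$, and also $\hat{\O}_{X^g\cap X^h,p} \cong \hat{\Sym}(V_{\langle g,h\rangle})$ by the $S = \{g,h\}$ case of Lemma~\ref{lem:smoothint} (here $V_S$ denotes the $\langle S\rangle$-coinvariants, equivalently the quotient $V/(\sum_{s\in S}(1-s)V)$). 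Passing to complements via the averaging idempotents, the codimensions at $p$ become the dimensions of the subspaces $(1-g)V$, $(1-h)V$, $(1-gh)V$, and $(1-g)V + (1-h)V$ of $V$.

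First I would establish the right-hand inequality: $(1-g)V + (1-h)V$ is a sum of two subspaces, so $\dim\big((1-g)V+(1-h)V\big) \le \dim (1-g)V + \dim(1-h)V$, which is exactly $\codim(X^g\cap X^h)|_Y \le \codim(X^g)|_Y + \codim(X^h)|_Y$. Second, for the left-hand inequality I would observe that $X^{gh} \supseteq X^g \cap X^h$ as schemes, since a point fixed by both $g$ and $h$ is fixed by $gh$; equivalently, on the level of coinvariants, the obvious surjection $V_{\langle g,h\rangle} \twoheadrightarrow V_{gh}$ shows the ideal cutting out $X^{gh}$ is contained in that of $X^g\cap X^h$, hence $\codim(X^{gh})|_Y \le \codim(X^g\cap X^h)|_Y$. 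Concretely, $(1-gh)V = (1-g)V + g(1-h)V \subseteq (1-g)V + (1-h)V$ only after using that $g$ preserves $(1-g)V$ — wait, more carefully: $1 - gh = (1-g) + g(1-h)$, so $(1-gh)V \subseteq (1-g)V + (1-h)V$ provided $g(1-h)V \subseteq (1-g)V + (1-h)V$; this holds because $g$ permutes the $\langle g,h\rangle$-stable subspace $(1-g)V+(1-h)V$ (indeed $g(1-g)V = (1-g)V$ and $g(1-h)V = (g - gh)V = (1-gh)V - (1-g)V \subseteq (1-g)V + (1-gh)V$, so one should instead note directly that $W := (1-g)V+(1-h)V$ satisfies $gW \subseteq W$ and $hW\subseteq W$, since $g\cdot(1-h)v = (1-ghg^{-1})(gv) \in \dots$) — the clean statement is simply that $W$ is the kernel of $V \to V_{\langle g,h\rangle}$, hence $\langle g,h\rangle$-stable, and $(1-gh)V \subseteq W$ is immediate from $V/W$ being a trivial $\langle g,h\rangle$-module. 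That gives the chain of inequalities.

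For the final assertion, suppose $\codim(X^{gh})|_Y = \codim(X^g)|_Y + \codim(X^h)|_Y$. Combined with the two inequalities just proved, this forces $\codim(X^{gh})|_Y = \codim(X^g\cap X^h)|_Y$ at every point of $Y$. Now $X^{gh} \supseteq X^g\cap X^h$, both are smooth (Lemma~\ref{lem:smoothint}), and $Y$ is an irreducible component of $X^g\cap X^h$; since the two smooth schemes have equal codimension along $Y$, the component of $X^{gh}$ containing $Y$ must coincide with $Y$ near the generic point of $Y$, and by smoothness (hence reducedness and equidimensionality of each component) they agree on all of $Y$ — so $Y$ is a component of $X^{gh}$. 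The main obstacle I anticipate is purely bookkeeping: making the complete-local identifications of Lemma~\ref{lem:smoothint} compatible across the four loci simultaneously (choosing one $\langle g,h\rangle$-equivariant splitting $V \to \hat{\mfm}_p$ that works for all of them at once), and handling the fact that $Y$ need not be connected only through its connected components where the codimension functions are constant; neither is deep, but both require care to state cleanly.
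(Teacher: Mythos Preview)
Your approach is correct and essentially parallel to the paper's, though the paper argues more directly without passing to complete local rings. For the left inequality the paper simply invokes the scheme-theoretic inclusion $X^g\cap X^h\subset X^{gh}$ (which immediately gives $\codim(X^{gh})|_Y\le\codim(X^g\cap X^h)|_Y$), and for the right inequality it uses the evident surjection of vector bundles
\[
N_{X^g}^\vee|_{X^g\cap X^h}\oplus N_{X^h}^\vee|_{X^g\cap X^h}\to N_{X^g\cap X^h}^\vee.
\]
Your pointwise linear algebra is exactly the fiber of these global statements at $p$, so the content is identical; the paper's formulation just sidesteps the bookkeeping you flag (choosing one equivariant splitting that works for all four loci). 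One small slip: your ``obvious surjection $V_{\langle g,h\rangle}\twoheadrightarrow V_{gh}$'' is backwards---since $(1-gh)V\subset (1-g)V+(1-h)V$, the surjection goes $V_{gh}\twoheadrightarrow V_{\langle g,h\rangle}$. Your subsequent clean argument (that $W=(1-g)V+(1-h)V$ is the kernel of $V\to V_{\langle g,h\rangle}$, hence $V/W$ is a trivial module and $(1-gh)V\subset W$) is the correct one and fixes this. The final paragraph, deducing that $Y$ is a component of $X^{gh}$ from the equality of codimensions and smoothness, matches the paper exactly.
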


\begin{proof}
The first inequality follows simply from the inclusion $X^g\cap X^h\subset X^{gh}$.  The second inequality follows from the fact that we have the surjective map of vector bundles
\[
N_{X^g}^\vee|_{(X^g\cap X^h)}\oplus N_{X^h}^\vee|_{(X^g\cap X^h)}\to N_{X^g\cap X^h}^\vee.
\]
\par

Let $Z$ be a component of $X^{gh}$ containing $Y$.  If we have an equality $\codim(X^{gh})|_Y= \codim(X^g\cap X^h)|_Y$ then, $Y$ is a codimension $0$ smooth embedded subvariety in $Z$.  Since any component of smooth variety is irreducible this implies $Y=Z$.  That is to say $Y$ is also a component of the intersection.  Since we have the implication
\[
\begin{array}{l}
\codim(X^{gh})|_Y= \codim(X^g)|_Y+\codim(X^h)|_Y\\
\Rightarrow \codim(X^{gh})|_Y= \codim(X^g\cap X^h)|_Y
\end{array}
\]
the first inequality also implies that $Y$ is a component of $X^{gh}$.
\end{proof}

Below we describe the vanishing of the cup product on cohomology in terms of the inequality
\[
\codim(X^{gh})|_Y\leq \codim(X^g)|_Y+\codim(X^h)|_Y.
\]

\subsection{Relations between the normal bundles}
\label{sect:(co)normal}

Consider $g,h\in G$ and fix a component $Y$ of the intersection $X^g\cap X^h$.  We then have the canonical map
\begin{equation}\label{eq:kappa}
\kappa:=\kappa(g,h,Y):N_{X^g}|_Y\oplus N_{X^h}|_Y\to N_{X^{gh}}|_Y
\end{equation}
whose restriction to each $N_{X^\tau}|_Y$, for $\tau=g,h$, is the inclusion $N_{X^\tau}|_Y\to T_X|_Y$ determined by the splitting of Section~\ref{sect:decomp} composed with the standard projection $T_{X}|_Y\to N_{X^{gh}}|_Y$.

\begin{lemma}\label{lem:esslem}
Suppose $Y$ is a component of the intersection $X^g\cap X^h$.  The following are equivalent:
\begin{enumerate}
\item[(i)] $Y$ is a shared component with $X^{gh}$ and the intersection of $X^g$ and $X^h$ is transverse along $Y$.
\item[(ii)] $\codim(X^{gh})|_Y=\codim(X^g)|_Y+\codim(X^h)|_Y$.
\item[(iii)] The map $\kappa:N_{X^g}|_Y\oplus N_{X^h}|_Y\to N_{X^{gh}}$ of~\eqref{eq:kappa} is an isomorphism.
\item[(iv)] $N_{X^g}|_Y$ and $N_{X^h}|_Y$ have trivial intersection in $T_X|_Y$.
\end{enumerate}
\end{lemma}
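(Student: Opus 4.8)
The plan is to prove the four-way equivalence by establishing a cycle of implications, working mostly with the codimension bookkeeping already set up in Lemma~\ref{lem:codimineqs} and the rank computations from Section~\ref{sect:decomp}, and noting that every statement is local on $Y$ (indeed checkable complete-locally at a geometric point, since all the sheaves involved are vector bundles on the smooth varieties $X^g$, $X^h$, $X^{gh}$, $Y$). First I would dispose of (i)$\Leftrightarrow$(ii): by definition, transversality of $X^g$ and $X^h$ along $Y$ means $\codim(X^g\cap X^h)|_Y=\codim(X^g)|_Y+\codim(X^h)|_Y$ (equivalently, that the surjection $N_{X^g}^\vee|_Y\oplus N_{X^h}^\vee|_Y\to N_{X^g\cap X^h}^\vee$ appearing in the proof of Lemma~\ref{lem:codimineqs} is an isomorphism), and Lemma~\ref{lem:codimineqs} shows that when this equality holds $Y$ is automatically a component of $X^{gh}$, in which case $\codim(X^{gh})|_Y=\codim(X^g\cap X^h)|_Y$; conversely, if (ii) holds, the same lemma forces $Y$ to be a component of $X^{gh}$, and then both inequalities in the lemma collapse, giving transversality. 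So (i) and (ii) each amount to the pair of equalities $\codim(X^{gh})|_Y=\codim(X^g\cap X^h)|_Y=\codim(X^g)|_Y+\codim(X^h)|_Y$.

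Next I would handle (ii)$\Leftrightarrow$(iii)$\Leftrightarrow$(iv), which is where the bundle map $\kappa$ enters. The source of $\kappa$ has rank $\codim(X^g)|_Y+\codim(X^h)|_Y$ and the target has rank $\codim(X^{gh})|_Y$, so for $\kappa$ to be an isomorphism the ranks must match, which is (ii); conversely, given (ii), an injective map between bundles of equal rank on a (reduced) scheme is an isomorphism, so it suffices to check $\kappa$ is injective, and this reduces to (iv). The equivalence (iii)$\Leftrightarrow$(iv) is essentially the observation that, via the splitting $T_X|_Y\cong T_{X^{gh}}|_Y\oplus N_{X^{gh}}|_Y$ of Section~\ref{sect:decomp} applied to the element $gh$, the kernel of $\kappa$ is precisely $(N_{X^g}|_Y\oplus N_{X^h}|_Y)\cap \ker\big(T_X|_Y\oplus T_X|_Y\to T_X|_Y\big)$ intersected further with $T_{X^{gh}}|_Y$; but $N_{X^g}|_Y,N_{X^h}|_Y\subset (1-g)T_X|_Y$ and $\subset(1-h)T_X|_Y$ respectively, and one checks — this is the key linear-algebra point, best verified complete-locally by diagonalizing the commuting (in the relevant sense, after passing to the invariant decomposition) semisimple operators $g,h$ on $T_pX$ — that $N_{X^g}|_Y\cap T_{X^{gh}}|_Y = N_{X^g}|_Y\cap T_{X^h}|_Y \subset N_{X^g}|_Y\cap N_{X^h}|_Y$ (the last inclusion because the part of $N_{X^g}$ fixed by $h$ must lie in the part moved by... ), so that $\ker\kappa$ is carried isomorphically onto $N_{X^g}|_Y\cap N_{X^h}|_Y$ inside $T_X|_Y$. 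Hence $\kappa$ injective $\Leftrightarrow$ $N_{X^g}|_Y\cap N_{X^h}|_Y=0$.

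Finally, to close the loop I would prove (iv)$\Rightarrow$(ii) (or equivalently show (iii)$\Rightarrow$(ii), which is the rank-count already mentioned, together with (iii)$\Rightarrow$(i) via surjectivity of $\kappa$ forcing $N_{X^g\cap X^h}^\vee$ to have the expected rank and $X^{gh}$ to contain $Y$ as a component). Concretely: if $N_{X^g}|_Y$ and $N_{X^h}|_Y$ meet trivially in $T_X|_Y$, then $N_{X^g}|_Y\oplus N_{X^h}|_Y\hookrightarrow T_X|_Y$, and since both summands map into $N_{X^{gh}}|_Y=(1-gh)T_X|_Y$ under the projection (one verifies $(1-gh)$ vanishes on neither... rather, that $N_{X^g}|_Y,N_{X^h}|_Y$ map \emph{into} rather than onto $T_{X^{gh}}$-complement directions), the composite into $N_{X^{gh}}|_Y$ is still injective; combined with the always-true surjection $N_{X^g}^\vee|_Y\oplus N_{X^h}^\vee|_Y\to N_{X^g\cap X^h}^\vee$ and the inclusion $X^g\cap X^h\subseteq X^{gh}$, a rank count yields all codimensions equal, i.e.\ (ii).

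The main obstacle I anticipate is the precise linear-algebra statement underlying (iii)$\Leftrightarrow$(iv): namely, controlling how the subspaces $N_{X^g}$, $N_{X^h}$, and $T_{X^{gh}}$ sit inside $T_X|_Y$ when $g$ and $h$ need not commute as honest operators (they commute only after restricting to the appropriate common eigenspaces / invariant subspaces along $Y$, since $Y\subseteq X^g\cap X^h$ forces $gh=hg$ pointwise on $T_pX$ up to the subtlety that these are the linearizations at a fixed point). The cleanest route is to fix a geometric point $p\in Y$, note that $g,h$ act as commuting semisimple operators on $T_pX$ because $p\in X^{\langle g,h\rangle}$ and the completed local ring is a power series ring on the coinvariants (Lemma~\ref{lem:smoothint}), simultaneously diagonalize, and then read off $N_{X^g}=\mathrm{im}(1-g)$, $N_{X^h}=\mathrm{im}(1-h)$, $N_{X^{gh}}=\mathrm{im}(1-gh)$ as spanned by coordinate eigenlines; the equivalence of (ii)–(iv) then becomes an elementary statement about eigenvalues (a coordinate line with $g$-eigenvalue $\alpha$ and $h$-eigenvalue $\beta$ lies in $N_{X^g}$ iff $\alpha\neq1$, in $N_{X^h}$ iff $\beta\neq1$, in $N_{X^{gh}}$ iff $\alpha\beta\neq1$, and trivial intersection of the first two sets of lines means no line has both $\alpha\neq1$ and $\beta\neq1$, which forces $\{\alpha\beta\neq1\}=\{\alpha\neq1\}\sqcup\{\beta\neq1\}$ — exactly the codimension identity). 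I would present this diagonalization argument carefully and let the global statement follow by the usual spreading-out from complete local rings used throughout Section~\ref{sect:decomp}.
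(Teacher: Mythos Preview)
Your treatment of (i)$\Leftrightarrow$(ii) via Lemma~\ref{lem:codimineqs} is fine and matches the paper's argument in spirit. The serious gap is in your handling of (ii)$\Leftrightarrow$(iii)$\Leftrightarrow$(iv), specifically the linear-algebra step you flag as the ``main obstacle.'' You assert that at a geometric point $p\in Y\subseteq X^{\langle g,h\rangle}$ the operators $g,h$ act as \emph{commuting} semisimple automorphisms of $T_pX$, and then simultaneously diagonalize. This is false: $p$ being a common fixed point means only that $\langle g,h\rangle$ acts on $T_pX$, not that the action factors through an abelian quotient. For a concrete counterexample take any finite nonabelian $G\subset\mathrm{GL}(V)$ (e.g.\ $S_3$ permuting coordinates on $\mathbb{C}^3$, or a binary dihedral group in $\mathrm{SL}_2$) and $p$ the origin; then $g,h$ act on $T_0V=V$ by themselves and do not commute. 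Your entire eigenline bookkeeping (``$\alpha\neq 1$, $\beta\neq 1$, $\alpha\beta\neq 1$'') collapses without simultaneous diagonalization, and with it the implications (iv)$\Rightarrow$(iii) and (iv)$\Rightarrow$(ii).

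What the paper does instead is invoke Lemma~\ref{lem:dualrels} in the appendix, which is precisely the Shepler--Witherspoon linear-algebra lemma establishing the equivalence of $(1-g)V\cap(1-h)V=0$, $(1-g)V+(1-h)V=(1-gh)V$, and $\dim V^{gh}=\dim V^{\langle g,h\rangle}$ for an \emph{arbitrary} (not necessarily commuting) pair $g,h$ in a finite group. The key step there---showing that $(1-g)V\cap(1-h)V=0$ together with $V=V^g+V^h$ forces $V^{gh}=V^g\cap V^h$---uses a genuine trick: write $v\in V^{gh}$ as $v_1+v_2$ with $v_1\in V^g$, $v_2\in V^h$, compute $gv$ two ways (once directly, once as $h^{-1}v$), and deduce $(1-h^{-1})v_1=-(1-g)v_2\in(1-g)V\cap(1-h)V=0$. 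This replaces your diagonalization and is what you need to supply; once you have it complete-locally, your rank-count arguments for (iii)$\Leftrightarrow$(iv) and the passage back to (ii) go through as you outlined.
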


This is a geometric analog of~\cite[Lemma 2.1]{SW}.  We prove~\cite[Lemma 2.1]{SW} in finite characteristic in the appendix, Lemma~\ref{lem:dualrels}.

\begin{proof}
(i)$\Leftrightarrow$(ii): If either (i) or (ii) holds $Y$ is a shared component with $X^{gh}$.  We consider the map of vector bundles $T_{X^g}|_Y\oplus T_{X^h}|_Y\to T_X|_Y$.  The kernel of this map is $T_{(X^g\cap X^h)}|_Y=T_{X^{gh}}|_Y$, and one can check locally that the cokernel is a vector bundle on $Y$.  By counting ranks, we see that transversality holds if and only if we have an equality
\[
\mathrm{rank}(T_{X^{gh}})|_Y= \mathrm{rank}(T_{X^g})|_Y+\mathrm{rank}(T_{X^h})|_Y-\mathrm{rank}(T_X)|_Y.
\]
This, in turn, occurs if and only if
\[
\begin{array}{rl}
\mathrm{codim}(X^{gh})|_Y&=\mathrm{rank}(T_X)|_Y-\mathrm{rank}(T_{X^{gh}})|_Y\\
&=\mathrm{rank}(T_X)|_Y-(\mathrm{rank}(T_{X^g})|_Y+\mathrm{rank}(T_{X^h})-\mathrm{rank}(T_X)|_Y)\\
&=\mathrm{rank}(T_X)|_Y-\mathrm{rank}(T_{X^g})|_Y+\mathrm{rank}(T_X)|_Y-\mathrm{rank}(T_{X^h})|_Y\\
&=\mathrm{codim}(X^g)|_Y+\mathrm{codim}(X^h)|_Y.
\end{array}
\]
Whence we have the desired equivalence between (i) and (ii).
\par

(ii)$\Leftrightarrow$(iv): For (iv), the vanishing of the intersection in $T_X|_Y$ is equivalent to the vanishing of the kernel $K$ in the sequence
\[
0\to K\to N_{X^g}|_Y\oplus N_{X^h}|_Y\to T_X|_Y.
\]
This property can be check complete locally at geometric points $p$ of $Y$, and hence can be reduced to the condition Lemma~\ref{lem:dualrels} (iii).  The equivalent condition of Lemma~\ref{lem:dualrels} (ii) then assures as that at each such $p$ the proposed relation of codimensions holds.  So we see that (iv) implies (ii), and we similarly find the converse implication.
\par

(iii)$\Leftrightarrow$(iv): The fact that (iii) implies (iv) is obvious, as the map $\kappa$ factors through $T_X|_Y$.  Suppose now that (iv) holds, i.e.\ that the images of the normal bundles $N_{X^g}|_Y$ and $N_{X^h}|_Y$ intersect trivially in the tangent sheaf.  Note that the image of $N_{X^g}|_Y\oplus N_{X^h}|_Y$ in $T_X|_Y$ lies in $(1-gh)T_X|_Y$ if and only if the the composition $N_{X^g}|_Y\oplus N_{X^h}|_Y\to T_X|_Y\to T_{X^{gh}}|_Y$ vanishes.  Vanishing of this composition can be checked complete locally, and hence the inclusion $N_{X^g}|_Y+ N_{X^h}\subset (1-gh)T_X|_Y$ can be checked complete locally.  Vanishing of the intersection is therefore reduced to the condition of Lemma~\ref{lem:dualrels} (iii), and the equivalent condition of Lemma~\ref{lem:dualrels} (ii) implies that $N_{X^g}|_Y+ N_{X^h}|_Y=(1-gh)T_X|_Y$.  Since these normal bundles intersect trivially in $T_X|_Y$ the sum $N_{X^g}|_Y+ N_{X^h}$ remains direct.  Finally, since $(1-gh)T_X|_Y$ is sent isomorphically to $N_{X^{gh}}|_Y$ under the projection from $T_X|_Y$, we see that $\kappa$ is an isomorphism. 
\end{proof}

\section{The Hochschild cohomology $\uHH^\bullet(\msc{X})$ as an equivariant sheaf: Following~\cite{anno}}
\label{sect:A}

We recall an argument of Anno~\cite{anno}, which produces a canonical isomorphism of equivariant sheaves
\[
\uHH^\bullet(\msc{X})\cong \oplus_{g\in G}T^{poly}_{X^g}\ot_{\O_X}\det(N_{X^g}).
\]
Note that the equivariant structure on the sum $\oplus_{g}T^{poly}_{X^g}\ot_{\O_X}\det(N_{X^g})$ is induced by the isomorphisms $h:X^g\to X^{hgh^{-1}}$.  To be precise, these isomorphisms fit into diagrams
\[
\xymatrix{
X\ar[r]^h & X\\
X^g\ar[u]^{\mrm{incl}}\ar[r]^h & X^{hgh^{-1}}\ar[u]_{\mrm{incl}},
}
\]
and we have induced isomorphisms $h_\ast T_{X^g}\cong T_{X^{hgh^{-1}}}$ and $h_\ast N_{X^g}\cong N_{X^{hgh^{-1}}}$ via conjugation by $h$, which produce the equivariant structure maps
\[
h_\ast(T^{poly}_{X^g}\ot_{\O_X}\det(N_{X^g}))\cong T^{poly}_{X^{hgh^{-1}}}\ot_{\O_X}\det(N_{X^{hgh^{-1}}}).
\]

\subsection{The Hochschild cohomology $\uHH^\bullet(\msc{X})$ as an equivariant sheaf}
\label{sect:A1}

For any $g\in G$ we have the graph $\Delta_g:X\to X\times X$ of $g$ and extensions $\sExt_{X^2}(\Delta_\ast\O_X,(\Delta_g)_\ast\O_X)$.  We consider the projection $(\Delta_g)_\ast\O_X\to \Delta_\ast\O_{X^g}$, which is induced by the projection $\O_X\to \O_{X^g}$ over $X$ and the equality $(\Delta_g)_\ast\O_{X^g}=\Delta_\ast\O_{X^g}$.  This provides a map on cohomology
\[
\sExt_{X^2}(\Delta_\ast\O_X,(\Delta_g)_\ast\O_X)\to \sExt_{X^2}(\Delta_\ast\O_X,\Delta_\ast\O_{X^g})=\sExt_{X^2}(\Delta_\ast\O_X,\Delta_\ast\O_X)|_{X^g}=T^{poly}_X|_{X^g}.
\]
\par

The identification $\sExt_{X^2}(\Delta_\ast\O_X,\Delta_\ast\O_{X^g})=\sExt_{X^2}(\Delta_\ast\O_X,\Delta_\ast\O_X)|_{X^g}$ employed above is induced by the affine local, natural, maps
\[
\begin{array}{rl}
\Ext_{\O(U^2)}(\O(U),\O(U))\ot_{\O(U)}\O(U^g)\to &H^\bullet\left(\mrm{REnd}_{\O(U^2)}(\O(U))\ot_{\O(U)}\O(U^g)\right)\\
&=H^\bullet\left(\mrm{RHom}_{\O(U^2)}(\O(U),\O(U^g)\right)\\
&=\Ext_{\O(U^2)}(\O(U),\O(U^g)).
\end{array}
\]
These local maps are isomorphisms since the cohomology $\Ext_{\O(U^2)}(\O(U),\O(U))$ is flat over $\O(U)$.  Indeed, we have the classical HKR algebra isomorphism $\uHH^\bullet(X)=\Ext_{X^2}(\Delta_\ast\O_X,\Delta_\ast\O_X)=T^{poly}_X$ induced by the identification $\uHH^1(X)=T_X$.
\par

We now have the composite map
\[
\mrm{A}_g:\sExt_{X^2}(\Delta_\ast\O_X,(\Delta_g)_\ast\O_X)\to T^{poly}_X|_{X^g}\overset{p_g}\longrightarrow T^{poly}_{X^g}\ot_{\O_X}\det(N_{X^g}),
\]
where $p_g$ is as in Section~\ref{sect:decomp}.  We recall that the determinant of the normal bundle $\det(N_{X^g})$ here lies in cohomological degree $\mrm{codim}(X^g)$.  It is shown in~\cite{anno} that each $\mrm{A}_g$ is an isomorphism.

\begin{proposition}[{\cite[Proposition 5]{anno}}]\label{prop:anno}
For each $g\in G$, the map $\mrm{A}_g$ is an isomorphism of sheaves (supported) on $X^g$.  Whence we have a canonical isomorphism
\[
\mrm{A}:\uHH^\bullet(\X)\overset{\cong}\to\bigoplus_{g\in G} T^{poly}_{X^g}\ot_{\O_X}\det(N_{X^g}).
\]
of $G$-equivariant sheaves (supported) on $X$.
\end{proposition}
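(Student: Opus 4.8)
The plan is to recall Anno's argument, which reduces everything to a Koszul computation on complete local rings. Since $\mrm{A}=\oplus_{g}\mrm{A}_g$, it suffices to prove each $\mrm{A}_g$ is an isomorphism. As $\mrm{A}_g$ is a morphism of coherent sheaves supported (scheme-theoretically) on the closed subvariety $X^g$ — the source $\sExt_{X^2}(\Delta_\ast\O_X,(\Delta_g)_\ast\O_X)$ is coherent because $X^2$ is smooth so $\Delta_\ast\O_X$ admits a finite local resolution by vector bundles, and its support meets the first projection properly — it is enough to check that $\mrm{A}_g$ becomes an isomorphism after completing at every closed point $p\in X^g$ (a coherent sheaf vanishes iff all its closed-point stalks do, and completion is faithfully flat and exact on these). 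Moreover, since $\Ext$ of coherent sheaves and completion both commute with the flat base change $X_{\bar k}\to X$, and the whole construction of $\mrm{A}_g$ is compatible with this base change, one may assume $k=\bar k$.

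Next I would set up the local model. Fix a closed point $p\in X^g$. By Lemma~\ref{lem:smoothint} and the splitting of Section~\ref{sect:decomp}, choose a $g$-equivariant coordinate system $x_1,\dots,x_n$ for $\hat{\O}_{X,p}\cong k[[x_1,\dots,x_n]]$ in which $x_1,\dots,x_m$ span the trivial $g$-eigenspace of the cotangent space — so $\hat{\O}_{X^g,p}=k[[x_1,\dots,x_m]]$ and, complete-locally, $X^g=\{x_{m+1}=\dots=x_n=0\}$ with $m=\dim_p X^g$ — while $x_{m+1},\dots,x_n$ span the sum of the nontrivial eigenspaces, on which $1-g$ is invertible. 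Complete-locally at $(p,p)$ the ring $\O_{X^2}$ becomes $k[[x_1,\dots,x_n,y_1,\dots,y_n]]$, the diagonal is cut out by the regular sequence $t_i:=x_i\ot 1-1\ot x_i$, and $(\Delta_g)_\ast\O_X$ becomes $k[[x_1,\dots,x_n]]$ with $1\ot x_i$ acting through $g^\ast(x_i)=\sum_j g_{ij}x_j$. Resolving $\hat{\O}_{X,p}$ by the Koszul complex on $(t_i)$ and applying $\Hom(-,-)$ to $(\Delta_g)_\ast\O_X$ yields the Koszul cochain complex in which $t_i$ acts by multiplication by $x_i-g^\ast(x_i)=\big((1-g^\ast)x\big)_i$; this is $0$ for $i\le m$ and, for $i>m$, is (an invertible linear change away from) $x_i$. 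Hence this complex splits off the exterior algebra on the Koszul generators $e_1^\ast,\dots,e_m^\ast$ with zero differential, tensored with the Koszul cochain complex of the regular sequence $(x_{m+1},\dots,x_n)$, and its cohomology is the free $\hat{\O}_{X^g,p}$-module
\[
\widehat{\sExt}_{X^2}(\Delta_\ast\O_X,(\Delta_g)_\ast\O_X)_p \;=\; \left(\wedge^\bullet_{\hat{\O}_{X^g,p}}\langle e_1^\ast,\dots,e_m^\ast\rangle\right)\ot_{\hat{\O}_{X^g,p}} \hat{\O}_{X^g,p}\cdot(e_{m+1}^\ast\wedge\dots\wedge e_n^\ast),
\]
with the top normal generator $e_{m+1}^\ast\wedge\dots\wedge e_n^\ast$ sitting in cohomological degree $\codim_p(X^g)$.

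It then remains to match this model with the two maps composing $\mrm{A}_g$. Under the HKR identification $\sExt_{X^2}(\Delta_\ast\O_X,\Delta_\ast\O_X)=T^{poly}_X$, the Koszul generator $e_i^\ast$ corresponds to the coordinate derivation $\partial_{x_i}$; the map induced by the projection $(\Delta_g)_\ast\O_X\to\Delta_\ast\O_{X^g}$ is, on Koszul cochains, reduction of the coefficients modulo $(x_{m+1},\dots,x_n)$, so it carries the above cohomology isomorphically onto the $\hat{\O}_{X^g,p}$-submodule $\wedge^\bullet\langle\partial_{x_1},\dots,\partial_{x_m}\rangle\wedge(\partial_{x_{m+1}}\wedge\dots\wedge\partial_{x_n})$ of $T^{poly}_X|_{X^g}$. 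By the canonical splitting of Section~\ref{sect:decomp}, complete-locally $\partial_{x_1},\dots,\partial_{x_m}$ span $T_{X^g}$ and $\partial_{x_{m+1}}\wedge\dots\wedge\partial_{x_n}$ spans $\det(N_{X^g})$, so this submodule is precisely the image of $p_g$, whence $\mrm{A}_g=p_g\circ(\text{this map})$ is a complete-local isomorphism at $p$. Running this over all closed $p\in X^g$ shows $\mrm{A}_g$, and hence $\mrm{A}$, is an isomorphism; its $G$-equivariance is immediate from the naturality in $g$ of the three constituent maps together with the canonicity of the decomposition of Section~\ref{sect:decomp}.

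\emph{The main obstacle} I anticipate is the bookkeeping in the last step: verifying that the abstractly-defined zig-zag $\sExt_{X^2}(\Delta_\ast\O_X,(\Delta_g)_\ast\O_X)\to\sExt_{X^2}(\Delta_\ast\O_X,\Delta_\ast\O_{X^g})=T^{poly}_X|_{X^g}\xrightarrow{p_g}T^{poly}_{X^g}\ot_{\O_X}\det(N_{X^g})$ really computes, in the Koszul model, the concrete projection described above — i.e., that the HKR isomorphism intertwines the Koszul generators with the coordinate derivations in a way compatible with the $g$-eigenspace splitting, and that the map on $\sExt$ induced by $(\Delta_g)_\ast\O_X\to\Delta_\ast\O_{X^g}$ is indeed coefficient-reduction on Koszul cochains. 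An alternative, perhaps cleaner, route is to note that $\Delta(X)$ and $\mrm{graph}(g)$ intersect cleanly along $X^g$ and to invoke the general formula for $\sExt$ of cleanly intersecting smooth subvarieties, but this essentially repackages the same local computation.
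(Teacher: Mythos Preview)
Your proposal is correct and follows essentially the same approach as the paper: reduce to a complete-local check at geometric points of $X^g$, linearize via a $g$-equivariant choice of coordinates, and invoke the known affine/linear computation. The only difference is one of detail---the paper dispatches the local computation in a single sentence by citing~\cite{anno} and~\cite[Section~4.3]{negronwitherspoon17}, whereas you carry out the Koszul calculation explicitly; your anticipated ``bookkeeping obstacle'' is precisely what those references contain.
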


We provide the proof for the reader's convenience.

\begin{proof}
As noted in~\cite{anno}, the fact that $\mrm{A}_g$ is an isomorphism can be checked complete locally at geometric points of $X^g=\mrm{Supp}\hspace{.5mm} \sExt_{X^2}(\Delta_\ast\O_X,(\Delta_g)_\ast\O_X)$.
\par

At any $g$-invariant geometric point $p$ of $X$, with corresponding maximal ideal $m_p\subset \O_{X,p}$, we choose a $g$-linear section $m_p/m_p^2\to m_p$ to deduce an isomorphism $\hat{\O}_{X,p}\cong \mrm{Sym}(m_p/m_p^2)$ of $g$-algebras.  Hence by considering the complete localization $\hat{(\mrm{A}_g)}_p$ we reduce to the affine case, where $\mrm{A}_g$ is well-known to be an isomorphism (see~\cite{anno} or~\cite[Section 4.3]{negronwitherspoon17}).
\par

The fact that the total isomorphism $\mrm{A}$ is $G$-equivariant follows from the fact that the two constituent maps
\[
\sExt_{X^2}(\Delta_\ast\O_X,\oplus_g(\Delta_g)_\ast\O_X)\to \sExt_{X^2}(\Delta_\ast\O_X,\oplus_g\Delta_\ast\O_{X^g})=\oplus_g T^{poly}_X|_{X^g}
\]
and
\[
\coprod_g p_g:\oplus_g T^{poly}_X|_{X^g}\to \oplus_g T^{poly}_{X^g}\ot_{\O_X}\det(N_{X^g})
\]
are both $G$-equivariant (cf.~\cite[Lemma 4.3.2]{negronwitherspoon17}).
\end{proof}

As we proceed we will simply write an equality $\uHH^\bullet(\X)=\bigoplus_{g\in G} T^{poly}_{X^g}\ot_{\O_X}\det(N_{X^g})$, by abuse of notation.  Via degeneration of the local-to-global spectral sequence of Section~\ref{sect:ss} we recover the linear identification of Arinkin, C\u{a}ld\u{a}raru, and Hablicsek.

\begin{corollary}[\cite{arinkinetal14}]\label{cor:arinkinetal}
For $\msc{X}=[X/G]$ as above, there is an isomorphism of vector spaces
\[
\gr \HH^\bullet(\X)=H^\bullet\left(X,\oplus_{g\in G}\!\ T^{poly}_{X^g}\otimes_{\O_X} \det(N_{X^g})\right)^G.
\]
\end{corollary}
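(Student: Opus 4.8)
The plan is to obtain the corollary formally from the degeneration of the local-to-global spectral sequence~\eqref{eq:105} established in Section~\ref{ss:form-deg}, together with the sheaf-level identification of Proposition~\ref{prop:anno}; no essentially new argument is required.

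First I would recall from Section~\ref{sect:loc} that $\HH^\bullet(\msc{X})$ is the $G$-invariant hypercohomology of $\msc{E}(\msc{X})$, whose cohomology sheaves are $\overline{\Delta}_\ast\uHH^\bullet(\msc{X})$. Taking $G$-invariants of the associated hypercohomology spectral sequence produces exactly~\eqref{eq:105},
\[
E_2^{p,q} = H^p\big(X,\uHH^q(\msc{X})\big)^G \ \Longrightarrow\ \HH^{p+q}(\msc{X}),
\]
the passage to $G$-invariants being harmless because $|G|$ is invertible in $k$, so that $(-)^G$ is exact and commutes with cohomology and with the formation of the spectral sequence; in particular $H^\bullet(X,\uHH^\bullet(\msc{X}))^G$ is the $G$-invariant part of the (bigraded) sheaf cohomology of $\uHH^\bullet(\msc{X})$. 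This equips $\HH^\bullet(\msc{X})$ with the descending filtration $F^\bullet$ coming from convergence; since we work over a field there is no extension problem, and convergence yields a bigraded identification $\gr^p_F \HH^{p+q}(\msc{X}) \cong E_\infty^{p,q}$.

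Next I would invoke the degeneration proved in Section~\ref{ss:form-deg}: there it is shown, via Grothendieck--Serre duality and the formality result~\cite[Theorem 4.4]{arinkinetal14}, that $\sRHom_{X^2}(\Delta_\ast\O_X,\Delta_\ast\O_X\rtimes G)$ is formal, hence $\msc{E}(\msc{X})$ is formal and~\eqref{eq:105} degenerates at $E_2$. Combined with the previous paragraph this gives
\[
\gr \HH^\bullet(\msc{X}) \cong \bigoplus_{p,q} E_2^{p,q} = H^\bullet\big(X,\uHH^\bullet(\msc{X})\big)^G
\]
as graded vector spaces, with total grading on the right. Finally I would substitute the canonical $G$-equivariant sheaf isomorphism $\uHH^\bullet(\msc{X}) \cong \bigoplus_{g\in G} T^{poly}_{X^g}\otimes_{\O_X}\det(N_{X^g})$ from Proposition~\ref{prop:anno}; its $G$-equivariance ensures it induces an isomorphism on $G$-invariant cohomology, which yields the asserted formula.

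The single piece of genuine content is the degeneration of~\eqref{eq:105}, imported from Section~\ref{ss:form-deg} and ultimately from~\cite{arinkinetal14}; this is the step one might regard as the main obstacle, but it has already been dealt with, and everything else is spectral-sequence bookkeeping. I would emphasize that the present statement is only a \emph{linear} identification: the multiplicative refinement in Corollary~\ref{cor:arinkinetal_mult} requires in addition the explicit ring structure on $\uHH^\bullet(\msc{X})$ worked out in Sections~\ref{sect:A} and~\ref{sect:product}, together with the multiplicativity of~\eqref{eq:105}.
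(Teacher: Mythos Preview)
Your proposal is correct and follows exactly the approach the paper takes: the corollary is stated immediately after Proposition~\ref{prop:anno} with the one-line justification that degeneration of the local-to-global spectral sequence of Section~\ref{sect:ss} combined with the sheaf identification of Proposition~\ref{prop:anno} yields the result. You have simply spelled out the spectral-sequence bookkeeping in more detail than the paper bothers to.
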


\section{The cup product on $\uHH^\bullet(\msc{X})$}
\label{sect:product}

We employ the identification $\mrm{A}:\uHH^\bullet(\msc{X})\cong \oplus_g T^{poly}_{X^g}\ot_{\O_X}\det(N_{X^g})$ of Proposition~\ref{prop:anno}, in conjunction with work of~\cite{SW}, in order to give a complete description of the cup product on local Hochschild cohomology.
\par

We prove below that the equivariant subsheaf $\uSA(\msc{X}):=\oplus_{g\in G}\det(N_{X^g})$ in $\uHH^\bullet(\msc{X})$ is an equivariant subalgebra.  As argued in the introduction, this subalgebra accounts for the points of $\msc{X}$ which admit automorphisms.  It is also shown that the Hochschild cohomology of $\msc{X}$ is generated by the polyvector fields $T^{poly}_X\subset \uHH^\bullet(\msc{X})$ and this distinguished subalgebra $\uSA(\msc{X})$.

\subsection{Technical points regarding complete localization and shrinking $G$}

We first recall some technical information which will be useful in our analysis. Let us consider a subgroup $G'$ in $G$, and the corresponding Hochschild cohomologies $\tilde{\pi}_\ast\sExt_{X^2}(\Delta_\ast\O_X,\Delta_\ast\O_X\rtimes G')$ and $\tilde{\pi}_\ast\sExt_{X^2}(\Delta_\ast\O_X,\Delta_\ast\O_X\rtimes G)$ as in Section~\ref{sect:SPprelim}.  The split inclusion $i:\Delta_\ast\O_X\rtimes G'\to \Delta_\ast\O_X\rtimes G$ induces an inclusion
\[
i_\ast:\tilde{\pi}_\ast\sExt_{X^2}(\Delta_\ast\O_X,\Delta_\ast\O_X\rtimes G')\to \tilde{\pi}_\ast\sExt_{X^2}(\Delta_\ast\O_X,\Delta_\ast\O_X\rtimes G),
\]
which we claim is an algebra map.
\par

In the ring theoretic setting, the fact that for any $G$-algebra $R$ the inclusion $\HH^\bullet(R,R\rtimes G')\to \HH^\bullet(R,R\rtimes G)$ is an algebra map is clear.  Indeed, this is already clear at the level of the dg algebras of Hochschild cochains $C^\bullet(R,R\rtimes G')\to C^\bullet(R,R\rtimes G)$.  Whence we see that $i_\ast$ is an algebra map by restricting to $G$-stable affines, as desired.
\par

We also recall that for coherent sheaves $M$ and $N$ on a smooth variety $Y$, and any geometric point $p$ of $Y$, the complete localization of the sheaf-extensions is canonically identified with the extensions of the complete localizations $\hat{M}_p$ and $\hat{N}_p$,
\[
\sExt_{Y}(M,N)^{\wedge}_p\underset{\mrm{canon}}\cong\Ext_{\hat{\O}_{Y,p}}(\hat{M}_p,\hat{N}_p).
\]
(See e.g.~\cite{hartshorne}.)  To be clear, our completion at $p$ here is the completion of the pullback to $Y_{\bar{k}}$.  This canonical isomorphism is one of algebras when $M=N$, and one of $\Ext_{\hat{\O}_{Y,p}}(\hat{N}_p,\hat{N}_p)-\Ext_{\hat{\O}_{Y,p}}(\hat{M}_p,\hat{M}_p)$-bimodules in general.
\par

Below, when we analyze the product on Hochschild cohomology complete locally at geometric points $p$ on the fixed spaces, we must first replace $G$ with the stabilizer $G_p$ of that point.  This step is necessary so that $G$ actually acts on the completion $\hat{\O}_{X,p}$.  The above argument assures us that there is no harm in making such local replacements.

\subsection{The action of $T^{poly}_X=\uHH^\bullet(\msc{X})_e$ and generation}

For the moment, let us view the Hochschild cohomology $\uHH^\bullet(\msc{X})$ as a sheaf on $X\times X/G$ which is supported on the diagonal.  We recall that the self-extension algebra $\sExt_{X^2}(\Delta_\ast\O_X\rtimes G,\Delta_\ast\O_X\rtimes G)$ is an $\O_{X^2}$-algebra which is supported on the union $\bigcup_{g\in G} \mrm{graph}(g)$, and hence the pushforward to $X\times X/G$ is supported on the diagonal $X\subset X\times X/G$.  In particular, the algebra structure map
\[
\O_{X\times X/G} \to \uHH^\bullet(\msc{X})
\]
factors through the projection $\O_{X\times X/G}\to \O_X$, where we abuse notation to identify $\O_X$ with its pushforward to $X\times X/G$.  So we see that $\uHH^\bullet(\msc{X})$ is naturally an $\O_X$-algebra.  This is also clear from the description of the multiplication on $\uHH^\bullet(\msc{X})$ given at~\eqref{e:exp-mult}.  (The point here is that $\O_X$ is in the center of $\uHH^\bullet(\msc{X})$.)
\par

By checking locally on $G$-stable affines, one readily verifies that the algebra
\[
\uHH^\bullet(\X)={\bigoplus}_{g\in G} T^{poly}_{X^g}\ot_{\O_X}\det(N_{X^g})
\]
is graded by $G$ in the obvious way.  That is to say, the multiplication on cohomology restricts to a map
\[
\uHH^\bullet(\msc{X})_h\ot_{\O_X}\uHH^\bullet(\msc{X})_g\to \uHH^\bullet(\msc{X})_{hg}
\]
for each $g,h\in G$, where $\uHH^\bullet(\msc{X})_\tau$ is the summand $T^{poly}_{X^\tau}\ot_{\O_X}\det(N_{X^\tau})$.  In particular, $\uHH^\bullet(\msc{X})_e$ is a subalgebra in the local Hochschild cohomology.  The following lemma is easily seen by checking over $G$-stable affines.

\begin{lemma}
The identification $T^{poly}_X=\uHH^\bullet(\msc{X})_e$, from Proposition~\ref{prop:anno}, is one of equivariant sheaves of algebras.
\end{lemma}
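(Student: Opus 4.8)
The plan is to reduce the assertion to the classical multiplicative Hochschild--Kostant--Rosenberg isomorphism for $X$, recalled in Section~\ref{sect:A1} in the form $\uHH^\bullet(X)=\sExt_{X^2}(\Delta_\ast\O_X,\Delta_\ast\O_X)=T^{poly}_X$, together with the equivariance of $\mrm{A}$ established in Proposition~\ref{prop:anno}. The first step is to pin down the product on the summand $\uHH^\bullet(\msc{X})_e$. Since the $e$-summand of $\Delta_\ast\O_X\rtimes G=\bigoplus_{g}(\Delta_g)_\ast\O_X$ is $(\Delta_e)_\ast\O_X=\Delta_\ast\O_X$, taking $g=h=e$ in the explicit description of the multiplication at~\eqref{e:exp-mult} shows that the induced product on $\uHH^\bullet(\msc{X})_e$ is the Yoneda composition product on $\sExt_{X^2}(\Delta_\ast\O_X,\Delta_\ast\O_X)$: the ``diagonal action of $e$'' is the identity, so no twist intervenes, and the target is again $\sRHom_{X^2}(\Delta_\ast\O_X,\Delta_\ast\O_X)$. (Equivalently, on a $G$-stable affine $U$, the $e$-graded part of the Hochschild-cochain dg algebra $C^\bullet(\O(U),\O(U)\rtimes G)$ is the subalgebra $C^\bullet(\O(U),\O(U))$, whence the $e$-component of $\HH^\bullet(\O(U),\O(U)\rtimes G)$ is $\HH^\bullet(\O(U))$ as an algebra.) Thus $\uHH^\bullet(\msc{X})_e$, with its inherited algebra structure, is canonically the Hochschild cohomology sheaf $\uHH^\bullet(X)$ equipped with the cup product.

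Next I would observe that, under this identification, $\mrm{A}_e$ is exactly the HKR isomorphism $\uHH^\bullet(X)\cong T^{poly}_X$. Indeed $X^e=X$ and $N_{X^e}=0$, so $\det(N_{X^e})=\O_X$, the projection $p_e$ of Section~\ref{sect:decomp} is the identity, and by construction $\mrm{A}_e$ is the restriction-to-the-diagonal map $\sExt_{X^2}(\Delta_\ast\O_X,(\Delta_e)_\ast\O_X)=\sExt_{X^2}(\Delta_\ast\O_X,\Delta_\ast\O_X)\to T^{poly}_X$, i.e.\ the HKR map. That this map is an isomorphism of sheaves of algebras --- cup product going to the wedge product --- is the classical multiplicative HKR theorem: locally on $G$-stable affines it is the affine statement of~\cite{HKR}, globalized to the sheaf level in~\cite{yekutieli02,calaquevandenbergh10} and recorded in Section~\ref{sect:A1}. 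No Todd-type correction is required here, as we compare the induced algebra structures on cohomology sheaves, not $A_\infty$-structures on the underlying complexes.

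For the equivariant structures, $\mrm{A}$ --- hence its restriction $\mrm{A}_e$ --- is an isomorphism of $G$-equivariant sheaves by Proposition~\ref{prop:anno}; the algebra structure on $\uHH^\bullet(\msc{X})$ is $G$-equivariant, as it is a sheaf of algebras on $\msc{X}$ (Remark~\ref{rem:intrinsic}), and $\uHH^\bullet(\msc{X})_e$ is a $G$-stable subalgebra; and the wedge product on $T^{poly}_X$ is manifestly $G$-equivariant. Combining these, $\mrm{A}_e$ is an isomorphism of $G$-equivariant sheaves of algebras. I do not anticipate a genuine obstacle: the only point requiring care is the first step, identifying the $e$-graded part of the product~\eqref{e:exp-mult} with the untwisted cup product on $\uHH^\bullet(X)$, after which classical multiplicative HKR and the naturality of its construction close the argument and equivariance is formal.
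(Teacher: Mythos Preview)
Your proposal is correct and follows essentially the same approach as the paper: the paper does not give an explicit proof but simply remarks that the lemma ``is easily seen by checking over $G$-stable affines,'' which is precisely what your argument carries out in detail---identifying the $e$-component of the product with the untwisted Yoneda product and invoking the classical multiplicative HKR isomorphism locally, with equivariance inherited from Proposition~\ref{prop:anno}.
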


Let us consider the case where $h$, or $g$, is the identity.  Here we have the inclusion $\mrm{det}(N_{X^g})\subset \uHH^\bullet(\msc{X})_g$ and the identification $T^{poly}_X=\uHH^\bullet(\msc{X})_e$, from which we restrict the multiplication to get a map
\[
T^{poly}_X\ot_{\O_X}\det(N_{X^g})\to T^{poly}_{X^g}\ot_{\O_X}\det(N_{X^g}).
\]
Note that for any $g\in G$ we have an algebra projection
\begin{equation}\label{eq:1024}
T^{poly}_X\to T^{poly}_X|_{X^g}\to T^{poly}_{X^g}
\end{equation}
given by the counit of the pullback functor with the algebra map $T^{poly}_X|_{X^g}\to T^{poly}_{X^g}$ induced by the spitting $T_X|_{X^g}\cong T_{X^g}\oplus N_{X^g}$ of Section~\ref{sect:decomp}.

\begin{theorem}\label{thm:Tact}
The polyvector fields $T^{poly}_X$ are a (graded) central subalgebra in local Hochschild cohomology, so that $\uHH^\bullet(\msc{X})$ is a $T^{poly}_X$-algebra.  Furthermore, for each $g\in G$, the restriction of the multiplication
\[
\uHH^\bullet(\msc{X})_e\ot_{\O_X}\det(N_{X^g})=T^{poly}_X\ot_{\O_X}\det(N_{X^g})\to T^{poly}_{X^g}\ot_{\O_X}\det(N_{X^g})=\uHH^\bullet(\msc{X})_g
\]
is the tensor product of the surjection~\eqref{eq:1024} with the identity on $\det(N_{X^g})$.
\end{theorem}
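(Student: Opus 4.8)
The plan is to reduce both assertions to the case of a finite group acting linearly on affine space, where the cup product on Hochschild cohomology is already described in the literature, and then to deduce graded-centrality formally from the resulting generation statement. Each assertion is a statement about the $\O_X$-algebra $\uHH^\bullet(\msc{X})=\oplus_{g}T^{poly}_{X^g}\ot_{\O_X}\det(N_{X^g})$ viewed as a sheaf on $X$: the claimed product formula is an equality of two maps of coherent sheaves supported on $X^g$, and graded-centrality of $T^{poly}_X$ amounts, for each $g$, to the vanishing of a commutator section of the coherent sheaf $\uHH^\bullet(\msc{X})_g$. Both may therefore be checked after completing at an arbitrary geometric point $p$ of $X^g$. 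There I invoke the technical reductions recalled at the start of this section: the split inclusion of smash products $\Delta_\ast\O_X\rtimes G_p\hookrightarrow\Delta_\ast\O_X\rtimes G$ induces an algebra embedding on local Hochschild cohomology, so $G$ may be replaced by the stabilizer $G_p$ (which contains $g$); complete localization is compatible with the $\sExt$-algebras and their bimodule structure; and a $G_p$-equivariant linear splitting $m_p/m_p^2\hookrightarrow m_p$ identifies $\widehat{\O}_{X,p}$ with the completed coordinate ring of the tangent space $V:=T_p$, on which $\Gamma:=G_p$ acts linearly. By exactness and faithful flatness of completion on coherent sheaves, it then suffices to treat $\HH^\bullet(\O(V),\O(V)\rtimes\Gamma)$ for $\Gamma$ acting linearly on $V$.

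In that linear setting the full algebra structure on $\HH^\bullet(\O(V),\O(V)\rtimes\Gamma)$ — equivalently on $\HH^\bullet(\O(V)\rtimes\Gamma)=\HH^\bullet(\O(V),\O(V)\rtimes\Gamma)^\Gamma$ — is computed by Shepler and Witherspoon~\cite{SW} (see also~\cite{anno} and~\cite[Section~4.3]{negronwitherspoon17}). Under the identification of Proposition~\ref{prop:anno}, whose identity component is the classical HKR isomorphism $\HH^\bullet(\O(V))=T^{poly}_V$, that description exhibits: (i) $T^{poly}_V$ as graded-central in $\HH^\bullet(\O(V),\O(V)\rtimes\Gamma)$; and (ii) for each $\gamma$, the multiplication $T^{poly}_V\ot\det(N_{V^\gamma})\to T^{poly}_{V^\gamma}\ot\det(N_{V^\gamma})$ as the restriction of polyvector fields to $V^\gamma$ — which, through the decomposition $T_V|_{V^\gamma}\cong T_{V^\gamma}\oplus N_{V^\gamma}$ of Section~\ref{sect:decomp}, is exactly the surjection~\eqref{eq:1024} — tensored with the identity on $\det(N_{V^\gamma})$. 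Transporting (ii) back along the reductions above yields the ``Furthermore'' clause of the theorem verbatim, and yields, for each $g\in G$, that $T^{poly}_X$ graded-commutes with $\det(N_{X^g})\subset\uHH^\bullet(\msc{X})_g$.

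It remains to promote these componentwise facts to graded-centrality of $T^{poly}_X$ throughout $\uHH^\bullet(\msc{X})$. Since~\eqref{eq:1024} is surjective, the ``Furthermore'' clause shows that the multiplication $T^{poly}_X\ot_{\O_X}\det(N_{X^g})\to\uHH^\bullet(\msc{X})_g$ has image all of $T^{poly}_{X^g}\ot_{\O_X}\det(N_{X^g})=\uHH^\bullet(\msc{X})_g$; summing over $g$, the sheaf of algebras $\uHH^\bullet(\msc{X})$ is generated by $T^{poly}_X=\uHH^\bullet(\msc{X})_e$ together with $\uSA(\msc{X})=\oplus_g\det(N_{X^g})$. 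A homogeneous section $\xi$ of $T^{poly}_X$ graded-commutes with each $\det(N_{X^g})$ by the preceding paragraph, and with $T^{poly}_X$ itself because $\uHH^\bullet(X)=T^{poly}_X$ is graded-commutative; since the set of sections graded-commuting with $\xi$ is closed under addition and multiplication and contains $T^{poly}_X$ and $\uSA(\msc{X})$, it contains the subalgebra they generate, i.e.\ all of $\uHH^\bullet(\msc{X})$. Hence $T^{poly}_X$ is graded-central and $\uHH^\bullet(\msc{X})$ is a $T^{poly}_X$-algebra.

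The genuinely computational content is entirely the linear case, and the main obstacle there is bookkeeping: one must check that the explicit HKR cochain representatives used in~\cite{SW} correspond, under the maps $\mrm{A}_\gamma$ of Proposition~\ref{prop:anno}, to the geometric projections $p_\gamma$ of Section~\ref{sect:decomp} and to the surjection~\eqref{eq:1024}, with consistent signs and $\Gamma$-normalizations. If one prefers a self-contained argument, the ``Furthermore'' clause can be extracted directly from the explicit multiplication~\eqref{e:exp-mult}: for a product of the identity component with the $g$-component the relevant identification $\sRHom_{X^2}(\Delta_\ast\O_X,\Delta_\ast\O_X)\cong\sRHom_{X^2}((\Delta_g)_\ast\O_X,(\Delta_g)_\ast\O_X)$ is the evident one, so $\xi\cdot\eta$ is represented by $(\xi\rtimes g)\circ\eta$, whose image under the first map of $\mrm{A}_g$ is the product $\xi|_{X^g}\cdot(\text{first map of }\mrm{A}_g)(\eta)$ taken in $T^{poly}_X|_{X^g}$; applying $p_g$, and using that $p_g$ is $T^{poly}_{X^g}$-linear for the algebra decomposition $T^{poly}_X|_{X^g}\cong T^{poly}_{X^g}\ot_{\O_{X^g}}\wedge^\bullet_{\O_{X^g}}N_{X^g}$ on which $g$ acts trivially on the first tensor factor, reduces everything to the single claim that the first map of $\mrm{A}_g$ carries $\sExt^{\codim(X^g)}_{X^2}(\Delta_\ast\O_X,(\Delta_g)_\ast\O_X)$ isomorphically onto $\det(N_{X^g})\subset\wedge^{\codim(X^g)}T_X|_{X^g}$ — once more a complete-local, essentially Koszul, computation — and a symmetric analysis of the opposite order then also recovers the graded-commutativity of $T^{poly}_X$ with $\det(N_{X^g})$ used above.
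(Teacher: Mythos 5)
Your argument is correct and takes essentially the same route as the paper: both reduce the ``Furthermore'' clause and the centrality of $T^{poly}_X$ to a complete-local computation at geometric points of $X^g$ (replacing $G$ by the stabilizer and linearizing via an equivariant splitting of $m_p/m_p^2$, as in Proposition~\ref{prop:anno}), where the cup product is known from the affine/linear computations of Anno and Shepler--Witherspoon. Your additional commutant-subalgebra step, deducing full graded-centrality from commutation with $T^{poly}_X$ and the generators $\det(N_{X^g})$, is a harmless formal variant of the paper's direct complete-local verification of centrality.
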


\begin{proof}
%One can see locally, over $G$-stable affines, that the multiplication on $\uHH^\bullet(\msc{X})_e$ is the usual multiplication on polyvector fields.  So the identification $\uHH^\bullet(\msc{X})$ is appropriate.
%\par

The fact that the multiplication factors through the reduction $T^{poly}_X\to T^{poly}_X|_{X^g}$, in the first coordinate, simply follows from the fact that the target sheaf is supported on $X^g$.  Recall that the kernel of the projection $T^{poly}_X|_{X^g}\to T^{poly}_{X^g}$ is the ideal $\msc{I}$ generated by $N_{X^g}=(1-g)T_X|_{X^g}$.  So we seek to show first that the multiplication map $T^{poly}_X|_{X^g}\ot_{\O_X}\det(N_{X^g})\to T^{poly}_{X^g}\ot_{\O_X}\det(N_{X^g})$ vanishes on $\msc{I}\ot\det(N_{X^g})$, and that the restriction of the multiplication
\[
T^{poly}_{X^g}\ot_{\O_X}\det(N_{X^g})\overset{\mrm{incl}}\longrightarrow T^{poly}_X|_{X^g}\ot_{\O_X}\det(N_{X^g})\overset{\mrm{mult}}\longrightarrow T^{poly}_{X^g}\ot_{\O_X}\det(N_{X^g})
\]
is the identity map.  Both of these properties can be checked complete locally at points on $X^g$.  Whence we may reduce to the affine case, as in the proof of Proposition~\ref{prop:anno}.  Both the vanishing of the product on $\msc{I}\ot_{\O_X}\det(N_{X^g})$ and the fact that the above composite is the identity are understood completely in the affine case~\cite{anno,SW}.  Centrality of $T^{poly}_X$ in $\uHH^\bullet(\msc{X})$ also follows by a complete local reduction to the linear setting.
\end{proof}

\begin{remark}
Theorem~\ref{thm:Tact} can also be deduced from the fact that the cohomology $\uHH^\bullet(\msc{X})$ is a braided commutative algebra in the category of Yetter-Drinfeld modules for $G$.  See~\cite{negron}.
\end{remark}

As an immediate corollary to Theorem~\ref{thm:Tact} one finds

\begin{corollary}
As a $T^{poly}_X$-algebra, the Hochschild cohomology $\uHH^\bullet(\msc{X})$ is generated by the determinants of the normal bundles $\det(N_{X^g})$.
\end{corollary}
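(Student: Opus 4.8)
The plan is to read this off directly from Theorem~\ref{thm:Tact}; no new geometric input is required. Recall that, as observed just before that theorem, the identification $\uHH^\bullet(\msc{X})=\bigoplus_{g\in G}T^{poly}_{X^g}\ot_{\O_X}\det(N_{X^g})$ is $G$-graded as an algebra, with $g$-component $\uHH^\bullet(\msc{X})_g=T^{poly}_{X^g}\ot_{\O_X}\det(N_{X^g})$, and that $\uHH^\bullet(\msc{X})_e=T^{poly}_X$ is a central subalgebra. So the only thing I would need to verify is that the $T^{poly}_X$-submodule generated by $\uSA(\msc{X})=\bigoplus_{g\in G}\det(N_{X^g})$ already exhausts $\uHH^\bullet(\msc{X})$.

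For this, fix $g\in G$. By the second assertion of Theorem~\ref{thm:Tact}, the restriction of the cup product
\[
T^{poly}_X\ot_{\O_X}\det(N_{X^g})=\uHH^\bullet(\msc{X})_e\ot_{\O_X}\det(N_{X^g})\longrightarrow \uHH^\bullet(\msc{X})_g=T^{poly}_{X^g}\ot_{\O_X}\det(N_{X^g})
\]
is the projection \eqref{eq:1024} tensored with the identity on $\det(N_{X^g})$. The map $T^{poly}_X\to T^{poly}_{X^g}$ of \eqref{eq:1024} is surjective — it is the composite of restriction to the closed subvariety $X^g$ with the split surjection coming from $T_X|_{X^g}\cong T_{X^g}\oplus N_{X^g}$ — and tensoring is right exact, so this multiplication map is surjective. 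Hence $T^{poly}_X\cdot\det(N_{X^g})=\uHH^\bullet(\msc{X})_g$. Summing over $g\in G$ and invoking the $G$-grading, the $T^{poly}_X$-submodule generated by $\bigoplus_{g\in G}\det(N_{X^g})$ equals $\bigoplus_{g\in G}\uHH^\bullet(\msc{X})_g=\uHH^\bullet(\msc{X})$, so a fortiori the $T^{poly}_X$-subalgebra it generates is all of $\uHH^\bullet(\msc{X})$. I expect no genuine obstacle here: all of the substance lives in Theorem~\ref{thm:Tact}, and this statement is a purely formal consequence — in particular one never needs to multiply the classes $\det(N_{X^g})$ with one another (the structure of those products is instead the content of Theorem~\ref{thm:SA}).
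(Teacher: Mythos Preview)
Your argument is correct and is precisely the ``immediate'' deduction the paper has in mind: the paper states the corollary with no proof beyond the phrase ``As an immediate corollary to Theorem~\ref{thm:Tact} one finds,'' and your unpacking of that sentence---surjectivity of~\eqref{eq:1024} tensored with the identity on $\det(N_{X^g})$, then summing over the $G$-grading---is exactly what is intended.
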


\subsection{The equivariant subalgebra $\uSA(\msc{X})$}

Recall that, since $X^g$ is sent isomorphically to $X^{hgh^{-1}}$ via the action of $h$ on $X$, the codimensions of $X^g$ and $X^{hgh^{-1}}$ agree.  Rather, $\mrm{codim}(X^g)|_p=\mrm{codim}(X^{hgh^{-1}})|_{h(p)}$ at each point $p\in X^g$.  Hence we see that the sum
\[
\uSA(\msc{X}):=\bigoplus_{g\in G} \det(N_{X^g})
\]
forms an equivariant subsheaf in $\uHH^\bullet(\msc{X})$.  Since each product $\det(N_{X^g})\ot_{\O_X}\det(N_{X^h})$ is supported on the intersection $X^g\cap X^h$, one can describe the multiplication
\[
\det(N_{X^g})\ot_{\O_X}\det(N_{X^h})\to \uHH^\bullet(\msc{X})_{gh}
\]
by considering its restriction to the components of the intersection.

\begin{theorem}\label{thm:SA}
The equivariant subsheaf $\uSA(\msc{X})$ is a subalgebra in $\uHH^\bullet(\msc{X})$.  The multiplication
\[
m_{\uSA}:\det(N_{X^g})\ot_{\O_X}\det(N_{X^h})\to \det(N_{X^{gh}})
\]
is non-vanishing only on those components $Y$ of the intersection $X^g\cap X^h$ which are shared components with $X^{gh}$, and along which $X^g$ and $X^h$ intersect transversely.  On any such components $Y$, the multiplication $m_{\underline{SA}}$ is the isomorphism induced by the canonical splitting $\kappa:N_{X^g}|_Y\oplus N_{X^h}|_Y\cong N_{X^{gh}}|_Y$ of Lemma~\ref{lem:esslem} (iii), i.e.\ $m_{\uSA}=\det(\kappa)$.
\end{theorem}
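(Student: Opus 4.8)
The plan is to reduce everything to the affine—indeed linear—situation, where the product on $\uSA$ is already understood in the work of Shepler--Witherspoon \cite{SW}, and then to assemble the local computations into the global geometric statement using the decompositions of Section~\ref{sect:decomp} and the transversality criterion of Lemma~\ref{lem:esslem}. First I would observe that $\uSA(\msc{X})$ is closed under multiplication: the product $\det(N_{X^g})\ot_{\O_X}\det(N_{X^h})$ lands in $\uHH^\bullet(\msc{X})_{gh} = T^{poly}_{X^{gh}}\ot_{\O_X}\det(N_{X^{gh}})$, and I must check it actually lies in the summand $\det(N_{X^{gh}})$ (the top exterior power, i.e.\ the part of $T^{poly}_{X^{gh}}$-degree $0$). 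This is a statement about a map of coherent sheaves, so it can be verified after completing at a geometric point $p$ of $X^g\cap X^h$; there, after replacing $G$ by the stabilizer $G_p$ (harmless by the discussion on shrinking $G$) and choosing a $G_p$-equivariant splitting $m_p/m_p^2 \to m_p$ as in the proof of Proposition~\ref{prop:anno}, we are reduced to a finite group acting linearly on a vector space, where the corresponding fact is \cite[Lemma~2.1 and \S3]{SW} (together with the appendix, Lemma~\ref{lem:dualrels}, to handle positive characteristic). Thus $\uSA(\msc{X})$ is a subalgebra and $m_{\uSA}$ is well-defined into $\det(N_{X^{gh}})$.

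Next I would analyze $m_{\uSA}$ component by component. Fix a component $Y$ of $X^g\cap X^h$ and restrict $m_{\uSA}$ to $Y$. Since $\det(N_{X^g})|_Y$, $\det(N_{X^h})|_Y$, and $\det(N_{X^{gh}})|_Y$ are line bundles on $Y$ (here one uses that $Y$ is smooth and connected, and that $X^{gh}$ is smooth by Lemma~\ref{lem:smoothint}; when $Y$ is not a component of $X^{gh}$ the target restricted to $Y$ is still the pullback of a line bundle), the restriction $m_{\uSA}|_Y$ is either zero or an isomorphism, and whether it is nonzero can be checked at a single geometric point $p\in Y$, hence complete-locally. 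At such a $p$, the linear-algebraic computation of \cite{SW} says precisely that the product $\det(N_{X^g})\ot\det(N_{X^h}) \to \det(N_{X^{gh}})$ is nonzero exactly when the images of $N_{X^g}|_p$ and $N_{X^h}|_p$ span a subspace of $T_{X,p}$ of dimension $\dim N_{X^g}|_p + \dim N_{X^h}|_p$ that coincides with $(1-gh)T_{X,p} = N_{X^{gh}}|_p$, and that in this case the product is the determinant of the resulting direct-sum decomposition $N_{X^{gh}}|_p \cong N_{X^g}|_p \oplus N_{X^h}|_p$. By Lemma~\ref{lem:esslem}, the condition ``$N_{X^g}|_Y$ and $N_{X^h}|_Y$ intersect trivially in $T_X|_Y$'' (its part (iv)) is equivalent to $Y$ being a shared component of $X^{gh}$ along which $X^g$ and $X^h$ meet transversely (part (i)) and to $\kappa(g,h,Y)$ being an isomorphism (part (iii)). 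So the nonvanishing locus of $m_{\uSA}$ is exactly as claimed, and on those components the map is nonzero, hence an isomorphism of line bundles.

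Finally I would identify that isomorphism with $\det(\kappa)$. Having reduced to the linear case at each $p\in Y$, the product there is the determinant of the internal direct sum decomposition $N_{X^{gh}}|_p = N_{X^g}|_p \oplus N_{X^h}|_p$ induced by the two inclusions $N_{X^g}|_p, N_{X^h}|_p \hookrightarrow T_{X,p} \twoheadrightarrow N_{X^{gh}}|_p$; but this is precisely the pointwise description of the map $\kappa = \kappa(g,h,Y)$ of \eqref{eq:kappa}, so the two agree pointwise, hence as maps of line bundles on $Y$ (two maps of line bundles agreeing at every geometric point, one of which is an isomorphism, are equal up to—and here equal to—the same isomorphism, because the identification with the affine model is canonical and compatible with the maps $A_g$ of Section~\ref{sect:A}). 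I expect the main obstacle to be bookkeeping: making sure the canonical identifications $\uHH^\bullet(\msc{X})_g \cong T^{poly}_{X^g}\ot_{\O_X}\det(N_{X^g})$ from Proposition~\ref{prop:anno}, the complete-local isomorphisms $\sExt_{X^2}(\cdots)^{\wedge}_p \cong \Ext_{\hat\O_{X,p}}(\cdots)$, and the $G_p$-equivariant linear model of \cite{SW} all fit together so that the product computed upstairs genuinely corresponds to the map $\det(\kappa)$ downstairs—in particular tracking the cohomological degree shift by $\codim(X^g)$ on each determinant factor and verifying that no spurious scalar is introduced when passing between the sheaf-theoretic and linear pictures (this is exactly the kind of normalization subtlety flagged elsewhere in the paper, though here, for $\uSA$ as opposed to the symplectic trivializations, no rescaling is needed).
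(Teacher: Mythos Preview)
Your proposal is correct and follows essentially the same route as the paper: reduce to the linear case via complete localization at geometric points of $X^g\cap X^h$ (after shrinking to the stabilizer), invoke \cite[Proposition~2.5]{SW} there, and translate the linear-algebraic condition back via Lemma~\ref{lem:esslem}. The paper organizes the logic slightly differently---it first proves vanishing on ``bad'' components and then uses the cohomological grading to conclude $\uSA(\msc{X})$ is a subalgebra, whereas you establish closure under multiplication first---but the content is the same. One small imprecision: the claim that a map of line bundles on connected $Y$ is ``either zero or an isomorphism'' is not true in general; what makes it true here is that the complete-local analysis shows the map is pointwise either zero or an isomorphism, and the codimension condition governing which case occurs is constant on the connected component $Y$.
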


\begin{proof}
Consider a geometric point $p$ in the intersection.  By Lemma~\ref{lem:codimineqs} we have
\begin{equation}\label{eq:1085}
\mrm{codim}(X^{gh})|_p\leq \mrm{codim}(X^g)|_p+\mrm{codim}(X^h)|_p
\end{equation}
with equality holding if and only if the component $Y$ containing $p$ is shared with $X^{gh}$ and the intersection is transverse along $Y$.  So we are claiming, first, that multiplication vanishes at each point $p$ at which~\eqref{eq:1085} is a strict inequality.  At any such point $p$ we consider the complete localization $\hat{(m|_{\det(N_{X^g})\ot_{\O_X}\det(N_{X^h})})}_p$ to reduce to the affine case, where the result is known to hold~\cite[Proposition~2.5]{SW}.
\par

We note that the equality $\mrm{codim}(X^{gh})|_p=\mrm{codim}(X^g)|_p+\mrm{codim}(X^h)|_p$ implies an equality of cohomological degrees
\[
\deg\big(\det(N_{X^{gh}})\big)|_p+\deg\big(\det(N_{X^{h}})\big)|_p=\deg\big(\det(N_{X^{gh}})\big)|_p
\]
in Hochschild cohomology.  Since the cup product on $\uHH^\bullet(\msc{X})$ is graded with respect to the cohomological degree, the vanishing result we have just established implies that the product on the Hochschild cohomology restricts to a product $m_{\uSA}$ on the equivariant subsheaf $\uSA(\msc{X})$.  Whence we see that $\uSA(\msc{X})$ is an equivariant subalgebra in $\uHH^\bullet(\msc{X})$.
\par

Now, at any point $p$ at which the inequality~\eqref{eq:1085} is an equality, we again reduce to the affine case via a localization $\hat{(m_{\uSA})}_p$ to see that the multiplication is in fact induced by the canonical decomposition $N_{X^g}|_Y\oplus N_{X^h}|_Y\cong N_{X^{gh}}|_Y$ of Lemma~\ref{lem:esslem} (see the proof of~\cite[Proposition~2.5]{SW}).
\end{proof}

\begin{remark}
Our algebra $\uSA(\msc{X})$ is a sheaf-theoretic variant of the ``volume subalgebra" $A_\mrm{vol}$ of~\cite{SW}.
\end{remark}

As remarked in the introduction, we can view $\uSA(\msc{X})$ as a sheaf on $\msc{X}$, or $\msc{X}\times X/G$.  However, we are not certain that $\uSA(\msc{X})$ is an invariant of $\msc{X}$ as an orbifold.  In particular, it is not immediately obvious how one should define this sheaf of algebras for orbifolds which are not global quotients.

\begin{question}
Does the algebra $\uSA(\msc{X})$, viewed as a sheaf of algebras on $\msc{X}$, have a natural interpretation as an orbifold invariant?
\end{question}

\subsection{Global descriptions}

One can readily combine Theorems~\ref{thm:Tact} and~\ref{thm:SA} to obtain a complete description of the algebra structure for $\uHH^\bullet(\msc{X})$, as in Theorem~\ref{thm:intro} of the introduction.  We observe what degeneration of the spectral sequence of Section~\ref{ss:form-deg} gives us, when paired with Theorems~\ref{thm:Tact} and~\ref{thm:SA}.

\begin{corollary}\label{cor:arinkinetal_mult}
The linear identification
\[
\gr \HH^\bullet(\X)=H^\bullet\left(X,\oplus_{g\in G}\!\ T^{poly}_{X^g}\otimes_{\O_X} \det(N_{X^g})\right)^G.
\]
of Corollary~\ref{cor:arinkinetal} is an identification of graded algebras, where $\oplus_{g\in G}\!\ T^{poly}_{X^g}\otimes_{\O_X} \det(N_{X^g})$ has the equivariant algebra structure described in Theorems~\ref{thm:Tact} and~\ref{thm:SA}.
\end{corollary}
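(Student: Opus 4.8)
The plan is to bootstrap the purely linear identification of Corollary~\ref{cor:arinkinetal} to an identification of graded algebras, using the \emph{multiplicativity} of the local-to-global spectral sequence~\eqref{eq:105} together with the sheaf-level algebra structure pinned down in Theorems~\ref{thm:Tact} and~\ref{thm:SA}. First I would assemble those two theorems with Proposition~\ref{prop:anno}: since $\uHH^\bullet(\msc{X})$ is generated as a $T^{poly}_X$-algebra by the subalgebra $\uSA(\msc{X})$, its product is completely determined by centrality of $T^{poly}_X$, the module action~\eqref{eq:1024} of Theorem~\ref{thm:Tact}, and the rule $m_{\uSA}=\det(\kappa)$ of Theorem~\ref{thm:SA}. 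Write $\msc{A}$ for this equivariant sheaf of graded algebras $\oplus_{g\in G}T^{poly}_{X^g}\ot_{\O_X}\det(N_{X^g})$; the goal is then to identify $\gr\HH^\bullet(\msc{X})$ with $H^\bullet(X,\msc{A})^G$ as graded algebras, where $H^\bullet(X,\msc{A})^G$ carries the cup product induced by $\msc{A}$ being a sheaf of graded algebras.

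Next I would recall from Section~\ref{sect:loc} that $\uHH^\bullet(\msc{X})$ is (the diagonal restriction of) the cohomology sheaves of the equivariant sheaf of dg algebras $\msc{E}(\msc{X})$, and that~\eqref{eq:105} is, after passing to $G$-invariants, the hypercohomology spectral sequence of $\msc{E}(\msc{X})$. This is a spectral sequence \emph{of algebras}: choosing a model of $\msc{E}(\msc{X})$ as an honest sheaf of dg algebras and computing its hypercohomology via a multiplicative resolution -- a Godement resolution, or the \v{C}ech complex for a $G$-stable affine cover -- the filtration by resolution degree is multiplicative, and the induced product on $E_2 = H^\bullet(X,\uHH^\bullet(\msc{X}))^G$ is by construction the cup product coming from the sheaf-of-algebras structure on $\uHH^\bullet(\msc{X})$. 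Passing to $G$-invariants throughout does no damage, as $|G|$ is invertible in $k$, so $(-)^G$ is exact, compatible with products, and commutes with formation of the spectral sequence. The filtration on $\HH^\bullet(\msc{X})$ appearing in~\eqref{eq:151} and in Corollary~\ref{cor:arinkinetal} is precisely the associated decreasing filtration, so that $\gr^p\HH^n(\msc{X})=E_\infty^{p,n-p}$.

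Then I would bring in the degeneration proved in Section~\ref{ss:form-deg}: $\sRHom_{X^2}(\Delta_\ast\O_X,\Delta_\ast\O_X\rtimes G)$ is formal as a sheaf, so~\eqref{eq:105} degenerates and $E_2=E_\infty$. For a cohomological multiplicative spectral sequence that is bounded in each total degree and degenerates at $E_2$, the canonical isomorphism $E_\infty\cong\gr(\mathrm{abutment})$ is one of bigraded algebras -- this is the standard fact that when no differentials intervene the product on the associated graded of the filtered abutment coincides with the product on $E_2$. Chaining the steps gives, as graded algebras,
\[
\gr\HH^\bullet(\msc{X})\;\cong\;E_\infty\;=\;E_2\;=\;H^\bullet\big(X,\uHH^\bullet(\msc{X})\big)^G\;=\;H^\bullet(X,\msc{A})^G,
\]
and inserting the description of $\msc{A}$ from the first paragraph yields exactly the algebra structure on $\oplus_{g\in G}T^{poly}_{X^g}\ot_{\O_X}\det(N_{X^g})$ of Theorems~\ref{thm:Tact} and~\ref{thm:SA}, refining Corollary~\ref{cor:arinkinetal}.

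I expect the only real obstacle to be the point buried in the second paragraph: verifying that the $E_2$-product of \emph{this particular} spectral sequence is the cup product attached to the sheaf-of-algebras structure on $\uHH^\bullet(\msc{X})$, rather than merely some a priori multiplication compatible with the abutment. This is exactly what the multiplicativity assertion following~\eqref{eq:105} is meant to provide; pinning it down requires fixing an explicit dg-algebra model for $\msc{E}(\msc{X})$ and a multiplicative resolution, and then tracing through that the associated-graded product is the cup product of cohomology sheaves. Because the restriction-to-the-diagonal map $\msc{E}(\msc{X})\to\overline{\Delta}_\ast\uHH^\bullet(\msc{X})$ on cohomology sheaves is a morphism of sheaves of algebras, nothing multiplicative is lost in passing from $\msc{E}(\msc{X})$ on $X\times X/G$ down to $\uHH^\bullet(\msc{X})$ on $X$; once the $E_2$-product is identified, the remainder of the argument is purely formal.
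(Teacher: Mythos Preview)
Your proposal is correct and follows essentially the same approach as the paper: the paper does not give a separate proof of this corollary, treating it as immediate from the multiplicativity of the spectral sequence~\eqref{eq:105}, its degeneration at $E_2$ (Section~\ref{ss:form-deg}), and the identification of the sheaf-algebra structure on $\uHH^\bullet(\msc{X})$ in Theorems~\ref{thm:Tact} and~\ref{thm:SA}. Your write-up supplies the details the paper leaves implicit, in particular the verification that the $E_2$-product is the cup product on sheaf cohomology via a multiplicative (Godement or \v{C}ech) resolution.
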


One can also see from Theorems~\ref{thm:Tact}/\ref{thm:SA} that the only obstruction to the Hochschild cohomology being commutative is the non-commutativity of $G$.  Namely, there is an inequality of components $\uHH^\bullet(\msc{X})_{gh}$ and $\uHH^\bullet(\msc{X})_{hg}$ for non-abelian $G$.  When $G$ is abelian, these two components agree and also $\det(N_{X^{gh}})=\det(N_{X^{hg}})$.  Hence the generating subalgebra $\uSA(\msc{X})$ is commutative in this case, and it follows that entire cohomology $\uHH^\bullet(\msc{X})$ is commutative as well.  We record this observation.

\begin{corollary}
When $G$ is abelian, the cohomology $\uHH^\bullet(\msc{X})$ is (graded) commutative.
\end{corollary}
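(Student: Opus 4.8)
The plan is to leverage the $G$-grading on $\uHH^\bullet(\msc{X})$ established just above, together with the explicit description of the multiplication provided by Theorems~\ref{thm:Tact} and~\ref{thm:SA}. Recall that $\uHH^\bullet(\msc{X}) = \bigoplus_{g\in G} \uHH^\bullet(\msc{X})_g$ with $\uHH^\bullet(\msc{X})_g = T^{poly}_{X^g}\ot_{\O_X}\det(N_{X^g})$, and that this decomposition is multiplicative: the product sends $\uHH^\bullet(\msc{X})_h \ot_{\O_X}\uHH^\bullet(\msc{X})_g$ into $\uHH^\bullet(\msc{X})_{hg}$. When $G$ is abelian, $hg=gh$, so the sources of the products $m\colon \uHH^\bullet(\msc{X})_g\ot\uHH^\bullet(\msc{X})_h\to\uHH^\bullet(\msc{X})_{gh}$ and $m^{op}\colon \uHH^\bullet(\msc{X})_h\ot\uHH^\bullet(\msc{X})_g\to\uHH^\bullet(\msc{X})_{gh}$ land in the \emph{same} component; this removes the only obvious grading obstruction and makes the statement plausible.

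First I would reduce the general commutativity claim to commutativity of the generating subalgebra $\uSA(\msc{X})=\bigoplus_g\det(N_{X^g})$. By the Corollary following Theorem~\ref{thm:Tact}, $\uHH^\bullet(\msc{X})$ is generated as a $T^{poly}_X$-algebra by $\uSA(\msc{X})$, and by Theorem~\ref{thm:Tact} the subalgebra $T^{poly}_X=\uHH^\bullet(\msc{X})_e$ is (graded) central. Hence it suffices to check that any two elements of $\uSA(\msc{X})$ graded-commute, since a graded-central subalgebra together with a graded-commutative set of generators yields a graded-commutative algebra (a short, standard argument: write two arbitrary elements as $T^{poly}_X$-combinations of products of generators and push the central scalars through, using that the generators pairwise commute up to sign).

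Next I would verify that $\uSA(\msc{X})$ is graded commutative when $G$ is abelian, using Theorem~\ref{thm:SA}. Fix $g,h\in G$ and a component $Y$ of $X^g\cap X^h=X^h\cap X^g$. By Theorem~\ref{thm:SA}, both $m_{\uSA}$ and $m_{\uSA}^{op}$ vanish on $Y$ unless $Y$ is a shared component with $X^{gh}=X^{hg}$ along which $X^g$ and $X^h$ meet transversely; the condition is symmetric in $g$ and $h$, so the supports agree. On such a $Y$, Theorem~\ref{thm:SA} identifies $m_{\uSA}$ with $\det(\kappa(g,h,Y))$ where $\kappa\colon N_{X^g}|_Y\oplus N_{X^h}|_Y\xrightarrow{\ \cong\ }N_{X^{gh}}|_Y$, and $m_{\uSA}^{op}$ with $\det(\kappa(h,g,Y))$; the two maps $\kappa(g,h,Y)$ and $\kappa(h,g,Y)$ differ precisely by the swap automorphism of $N_{X^g}|_Y\oplus N_{X^h}|_Y$. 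Taking determinants, the swap contributes the Koszul sign $(-1)^{\codim(X^g)\codim(X^h)}$, which is exactly the graded-commutativity sign since $\det(N_{X^g})$ sits in cohomological degree $\codim(X^g)$ and $\det(N_{X^h})$ in degree $\codim(X^h)$. Therefore $m_{\uSA}$ and $m_{\uSA}^{op}$ agree up to the correct sign on every component, hence globally, and $\uSA(\msc{X})$ is graded commutative.

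The main obstacle I anticipate is the bookkeeping of signs: one must be careful that the Koszul sign from transposing the two normal-bundle factors under $\kappa$ matches the Koszul sign dictated by the cohomological grading on $\uHH^\bullet(\msc{X})$, and that no additional sign is hidden in the identification $\det(N_{X^{gh}})=\det(N_{X^{hg}})$ (which is literally an equality of sheaves when $G$ is abelian, so should introduce nothing). Once the sign on $\uSA(\msc{X})$ is pinned down, extending graded commutativity to all of $\uHH^\bullet(\msc{X})$ via the centrality of $T^{poly}_X$ and the generation statement is purely formal. As an optional sanity check one can instead argue entirely complete-locally, reducing to a finite abelian group acting linearly on affine space where the cup product formula of~\cite{SW} makes graded commutativity manifest; but the grading argument above is cleaner and avoids re-deriving the local formula.
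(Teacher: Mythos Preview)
Your proposal is correct and follows essentially the same approach as the paper: reduce to graded commutativity of the generating subalgebra $\uSA(\msc{X})$ via the centrality of $T^{poly}_X$ (Theorem~\ref{thm:Tact}) and then use the explicit description of $m_{\uSA}$ from Theorem~\ref{thm:SA}, noting that when $G$ is abelian both products land in the same component $\det(N_{X^{gh}})=\det(N_{X^{hg}})$. Your treatment is in fact more careful than the paper's brief discussion, as you explicitly track the Koszul sign coming from swapping the summands in $\kappa(g,h,Y)$ versus $\kappa(h,g,Y)$, which the paper leaves implicit.
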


\subsection{An alternate expression of the cup product}
\label{sect:altprod}
One can alternatively introduce a ``product" on local Hochschild cohomology via the composite
\begin{equation}\label{eq:altprod}
\xymatrix{
\uHH^\bullet(\msc{X})_g\ot_{\O_X}\uHH^\bullet(\msc{X})_h\ar[r] & (T^{poly}_X|_{X^g})\ot_{\O_X}(T^{poly}_X|_{X^h})=T^{poly}_X\ot_{\O_X}T^{poly}_X|_{(X^g\cap X^h)}\ar[dl]|{\mrm{mult}}\\
 T^{poly}_X|_{(X^g\cap X^h)}\ar[r]_(.35){P_{gh}} & T^{poly}_{X^{gh}}\ot_{\O_X}\det(N_{X^{gh}})= \uHH^\bullet(\msc{X})_{gh},
}
\end{equation}
where the first map is as in Section~\ref{sect:A1} and $P_{gh}$ is the projection $p_{gh}$ on those components $Y$ of $X^g\cap X^h$ which are shared components with $X^{gh}$, and vanishes elsewhere.  One can in fact argue that the above sequence does provide a well-defined, associative, algebra structure which agrees with the cup product.  Indeed, one can make a direct comparison between the two products via the equivalent conditions (iii) and (iv) of Lemma~\ref{lem:esslem}.  The expression of the cup product via~\eqref{eq:altprod} provides a certain (re)reading of the last sentence of~\cite{anno}, and an alternate expression of~\cite[Theorem 3]{anno}.

\section{Formality and projective Calabi--Yau quotients}
\label{sect:formality}

We say the Hochschild cohomology of a Deligne--Mumford stack $\msc{X}$ is formal, as an algebra, if there is a sequence of quasi-isomorphism of $A_\infty$-algebras
\[
\msc{E}xt_{\msc{X}^2}(\Delta_\ast\O_\msc{X},\Delta_\ast\O_\msc{X})\overset{\sim}\longrightarrow \mbb{M}_1\leftarrow \cdots\to\mbb{M}_r\overset{\sim}\longleftarrow \mrm{R}\msc{H}om_{\msc{X}^2}(\Delta_\ast\O_\msc{X},\Delta_\ast\O_\msc{X}),
\]
where the $\mbb{M}_i$ are $A_\infty$-algebras in the category of sheaves of vector spaces on (the small \'{e}tale site of) $\msc{X}$, with the tensor product over the base field.
\par

Formality for smooth varieties in characteristic zero was essentially claimed by Kontsevich~\cite[Claim 8.4]{kontsevich03}, and proved by Tamarkin for affine space~\cite{tamarkin98}.  Tamarkin's proof was extended to the non-affine setting by Calaque and Van den Bergh~\cite{calaquevandenbergh10II}.  We show here that this formality result does {\it not} extend to Deligne--Mumford stacks in general.  Namely, we prove that certain Calabi--Yau orbifolds are not formal.  Here symplectic varieties feature prominently, in examples and more in depth discussions, and we continue an analysis of symplectic quotient orbifolds in subsequent sections.

\begin{remark}
One can adopt a weaker version of formality, which proposes that a sheaf of dg algebras $\msc{R}$ on $\msc{X}$ is formal if there is an isomorphism of algebras $H^\bullet(\msc{R})\cong \msc{R}$ in the derived category of sheaves of vector spaces on $\msc{X}$ (cf.~\cite{calaquevandenbergh10}).  Our method of proof shows that this weaker form of formality for Hochschild cohomology is also obstructed in general.
\end{remark}
\begin{remark}
The formality results of~\cite{kontsevich03,tamarkin98,calaquevandenbergh10,calaquevandenbergh10II} are much stronger than what is stated here.
\end{remark}

\begin{comment}
\subsection{Some results of~\cite{bridgelandkingreid01} and~\cite{caldararuI}}

In{bridgelandkingreid01} the authors consider a global quotient stack $\msc{X}$ equipped with a crepant resolution $Y\to X/G$ of the corresponding coarse space $X/G$.  They prove the following result, among others.

\begin{theorem}[{[Corollary 1.3]{bridgelandkingreid01}}]
Suppose $X$ is a complex symplectic variety and $G$ acts by symplectic
automorphisms. Assume that $Y$ is a crepant resolution of $X/G$, and take $\msc{X}=[X/G]$. Then there is an equivalence $\Phi:\mrm{D}^b(Y)\to \mrm{D}^b(\msc{X})$ provided by an explicit integral transform.
\end{theorem}

By an integral transform, we mean a functor $\Phi$ We will also need some work of C\u{a}ld\u{a}raru{caldararuI} (see also{caldararuwillerton10,orlov03}).

\begin{theorem}[{[Theorem\ 8.1]{caldararuI}}]
Let $\msc{X}$ and $\msc{Y}$ be projective Deligne--Mumford
stack with Serre duality.  Suppose $\msc{X}$ and $\msc{Y}$ admit a derived equivalence given by an integral transform.  Then the Hochschild cohomologies admit a corresponding isomorphism $\HH^\bullet(\msc{X})\cong \HH^\bullet(\msc{Y})$ of graded rings.
\end{theorem}
\end{comment}

\subsection{Failure of formality for projective Calabi--Yau quotients}

Let us consider a Calabi--Yau triangulated category $\msc{D}$ over $k$ of dimension $d$~\cite{keller08}.  So, $\msc{D}$ is a Hom-finite triangulated category for which the shift $\Sigma^d$ provides a Serre functor.  Serre duality in this case can be expressed via a global, non-degenerate, degree $-d$, functorial pairing
\[
\mrm{tr}_{a,b}:\Ext_\msc{D}(a,b)\times \Ext_\msc{D}(b,a)\to k,
\]
for arbitrary objects $a,b$ in $\msc{D}$, and $\Ext_\msc{D}(a,b)=\oplus_{n\in \mbb{Z}}\Hom_\msc{D}(a,\Sigma^nb)$.  In particular, each self-extension algebra $\mrm{Ext}_\msc{D}(a,a)$ is a graded Frobenius algebra.
\par

For $\msc{Z}$ a proper Calabi--Yau Deligne--Mumford stack, the derived category $\mrm{D}^b(\mrm{Coh}(\msc{Z}))$ is a Calabi--Yau category of dimension $d=\mrm{dim}(\msc{Z})$.  We can construct examples of such projective Calabi--Yau $\msc{Z}$ as quotient stacks as follows: Consider a projective Calabi--Yau variety $Z$ equipped with an action of a finite group $\Pi$ such that $\omega_Z\cong \O_Z$ as a $\Pi$-equivariant sheaf.  In this case the Grothendieck-Serre duality maps $\Ext^i_Z(M,N)\to \Ext^{d-i}_Z(N,M)^\ast$, for $\Pi$-equivariant $M$ and $N$, are $\Pi$-linear, by functoriality.  Since taking $\Pi$-invariants gives equivariant maps $\Ext_Z^i(M,N)^\Pi=\Ext_{[Z/\Pi]}^i(M,N)$ we may take $\Pi$-invariants of Serre duality for $Z$ to obtain Serre duality for $\msc{Z}=[Z/\Pi]$.  Whence the quotient $\msc{Z}=[Z/\Pi]$ is Calabi--Yau.
\par

We may consider specifically $Z$ a projective symplectic variety with $\Pi$ acting by symplectic automorphisms.  In this case $\Pi$-invariance of the symplectic form implies that the isomorphism $\O_Z\cong \omega_Z$ provided by the corresponding symplectic volume form is an equivariant isomorphism. 
\par

Since the product $\msc{Z}=\msc{X}\times \msc{X}$ of a Calabi--Yau Deligne--Mumford stack $\msc{X}$ with itself is again Calabi--Yau we find the following result. 

\begin{lemma}
If $\msc{X}$ is a Deligne--Mumford stack which is proper and Calabi--Yau, then the Hochschild cohomology $\HH^\bullet(\msc{X})$ is a graded Frobenius algebra.
\end{lemma}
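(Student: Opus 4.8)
The statement to prove is that $\HH^\bullet(\msc{X}) = \Ext^\bullet_{\msc{X}^2}(\Delta_\ast\O_\msc{X},\Delta_\ast\O_\msc{X})$ is a graded Frobenius algebra when $\msc{X}$ is proper and Calabi--Yau. The key input is the preceding paragraph: if $\msc{Z}$ is a proper Calabi--Yau Deligne--Mumford stack of dimension $d$, then $\mrm{D}^b(\mrm{Coh}(\msc{Z}))$ is a Calabi--Yau category of the same dimension, hence for any object $a$ the self-extension algebra $\Ext_{\mrm{D}^b(\msc{Z})}(a,a)$ is a graded Frobenius algebra via the nondegenerate pairing $\mrm{tr}_{a,a}$. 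The plan is to apply this with $\msc{Z}=\msc{X}\times\msc{X}=\msc{X}^2$ and $a=\Delta_\ast\O_\msc{X}$.

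\medskip
\emph{Step 1: $\msc{X}^2$ is proper and Calabi--Yau.} Properness of $\msc{X}^2$ is immediate since properness is stable under products. For the Calabi--Yau property, one must check that $\omega_{\msc{X}^2}\cong \O_{\msc{X}^2}$. This follows from the K\"unneth-type identification $\omega_{\msc{X}^2}\cong \mrm{pr}_1^\ast\omega_\msc{X}\ot \mrm{pr}_2^\ast\omega_\msc{X}$ together with the given trivialization $\omega_\msc{X}\cong \O_\msc{X}$; one also notes that $\dim\msc{X}^2 = 2d$ where $d=\dim\msc{X}$, and that the Serre functor on $\mrm{D}^b(\msc{X}^2)$ is the composite of the tensor with $\omega_{\msc{X}^2}[2d]$, which is thus $\Sigma^{2d}$.

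\medskip
\emph{Step 2: apply the Calabi--Yau structure to $a=\Delta_\ast\O_\msc{X}$.} Since $\Delta:\msc{X}\to\msc{X}^2$ is a finite (hence proper) morphism by Lemma~\ref{lem:aff_diag}, the pushforward $\Delta_\ast\O_\msc{X}$ is a genuine object of $\mrm{D}^b(\mrm{Coh}(\msc{X}^2))$, so the Calabi--Yau pairing $\mrm{tr}_{a,a}$ of degree $-2d$ on $\Ext_{\msc{X}^2}(\Delta_\ast\O_\msc{X},\Delta_\ast\O_\msc{X})=\HH^\bullet(\msc{X})$ is nondegenerate and functorial. In particular, multiplication composed with the trace gives a nondegenerate graded pairing
\[
\HH^i(\msc{X})\times\HH^{2d-i}(\msc{X})\to k,\qquad (\alpha,\beta)\mapsto \mrm{tr}(\alpha\cup\beta),
\]
and associativity of the cup product together with functoriality of $\mrm{tr}$ gives the Frobenius identity $\mrm{tr}((\alpha\cup\beta)\cup\gamma)=\mrm{tr}(\alpha\cup(\beta\cup\gamma))$. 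Hence $\HH^\bullet(\msc{X})$ is graded Frobenius (with socle in degree $2d$), and one should remark that finite-dimensionality of $\HH^\bullet(\msc{X})$ — needed for ``Frobenius'' rather than merely ``a nondegenerate pairing'' — follows from Hom-finiteness of the bounded derived category of a proper Deligne--Mumford stack.

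\medskip
\emph{Main obstacle.} The substantive point is really Step 1 combined with the assertion (used in the preceding paragraph of the paper) that the bounded derived category of a proper Calabi--Yau Deligne--Mumford stack is a Calabi--Yau triangulated category in the sense with a Serre functor $\Sigma^d$; once that is granted, everything is formal. One should be careful that ``Calabi--Yau'' for $\msc{X}$ is taken in the sense of a trivialization of $\omega_\msc{X}$ (so that the Serre functor is exactly a shift, not merely a shift up to an autoequivalence), which is what makes the pairing $\mrm{tr}_{a,b}$ land in a single degree $-d$; this is precisely how the paper has set things up via the equivariant trivialization $\omega_Z\cong\O_Z$. No further technical difficulty arises.
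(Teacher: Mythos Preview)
Your proposal is correct and follows exactly the approach the paper indicates: the paper's entire argument is the sentence preceding the lemma, namely that $\msc{X}\times\msc{X}$ is again proper Calabi--Yau, so that $\HH^\bullet(\msc{X})=\Ext_{\msc{X}^2}(\Delta_\ast\O_\msc{X},\Delta_\ast\O_\msc{X})$ is a self-extension algebra in a Calabi--Yau triangulated category and hence graded Frobenius. Your Steps 1 and 2 simply spell this out with a bit more care (the K\"unneth identification of $\omega_{\msc{X}^2}$, coherence of $\Delta_\ast\O_\msc{X}$, and Hom-finiteness), but there is no difference in strategy.
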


We can now present the desired result.

\begin{theorem}\label{thm:informal}
Suppose $X$ is a projective variety and that $G$ is a finite subgroup in $\mrm{Aut}(X)$.  Suppose additionally that the action of $G$ is not free, and that the quotient $\msc{X}=[X/G]$ is Calabi--Yau. Then the Hochschild cohomology of $\msc{X}$ is not formal.
\end{theorem}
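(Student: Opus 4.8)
The plan is to argue by contradiction, using the Frobenius structure forced on $\HH^\bullet(\msc{X})$. First I would show that formality of $\HH^\bullet(\msc{X})$ implies an isomorphism of graded algebras $\HH^\bullet(\msc{X})\cong\gr\HH^\bullet(\msc{X})$. Pushing the sheaves $\sRHom_{\msc{X}^2}(\Delta_\ast\O_\msc{X},\Delta_\ast\O_\msc{X})$ and $\sExt_{\msc{X}^2}(\Delta_\ast\O_\msc{X},\Delta_\ast\O_\msc{X})$ forward along the (finite, hence exact) first projection $\msc{X}^2\to\msc{X}$ turns a formality quasi-isomorphism into one between the resulting sheaf of dg algebras on $\msc{X}$ and the sheaf of graded algebras $\uHH^\bullet(\msc{X})$, which carries the zero differential. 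Applying derived global sections, $\mrm{R}\Gamma(\uHH^\bullet(\msc{X}))$ splits as a complex into $\bigoplus_q\mrm{R}\Gamma(\uHH^q(\msc{X}))[-q]$, so its cohomology is $\bigoplus_{p,q}H^p(X,\uHH^q(\msc{X}))^G$ with \emph{no} extension problem, carrying the algebra structure identified in Corollary~\ref{cor:arinkinetal_mult} as $\gr\HH^\bullet(\msc{X})$. Combined with degeneration of~\eqref{eq:105} this gives $\HH^\bullet(\msc{X})\cong\gr\HH^\bullet(\msc{X})$ as graded algebras; the weak form of formality from the Remark above already suffices here.

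Next, since $X$ is projective, $\msc{X}$ is a proper Calabi--Yau Deligne--Mumford stack, so by the Lemma just above $\HH^\bullet(\msc{X})$ is a graded Frobenius algebra, with pairing of degree $-2\dim X$ because $\msc{X}^2$ is Calabi--Yau of dimension $2\dim X$. It therefore suffices to show $\gr\HH^\bullet(\msc{X})$ is \emph{not} a graded Frobenius algebra with pairing of degree $-2\dim X$. Set $n=\dim X$; we may assume $X$ connected, so that $X^g\subsetneq X$ (in fact $\dim X^g\le n-1$) for every $g\ne e$. Write $\gr\HH^\bullet(\msc{X})=\bigoplus_CA_C$ for the decomposition indexed by conjugacy classes $C$ of $G$, obtained by grouping the summands with $g\in C$ and taking invariants, so $A_{\{e\}}=H^\bullet(X,T^{poly}_X)^G$. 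I will use two facts. Fact (a): since $\omega_X\cong\O_X$, adjunction gives $\det(N_{X^g})\cong\omega_{X^g}$, hence $\uHH^\bullet(\msc{X})_g\cong\bigoplus_i\Omega^i_{X^g}$ and $H^\bullet(X^g,\uHH^\bullet(\msc{X})_g)$ is a shift of the Hodge cohomology of $X^g$; counting degrees, $A_C$ for $C\ne\{e\}$ vanishes in total degree above $n+\dim X^g\le2n-1$, so in particular $\gr\HH^{2n}(\msc{X})=H^n(X,\wedge^nT_X)^G=H^n(X,\O_X)^G=k$ lies entirely in $A_{\{e\}}$. Fact (b): for $g\ne e$, Theorem~\ref{thm:SA} shows $m_{\uSA}\colon\det(N_{X^g})\ot_{\O_X}\det(N_{X^{g^{-1}}})\to\det(N_{X^{e}})=\O_X$ is supported on the components of $X^g\cap X^{g^{-1}}=X^g$ shared with $X^{e}=X$ --- of which there are none --- and hence, by Theorem~\ref{thm:Tact} (generation of $\uHH^\bullet(\msc{X})$ over $T^{poly}_X$ by $\uSA(\msc{X})$), the whole product $\uHH^\bullet(\msc{X})_g\cdot\uHH^\bullet(\msc{X})_{g^{-1}}\to\uHH^\bullet(\msc{X})_e$ vanishes (one may alternatively invoke the explicit formula~\eqref{eq:altprod}); consequently, for $C\ne\{e\}$ and any $C'$ the product $A_C\cdot A_{C'}$ has trivial $A_{\{e\}}$-component, since a nonzero one would require $gg'=e$, forcing $C'=C^{-1}$ and landing in $\uHH^\bullet(\msc{X})_g\cdot\uHH^\bullet(\msc{X})_{g^{-1}}=0$.

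Finally I would produce the obstructing element. Since the action is not free, choose $g\ne e$ with $X^g\ne\emptyset$ and let $C$ be its conjugacy class. Using $\omega_X\cong\O_X$, the canonical trivialization $\wedge^{\dim X^g}T_{X^g}\ot_{\O_{X^g}}\det(N_{X^g})\cong\omega_{X^g}^{\vee}\ot\omega_{X^g}\cong\O_{X^g}$ yields a nonzero global section of the degree-$n$ part of $\uHH^\bullet(\msc{X})_g$; averaging over the $G$-orbit of $g$ produces a nonzero $G$-invariant class $\xi\in A_C\cap\gr\HH^n(\msc{X})$. By fact (b), $\xi\cdot\gr\HH^n(\msc{X})$ lies in $\bigoplus_{C'\ne\{e\}}A_{C'}$, which by fact (a) has no component in degree $2n$; thus $\xi$ pairs to zero with all of $\gr\HH^n(\msc{X})$ under the multiplication $\gr\HH^n(\msc{X})\times\gr\HH^n(\msc{X})\to\gr\HH^{2n}(\msc{X})=k$. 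Hence this pairing is degenerate, $\gr\HH^\bullet(\msc{X})$ is not graded Frobenius, and $\HH^\bullet(\msc{X})$ is not formal.

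The step I expect to be the main obstacle is the first one: making precise which notion of formality is in force (sheaf-theoretic versus $A_\infty$), checking its compatibility with pushforward along $\msc{X}^2\to\msc{X}$ and with derived global sections, and --- crucially --- arguing that the multiplicative degenerate spectral sequence~\eqref{eq:105} genuinely presents $\gr\HH^\bullet(\msc{X})$ as the \emph{associated graded algebra} of $\HH^\bullet(\msc{X})$, with no residual extension problem. A secondary technical point is to obtain fact (b) directly from Theorems~\ref{thm:Tact} and~\ref{thm:SA} rather than from the only informally-justified formula~\eqref{eq:altprod}.
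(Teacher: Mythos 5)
Your proposal is correct and takes essentially the same route as the paper: assuming (weak) formality one obtains an algebra isomorphism $\HH^\bullet(\msc{X})\cong H^\bullet(X,\uHH^\bullet(\msc{X}))^G=\gr\HH^\bullet(\msc{X})$, which is then contradicted by the graded Frobenius structure of the Calabi--Yau Lemma, since the top degree $2\dim(X)$ lies entirely in the identity (codimension-zero) component while positive-codimension classes multiply into positive codimension and therefore pair to zero. Your facts (a)--(b) and the explicit invariant class supported on $X^g$ simply spell out what the paper phrases as the codimension-grading argument (and usefully supply the nonvanishing of the positive-codimension invariants that the paper leaves implicit); the one inaccuracy, namely that the first projection $\msc{X}^2\to\msc{X}$ is called finite, is harmless because the relevant cohomology sheaves are supported on the diagonal, which maps isomorphically to $\msc{X}$.
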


\begin{proof}
In this case $\HH^\bullet(\msc{X})$ is Frobenius, by the previous lemma.  The fact that $G$ does not act freely implies that we have nonempty fixed spaces $X^g$ of positive codimension.  Take $\msc{E}=\msc{E}xt_{\msc{X}^2}(\Delta_\ast\O_\msc{X},\Delta_\ast\O_\msc{X})$ and $\msc{R}=\mrm{R}\msc{H}om_{\msc{X}^2}(\Delta_\ast\O_\msc{X},\Delta_\ast\O_\msc{X})$.
\par

Let us suppose, by way of contradiction, that there is an isomorphism $\msc{E}\to \msc{R}$ of algebras in the derived category of sheaves of vector spaces on $\msc{X}$.  Then we get an induced isomorphism on derived global sections, and subsequent algebra isomorphism on cohomology
\begin{equation}\label{eq:2026}
H^\bullet(\msc{X},\msc{E})=H^\bullet(X,\uHH^\bullet(\msc{X}))^G\cong \HH^\bullet(\msc{X}).
\end{equation}
(The first equality in the above equation follows from the material of Section~\ref{sect:ss}.)  We claim, however, that $H^\bullet(X,\uHH^\bullet(X))^G$ admits no graded Frobenius structure, and hence that no such isomorphism~\eqref{eq:2026} exists.
\par

Indeed, such a graded Frobenius structure would be given by a linear map from top degree
\[
f_\mrm{top}:\left(H^\bullet(X,\uHH^\bullet(\msc{X}))^G\right)^{2\mrm{dim}(X)}\to k
\]
so that the Frobenius form is the product composed with this function, $\langle a,b\rangle=f_\mrm{top}(ab)$.  But now, by our explicit description of the cohomology $\uHH^\bullet(\msc{X})$ given in Section~\ref{sect:product}, we see that $H^\bullet(X,\uHH^\bullet(\msc{X}))^G$ admits a natural algebra grading by codimensions, and the top degree occurs only in the codimension $0$ component
\[
\left(H^\bullet(X,\uHH^\bullet(\msc{X}))^G\right)^{2\mrm{dim}(X)}=H^{\dim(X)}(X,\omega^\vee_X)^G.
\]
It follows that any such pairing on $H^\bullet(X,\uHH^\bullet(\msc{X}))^G$ necessarily vanishes when restricted to any positive codimension component, and must therefore be degenerate, which is a contradiction.  So $H^\bullet(X,\uHH^\bullet(\msc{X}))^G$ admits no graded Frobenius structure, no such algebra isomorphism~\eqref{eq:2026} exists, and the Hochschild cohomology of $\msc{X}$ cannot be formal.
\end{proof}

Of course, there exist many $\msc{X}$ satisfying the hypotheses of Theorem~\ref{thm:informal}.  We can consider symplectic quotients for example.  More specifically, one can consider Kummer varieties, which are constructed as quotients of abelian surfaces by the $\mbb{Z}/2\mbb{Z}$ inversion action.  There are $16$ fixed points in this instance, and the quotient scheme has $16$ corresponding isolated singularities.

\begin{corollary}\label{cor:symp_informal}
Let $X$ be a projective symplectic variety, and let $G\subset \mrm{Sp}(X)$ be a finite subgroup for which the corresponding action is not free.  Then the Hochschild cohomology of the orbifold quotient $\msc{X}$ is not formal.
\end{corollary}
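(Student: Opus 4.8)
The plan is to obtain this as a special case of Theorem~\ref{thm:informal}. That theorem requires $X$ projective, $G$ a finite subgroup of $\mathrm{Aut}(X)$, the action of $G$ not free, and the quotient $\msc{X}=[X/G]$ Calabi--Yau. The first three are part of the hypotheses of the corollary (using $\mathrm{Sp}(X)\subseteq\mathrm{Aut}(X)$), so the only point to check is that $\msc{X}$ is Calabi--Yau; this is precisely the situation already flagged in the discussion preceding Theorem~\ref{thm:informal}.

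To verify the Calabi--Yau condition I would use the criterion recorded there: if a finite group $\Pi$ acts on a projective variety $Z$ with $\omega_Z\cong\O_Z$ \emph{as a $\Pi$-equivariant sheaf}, then $[Z/\Pi]$ is Calabi--Yau (one takes $\Pi$-invariants of Grothendieck--Serre duality for $Z$). So it suffices to produce a $G$-equivariant trivialization of $\omega_X$. Let $2n=\dim X$ and let $\sigma\in H^0(X,\Omega^2_X)$ be the symplectic form; nondegeneracy of $\sigma$ says exactly that $\sigma^{\wedge n}$ is a nowhere-vanishing global section of $\omega_X=\Omega_X^{2n}$, hence defines an isomorphism $\O_X\cong\omega_X$ sending $1$ to $\sigma^{\wedge n}$. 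Since $G\subseteq\mathrm{Sp}(X)$ we have $g^\ast\sigma=\sigma$ for every $g\in G$, so $g^\ast(\sigma^{\wedge n})=\sigma^{\wedge n}$; thus $\sigma^{\wedge n}$ is $G$-invariant and the trivialization $\O_X\cong\omega_X$ is an isomorphism of $G$-equivariant sheaves. Applying the criterion with $(Z,\Pi)=(X,G)$ shows $\msc{X}$ is Calabi--Yau, and Theorem~\ref{thm:informal} then yields that $\HH^\bullet(\msc{X})$ is not formal.

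There is no genuinely hard step here. The only things needing a little care are that ``symplectic variety'' is understood in the algebraic/holomorphic-symplectic sense on a \emph{smooth} $X$, so that $\sigma^{\wedge n}$ is honestly nowhere-vanishing (not merely generically nonzero), and that a ``symplectic'' action means, by definition, one preserving $\sigma$ --- which is exactly what upgrades the trivialization to an equivariant one. One could also record here the explicit family of examples already mentioned in the introduction: Kummer varieties, i.e.\ quotients of abelian surfaces by the $\mbb{Z}/2\mbb{Z}$-inversion action, which has $16$ fixed points, so all the hypotheses hold and the resulting smooth Deligne--Mumford stack has non-formal Hochschild cohomology.
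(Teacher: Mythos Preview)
Your proposal is correct and matches the paper's approach exactly: the paper states the corollary without a separate proof, having already observed in the discussion before Theorem~\ref{thm:informal} that a projective symplectic variety with a symplectic group action yields a Calabi--Yau quotient via the $G$-invariant symplectic volume form. Your write-up simply spells out this deduction (and the Kummer example) in full.
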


\begin{remark}
It seems likely that the conclusion of Theorem~\ref{thm:informal} holds for arbitrary proper, Calabi-Yau, Deligne-Mumford stacks $\msc{Z}$ with singular coarse space $\underline{Z}$.
\end{remark}

One observes that we have actually proved the following.

\begin{corollary}\label{cor:DM_informal}
There exist smooth Deligne--Mumford stacks for which the Hochschild cohomology is not formal.
\end{corollary}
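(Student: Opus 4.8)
The plan is to deduce this immediately from Theorem~\ref{thm:informal} (equivalently Corollary~\ref{cor:symp_informal}), whose conclusion is precisely the assertion of non-formality; all that remains is to exhibit one honest example of a smooth Deligne--Mumford stack satisfying the hypotheses there. The canonical choice is a Kummer orbifold: take $A$ an abelian surface over $\mbb{C}$ (or over any field of characteristic zero) and let $G=\mbb{Z}/2\mbb{Z}$ act on $A$ by the inversion automorphism $x\mapsto -x$. First I would record that $\msc{X}=[A/G]$ is a smooth, projective Deligne--Mumford stack: it is the quotient of a smooth projective variety by a finite group acting by automorphisms, the group order is invertible in the base field, and $\msc{X}$ is genuinely a stack (and not a variety) because the sixteen $2$-torsion points of $A$ become stacky points with $\mbb{Z}/2\mbb{Z}$ automorphism group.

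Next I would verify that the action lands in the symplectic group and is not free. The abelian surface $A$ carries a nonzero translation-invariant holomorphic $2$-form $\omega$, which corresponds to a nondegenerate alternating form on the tangent space at the origin; since the differential of the inversion map at the origin is $-\Id$, one computes $(-1)^\ast\omega=(-1)^2\omega=\omega$, so inversion lies in $\mrm{Sp}(A)$. (Equivalently, this shows $\omega_A\cong\O_A$ as a $G$-equivariant line bundle, so $\msc{X}$ is Calabi--Yau, which is the hypothesis needed to invoke Theorem~\ref{thm:informal} directly.) The action is manifestly not free, since the $2$-torsion points are fixed. Therefore Corollary~\ref{cor:symp_informal} (or Theorem~\ref{thm:informal}) applies to $\msc{X}$ and yields that $\HH^\bullet(\msc{X})$ is not formal, which proves the corollary.

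There is essentially no obstacle here; the only point requiring care is to confirm that the example meets every standing hypothesis of the earlier results simultaneously --- namely the characteristic condition of the paper (met by working in characteristic zero), projectivity of $A$ (so that "projective" in Corollary~\ref{cor:symp_informal} is justified), and the fact that $[A/G]$ is a smooth DM stack rather than a scheme (guaranteed by the presence of the sixteen fixed points, together with smoothness of $A$ and finiteness of $G$). One could equally well take any abelian variety of even dimension, or more generally any projective symplectic $X$ with a nontrivial finite symplectic group action with fixed points, but the Kummer case already suffices to establish the existence claim.
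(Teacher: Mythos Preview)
Your proposal is correct and follows essentially the same approach as the paper: the paper simply remarks that Theorem~\ref{thm:informal} (via the Kummer example already mentioned) already furnishes the required smooth Deligne--Mumford stack, and you have spelled out the verification of the hypotheses for $[A/(\mbb{Z}/2\mbb{Z})]$ in detail. There is nothing to add.
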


\subsection{Alternate notions of formality and derived equivalence}

Let us say that the Hochschild cohomology for $\msc{X}$ is $\beta$-formal if the higher $A_\infty$-structure on the vector space valued cohomology $\HH^\bullet(\msc{X})$ vanishes.  This is equivalent to the statement that there exists a homotopy isomorphism, or $A_\infty$ quasi-isomorphism, $\HH^\bullet(\msc{X})\overset{\sim}\to \mrm{RHom}_{\msc{X}^2}(\Delta_\ast\O_\msc{X},\Delta_\ast\O_\msc{X})$.

It is apparent from our above examples that the usual formality property for a stack $\msc{X}$ is not a derived invariant.  Indeed, if we suppose that the coarse space $X/G$ of a projective symplectic global quotient $\msc{X}$ admits a crepant resolution $Y\to X/G$, then Bridgeland--King--Reid construct a derived equivalence $\mrm{D}^b(\msc{X})\cong \mrm{D}^b(Y)$ via an explicit integral transform.  In this case $\msc{X}$ is as in Corollary~\ref{cor:symp_informal}, and hence the Hochschild cohomology is not formal.  However, formality of Hochschild cohomology for smooth varieties tells us that the Hochschild cohomology of $Y$ is formal.
\par

On the other hand, $\beta$-formality is a derived invariant (see~\cite{caldararuI,orlov03,BFN-dag}).  This $\beta$-formality is implied by usual formality for affine varieties, but the relationship between the two notions of formality is unclear in general.  Indeed, by~\cite{calaquevandenbergh10II} the Hochschild cohomology of a smooth variety $Y$ is the $A_\infty$-algebra given as the cohomology of the derived global sections $\mrm{R}\Gamma^\bullet(Y,T^{poly}_Y)$, which is not apriori formal as a dg algebra in the category of vector spaces.  So, for example, we do not know at this moment if an affine symplectic quotient $\msc{X}$ is $\beta$-formal even though any crepant resolution $Y$ of the coarse space is formal in the usual sense.  This is because such a crepant resolution will be non-affine in general, and hence $\beta$-formality is not known for $Y$.

\section{Hochschild cohomology for Symplectic quotients}
\label{sect:symplectic}

We fix $k=\mbb{C}$ in this section.  Here we consider a symplectic variety $X$ (over $\mbb{C}$) on which a finite group $G$ acts by symplectic automorphisms.  We explain how the local Hochschild cohomology of the quotient $\msc{X}=[X/G]$ simplifies in this case.  Our presentation involves a correction to an oft-repeated error in the literature (see Section~\ref{sect:remark} and Appendix~\ref{sect:pav}).  In Section~\ref{sect:H_orb} we use our description of Hochschild cohomology to propose explicit relations between Hochschild cohomology and orbifold cohomology for symplectic quotients.

\subsection{Generalities for symplectic quotients}

Let $X$ be symplectic and $G$ be a finite group acting on $X$ via symplectic automorphisms.  We let $\omega\in \Gamma(X,\Omega^2_X)$ be the symplectic form and $\pi=\omega^{-1}\in \Gamma(X,T^2_X)$ be the corresponding Poisson structure.  We note that the fixed spaces for the $G$ action are symplectic subvarieties in $X$, and we denote the corresponding symplectic form on $X^g$ by $\omega_g$ for each $g\in G$.  We fix $d_g=\frac{1}{2}\mrm{codim}(X^g)$ and $d=\frac{1}{2}\dim(X)$.
\par

Via the decomposition $\Omega_X|_{X^g}=\Omega_{X^g}\oplus \Omega_{X^g}^\perp$, where $\Omega_{X^g}^\perp$ is the sum of all nontrivial $g$-eigensheaves, we decompose the form as $\omega|_{X^g}=\omega_g+\omega_g^\perp$ with $\omega_g$ a section of $\Omega_{X^g}$ and $\omega_g^\perp$ a section of $\Omega_{X^g}^\perp\wedge \Omega_{X^g}^\perp$.  Note that the mixed term $\omega_g^{mixed}$ over $\Gamma(X^g,\Omega_{X^g}\wedge \Omega_{X^g}^\perp)$ vanishes by $g$-invariance of the restriction $\omega|_{X^g}$.  Hence the volume form is the product $(\omega|_{X^g})^d=\binom{d}{d_g}\omega_g^{d-d_g}(\omega_g^{\perp})^{d_g}$, and we have an isomorphism
\begin{equation}\label{eq:116}
\Omega_{X^g}^{2(d-d_g)}\ot_{\O_X}\det(N_{X^g}^\vee)\to \Omega_X^{2d}|_{X^g},\ \ \omega_g^{(d-d_g)}\ot (\omega_g^{\perp})^{d_g}\mapsto \binom{d}{d_g}^{-1}(\omega|_{X^g})^d
\end{equation}
given by the multiplication on $\Omega^{poly}_X|_{X^g}$.  Since the respective volume forms trivialize $\Omega_{X^g}^{2(d-d_g)}$ and $\Omega_X^{2d}|_{X^g}$, the isomorphism~\eqref{eq:116} implies that $\det(N_{X^g}^\vee)$ is globally trivial and generated by the relative volume form $(\omega_g^{\perp})^{d_g}$.  A similar analysis holds for the tangent sheaf, with $\det(N_{X^g})$ trivialized by the relative dual volume form $(\pi_g^\perp)^{d_g}$.  We normalize and take
\[
\psi_g:=\frac{1}{d_g!}(\pi_g^\perp)^{d_g}.
\]
We have just established

\begin{lemma}
Let $X$ be a symplectic variety and $G$ be a finite group acting by symplectic automorphisms on $X$.  Then the determinants of the normal bundles $\det(N_{X^g})$, for each $g\in G$, are globally trivial and generated by the global polyvector field $\psi_g$.
\end{lemma}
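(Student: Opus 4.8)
The plan is to use nondegeneracy of the symplectic form to write down an explicit trivializing section of $\det(N_{X^g})$, assembled from the ``transverse part'' of the Poisson bivector $\pi=\omega^{-1}$. I would work on one connected component of $X^g$ at a time, so that $d_g=\tfrac12\codim(X^g)$ is an honest integer; all statements below are to be read componentwise on $X^g$, and may, if desired, be checked complete locally at geometric points of $X^g$ as in Sections~\ref{sect:decomp}--\ref{sect:(co)normal}.

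First I would restrict $\pi$ to $X^g$ and decompose it along the $g$-equivariant splitting $T_X|_{X^g}\cong T_{X^g}\oplus N_{X^g}$ of Section~\ref{sect:decomp}, in which $T_{X^g}$ is the trivial $g$-eigensheaf and $N_{X^g}=(1-g)T_X|_{X^g}$ is the sum of the nontrivial ones. In the decomposition $\wedge^2(T_X|_{X^g})=\wedge^2 T_{X^g}\oplus(T_{X^g}\ot_{\O_X}N_{X^g})\oplus\wedge^2 N_{X^g}$, the middle summand has no nonzero $g$-invariant sections, since after decomposing $N_{X^g}$ into $g$-eigensheaves every eigenvalue occurring there is nontrivial. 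As $\pi|_{X^g}$ is $g$-invariant, its component in the middle summand vanishes, and we obtain $\pi|_{X^g}=\pi_g+\pi_g^\perp$ with $\pi_g\in\Gamma(X^g,\wedge^2 T_{X^g})$ and $\pi_g^\perp\in\Gamma(X^g,\wedge^2 N_{X^g})$. Block-diagonality, together with nondegeneracy of $\pi|_{X^g}$ (the restriction to $X^g$ of the bundle isomorphism $\Omega_X\cong T_X$ induced by $\pi$), then forces $\pi_g$ and $\pi_g^\perp$ to be nondegenerate on $T_{X^g}$ and $N_{X^g}$ respectively; in particular $\pi_g$ is the Poisson bivector of $X^g$.

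Next I would compute the top exterior power $\pi^d|_{X^g}=(\pi_g+\pi_g^\perp)^d$. Since $\pi_g$ and $\pi_g^\perp$ lie in complementary summands and have even degree, the binomial expansion applies, and since $\pi_g^{d-d_g+1}=0$ and $(\pi_g^\perp)^{d_g+1}=0$ by rank considerations, only the $k=d_g$ term survives: $\pi^d|_{X^g}=\binom{d}{d_g}\,\pi_g^{d-d_g}\wedge(\pi_g^\perp)^{d_g}$, the tangent analogue of~\eqref{eq:116}. Now $\pi_g^{d-d_g}$ is a nowhere-vanishing section of $\det T_{X^g}$ (the dual Liouville volume of the symplectic variety $X^g$), $\pi^d|_{X^g}$ is the restriction of the nowhere-vanishing section $\pi^d$ of $\det T_X$, and the multiplication map $\det T_{X^g}\ot_{\O_X}\det(N_{X^g})\to\det(T_X)|_{X^g}$ is the canonical isomorphism induced by $0\to T_{X^g}\to T_X|_{X^g}\to N_{X^g}\to 0$. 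Running the two volume forms through this isomorphism forces $(\pi_g^\perp)^{d_g}$ to be a nowhere-vanishing global section of $\det(N_{X^g})$; hence $\det(N_{X^g})$ is globally trivial, and after the harmless rescaling $\psi_g=\tfrac1{d_g!}(\pi_g^\perp)^{d_g}$ it is generated by $\psi_g$, as claimed.

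I do not expect a genuine obstacle. The one step that needs a moment's care is the vanishing of the mixed component $T_{X^g}\ot_{\O_X}N_{X^g}$: this is precisely where one uses that $N_{X^g}$ is, by definition, the \emph{nontrivial} part of $T_X|_{X^g}$, so that $g$ acts there with no fixed vectors. One should also bear in mind that $\codim(X^g)$, and with it the line bundle $\det(N_{X^g})$ and the integer $d_g$, is only locally constant, which is why everything is phrased componentwise; the remaining multilinear input — vanishing of the mixed term and the determinant identity for the normal exact sequence — can alternatively be reduced to the linear case as in the proof of Proposition~\ref{prop:anno}.
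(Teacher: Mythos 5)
Your argument is correct and is essentially the paper's proof: the paper performs the same eigensheaf decomposition, vanishing of the mixed term by $g$-invariance, and binomial expansion of the top power against the splitting of the normal exact sequence, only carried out for $\omega$ on the cotangent side (the isomorphism~\eqref{eq:116}) and then transferred to the tangent side with the remark that ``a similar analysis holds,'' whereas you write out that similar analysis directly for $\pi$. No substantive difference.
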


Recall that the subalgebra $\uSA(\msc{X})$ is generated by the determinants $\det(N_{X^g})$ as an $\O_X$-algebra, and each determinant is generated by the global polyvectors $\psi_g$.  Hence $\uSA(\msc{X})$ is generated as an $\O_X$-algebra by the sections $\psi_g$, for $g$ not the identity.  By the $G$-grading on $\uSA(\msc{X})$ we have
\begin{equation}\label{eq:126}
\psi_g\cdot \psi_h|_Y=a(g,h)\psi_{gh}|_Y,
\end{equation}
where the $Y$ range over shared components of $X^{gh}$ and $X^g\cap X^h$ and $a(g,h)$ is a section of the sheaf of units $(\prod_Y\O_Y)^\times$.
\par

By checking complete locally at closed points, to reduce to the affine case, one finds that the coefficients $a(g,h)$ are in the constant sheaf of units $\underline{\mbb{C}}$ on $\coprod_\mrm{shared\ cmpts} Y$.  Whence we see that the multiplication on $\uSA(\msc{X})$, and the Hochschild cohomology in its entirely $\uHH^\bullet(\msc{X})$, is determined by the coefficients $a(g,h)$.  We will show that the function $a$ is, essentially, a boundary.  (We use the word essentially here because $a$ is not actually a $2$-cocycle for $G$.)

\subsection{Hochschild cohomology for symplectic quotients}
\label{sect:scaling}

Consider the product $\mbb{C}^{\pi_0(X^g)}$ as the global sections of the constant sheaf $\underline{\mbb{C}}$ on $X^g$.  We seek invertible scalars $\lambda_g\in \mbb{C}^{\pi_0(X^g)}$ for each $g\in G$ such that
\begin{equation}\label{eq:137}
(\lambda_g\lambda_h)^{-1}|_Y(\lambda_{gh}|_Y)=a(g,h)|_Y\ \ \text{on shared components $Y$ between $X^{gh}$ and }X^g\cap X^h,
\end{equation}
which satisfy the $G$-equivariance condition:
\begin{equation}\label{eq:137-eq}
\text{For every component
$Z \subseteq X^g$ and every $h \in G$, } \lambda_g|_Z = \lambda_{hgh^{-1}}|_{h(Z)}.
\end{equation}
In this case the scaled generators $\mu_g=\lambda_g\psi_g$ satisfy $\mu_g\mu_h|_Y=\mu_{gh}|_Y$, and under the $\O_X$-linear isomorphism
\[
\oplus_{g\in G}\O_{X^g}\overset{\sim}\longrightarrow \uSA(\msc{X}),\ \ 1_g\mapsto \lambda_g\psi_g,
\]
the induced algebra structure on the sum $\oplus_g \O_{X^g}$ is simply given by
\begin{equation}\label{eq:transp}
f_g\cdot f_h|_Y=\left\{\begin{array}{ll}
(f_g|_Y)(f_h|_Y) & \text{if $Y$ is a shared component with }X^g\cap X^h\\
0 & \text{otherwise},
\end{array}\right.
\end{equation}
where the $f_\sigma$ are sections of their respective structure sheaves and $Y$ is an arbitrary component of $X^{gh}$ in the above formula.  We call the multiplication~\eqref{eq:transp} the {\it transparent multiplication} on the sum $\oplus_{g\in G}\O_{X^g}$.
\par

We note that the transparent multiplication on the sum $\oplus_g\O_{X^g}$ is related to the multiplication on the group ring $\O_X[G]$, although there is some nuance to consider here.  For $X=V$ a symplectic vector space, for example, and $G\subset \mrm{Sp}(V)$, the fiber of the transparent algebra $\oplus_g\O_{X^g}$ at $0$ is the associated graded algebra $\gr_F\mbb{C}[G]$ of the group algebra under the codimension filtration $F_{-i}\mbb{C}[G]=\mbb{C}\{g\in G:\mrm{codim}(V^g)\geq i\}$ (see e.g.~\cite{etingofginzburg02}).

We will see below that the determinants of the operators $(1-g|_{N_{X^g}})$ on the normal bundles $N_{X^g}$, for each $g$, provide the desired bounding function as in~\eqref{eq:137}.

\begin{lemma}\label{lem:182}
For each $g\in G$, the determinant of the operator $(1-g|_{N_{X^g}})$ is a real, positive, global section of the constant sheaf of complex numbers over $X^g$.
\end{lemma}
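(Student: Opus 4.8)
The plan is to reduce the statement to a pointwise computation at closed points of $X^g$, then diagonalize the symplectic automorphism $g$ on the normal space and observe that its nontrivial eigenvalues come in conjugate pairs on a complex vector space, which forces $\det(1-g)$ to be a positive real number. Since $\det(1-g|_{N_{X^g}})$ is an algebraic function on $X^g$ that is locally constant (the conjugacy class of the semisimple operator $g$ acting on the fibers of $N_{X^g}$ does not vary within a connected component), it suffices to evaluate it at a single closed point of each connected component of $X^g$, using Lemma~\ref{lem:smoothint} to identify the fiber of $T_X$ at such a point with its tangent space and the fiber of $N_{X^g}$ with the nontrivial part $(T_p)_1^\perp$ under the action of $g$.

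The key steps, in order: (1) Note that $\det(1-g|_{N_{X^g}})$, as a section of $\O_{X^g}$, is actually valued in the constant sheaf $\underline{\mbb{C}}$ on $X^g$, since $g$ acts semisimply with eigenvalues that are roots of unity of bounded order, so the characteristic polynomial of $g$ on the fibers of $N_{X^g}$ is locally constant; hence it suffices to check positivity at one closed point $p$ of each component. (2) At such a point $p$, the fiber $(N_{X^g})_p$ is the sum of the nontrivial $g$-eigenspaces in the tangent space $T_pX$, and $g$ restricted to $N_{X^g}$ has no eigenvalue equal to $1$. (3) Because $X$ is symplectic and $G$ acts by symplectic automorphisms, the symplectic form restricts to a $g$-invariant nondegenerate pairing on $(N_{X^g})_p$ — this follows because $\Omega_{X^g}$ is $g$-trivial and the mixed term vanishes by $g$-invariance, as already recorded in the discussion preceding this lemma — so the nontrivial eigenvalues of $g$ are permuted by $\zeta \mapsto \zeta^{-1}$. (4) Since moreover $g$ has finite order and we work over $\mbb{C}$, the eigenvalues lie on the unit circle, so $\zeta^{-1} = \bar\zeta$, and the eigenvalues come in conjugate pairs $\{\zeta,\bar\zeta\}$ (a real eigenvalue $\pm 1$ can only be $-1$, since $1$ is excluded, and $-1$ is its own conjugate, contributing $(1-(-1))=2>0$). (5) Group the eigenvalue contributions: each conjugate pair contributes $(1-\zeta)(1-\bar\zeta) = |1-\zeta|^2 > 0$, and each eigenvalue $-1$ contributes $2 > 0$; hence $\det(1-g|_{N_{X^g}}) = \prod (1-\zeta) > 0$, a product of positive reals.

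The main obstacle — though it is more of a bookkeeping point than a genuine difficulty — is verifying cleanly that the symplectic form does restrict to a nondegenerate $g$-invariant form on the normal space $(N_{X^g})_p$, so that the eigenvalue-pairing argument applies; this is exactly the decomposition $\omega|_{X^g} = \omega_g + \omega_g^\perp$ with vanishing mixed term that was set up just before the lemma, together with the observation that $\omega_g^\perp$ is nondegenerate on $(N_{X^g})_p$ because the total form $(\omega|_{X^g})^d$ is a nonvanishing volume form. One must also be slightly careful that the statement is about a global section being real and positive, which is why step (1) — reducing to a finite check at closed points, one per connected component — is needed; positivity of a real number at each point then gives positivity of the locally constant function.
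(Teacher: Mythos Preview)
Your proposal is correct and follows essentially the same approach as the paper: both arguments use $g$-invariance of the symplectic form to pair the $\zeta$-eigenspace of $g$ on $N_{X^g}$ with the $\zeta^{-1}=\bar\zeta$-eigenspace, so that the determinant factors as $2^l\prod_i|1-\zeta_i|^2>0$. The only cosmetic difference is that the paper works directly with the eigensheaf decomposition of $T_X|_Z$ on each component $Z$ of $X^g$, whereas you first reduce to fibers at closed points; the content is the same.
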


\begin{proof}
Consider a component $Z$ of $X^g$.  The space $T_X|_{Z}$ decomposes into eigensheaves $\oplus_{\chi}T_\chi$, where $\chi$ runs over the characters of the group $\langle g|_{T_X}\rangle$, and we have $N_{Z}=\oplus_{\chi\neq id}T_\chi$.  Invariance of the non-degenerate form $\omega:T_X|_{Z}\wedge T_X|_{Z}\to \O_{Z}$ under $g$ implies that the pairing restricts to non-degenerate pairings $T_{\chi}\times T_{\chi^{-1}}\to \O_{Z}$.  In particular, the eigensheaves come in pairs of the same rank, and if we take $\zeta_i=\chi_i(g)$ we have
\[
\det(1-g|_{N_{Z}})=2^l\prod_{i=1}^N(1-\zeta_i)(1-\bar{\zeta}_i)=2^l\prod_{i=1}^N|1-\zeta_i|^2,
\]
where $\{\chi_1,\dots,\chi_N\}$ is an enumeration of the characters for which $\zeta_i=\chi_i(g)$ is in the upper half plane in $\mbb{C}$, and $l$ is the rank of the eigensheaf with eigenvalue $-1$ for $g$.
\end{proof}

We provide a detailed proof of the following proposition in the case in which $X$ is a symplectic vector space in Appendix~\ref{sect:pav}, as there are some inaccuracies in the literature which need to be corrected.  We discuss the proof for general $X$ in Section~\ref{sect:proof_disc}.

\begin{proposition}\label{prop:coboundary}
The global sections $\mrm{det}(1-g|_{N_{X^g}})^{1/2}\in \mbb{C}^{\pi_0(X^g)}$ collectively solve equation~\eqref{eq:137} and \eqref{eq:137-eq}, where $\mrm{det}(1-g|_{N_{X^g}})^{1/2}$ denotes the positive square root.  
\end{proposition}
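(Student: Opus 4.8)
The plan is to reduce the statement to the case of a symplectic vector space carrying a linear group action, where the required identity becomes a finite-dimensional linear-algebra computation — precisely the one carried out, with Pavel Etingof, in Appendix~\ref{sect:pav}. First, the equivariance condition~\eqref{eq:137-eq} is essentially automatic: for $h\in G$ and a component $Z\subseteq X^g$, the action of $h$ on $X$ restricts to an isomorphism $Z\overset{\sim}\to h(Z)\subseteq X^{hgh^{-1}}$, which lifts to an isomorphism of vector bundles $N_{X^g}|_Z\overset{\sim}\to N_{X^{hgh^{-1}}}|_{h(Z)}$ conjugating the action of $g$ on the source to that of $hgh^{-1}$ on the target. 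Hence $1-g|_{N_{X^g}}$ and $1-hgh^{-1}|_{N_{X^{hgh^{-1}}}}$ are conjugate over $Z$, so their determinants agree, and therefore so do the positive square roots.

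The content is in~\eqref{eq:137}. Both sides are sections of the constant sheaf of units on each shared component $Y$ of $X^g\cap X^h$ and $X^{gh}$ (by Lemma~\ref{lem:182} on the left and by the discussion around~\eqref{eq:126} for $a(g,h)$), so it suffices to check the equality at a single closed point $p$ of each such $Y$. I would then pass to the complete local ring $\hat{\O}_{X,p}$, first replacing $G$ by the stabilizer $G_p$ so that $G_p$ genuinely acts on the completion, exactly as in the complete-local reductions preceding Theorem~\ref{thm:Tact} and in this section. By Lemma~\ref{lem:smoothint} (applied to $X$), together with a standard $G_p$-equivariant linearization of $\omega$ over $\hat{\O}_{X,p}$ (available in characteristic zero by averaging over $G_p$), one identifies $\hat{\O}_{X,p}$, together with its symplectic structure, with the completion of a symplectic vector space $V=T_pX$ carrying a linear action of $G_p\subseteq\mrm{Sp}(V)$. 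Under this identification the fixed loci $X^g,X^h,X^{gh}$ become the linear subspaces $V^g,V^h,V^{gh}$, their normal bundles become the complementary ``moving'' subspaces, each $\psi_\sigma$ becomes the normalized dual Liouville polyvector on the moving subspace of $\sigma$, transversality along $Y$ (Lemma~\ref{lem:esslem}) becomes the statement $V^{gh}=V^g\cap V^h$ with $\kappa:N_{V^g}\oplus N_{V^h}\overset{\sim}\to N_{V^{gh}}$, and the structure constant $a(g,h)$ at $p$ becomes the corresponding structure constant in the linear model.

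It then remains to prove the proposition for a symplectic vector space with a linear action: equivalently, that the rescaled generators $\mu_\sigma:=\det(1-\sigma|_{N_{V^\sigma}})^{1/2}\,\psi_\sigma$ satisfy $\mu_g\mu_h|_Y=\mu_{gh}|_Y$, which on unwinding~\eqref{eq:126} is exactly the assertion
\[
\psi_g\cdot\psi_h=\frac{\det(1-(gh)|_{N_{V^{gh}}})^{1/2}}{\det(1-g|_{N_{V^g}})^{1/2}\,\det(1-h|_{N_{V^h}})^{1/2}}\;\psi_{gh},
\]
i.e.~\eqref{eq:137}. I expect this linear identity to be the main obstacle, and it is the point of Appendix~\ref{sect:pav}: beyond the bookkeeping of how the relative Liouville forms of $V^g$ and $V^h$ assemble (via $\kappa$) into that of $V^{gh}$, the delicate issue — and the place where the earlier literature went wrong — is that the scaling factor must be the \emph{positive real} square root, which is exactly what Lemma~\ref{lem:182} supplies. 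Everything else (the equivariance, the reduction to a closed point, and the equivariant linearization) is routine given the material developed above, and since the structure constant $a(g,h)$ at $p$ was computed entirely inside $\hat{\O}_{X,p}$, the linear case yields the general one.
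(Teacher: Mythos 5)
Your proposal is correct and follows the same overall strategy as the paper: reduce \eqref{eq:137} to a check at closed points of the shared components $Y$ (using that $a(g,h)$ and the $\lambda_\sigma$ are constants there, by the discussion after \eqref{eq:126} and Lemma~\ref{lem:182}), and then invoke the linear-algebra results of Appendix~\ref{sect:pav}, with Lemma~\ref{lem:182} guaranteeing positivity so that Proposition~\ref{p2} pins down the sign; the treatment of \eqref{eq:137-eq} by conjugating normal bundles is likewise the paper's argument. The one genuine difference is the mechanism of reduction to the linear model. You pass to $\hat{\O}_{X,p}$ and invoke a $G_p$-equivariant formal linearization of the symplectic form; this is a true statement over $\mbb{C}$, but ``by averaging over $G_p$'' undersells it --- averaging linearizes the group action (as in Lemma~\ref{lem:smoothint}), whereas flattening $\omega$ requires in addition a formal Moser/Darboux argument (made equivariant by averaging the primitive). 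The paper avoids this tool entirely: it takes $V=T_X|_{\hat{Y}_p}$ as a symplectic module over $K=\hat{\O}_{Y,p}$, observes that Lemma~\ref{lem:pfaffian} and Proposition~\ref{p1} hold verbatim over $K$, and only passes to the fiber $V\ot_K\mbb{C}$ --- an honest symplectic vector space with $g,h$ in the finite group $G_p$, no linearization needed --- to determine the sign via Proposition~\ref{p2}. Your route buys a cleaner one-step reduction to Theorem~\ref{mainthe} at the price of a heavier (though standard) input; the paper's buys economy of means, and also makes transparent that the identity is really fiberwise linear algebra, since $a(g,h)(p)$ depends only on $(T_pX,\omega_p)$ and the subspaces $(1-g)T_pX$, $(1-h)T_pX$, $(1-gh)T_pX$ via $\det(\kappa)$ from Theorem~\ref{thm:SA}. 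If you wish to keep your version, either cite an equivariant formal Darboux statement explicitly or note, as above, that the linearization of $\omega$ can be dispensed with by working fiberwise.
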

Note that this solution is indeed invariant under conjugation by $G$, as desired.
\begin{remark} \label{r:agh-nz}
If $G$ is abelian, then it is easy to check that the cocycle $a(g,h)=1$ for all $g$ and $h$, in spite of the fact that the determinants $\mrm{det}(1-g|_{N_{X^g}})$ are not equal to one.  Thus in this case one could also use the trivial solution to \eqref{eq:137}. However, when $G$ is nonabelian, in general $a(g,h) \neq 1$. For example, if $G=S_n$ and $X=T^*\mbb{C}^n$,
%$(\mbb{C}^{n-1})$ is the cotangent bundle of the reflection representation,
 it follows from Proposition \ref{prop:coboundary} (and one can verify directly) that $a((1 \cdots k)(k \cdots m)) = \frac{m}{k(m-k)}$.
\end{remark}

From Proposition~\ref{prop:coboundary} we obtain a simple description of the multiplication of $\uSA(\msc{X})$ for arbitrary symplectic quotients.

\begin{theorem}\label{thm:SAsymp}
Suppose $X$ is a symplectic variety and that $G$ acts on $X$ by symplectic automorphisms.  Then there is a canonical isomorphism of $\O_X$-algebras $\uSA(\msc{X})\cong \oplus_{g\in G}\O_{X^g}$, where $\oplus_g\O_{X^g}$ is given the transparent multiplication~\eqref{eq:transp}.
\end{theorem}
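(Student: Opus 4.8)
The plan is to obtain the isomorphism by the rescaling recipe already set up in Section~\ref{sect:scaling}: replace each canonical generator $\psi_g$ of the globally trivial line $\det(N_{X^g})$ by a suitable scalar multiple, chosen so that the rescaled generators multiply ``transparently''. With Proposition~\ref{prop:coboundary} available, the theorem becomes a formal consequence of the $G$-graded description of $m_{\uSA}$ from Theorem~\ref{thm:SA}.

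Concretely, first I would put $\lambda_g:=\det(1-g|_{N_{X^g}})^{1/2}\in\mbb{C}^{\pi_0(X^g)}$ (the positive square root), which is an invertible section of $\underline{\mbb{C}}$ on $X^g$ by Lemma~\ref{lem:182}, and set $\mu_g:=\lambda_g\psi_g$, a global generator of $\det(N_{X^g})$; note $\mu_e=1$, since $d_e=0$ and $\lambda_e=1$. Next I would extract the two needed facts from Proposition~\ref{prop:coboundary}. Combining the definition~\eqref{eq:126} of the structure constants $a(g,h)$ with the bounding relation~\eqref{eq:137} yields $\mu_g\mu_h|_Y=\mu_{gh}|_Y$ on every component $Y$ shared between $X^{gh}$ and $X^g\cap X^h$ (along which, as in Lemma~\ref{lem:esslem}, the intersection is transverse), while $\mu_g\mu_h$ vanishes on the other components of $X^g\cap X^h$ by the vanishing clause of Theorem~\ref{thm:SA} and vanishes away from $X^g\cap X^h$ for support reasons. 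The equivariance condition~\eqref{eq:137-eq} records that the action of $h$ carries $\mu_g|_Z$ to $\mu_{hgh^{-1}}|_{h(Z)}$ for each component $Z\subseteq X^g$.

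Then I would define the $\O_X$-linear map $\Phi\colon\bigoplus_{g\in G}\O_{X^g}\to\uSA(\msc{X})$ by $1_g\mapsto\mu_g$. Since $\mu_g$ trivializes $\det(N_{X^g})$, each summand maps isomorphically onto the corresponding line of $\uSA(\msc{X})$, so $\Phi$ is an isomorphism of $\O_X$-modules; by the multiplicativity of the $\mu_g$ on shared components, their vanishing elsewhere, and $\O_X$-bilinearity, $\Phi$ carries the transparent multiplication~\eqref{eq:transp} to $m_{\uSA}$, and $\Phi(1_e)=\mu_e=1$, so $\Phi$ is a unital algebra isomorphism (associativity of the transparent product is then automatic, being transported from $\uSA(\msc{X})$); finally~\eqref{eq:137-eq} is exactly the statement that $\Phi$ intertwines the $G$-actions permuting the $\O_{X^g}$ and the $\det(N_{X^g})$, so $\Phi$ is an isomorphism of equivariant $\O_X$-algebras, canonical once the positive square roots are fixed.

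The genuinely substantive ingredient is Proposition~\ref{prop:coboundary} itself, established for linear symplectic actions in Appendix~\ref{sect:pav} and reduced to that case in Section~\ref{sect:proof_disc}; within the present argument everything is bookkeeping with the $G$-grading. The one point requiring care is the one already flagged in the text: $a$ is not literally a group $2$-cocycle, since it is only defined component-by-component on the $X^g\cap X^h$, so relation~\eqref{eq:137} must be read in that refined sense; as elsewhere in the paper, restricting to $G$-stable affines and completing at closed points reduces the needed verifications to the already-understood affine (indeed linear) situation.
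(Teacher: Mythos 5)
Your proposal is correct and follows essentially the same route as the paper: the theorem is deduced exactly as in Section~\ref{sect:scaling}, by rescaling the canonical generators $\psi_g$ to $\mu_g=\det(1-g|_{N_{X^g}})^{1/2}\psi_g$ and invoking Proposition~\ref{prop:coboundary} for relations~\eqref{eq:137} and~\eqref{eq:137-eq}, with Theorem~\ref{thm:SA} supplying the vanishing on non-shared components. Your only extra bookkeeping (the equivariance check, which also uses that $G$-invariance of the symplectic form carries $\psi_g$ to $\psi_{hgh^{-1}}$) is consistent with the paper's setup.
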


This theorem extends to the ``sheaf-valued'' cohomology $\uHH^\bullet(\msc{X})=\oplus_{g\in G} T^{poly}_{X^g}$ in the symplectic case as well, where $\msc{X}=[X/G]$, as usual.

\begin{theorem}\label{thm:HHsymp}
There is an isomorphism of equivariant sheaves of algebras 
\[
\oplus_{g\in G}T^{poly}_{X^g}\overset{\sim}\longrightarrow\uHH^\bullet(\msc{X}),\ \ 1_g\mapsto \det(1-g|_{N_{X^g}})^{1/2}\psi_g
\]
where the sum $\oplus_g T^{poly}_{X^g}$ is given the transparent multiplication.
\end{theorem}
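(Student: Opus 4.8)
The plan is to write down the asserted map, observe that it is an equivariant isomorphism of sheaves because in the symplectic case each $\det(N_{X^g})$ is a trivial line bundle, and then deduce multiplicativity by bootstrapping from Theorem~\ref{thm:SAsymp} (which already handles the generating subalgebra $\uSA(\msc{X})$) together with Theorem~\ref{thm:Tact} (which reduces all of $\uHH^\bullet(\msc{X})$ to $\uSA(\msc{X})$ and the central copy of $T^{poly}_X$). Concretely, set $\mu_g:=\det(1-g|_{N_{X^g}})^{1/2}\psi_g$, a global generator of $\det(N_{X^g})$ by Lemma~\ref{lem:182} and the symplectic trivialization $\det(N_{X^g})=\O_{X^g}\psi_g$. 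Under Proposition~\ref{prop:anno}'s identification $\uHH^\bullet(\msc{X})_g=T^{poly}_{X^g}\ot_{\O_X}\det(N_{X^g})$, define $\Phi$ on the $g$-summand by $\alpha\mapsto\alpha\ot\mu_g$; equivalently, $\Phi$ is the unique $T^{poly}_X$-linear map with $\Phi(1_g)=\mu_g$, where $T^{poly}_X$ acts on $T^{poly}_{X^g}$ through the algebra surjection $q_g:T^{poly}_X\to T^{poly}_{X^g}$ of~\eqref{eq:1024} and on $\uHH^\bullet(\msc{X})$ as its central subalgebra — these two $T^{poly}_X$-actions being identified by Theorem~\ref{thm:Tact}. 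Since $\mu_g$ trivializes $\det(N_{X^g})$, $\Phi$ is an $\O_X$-module isomorphism summand by summand, hence an isomorphism of sheaves.

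For equivariance, recall that the equivariant structures on both sides come from the conjugation isomorphisms $h:X^g\to X^{hgh^{-1}}$. The Poisson bivector $\pi$ is $G$-invariant, so conjugation by $h$ carries $\pi_g^\perp$ to $\pi_{hgh^{-1}}^\perp$ and hence $\psi_g$ to $\psi_{hgh^{-1}}$, while the scalars $\det(1-g|_{N_{X^g}})^{1/2}$ are conjugation-invariant — this is precisely condition~\eqref{eq:137-eq} verified in Proposition~\ref{prop:coboundary}. Thus $\Phi(1_g)=\mu_g$ maps to $\mu_{hgh^{-1}}=\Phi(1_{hgh^{-1}})$, and since $\Phi$ is $T^{poly}_X$-linear with the $T^{poly}_X$-action itself equivariant, $\Phi$ is equivariant.

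The substantive step is multiplicativity. As both sides are $G$-graded algebras and $\Phi$ is $\O_X$-linear and grading-preserving, it suffices to check $\Phi(\alpha\cdot\beta)=\Phi(\alpha)\Phi(\beta)$ on homogeneous elements $\alpha\in T^{poly}_{X^g}$, $\beta\in T^{poly}_{X^h}$, which we do over a $G$-stable affine. Pick lifts $\tilde\alpha,\tilde\beta\in T^{poly}_X$ with $q_g(\tilde\alpha)=\alpha$, $q_h(\tilde\beta)=\beta$; by Theorem~\ref{thm:Tact}, $\Phi(\alpha)=\tilde\alpha\cdot\mu_g$ and $\Phi(\beta)=\tilde\beta\cdot\mu_h$ in $\uHH^\bullet(\msc{X})$. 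Using centrality of $T^{poly}_X$ and $\O_X$-bilinearity, $\Phi(\alpha)\Phi(\beta)=\tilde\alpha\tilde\beta\cdot(\mu_g\cdot\mu_h)$; by Theorem~\ref{thm:SAsymp} (i.e.\ Proposition~\ref{prop:coboundary}), $\mu_g\cdot\mu_h=c_{gh}\,\mu_{gh}$ in $\det(N_{X^{gh}})$, where $c_{gh}\in\O_{X^{gh}}$ is the idempotent supported on the components $Y$ of $X^g\cap X^h$ shared with $X^{gh}$ along which $X^g,X^h$ meet transversely (equivalently, $\codim X^{gh}|_Y=\codim X^g|_Y+\codim X^h|_Y$; see Lemma~\ref{lem:esslem}). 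Applying Theorem~\ref{thm:Tact} once more, $\Phi(\alpha)\Phi(\beta)=q_{gh}(\tilde\alpha\tilde\beta)\ot c_{gh}\mu_{gh}=\bigl(c_{gh}\,q_{gh}(\tilde\alpha\tilde\beta)\bigr)\ot\mu_{gh}=\Phi\bigl(c_{gh}\,q_{gh}(\tilde\alpha\tilde\beta)\bigr)$. It remains to recognize $c_{gh}\,q_{gh}(\tilde\alpha\tilde\beta)$ as the transparent product $\alpha\cdot\beta$. On a non-shared or non-transverse component both $c_{gh}$ and the transparent product vanish, so fix a shared transverse component $Y$. There Lemma~\ref{lem:esslem}(iii) gives $T_X|_Y=T_{X^{gh}}|_Y\oplus N_{X^g}|_Y\oplus N_{X^h}|_Y$, under which the restriction of $q_{gh}$ kills all polyvectors of positive degree in $N_{X^{gh}}|_Y=N_{X^g}|_Y\oplus N_{X^h}|_Y$; meanwhile $\tilde\alpha|_Y$ and $\tilde\beta|_Y$ differ from their $\bigwedge T_{X^{gh}}|_Y$-components — which are exactly the reductions of $\alpha|_Y,\beta|_Y$ entering the ``expected'' multiplication — by terms of positive degree in $N_{X^{gh}}|_Y$. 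A one-line Koszul bookkeeping then shows $q_{gh}(\tilde\alpha\tilde\beta)|_Y$ is the product of those reductions, i.e.\ the transparent product. Hence $\Phi$ is an algebra map, and associativity and unitality are inherited from $\uHH^\bullet(\msc{X})$.

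I expect the main obstacle to be exactly this last identification — confirming that the algebra structure transported along $\Phi$ is precisely the transparent multiplication of Theorem~\ref{thm:HHsymp} — which is bookkeeping with the transversality decomposition rather than anything deep, since the one genuinely nontrivial ingredient, Proposition~\ref{prop:coboundary} (that the positive square roots $\det(1-g|_{N_{X^g}})^{1/2}$ solve~\eqref{eq:137} and~\eqref{eq:137-eq}), is already established via reduction to the linear case and the appendix with Etingof.
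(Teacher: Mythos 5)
Your proposal is correct and follows essentially the same route as the paper: rescale the symplectic trivializations $\psi_g$ by the positive square roots $\det(1-g|_{N_{X^g}})^{1/2}$ (Proposition~\ref{prop:coboundary}, giving Theorem~\ref{thm:SAsymp} on the generating subalgebra $\uSA(\msc{X})$), and then extend to all of $\uHH^\bullet(\msc{X})$ via the $T^{poly}_X$-module description of Theorem~\ref{thm:Tact} together with the vanishing/transversality pattern of Theorem~\ref{thm:SA} and Lemma~\ref{lem:esslem}. The paper leaves precisely this bookkeeping implicit, and your local lifting argument identifying the transported product with the transparent multiplication fills it in correctly.
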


Note that to include the grading we need to shift the polyvector fields as $\oplus_g T^{\bullet-\mrm{codim}(X^g)}_{X^g}$.  We ignore this degree shift for the sake of notation.
\par

Let us state explicitly what we mean by the transparent multiplication here.  Let $Y$ be a shared component for $X^{gh}$ and $X^g\cap X^h$ along which the fixed spaces intersect transversely.  For $\tau=g,h,gh$, we have the projections
\[
p_\tau:T^{poly}_X|_Y\to T^{poly}_{X^\tau}|_Y
\]
with kernel generated by the normal bundle $N_{X^\tau}$.  The containments $N_{X^g}|_Y,\ N_{X^h}|_Y\subset N_{X^{gh}}|_Y$ provided by (the proof of) Lemma~\ref{lem:esslem} (iii) imply that the projection $p_{gh}$ factors through $T^{poly}_{X^g}|_Y$ and $T^{poly}_{X^h}|_Y$, and we have induced projections $T^{poly}_{X^g}|_Y\to T^{poly}_{X^{gh}}|_Y$ and $T^{poly}_{X^h}|_Y\to T^{poly}_{X^{gh}}|_Y$.  We let $p_{gh}$ denote these induced projections, by an abuse of notation.  Whence we can define an obvious product by restricting to components $Y$ of $X^{gh}$,
\begin{equation}\label{eq:Ttransp}
(\xi_g\cdot\xi_h)|_Y=\left\{\begin{array}{ll}
p_{gh}(\xi_g|_Y)p_{gh}(\xi_h|_Y) &\text{if $Y$ is a shared component}\\
0 & \text{otherwise}.
\end{array}\right.
\end{equation}
This is what we have referred to as the transparent multiplication in Theorem~\ref{thm:HHsymp}.

\subsection{The proof of Proposition~\ref{prop:coboundary}}
\label{sect:proof_disc}

\begin{proof}[Proof of Proposition~\ref{prop:coboundary}]
The identity \eqref{eq:137-eq} is clear from the definition $\lambda_g=\mrm{det}(1-g|_{N_{X^g}})$.
 To prove \eqref{eq:137}, 
%It suffices to prove the equality~\eqref{eq:137}, for $\lambda_g=\mrm{det}(1-g|_{N_{X^g}})$, 
 it suffices to work complete locally at closed points on shared
 components $Y$ of $X^{gh}$ and $X^g\cap X^h$.  At such a point $p$ on
 a shared component $Y$ we take $V=T_X|_{\hat{Y}_p}$.  We have
 $V^g=N_{X^g}|_{\hat{Y}_p}$,
 $(V^g)^\perp=(1-g)V=T_{X^g}|_{\hat{Y}_p}$, and similar equalities for
 $h$ and $gh$.  We consider these objects as modules over the complete
 local $\mbb{C}$-algebra $K=\hat{\O}_{Y,p}$.
\par

One replaces $\mbb{C}$ with $K$ and argues just as in the appendix to arrive at Proposition~\ref{prop:coboundary}.  Namely, the proofs of Lemma~\ref{lem:pfaffian} and Proposition~\ref{p1} are equally valid over $K$.  This gives
\[
a(g,h)=\left(\frac{\det(1-gh|_{N_{X^{gh}}})}{\det(1-g|_{N_{X^g}})\det(1-h|_{N_{X^h}})}\right)^{1/2}.
\]
We have seen at Lemma~\ref{lem:182} that all of the determinants in the above formula are positive real, and hence $a(g,h)$ is either the positive or negative square root on each component $Y$.  One can check the sign of $a(g,h)$ on the fiber $V\ot_K\mbb{C}$, which reduces us to an analysis of a symplectic vector space equipped with a finite symplectic group action.  So we apply Proposition~\ref{p2} directly to find that $a(g,h)$ is in fact the positive square root.
\end{proof}

\subsection{A remark on the literature}
\label{sect:remark}

It is stated in multiple references that the coefficients $a(g,h)$ are identically $1$, in the specific case of a vector space $X=V$ and $G$ a finite subgroup in $\mrm{Sp}(V)$ (e.g.~\cite[proof of Theorem 1.8]{etingofginzburg02},~\cite[\S 3.10]{alvarez02},~\cite[Theorem 4]{anno}).  However, this is not the case, as explained in Remark \ref{r:agh-nz}.  The best that one could hope for is that the coefficients $a(g,h)$ form a $G$-equivariant ``coboundary", in the sense of~\eqref{eq:137},\eqref{eq:137-eq}, which is what is shown in Proposition~\ref{prop:coboundary}.

\section{Symplectic quotients and orbifold cohomology}
\label{sect:H_orb}

As in Section~\ref{sect:symplectic}, we suppose $X$ is a symplectic variety over $\mbb{C}$ equipped with an action of a finite group $G$ by symplectic automorphisms.  We present relations between Hochschild cohomology and orbifold cohomology, motivated by findings of~\cite{ginzburgkaledin04,dolgushevetingof05,etingofoblomkov06}.  Indeed, many of the results here can be seen as globalized variants of results from~\cite{ginzburgkaledin04}, and some materials intersect non-trivially with work of Ruan~\cite{ruan00,ruan06}.

\subsection{A dg structure on polyvector fields}

Let $F_\omega:T_X\to\Omega_X$ be the isomorphism provided by the symplectic structure, and let $\{-,-\}$ denote the Poisson bracket on polyvector fields.
\par

We have
\[
F_\omega\{\pi,f\}=2F_\omega\left(\pi(-,d_f)\right)=2\pi(F_\omega^\vee-,d_f)=2\langle -,d_f\rangle:T_X\to \msc{O}_X.
\]
This final element is simply $2 d_f\in \Omega_X$.  Furthermore, one can checking complete locally to find that the sheaf isomorphism $F^{poly}_\omega$ provides an isomorphism of complexes
\begin{equation}\label{eq:226}
F^{poly}_\omega:(T_X^{poly},\frac{1}{2}\{\pi,-\})\to (\Omega_X^{poly},d_\mrm{dR}).
\end{equation}
\begin{comment}
More specifically, the elements $\xi_i:=\{\pi,x_i\}$ provide a basis for $T_X$ complete locally, in coordinates $\hat{\O}_{X,p}\cong \mbb{C}[\![x_i]\!]$.  This is simply because the $F_\omega\xi_i=2d_{x_i}\in \Omega_{\hat{X}_p}$ provide a basis for the K\"ahler differentials.  Now the fact that 
\[
\frac{1}{2}\{\pi,\xi_i\}=\frac{1}{2}\{\pi,\{\pi,x_i\}\}=\frac{1}{4}\{\{\pi,\pi\},x_i\}=0
\]
for each $i$ implies that for arbitrary polyvectors
\[
\begin{array}{rl}
F_\omega^{poly}\left(\frac{1}{2}\{\pi,\sum_Ia_I\xi_{i_1}\dots\xi_{i_l}\}\right)&=\sum_I\frac{1}{2}F_\omega^{poly}\{\pi,a_I\} F^{poly}_\omega(\xi_{i_1}\dots\xi_{i_l})\\
&=2^l\sum_I d_\mrm{dR}(a_I)d_{x_{i_1}}\dots d_{x_{i_l}}\\
&=2^l\sum_I d_\mrm{dR}(a_Id_{x_{i_1}}\dots d_{x_{i_l}})=d_\mrm{dR}\left(F_\omega^{poly}(\sum_I a_I \xi_{i_1}\dots \xi_{i_l})\right),
\end{array}
\]
as claimed.  Of course, we can repeat this at each of the fixed spaces to obtain isomorphisms $(T_{X^g}^{poly},\frac{1}{2}\{\pi_g,-\})\cong (\Omega_{X^g}^{poly},d_\mrm{dR})$.
\end{comment}

We define the operation
\[
\{\pi,-\}:\oplus_{g\in G}T^{poly}_{X^g}\to \oplus_{g\in G} T^{poly}_{X^g}
\]
formally as the sum $\oplus_{g\in G}\{\pi_g,-\}$.
\par

We consider $\oplus_{g\in G}T^{poly}_{X^g}$ as a sheaf of algebras on $X$ under the transparent multiplication~\eqref{eq:Ttransp}.  By a nontrivial, but straightforward, computation one sees that the operator $\frac{1}{2}\{\pi,-\}$ is a degree one square zero derivation on the sheaf of algebras $\oplus_g T^{poly}_{X^g}$, with the transparent multiplication.  Rather, $(\oplus_g T^{poly}_{X^g},\frac{1}{2}\{\pi,-\})$ is a dg algebra.

\subsection{Linear identifications with orbifold cohomology}

We have the isomorphism of sheaves of complexes
\begin{equation}\label{eq:463}
(\uHH^\bullet(\msc{X}),\frac{1}{2}\{\pi,-\})\cong \bigoplus_{g\in G}(\Omega_{X^g}^\bullet,d_\mrm{dR})
\end{equation}
given by $F^{poly}_\omega$, up to shifting by the codimensions.  This induces a non-trivial dg algebra structure on the sum of the de Rham complexes.  We discuss this dg structure in Section~\ref{sect:orb_prod}, but let us first record a number of observations and linear identifications.  We define the dg algebra
\[
\uHH^\bullet_\pi(\msc{X}):=(\uHH^\bullet(\msc{X}),\frac{1}{2}\{\pi,-\})
\]
(cf.~\cite[Sect.\ 6]{ginzburgkaledin04}).

\begin{lemma}[{cf.~\cite{ginzburgkaledin04}}]\label{lem:HochFG}
There is an isomorphism of graded $G$-representations
\[
\mbb{H}^\bullet(X,\uHH^\bullet_\pi(\msc{X}))\overset{\cong}\to \bigoplus_{g\in G}H^{\bullet+\mrm{codim}(X^g)}(X^g_\mrm{an},\mbb{C}),
\]
where $\mbb{H}^\bullet$ denotes the hypercohomology and $H^\bullet(Z_\mrm{an},\mbb{C})$ denotes the singular cohomology of the analytic space associated to a variety $Z$ over $\mbb{C}$.
\end{lemma}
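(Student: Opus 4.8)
The plan is to deduce the lemma directly from the isomorphism of complexes of sheaves~\eqref{eq:463}, together with Grothendieck's algebraic de Rham theorem. Recall that~\eqref{eq:463}, induced by the symplectic isomorphism $F^{poly}_\omega\colon T^{poly}_X\to\Omega^{poly}_X$ and the trivializations $\det(N_{X^g})\cong\O_{X^g}$ by the relative volume polyvectors $\psi_g$, identifies $\uHH^\bullet_\pi(\msc{X})$ with $\bigoplus_{g\in G}(\Omega^\bullet_{X^g},d_\mrm{dR})$ as a complex of sheaves on $X$, up to shifting the $g$-summand by $\codim(X^g)$; here each $\Omega^\bullet_{X^g}$ is regarded as a complex of sheaves on $X$ via pushforward along the closed embedding $\iota_g\colon X^g\hookrightarrow X$. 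The first point is that this identification is $G$-equivariant. Indeed, $\uHH^\bullet(\msc{X})$ is a $G$-equivariant sheaf with $h\in G$ carrying the $g$-summand isomorphically onto the $hgh^{-1}$-summand through the symplectomorphism $h\colon X^g\to X^{hgh^{-1}}$; since $\omega$, equivalently $\pi$, is $G$-invariant, the bundle map $F_\omega$ is $G$-equivariant, the differential $\frac{1}{2}\{\pi,-\}$ is $G$-equivariant, and $h\cdot\psi_g=\psi_{hgh^{-1}}$ because $\psi_g$ is built from $\pi$ alone. Hence every map occurring in~\eqref{eq:463} commutes with the $G$-action, so~\eqref{eq:463} is an isomorphism of complexes of $G$-equivariant sheaves. (The scaling factors $\det(1-g|_{N_{X^g}})^{1/2}$ of Theorem~\ref{thm:HHsymp} play no role here, as they only affect multiplicativity; note moreover that they are $G$-invariant by Proposition~\ref{prop:coboundary}.)

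Next I would take hypercohomology $\mbb{H}^\bullet(X,-)$ of~\eqref{eq:463}. Hypercohomology commutes with the finite direct sum over $g\in G$, and since $\iota_g$ is a closed embedding the pushforward $\iota_{g\ast}$ is exact (so coincides with $\mrm{R}\iota_{g\ast}$), whence $\mbb{H}^\bullet(X,\iota_{g\ast}\Omega^\bullet_{X^g})=\mbb{H}^\bullet(X^g,\Omega^\bullet_{X^g})$. By Lemma~\ref{lem:smoothint} each $X^g$ is a smooth quasi-projective variety over $\mbb{C}$, so Grothendieck's comparison theorem for algebraic de Rham cohomology provides a natural isomorphism $\mbb{H}^\bullet(X^g,\Omega^\bullet_{X^g})\cong H^\bullet(X^g_\mrm{an},\mbb{C})$. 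Reassembling the summands and keeping track of the shift by $\codim(X^g)$ on each of them yields the stated isomorphism of graded vector spaces. Finally, naturality of Grothendieck's comparison isomorphism with respect to the algebraic automorphisms $h\colon X^g\to X^{hgh^{-1}}$ shows that the resulting isomorphism intertwines the $G$-action on $\mbb{H}^\bullet(X,\uHH^\bullet_\pi(\msc{X}))$ with the $G$-action on $\bigoplus_{g\in G} H^\bullet(X^g_\mrm{an},\mbb{C})$ permuting summands through the induced maps $H^\bullet(X^g_\mrm{an},\mbb{C})\to H^\bullet(X^{hgh^{-1}}_\mrm{an},\mbb{C})$, so the isomorphism is one of graded $G$-representations.

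Since~\eqref{eq:463} is already in hand, the argument is essentially formal; the only points requiring care are the degree-shift bookkeeping — making sure the shift placing $\Omega^0_{X^g}=\O_{X^g}$ in the correct cohomological degree matches the codimension shift appearing in the statement — and the confirmation that~\eqref{eq:463} is genuinely an isomorphism of complexes of $G$-equivariant sheaves, and not merely a sheaf-level identification. If one wished to avoid invoking~\eqref{eq:463} as a black box, the substantive input would be the complete-local check (reducing to a symplectic vector space with a linear finite symplectic group action) that $F^{poly}_\omega$ carries $\frac{1}{2}\{\pi,-\}$ to $d_\mrm{dR}$ compatibly with the splittings $T_X|_{X^g}\cong T_{X^g}\oplus N_{X^g}$ of Section~\ref{sect:decomp} and with the trivializations $\psi_g$; but this is precisely the content of the discussion preceding the lemma.
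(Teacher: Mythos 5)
Your proposal is correct and follows essentially the same route as the paper: the paper's proof likewise deduces the lemma from the identification~\eqref{eq:463}, Grothendieck's comparison between algebraic and analytic de Rham cohomology, and the identification of analytic de Rham with singular cohomology, with $G$-equivariance coming from the naturality of each of these isomorphisms. Your additional remarks (exactness of pushforward along the closed embeddings $X^g\hookrightarrow X$, irrelevance of the scaling factors $\det(1-g|_{N_{X^g}})^{1/2}$ for the linear statement, and the codimension-shift bookkeeping) simply spell out details the paper leaves implicit.
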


\begin{proof}
This follows from the isomorphism~\eqref{eq:463}, Grothendieck's isomorphism between the algebraic de Rham cohomology and analytic de Rham cohomology~\cite{grothendieck66}, and the general isomorphism $H^\bullet_\mrm{dR}(Z_\mrm{an})\cong H^\bullet(Z_\mrm{an},\mbb{C})$.  Each isomorphism here is sufficiently natural to account for the action of $G$.
\end{proof}

We take invariants to arrive at an identification with the orbifold cohomology.

\begin{theorem}\label{thm:HHorb}
There is a graded isomorphism between the hypercohomology of $\uHH^\bullet_\pi(\msc{X})$, considered as a sheaf on $\msc{X}$, and the orbifold cohomology of $\msc{X}$, $\mbb{H}^\bullet(\msc{X},\uHH^\bullet_\pi(\msc{X}))\cong H^\bullet_\mrm{orb}(\msc{X},\mbb{C})$.
\end{theorem}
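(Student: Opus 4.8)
The plan is to deduce the statement by taking $G$-invariants in Lemma~\ref{lem:HochFG} and matching the result with the Chen--Ruan definition of orbifold cohomology, the essential numerical input being that a symplectic automorphism has age equal to half the codimension of its fixed locus.

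First, since $G$ is finite and $\mrm{char}(k)=0$, the functor of $G$-invariants is exact, so for any $G$-equivariant complex of sheaves on $X$---which is the same datum as a complex of sheaves on $\msc{X}$---one has a canonical graded identification $\mbb{H}^\bullet(\msc{X},-)=\mbb{H}^\bullet(X,-)^G$. Applying this to $\uHH^\bullet_\pi(\msc{X})$ and then invoking Lemma~\ref{lem:HochFG} gives a graded isomorphism
\[
\mbb{H}^\bullet(\msc{X},\uHH^\bullet_\pi(\msc{X}))\;\cong\;\Big(\bigoplus_{g\in G}H^{\bullet-\codim(X^g)}(X^g_\mrm{an},\mbb{C})\Big)^{G},
\]
where $G$ acts on the right by permuting the summands along the isomorphisms $h\colon X^g\overset{\sim}{\to}X^{hgh^{-1}}$ and by pushforward along them on cohomology. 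This is precisely the $G$-action defining the equivariant structure on $\uHH^\bullet(\msc{X})$ from Section~\ref{sect:A}, and it is compatible with the scaling factors $\det(1-g|_{N_{X^g}})^{1/2}$ of Theorem~\ref{thm:HHsymp} thanks to the $G$-equivariance \eqref{eq:137-eq} guaranteed by Proposition~\ref{prop:coboundary}.

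Next I would recall that the Chen--Ruan orbifold cohomology of the global quotient $\msc{X}=[X/G]$ is, as a graded vector space, $\big(\bigoplus_{g\in G}H^{\bullet-2\iota_g}(X^g_\mrm{an},\mbb{C})\big)^{G}$ with the same $G$-action on summands, where $\iota_g\colon\pi_0(X^g)\to\mbb{Q}$ is the degree-shifting number: on a component $Z$ of $X^g$, if $g$ acts on the normal space with eigenvalues $e^{2\pi i\theta_1},\dots,e^{2\pi i\theta_m}$ with $0\le\theta_j<1$, then $\iota_g(Z)=\sum_j\theta_j$. Since $G$ acts by symplectic automorphisms, the eigenvalue analysis in the proof of Lemma~\ref{lem:182} shows these eigenvalues occur in conjugate pairs $\{e^{2\pi i\theta},e^{2\pi i(1-\theta)}\}$, with the eigenvalue $-1$ appearing with even multiplicity; each such pair contributes $\theta+(1-\theta)=1$ to $\iota_g(Z)$ while contributing twice that to $\codim(X^g)|_Z$. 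Hence $2\iota_g=\codim(X^g)$ on every component, the two displayed graded vector spaces agree term by term, and we obtain the desired isomorphism $\mbb{H}^\bullet(\msc{X},\uHH^\bullet_\pi(\msc{X}))\cong H^\bullet_\mrm{orb}(\msc{X},\mbb{C})$.

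The step needing the most care is the bookkeeping of degree shifts: one must check that the codimension shift built into $\uHH^\bullet(\msc{X})$---the determinant $\det(N_{X^g})$ living in cohomological degree $\codim(X^g)$, equivalently the shift $T^{\bullet-\codim(X^g)}_{X^g}$ noted after Theorem~\ref{thm:HHsymp}---transported through the isomorphism $F^{poly}_\omega$ of \eqref{eq:226} and through Grothendieck's comparison of algebraic with analytic de~Rham cohomology, matches the Chen--Ruan shift $-2\iota_g$ exactly; the identity $\iota_g=\tfrac12\codim(X^g)$ above is what makes this work. I would also emphasize that this is only a linear identification: compatibility with the ring structures---the orbifold cup product versus the product $F^{poly}_\omega$ transports onto $\bigoplus_g(\Omega^\bullet_{X^g},d_\mrm{dR})$---is not claimed here and is instead the content of the conjectures in Section~\ref{sect:orb_prod}.
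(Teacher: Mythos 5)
Your proposal is correct and follows essentially the same route as the paper: identify $\mbb{H}^\bullet(\msc{X},-)$ with $G$-invariants of $\mbb{H}^\bullet(X,-)$, apply Lemma~\ref{lem:HochFG}, and match with the Fantechi--G\"ottsche/Chen--Ruan description of $H^\bullet_\mrm{orb}$ using that the age shift of a symplectic automorphism is half the codimension of its fixed locus. The only difference is cosmetic: the paper simply cites \cite[Definitions 1.5, 1.7]{fantechigottsche03} for the age computation, whereas you supply the eigenvalue-pairing argument for $2\iota_g=\codim(X^g)$ explicitly.
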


\begin{proof}
In the symplectic setting the ``age shift" $a(g)$ of~\cite[Definition 1.5]{fantechigottsche03} is half the codimension of $X^g$.  Hence the orbifold cohomology is given as the invariants
\[
H^\bullet_\mrm{orb}(\msc{X},\mbb{C})=\left(\oplus_g H^{\bullet+\mrm{codim}(X^g)}(X^g_\mrm{an},\mbb{C})\right)^G
\]
\cite[Definition 1.7]{fantechigottsche03}.  Now the desired isomorphism is immediate from the equality $\mbb{H}^\bullet(\msc{X},\uHH^\bullet_\pi(\msc{X}))=\mbb{H}^\bullet(X,\uHH^\bullet_\pi(\msc{X}))^G$ and Lemma~\ref{lem:HochFG}.
\end{proof}

In the particular case of projective symplectic variety $X$, we have a direct comparison between Hochschild cohomology and orbifold cohomology. 

\begin{corollary}\label{cor:501}
If $X$ is projective over $\mbb{C}$, then there is an isomorphism of graded vector spaces $\HH^\bullet(\msc{X})\cong H^\bullet_\mrm{orb}(\msc{X})$.
\end{corollary}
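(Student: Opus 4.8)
The plan is to deduce this from Theorem~\ref{thm:HHorb} together with a comparison of two spectral sequences that share a page. Theorem~\ref{thm:HHorb} already gives $H^\bullet_{\mrm{orb}}(\msc{X})\cong\mathbb{H}^\bullet(\msc{X},\uHH^\bullet_\pi(\msc{X}))$, so it suffices to produce a graded-vector-space isomorphism $\HH^\bullet(\msc{X})\cong\mathbb{H}^\bullet(\msc{X},\uHH^\bullet_\pi(\msc{X}))$. On one side, the local-to-global spectral sequence \eqref{eq:105} abuts to $\HH^\bullet(\msc{X})$ and, by \cite{arinkinetal14}, degenerates at $E_2^{p,q}=H^p(X,\uHH^q(\msc{X}))^G$ (this is Corollary~\ref{cor:arinkinetal}), so $\dim_{\mbb{C}}\HH^n(\msc{X})=\sum_{p+q=n}\dim H^p(X,\uHH^q(\msc{X}))^G$. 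On the other side, filtering the bounded complex of coherent sheaves $\uHH^\bullet_\pi(\msc{X})$ on $\msc{X}$ by cohomological degree gives a hypercohomology spectral sequence with $E_1^{p,q}=H^q(\msc{X},\uHH^p(\msc{X}))=H^q(X,\uHH^p(\msc{X}))^G$ and $d_1$ induced by $\frac{1}{2}\{\pi,-\}$, abutting to $\mathbb{H}^{p+q}(\msc{X},\uHH^\bullet_\pi(\msc{X}))$. Since $E_1^{p,q}$ of the second sequence is the $(q,p)$-entry $E_2^{q,p}$ of the first, the two have equal total dimension in every total degree $n$, so it remains to show the second sequence also degenerates, now at $E_1$.

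For that I would use the identification \eqref{eq:463}: under $F^{poly}_\omega$ — together with the symplectic trivializations $\det(N_{X^g})\cong\O_{X^g}$, which are $Z_G(g)$-equivariant because $\pi$ is $G$-invariant — the complex $\uHH^\bullet_\pi(\msc{X})$ becomes, on the cover $X$, the direct sum $\bigoplus_{g\in G}(\Omega^\bullet_{X^g},d_{\mrm{dR}})$ up to shifts by the codimensions. Each $X^g$ is closed in the projective variety $X$ and smooth by Lemma~\ref{lem:smoothint}, hence is a smooth projective complex variety, so its Hodge--de Rham spectral sequence $H^q(X^g,\Omega^p_{X^g})\Rightarrow H^{p+q}_{\mrm{dR}}(X^g)$ degenerates at $E_1$ by Deligne's theorem. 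Pushforward along the closed immersions $X^g\hookrightarrow X$ is exact, and taking $G$-invariants is exact in characteristic $0$ (and matches the decomposition into $\bigoplus_{[g]}$ over conjugacy classes with $Z_G(g)$-invariants); therefore the degree-filtration spectral sequence of $\uHH^\bullet_\pi(\msc{X})$ is a $G$-invariant direct sum of the Hodge--de Rham spectral sequences of the $X^g$, hence degenerates at $E_1$. Thus $\dim_{\mbb{C}}\mathbb{H}^n(\msc{X},\uHH^\bullet_\pi(\msc{X}))=\sum_{p+q=n}\dim H^q(X,\uHH^p(\msc{X}))^G$, which equals $\dim_{\mbb{C}}\HH^n(\msc{X})$ by the first paragraph.

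Combining with Theorem~\ref{thm:HHorb} yields $\dim_{\mbb{C}}\HH^n(\msc{X})=\dim_{\mbb{C}}H^n_{\mrm{orb}}(\msc{X})$ for all $n$, and over the field $\mbb{C}$ this produces the claimed isomorphism of graded vector spaces. One can also bypass the second spectral sequence and argue directly: using $\uHH^q(\msc{X})_g\cong\wedge^{q-\codim(X^g)}T_{X^g}\otimes\det(N_{X^g})\cong\Omega^{q-\codim(X^g)}_{X^g}$ as $Z_G(g)$-equivariant sheaves, the sum $\sum_{p+q=n}\dim H^p(X,\uHH^q(\msc{X}))^G$ reorganizes as $\sum_{[g]}\sum_{p+j=n-\codim(X^g)}\dim H^p(X^g,\Omega^j_{X^g})^{Z_G(g)}$, and standard Hodge theory for the smooth projective variety $X^g$ (Serre GAGA together with the Hodge decomposition, equivariantly under the finite group $Z_G(g)$) collapses the inner sum to $\dim H^{n-\codim(X^g)}(X^g_{\mrm{an}},\mbb{C})^{Z_G(g)}$; since $\codim(X^g)=2a(g)$ in the symplectic setting, the total is exactly $\dim H^n_{\mrm{orb}}(\msc{X})$.

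The one place that needs care — rather than new ideas — is verifying that the $d_1$ of the hypercohomology spectral sequence really is the map induced by $d_{\mrm{dR}}$ on the $X^g$ under \eqref{eq:463} (equivalently, that $F^{poly}_\omega$ is compatible with the degree filtrations), and then the bookkeeping of the codimension shifts and of the passage from $G$-invariants on $X$ to the conjugacy-class/$Z_G(g)$-invariants decomposition. All of this is already built into the construction of $\uHH^\bullet_\pi(\msc{X})$ and the proof of Theorem~\ref{thm:HHorb}, so beyond Deligne degeneration and the already-cited degeneration of \eqref{eq:105} no new input is required.
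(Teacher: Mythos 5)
Your proposal is correct, and in fact it contains the paper's own argument as its ``bypass'': the paper's proof is exactly the direct reorganization you sketch in your third paragraph --- the Hodge decomposition $H^\bullet(X^g_\mrm{an},\mbb{C})\cong \oplus_{p,q}H^q(X^g,\Omega^p_{X^g})$ for the smooth projective fixed loci, the symplectic identifications $T_{X^g}\cong\Omega_{X^g}$ (together with the equivariant trivializations $\det(N_{X^g})\cong\O_{X^g}$), and the vector space identification of Corollary~\ref{cor:arinkinetal}, with no use of the differential $\tfrac{1}{2}\{\pi,-\}$ at all. Your primary route is a mild repackaging: instead of invoking the Hodge decomposition directly, you pass through Theorem~\ref{thm:HHorb} and prove $E_1$-degeneration of the stupid-filtration hypercohomology spectral sequence of $\uHH^\bullet_\pi(\msc{X})$, which under \eqref{eq:463} is just the (shifted, $G$-equivariant) Hodge--de Rham spectral sequences of the $X^g$, so Deligne/classical Hodge theory applies; the identification of $d_1$ with $d_\mrm{dR}$ that you flag as the delicate point is automatic, since \eqref{eq:463} is an isomorphism of complexes of sheaves and hence of filtered complexes up to the codimension shifts. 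The two routes rest on the same pillars --- degeneration of \eqref{eq:105} from~\cite{arinkinetal14} and Hodge theory of the smooth projective $X^g$ --- and yield the same dimension count; the paper's version is simply shorter because it never needs the dg structure, while yours has the minor virtue of exhibiting $\HH^\bullet(\msc{X})$ and $H^\bullet_\mrm{orb}(\msc{X})$ as the abutments of two spectral sequences sharing a page, which is closer in spirit to Conjecture~\ref{conj:575}.
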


\begin{proof}
By the Hodge decomposition we have
\[
H^\bullet_\mrm{orb}(\msc{X})=\big(\bigoplus_g H^\bullet(X^g_\mrm{an},\mbb{C})\big)^G=\big(\bigoplus_g H^\bullet(X^g,\Omega^\bullet_{X^g})\big)^G.
\]
Whence the identification follows from the isomorphisms $T_{X^g}\cong \Omega_{X^g}$ given by the symplectic form(s) and the vector space identification $\HH^\bullet(\msc{X})=\big(\oplus_g H^\bullet(X^g,T^{poly}_{X^g})\big)^G$ of Corollary~\ref{cor:arinkinetal} (or rather~\cite{arinkinetal14}).
\end{proof}

\begin{remark}\label{rem:quant}
\begin{enumerate}
\item The isomorphism of Theorem~\ref{thm:HHorb} was obtained by Ginzburg and Kaladin in the case of a quotient $\msc{X}=[X/G]$ with $X$ affine~\cite{ginzburgkaledin04}.
\item A variant of Theorem~\ref{thm:HHorb} should hold, without the appearance of the differential $\frac{1}{2}\{\pi,-\}$, when one replaces $(\msc{X},\O_\msc{X})$ with a quantization of the symplectic structure, i.e.\ an equivariant quantization of $(X,\O_X)$.  Specifically, if one takes $\msc{A}/\mbb{C}[\![\hbar]\!]$ a quantization of the symplectic orbifold $\msc{X}$, then one expects $\HH^\bullet(\msc{A}[\hbar^{-1}])\cong H_\mrm{orb}^\bullet(\msc{X})\ot\mbb{C}(\!(\hbar)\!)$.  (Here we view $\msc{A}$ as a sheaf of rings on the \'etale site of $\msc{X}$, so that the Hochschild cohomology should implicitly involve a smash product construction for $\msc{A}$.)  This relation was proved by Dolgushev and Etingof in the affine case.  See also~\cite{ginzburgkaledin04,etingofoblomkov06}.
\item The isomorphism of Corollary~\ref{cor:501} can already be obtained from the formality result of~\cite{arinkinetal14}.
\end{enumerate}
\end{remark}

\subsection{Conjectural, stronger, relations with Fantechi-G\"ottsche cohomology and orbifold cohomology}
\label{sect:orb_prod}

We consider here the work of Fantechi and G\"ottsche~\cite{fantechigottsche03}, in the special case of a symplectic variety $X$ and a finite group $G$ acting via symplectic automorphisms.  In~\cite{fantechigottsche03}, the authors define a noncommutative algebra structure on the sum
\[
H^\bullet_\mrm{FG}(X,G)=\bigoplus_{g\in G}H^{\bullet-\mrm{codim}(X^g)}(X_\mrm{an}^g,\mbb{C}),
\]
such that the natural action of $G$ is by algebra automorphisms.  The orbifold cohomology is then recovered as the invariants
\[
H^\bullet_\mrm{orb}(\msc{X})=H^\bullet_\mrm{FG}(X,G)^G.
\]
\par

Of course, this resembles our situation for Hochschild cohomology, where we have produced an algebra structure on the sum $H^\bullet(X,\uHH^\bullet(\msc{X}))$ such that the $G$-invariants approximately recovers the Hochschild cohomology algebra $\HH^\bullet(\msc{X})$.  However, for compact $X_\mrm{an}$ both the Fantechi-G\"ottsche cohomology and orbifold cohomology admit canonical Frobenius structures~\cite{fantechigottsche03,chenruan04}.  As is argued in the proof of Theorem~\ref{thm:informal}, this Frobenius structure obstructs the existence of an algebra identification between Hochschild cohomology and Fantechi-G\"ottsche, or orbifold, cohomology.
\par

Let us define the codimension filtration on the Fantechi-G\"ottsche cohomology in the obvious way:
\[
F_i\ H^\bullet_\mrm{FG}(X,G)=\bigoplus_{\mrm{codim}(X^\sigma)\leq i}H^{\bullet-\mrm{codim}(X^\sigma)}(X_\mrm{an}^\sigma,\mbb{C}).
\]
This filtration restricts to a filtration on the invariant subalgebra $H^\bullet_\mrm{orb}(\msc{X})$.  In the following conjecture, we denote the associated graded algebra for Hochschild cohomology appearing in the local-to-global spectral sequence simply by $\gr\HH^\bullet(\msc{X})$, as in the introduction.

\begin{conjecture}\label{conj:575}
Let $X$ be symplectic and $\msc{X}$ be the quotient stack of $X$ by a finite symplectic group action.
\begin{enumerate}
\item[(i)] The vector space isomorphism of Theorem~\ref{lem:HochFG} induces algebra isomorphisms
\[
\mbb{H}^\bullet(X,\uHH^\bullet_\pi(\msc{X}))\cong \gr_FH^\bullet_\mrm{FG}(X,G)\ \ \mrm{and}\ \ \mbb{H}^\bullet(\msc{X},\uHH^\bullet_\pi(\msc{X}))\cong \gr_FH^\bullet_\mrm{orb}(\msc{X}).
\]
\item[(ii)] When $X$ is projective, there is an algebra isomorphism $\gr\HH^\bullet(\msc{X})\cong \gr_FH^\bullet_\mrm{orb}(\msc{X})$.
\end{enumerate}
\end{conjecture}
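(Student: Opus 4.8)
We sketch a strategy for proving the conjecture, which would globalize the affine computation of Ginzburg--Kaledin~\cite{ginzburgkaledin04}: the plan is to prove (i) by combining Theorem~\ref{thm:HHsymp} with a chain-level analysis of the Fantechi--G\"ottsche product, and then to deduce (ii) by feeding (i) into the multiplicative degeneration of Section~\ref{ss:form-deg} together with Hodge theory on the projective fixed loci $X^g$. For (i), one begins from the isomorphism of equivariant sheaves of dg algebras
\[
\uHH^\bullet_\pi(\msc{X})\ \cong\ \Big(\bigoplus_{g\in G}T^{poly}_{X^g},\ \frac{1}{2}\{\pi,-\}\Big)
\]
of Theorem~\ref{thm:HHsymp} (transparent multiplication~\eqref{eq:Ttransp}) and transports it along the sheaf isomorphism $F^{poly}_\omega$ of~\eqref{eq:463} to obtain $\bigoplus_{g}(\Omega^\bullet_{X^g},d_\mrm{dR})$ with a transported product. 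Since $F_\omega$ intertwines the canonical splittings $T_X|_{X^g}\cong T_{X^g}\oplus N_{X^g}$ and $\Omega_X|_{X^g}\cong\Omega_{X^g}\oplus N^\vee_{X^g}$ of Section~\ref{sect:decomp}, it commutes with the projections $p_\tau$ and carries the normalized generator $\det(1-g|_{N_{X^g}})^{1/2}\psi_g$ of $\det(N_{X^g})$ to a relative volume form; as $F^{poly}_\omega$ is a morphism of sheaves of algebras, the transported product is again \emph{transparent}, i.e.\ on a shared component $Y$ of $X^{gh}$ along which $X^g$ and $X^h$ meet transversely it is the wedge product followed by restriction to and push-forward into $\Omega^\bullet_{X^{gh}}|_Y$, and it vanishes elsewhere.

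So (i) is reduced to the assertion that $\gr_F$, for the codimension filtration, of the Fantechi--G\"ottsche product on $\bigoplus_g\Omega^\bullet_{X^g}$ is this transparent product. One would prove that by a degree count: the Fantechi--G\"ottsche product sends $x\otimes y$, for $x\in H^\bullet(X^g)$ and $y\in H^\bullet(X^h)$, to the class obtained on each component $Y$ of $X^{gh}$ by restricting to $X^g\cap X^h$, cupping with the top Chern class of the Fantechi--G\"ottsche obstruction bundle $\msc{E}_{g,h}$, and pushing forward along $X^g\cap X^h\hookrightarrow Y$; using the rank identities of Section~\ref{sect:SPprelim} one checks that $\mrm{rank}(\msc{E}_{g,h})|_Y$ and the codimension of this push-forward together account for exactly the shift $\codim(X^{gh})|_Y-\codim(X^g)|_Y-\codim(X^h)|_Y$ of the codimension filtration, which by Lemma~\ref{lem:codimineqs} is $\le 0$ and which vanishes precisely on the transverse shared components of Lemma~\ref{lem:esslem}. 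Thus every other contribution strictly lowers codimension-filtration degree and dies in $\gr_F$, while on the transverse shared components $\msc{E}_{g,h}$ has rank $0$ and what survives is exactly $x|_Y\cup y|_Y$. Passing to hypercohomology (the spectral sequence of a filtered sheaf of dg algebras being multiplicative) and then to $G$-invariants, via $\mbb{H}^\bullet(\msc{X},-)=\mbb{H}^\bullet(X,-)^G$ and $H^\bullet_\mrm{orb}=H^\bullet_\mrm{FG}(X,G)^G$ (cf.\ Theorem~\ref{thm:HHorb}), then gives both isomorphisms in (i).

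The step I expect to be the \emph{main obstacle} is exactly this reduction: making precise, at the chain/sheaf level, the claim that all of the Fantechi--G\"ottsche obstruction-bundle corrections drop out of $\gr_F$, so that $\gr_F$ of the Fantechi--G\"ottsche product literally coincides with the transparent product produced from $\uHH^\bullet_\pi(\msc{X})$ via $F^{poly}_\omega$. This is where one must be careful about normalizations --- the factors of $2$ in~\eqref{eq:226}, the generators $\psi_g$ and the square roots $\det(1-g|_{N_{X^g}})^{1/2}$ of Theorems~\ref{thm:SAsymp}--\ref{thm:HHsymp}, and the normalization of the Euler class in~\cite{fantechigottsche03}. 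I would handle this by a complete-local reduction to a finite subgroup of $\mrm{Sp}(V)$ acting linearly on a symplectic vector space, as in Appendix~\ref{sect:pav}, where every relevant class is explicit; recall that precisely such a normalization point required a correction in the literature (Section~\ref{sect:remark}).

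For (ii), assume $X$ is projective. By Corollary~\ref{cor:arinkinetal_mult} there is an isomorphism of graded algebras $\gr\HH^\bullet(\msc{X})=H^\bullet(X,\uHH^\bullet(\msc{X}))^G$. Filtering the sheaf of dg algebras $\uHH^\bullet_\pi(\msc{X})$ by polyvector degree yields a multiplicative spectral sequence with $E_1=H^\bullet(X,\uHH^\bullet(\msc{X}))$ whose $d_1$ is induced by $\frac{1}{2}\{\pi,-\}$; under $F^{poly}_\omega$ this is the first differential of the Fr\"olicher spectral sequence of $\coprod_g X^g$. Since each $X^g$ is smooth projective and also symplectic (so $T_{X^g}\cong\Omega_{X^g}$ and $\det N_{X^g}\cong\O_{X^g}$), one has $\dim_k E_1=\sum_g\sum_{p,q}h^{p,q}(X^g)=\sum_g\dim_k H^\bullet(X^g_\mrm{an},\mbb{C})=\dim_k\mbb{H}^\bullet(X,\uHH^\bullet_\pi(\msc{X}))$, forcing degeneration at $E_1$; and the harmonic representatives on each $H^\bullet(X^g_\mrm{an},\mbb{C})$ give a multiplicative splitting of the resulting filtration, so that $H^\bullet(X,\uHH^\bullet(\msc{X}))\cong\mbb{H}^\bullet(X,\uHH^\bullet_\pi(\msc{X}))$ as graded algebras. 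Taking $G$-invariants and combining with (i) yields
\[
\gr\HH^\bullet(\msc{X})=H^\bullet(X,\uHH^\bullet(\msc{X}))^G\cong\mbb{H}^\bullet(\msc{X},\uHH^\bullet_\pi(\msc{X}))\cong\gr_FH^\bullet_\mrm{orb}(\msc{X}).
\]
The delicate point in this last part is the absence of a multiplicative extension problem in the degenerate spectral sequence for $\uHH^\bullet_\pi$, which is where compactness of the hyperk\"ahler manifolds $X^g$ is used.
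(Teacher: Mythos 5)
The statement you are proving is not proved in the paper at all: Conjecture~\ref{conj:575} is stated as an open conjecture, with only two remarks surrounding it --- that the affine case of (i) is proposed in~\cite[Proposition 6.2]{ginzburgkaledin04}, and that part (ii) follows from multiplicativity of the Hodge decomposition when $X/G$ has isolated singularities, while ``some consideration'' is needed once the fixed loci have positive dimension. So there is no proof to compare yours against; what you have written is a strategy, and you yourself flag its central step as an obstacle rather than establishing it.

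That said, your outline is aligned with what the authors evidently have in mind (globalize Ginzburg--Kaledin via Theorem~\ref{thm:HHsymp}, transport along $F^{poly}_\omega$, and use Hodge theory for (ii)), and the degree bookkeeping you do is correct: writing $a(\sigma)=\tfrac12\codim(X^\sigma)$, one has $\operatorname{rk}(\msc{E}_{g,h})|_Y+\codim\bigl(Y\subset X^{gh}\bigr)=a(g)+a(h)-a(gh)$, so the Fantechi--G\"ottsche contribution on a component $Y$ survives in $\gr_F$ exactly when $Y$ is a transverse shared component, where the obstruction bundle has rank $0$; this matches the vanishing pattern of the transparent product. The genuine gaps are the following. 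First, the heart of (i) --- that $\gr_F$ of the Fantechi--G\"ottsche product \emph{equals}, under the specific isomorphism of Lemma~\ref{lem:HochFG}, the transparent product, with all constants matching (the $\det(1-g|_{N_{X^g}})^{1/2}$ normalizations of Theorem~\ref{thm:HHsymp}, the factor $\tfrac12$ in~\eqref{eq:226}, the Euler-class conventions of~\cite{fantechigottsche03}) --- is precisely the unproven content of the conjecture, and your proposed fix, a complete-local reduction to a linear symplectic action as in Appendix~\ref{sect:pav}, does not obviously apply: the Fantechi--G\"ottsche product is built from global topological data (Gysin pushforwards and Euler classes of obstruction bundles on the fixed loci), which are not computed by completing at a point, so a different comparison mechanism is needed there. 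Second, in (ii) the step ``harmonic representatives give a multiplicative splitting'' is exactly the extension-problem issue the paper isolates for positive-dimensional fixed loci; asserting it requires checking that the (canonical, functorial) Hodge decompositions of all the $X^g$ are compatible with the restriction maps to shared components and with the identifications $T_{X^g}\cong\Omega_{X^g}$ entering Corollary~\ref{cor:arinkinetal_mult}, which is plausible but is not carried out. In short: a reasonable plan, consistent with the paper's expectations, but the conjecture remains unproven by it.
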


When the quotient $X$ is projective and $X/G$ has isolated singularities the identification of $\gr_FH^\bullet_\mrm{orb}(\msc{X})$ with $\gr\HH^\bullet(\msc{X})$ follows from the fact that the Hodge decomposition respects the product on cohomology~\cite[Corollary 6.15]{voisin02}.  When $X$ has fixed spaces of positive dimension an identification between the two cohomologies would already require some consideration.
\par

We note that Ginzburg and Kaledin make a related conjecture involving quantizations, as in Remark~\ref{rem:quant}.  They propose that when one employs a quantization one should be able to avoid this associated graded procedure~\cite[Conjecture 1.6/1.3]{ginzburgkaledin04}.  We repeat their conjecture here.

\begin{conjecture}[Ginzburg-Kaledin]
For an appropriate quantization $\msc{A}$ of a symplectic quotient orbifold $\msc{X}$, one has an algebra isomorphism $\HH^\bullet(\msc{A}[\hbar^{-1}])\cong H^\bullet_\mrm{orb}(\msc{X})\ot\mbb{C}(\!(\hbar)\!)$. 
\end{conjecture}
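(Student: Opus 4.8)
The plan is to construct an explicit quantization and then run a local-to-global argument parallel to Sections~\ref{sect:ss}--\ref{sect:product}, with the structure sheaf replaced by a deformation quantization.  First I would take $\msc{A}$ to be a $G$-equivariant Fedosov quantization of the symplectic variety $(X,\omega)$: a sheaf of flat $\mbb{C}[\![\hbar]\!]$-algebras on the small \'etale site of $\msc{X}$ with $\msc{A}/\hbar\msc{A}=\O_\msc{X}$ and first-order commutator the Poisson structure $\pi$.  Such $\msc{A}$ exist by Fedosov's construction and may be taken $G$-equivariant since $G$ is finite (average a connection).  One then defines $\HH^\bullet(\msc{A}[\hbar^{-1}])$ exactly as in Section~\ref{sect:loc} and Remark~\ref{rem:quant}, via the smash product $\msc{A}\rtimes G$: there is a sheaf of dg algebras on the coarse space with cohomology sheaves a quantized analog $\uHH^\bullet(\msc{A}\rtimes G)$ of $\uHH^\bullet(\msc{X})$, together with a multiplicative local-to-global spectral sequence $H^p(X,\uHH^q(\msc{A}\rtimes G))^G\Rightarrow\HH^{p+q}(\msc{A}[\hbar^{-1}])$ after inverting $\hbar$.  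Part of making ``appropriate quantization'' precise happens here; one must also check independence of the choice of $\msc{A}$, which ought to follow because any two equivariant Fedosov quantizations have isomorphic Hochschild cohomology after inverting $\hbar$, though this needs an argument.

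Second, I would carry out the local computation.  On a $G_p$-stable formal neighborhood of a point $p\in X$, the equivariant formal Darboux theorem identifies $\msc{A}\rtimes G_p$ with $\widehat{\mathcal{W}}(T_pX)\rtimes G_p$, the smash product of the completed Weyl algebra with the stabilizer $G_p<\mrm{Sp}(T_pX)$.  A Koszul computation, using that $1-g$ is invertible on the non-fixed directions, shows that after inverting $\hbar$ one has $\HH^\bullet(\widehat{\mathcal{W}}(T_pX)\rtimes G_p)\cong\bigl(\bigoplus_{g\in G_p}\mbb{C}(\!(\hbar)\!)\bigr)^{G_p}$, with the $g$-summand placed in degree $\codim(X^g)|_p$, and that the cup product matches the Chen--Ruan orbifold product of the linear quotient $[T_pX/G_p]$; this is the affine-linear case, established by Dolgushev and Etingof~\cite{dolgushevetingof05} (see also~\cite{ginzburgkaledin04} and Remark~\ref{rem:quant}).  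Since the Hochschild cohomology of a Weyl algebra is rigid and the Darboux identification is canonical up to inner automorphism, the cohomology sheaf of $\uHH^\bullet(\msc{A}\rtimes G)$ supported on $X^g$ is the \emph{constant} sheaf $\underline{\mbb{C}(\!(\hbar)\!)}$, placed in degree $\codim(X^g)$.

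Third, globalize.  Because $\uHH^\bullet(\msc{A}\rtimes G)$ is a sum over $g\in G$ of shifted constant sheaves on the smooth loci $X^g$, the $E_2$-page of the spectral sequence is $\bigoplus_{g}H^{\bullet-\codim(X^g)}(X^g_\mrm{an};\mbb{C}(\!(\hbar)\!))^G=H^\bullet_\mrm{orb}(\msc{X})\ot\mbb{C}(\!(\hbar)\!)$.  Degeneration at $E_2$ should follow as in Section~\ref{ss:form-deg}: the relevant sheaf of complexes is formal, either because its Weyl-algebra local model is (by rigidity), or because each smooth projective $X^g$ is K\"ahler, so $\mrm{R}\Gamma(X^g,\underline{\mbb{C}})$ is formal.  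It then remains to identify the multiplicative structure.  By the local step, the sheaf-level multiplication $\uHH^\bullet(\msc{A}\rtimes G)_g\ot_{\O_{X^g\cap X^h}}\uHH^\bullet(\msc{A}\rtimes G)_h\to\uHH^\bullet(\msc{A}\rtimes G)_{gh}$ is controlled by the $g,h$-twisted cup product over $\widehat{\mathcal{W}}(T_pX)$; one must show it equals multiplication by the Euler class of the Chen--Ruan obstruction bundle over $X^g\cap X^h$ composed with the Gysin pushforward along $X^g\cap X^h\hookrightarrow X^{gh}$.  Taking $\mrm{R}\Gamma(-)^G$ and using multiplicativity of the spectral sequence would then yield the Fantechi--G\"ottsche, hence the orbifold, product on $E_2=E_\infty$, with no passage to $\gr_F$ needed --- in contrast to the classical transparent product~\eqref{eq:Ttransp} of Theorem~\ref{thm:SA}, which discards exactly the Euler-class and pushforward contributions (this is why Conjecture~\ref{conj:575} asks only for an isomorphism after $\gr_F$).

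The main obstacle is precisely this last identification: computing the $g,h$-twisted Hochschild cup product over $\widehat{\mathcal{W}}(T_pX)$ as a map of \emph{sheaves} on a positive-dimensional component $Y$ of $X^g\cap X^h$, rather than merely its value at a closed point, where it collapses to the transparent product.  Concretely one must track how the pairing $\HH^\bullet(\widehat{\mathcal{W}}(T_pX),\widehat{\mathcal{W}}(T_pX)g)\ot\HH^\bullet(\widehat{\mathcal{W}}(T_pX),\widehat{\mathcal{W}}(T_pX)h)\to\HH^\bullet(\widehat{\mathcal{W}}(T_pX),\widehat{\mathcal{W}}(T_pX)gh)$ interacts with the normal bundles $N_{X^g}$, $N_{X^h}$, $N_{X^{gh}}$ along $Y$ and with the comparison map $\kappa$ of Lemma~\ref{lem:esslem}, while keeping careful account of the age shifts, signs, and the square-root-of-$\det(1-g)$ normalizations of Proposition~\ref{prop:coboundary}, which reappear in the quantized setting and are the source of the errors noted in Section~\ref{sect:remark}.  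One must also verify that the resulting structure constants can be rescaled to be independent of $\hbar$, so that the answer is genuinely $H^\bullet_\mrm{orb}(\msc{X})\ot\mbb{C}(\!(\hbar)\!)$ rather than an $\hbar$-deformed algebra.
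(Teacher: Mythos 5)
Be aware that the statement you are addressing is not a theorem of the paper at all: it is the Ginzburg--Kaledin conjecture, recorded verbatim (from \cite{ginzburgkaledin04}) in Section~\ref{sect:H_orb}, and the paper offers no proof --- it only notes, in Remark~\ref{rem:quant}, that the affine case was proved by Dolgushev and Etingof \cite{dolgushevetingof05}. So there is no ``paper proof'' to match; what you have written is a research plan, and it should be judged as such.

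As a plan it is sensible, and its first two steps are essentially known: equivariant Fedosov quantizations exist, and the local computation over the completed Weyl algebra (the $g$-component concentrated in degree $\codim(X^g)$, giving constant sheaves $\underline{\mbb{C}(\!(\hbar)\!)}$ on the $X^g$ after inverting $\hbar$) is the Alev--Farinati--Lambre--Solotar/Dolgushev--Etingof input, paralleling the classical Lemma~\ref{lem:HochFG} and Theorem~\ref{thm:HHorb}. But the step you yourself flag as ``the main obstacle'' is not a technical loose end --- it is the entire content of the conjecture beyond the affine case. You assert that the sheaf-level product should equal the Euler class of the Chen--Ruan obstruction bundle composed with a Gysin pushforward along $X^g\cap X^h\hookrightarrow X^{gh}$, but no mechanism in your argument produces such terms: a pairing of constant sheaves supported on $X^g$ and $X^h$ lands, for degree and support reasons, in a transparent-type product exactly as in Theorem~\ref{thm:SA} and \eqref{eq:Ttransp}, and the Euler-class and pushforward corrections (which raise degree within $H^\bullet(X^g_{\mathrm{an}})$) can only enter through the global filtered structure --- i.e.\ through solving the multiplicative extension problem for the spectral sequence, not merely through its $E_2$-page. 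This is precisely why the paper's unconditional statements (Corollary~\ref{cor:501}, Conjecture~\ref{conj:575}) compare only associated graded algebras, and why the normalization subtleties of Proposition~\ref{prop:coboundary} and Section~\ref{sect:remark} matter. In addition, your ``appropriate quantization'' and independence-of-$\msc{A}$ claims are left unargued, and your appeal to degeneration ``as in Section~\ref{ss:form-deg}'' does not transfer: the formality argument there is for the classical (underived in $\hbar$) object, and even granting degeneration one still cannot read off the unfiltered product. So the proposal does not prove the statement; it reduces it to an identification that is itself the open conjecture.
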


Of course, point (ii) of Conjecture~\ref{conj:575} suggests that the Hochschild cohomology may be identified with the orbifold cohomology, before taking associated graded rings in both cases.  The following is strongly related to a conjecture of Ruan, as explained below.

\begin{conjecture}[cf.~\cite{ruan00,ruan06}]\label{conj:HH_H_orb}
For $\msc{X}$ a projective symplectic quotient orbifold, there is an algebra identification between the Hochschild cohomology $\HH^\bullet(\msc{X})$ and the orbifold cohomology $H^\bullet_\mrm{orb}(\msc{X})$.
\end{conjecture}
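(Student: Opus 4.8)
The plan is to upgrade the linear isomorphism of Corollary~\ref{cor:501} to a ring isomorphism by first comparing associated graded rings and then controlling the resulting extension problem on each side. First I would prove Conjecture~\ref{conj:575}~(ii). By Theorems~\ref{thm:SAsymp} and~\ref{thm:HHsymp}, after the conjugation-equivariant rescalings of Proposition~\ref{prop:coboundary}, the ring $\gr\HH^\bullet(\msc{X})$ is the $G$-invariant part of $H^\bullet\!\big(X,\oplus_{g}T^{poly}_{X^g}\big)$ with the transparent multiplication, while $\gr_F H^\bullet_\mrm{orb}(\msc{X})$ is the $G$-invariant part of $\oplus_g H^\bullet(X^g_\mrm{an},\mbb{C})$ with the product obtained from the Fantechi--G\"ottsche product~\cite{fantechigottsche03} by discarding the Chen--Ruan obstruction-bundle corrections~\cite{chenruan04}. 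Those corrections are supported precisely on components of $X^g\cap X^h$ along which $X^g$ and $X^h$ fail to meet transversely (Lemma~\ref{lem:esslem}); on a transverse component the obstruction bundle has rank zero and the surviving product is pullback followed by proper pushforward, which matches the transparent product. It then remains to check that the symplectic isomorphism $T_{X^g}\cong\Omega_{X^g}$ together with the Hodge decomposition $H^\bullet(X^g_\mrm{an},\mbb{C})\cong\bigoplus_{p,q}H^q(X^g,\Omega^p_{X^g})$ intertwines these two transparent products; since both are natural in $X^g$ and in restriction to smooth closed subvarieties, and the cup product on a compact K\"ahler manifold respects the Hodge bigrading, this is bookkeeping. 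This step turns Corollary~\ref{cor:501} into an isomorphism of associated graded rings.

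The crux is to lift this to the rings themselves. By the Frobenius obstruction in the proof of Theorem~\ref{thm:informal}, in general neither ring agrees with its own associated graded, so I would instead show that the two deformations of the common associated graded coincide. On the orbifold side, the difference between the Fantechi--G\"ottsche product and its $F$-graded piece is exactly the obstruction-bundle correction, governed by the excess intersection of the fixed loci; on the Hochschild side, the difference between the cup product on $\HH^\bullet(\msc{X})$ and on $\gr\HH^\bullet(\msc{X})$ is encoded in the higher operations $m_{\geq 3}$ of the sheaf $A_\infty$-structure carried by $\uHH^\bullet(\msc{X})$ (the structure that is nonzero precisely because of the non-formality of Theorem~\ref{thm:informal}). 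I would identify both with a single universal class built from the Poisson bivector $\pi$ by means of an equivariant, $\pi$-twisted enhancement of the multiplicative formality of~\cite{calaquevandenbergh10,calaquevandenbergh10II}: twisting that formality morphism by the Maurer--Cartan element $\pi$ converts the sheaf of Hochschild cochains of $\O_X$ into polyvector fields with differential $\tfrac{1}{2}\{\pi,-\}$, and one wants its smash-product/orbifold enhancement to identify $\mrm{R}\msc{H}om_{\msc{X}^2}(\Delta_\ast\O_\msc{X},\Delta_\ast\O_\msc{X})$, as a sheaf of $A_\infty$-algebras on $\msc{X}$, with the de Rham complexes $\oplus_g(\Omega^\bullet_{X^g},d_\mrm{dR},\wedge)$ suitably shifted by $\codim(X^g)$, again after the rescalings of Proposition~\ref{prop:coboundary}. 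Taking derived $G$-invariant global sections would then yield the ring isomorphism $\HH^\bullet(\msc{X})\cong H^\bullet_\mrm{orb}(\msc{X})$ directly, refining Theorem~\ref{thm:HHorb}.

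As a consistency check, and as an unconditional argument whenever $X/G$ admits a crepant resolution $Y\to X/G$, I would run the resolution route: $Y$ is hyperk\"ahler; Bridgeland--King--Reid give a derived equivalence $\mrm{D}^b(\msc{X})\simeq\mrm{D}^b(\mrm{Coh}(Y))$; C\u{a}ld\u{a}raru's theorem~\cite{caldararuI} promotes it to a graded ring isomorphism $\HH^\bullet(\msc{X})\cong\HH^\bullet(Y)$; the multiplicative Hochschild--Kostant--Rosenberg isomorphism for $Y$, combined with the identification $T_Y\cong\Omega_Y$ coming from the holomorphic symplectic form, gives $\HH^\bullet(Y)\cong\bigoplus_{p,q}H^q(Y,\Omega^p_Y)\cong H^\bullet(Y_\mrm{an},\mbb{C})$ as rings (the square-root Todd twist of~\cite{kontsevich03} being harmless here since $Y$ is Calabi--Yau and what remains is absorbed into the symplectic trivialization); and Ruan's Cohomological Hyperk\"ahler Resolution Conjecture~\cite{ruan00,ruan06} identifies $H^\bullet(Y_\mrm{an},\mbb{C})$ with $H^\bullet_\mrm{orb}(\msc{X})$ as rings. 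This proves Conjecture~\ref{conj:HH_H_orb} conditionally on Ruan's conjecture when a resolution exists, and it is this reduction that exhibits Conjecture~\ref{conj:HH_H_orb} as a resolution-free strengthening.

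I expect the main obstacle to be the intrinsic, resolution-free lifting step of the second paragraph. For projective $X$ there is in general no global deformation quantization of $(X,\O_X)$, so one cannot simply import the Ginzburg--Kaledin/Dolgushev--Etingof computation~\cite{ginzburgkaledin04,dolgushevetingof05} of the Hochschild cohomology of a quantized symplectic quotient and specialize at $\hbar=0$. Instead one must make sense of the $\pi$-twisted multiplicative formality for the smash product $\Delta_\ast\O_X\rtimes G$ in a way that sheafifies over $\msc{X}$ despite the group action mixing the fixed loci, correctly incorporates the square-root rescalings of Proposition~\ref{prop:coboundary}, and matches the resulting higher structure with the Chen--Ruan obstruction bundle. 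None of this is available in the literature even for affine non-linear quotients, and the last point in particular demands a genuinely new comparison between the deformation-quantization description of the orbifold product and its algebro-geometric one.
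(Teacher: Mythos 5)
The statement you are addressing is a conjecture in the paper, and the paper offers no proof of it; it only records the observation that, \emph{when} $X/G$ admits a crepant resolution $Y\to X/G$, the chain Bridgeland--King--Reid $\Rightarrow$ C\u{a}ld\u{a}raru's derived invariance of $\HH^\bullet$ $\Rightarrow$ HKR plus the symplectic trivialization $T_Y\cong\Omega_Y$ reduces Conjecture~\ref{conj:HH_H_orb} to Ruan's Cohomological Hyperk\"ahler Resolution Conjecture. Your third paragraph is exactly this reduction, so it proves nothing unconditional: it is conditional both on the existence of a crepant resolution (which in general fails, as the paper notes) and on Ruan's conjecture itself. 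Your first two paragraphs likewise do not close the gap. The associated-graded comparison you describe as ``bookkeeping'' is itself Conjecture~\ref{conj:575}(ii) of the paper, which is only known (via the Hodge decomposition) when $X/G$ has isolated singularities; for fixed loci of positive dimension the paper explicitly says the identification ``would already require some consideration.'' Moreover your claim that the Chen--Ruan/Fantechi--G\"ottsche obstruction-bundle corrections are supported exactly on non-transverse components and that what survives is the transparent product is an unproven assertion: the rank of the obstruction bundle is governed by the ages and the codimension of $X^{\langle g,h\rangle}$, not merely by transversality of $X^g$ and $X^h$ in the sense of Lemma~\ref{lem:esslem}, and matching it against the vanishing pattern of Theorem~\ref{thm:SA}/\ref{thm:HHsymp} together with the rescalings of Proposition~\ref{prop:coboundary} is precisely the content one would have to prove.

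The decisive step of your second paragraph --- an equivariant, $\pi$-twisted multiplicative formality identifying $\mrm{R}\msc{H}om_{\msc{X}^2}(\Delta_\ast\O_\msc{X},\Delta_\ast\O_\msc{X})$, as a sheaf of $A_\infty$-algebras on $\msc{X}$, with $\oplus_g(\Omega^\bullet_{X^g},d_\mrm{dR},\wedge)$ suitably shifted and rescaled, so that derived invariant global sections yield the ring isomorphism --- is exactly what is not available, as you yourself concede in the last paragraph; note also that Theorem~\ref{thm:informal} shows the \emph{untwisted} multiplicative formality genuinely fails for these orbifolds, so any such statement must be an honestly new twisted result rather than an ``enhancement'' of \cite{calaquevandenbergh10,calaquevandenbergh10II}. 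In short, what you have written is a reasonable research program, closely aligned with the heuristics the paper itself offers (Theorem~\ref{thm:HHorb}, Corollary~\ref{cor:501}, Conjecture~\ref{conj:575}, and the reduction to Ruan), but it does not prove Conjecture~\ref{conj:HH_H_orb}, which remains open.
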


When $X/G$ admits a crepant resolution $Y\to X/G$, Conjecture~\ref{conj:HH_H_orb} reduces to Ruan's {\it Cohomological Hyperk\"{a}hler Resolution Conjecture}~\cite{ruan00,ruan06}, in the specific instance of a projective global quotient orbifold.  The argument is as follows: In the presence of such a resolution $Y$, we have that $Y$ is projective and symplectic.  There is then an equivalence of categories $\mrm{D}^b(Y)\cong \mrm{D}^b(\msc{X})$ by a result of Bridgeland-King-Reid~\cite[Corollary 1.3]{bridgelandkingreid01}, and subsequent algebra identification of Hochschild cohomologies $\HH^\bullet(\msc{X})=\HH^\bullet(Y)$~\cite{caldararuI} (see also~\cite{caldararuwillerton10,BFN-dag}).  Now by the Hodge decomposition, formality for Hochschild cohomology~\cite{calaquevandenbergh10}, and the isomorphism $\Omega_Y\cong T_Y$ given by the symplectic form, we obtain further algebra identifications $\HH^\bullet(Y)=H^\bullet_\mrm{dR}(Y)=H^\bullet(Y_\mrm{an},\mbb{C})$.  This gives, in total, $\HH^\bullet(\msc{X})=H^\bullet(Y_\mrm{an},\mbb{C})$.  Ruan's conjecture proposes that we have $H^\bullet_\mrm{orb}(\msc{X})=H^\bullet(Y_\mrm{an},\mbb{C})$ as algebras, and we obtained the claimed reduction.
\par

We note that, in general, there will not exist a crepant resolution of
$X/G$ (a local obstruction to its existence is discussed in
\cite{Bel-sing,BS-sra2}, among other places).
% Conjecture~\ref{conj:HH_H_orb} does not propose the existence of a
% crepant resolution of the coarse space $X/G$ in general.
For select verifications of Ruan's conjecture(s) one can see~\cite{lehnsorger03,uribe05,futian17,futianvial}, for example.

\begin{remark}
The statement of Conjecture~\ref{conj:HH_H_orb} makes perfect sense for an arbitrary projective symplectic orbifold $\msc{X}$, which needn't be a global quotient in general.  It seems reasonable to propose the conjecture in this greater generality, although the statement should certainly undergo some additional scrutiny here.  Note that our reduction of Conjecture~\ref{conj:HH_H_orb} to Ruan's conjecture does not immediately apply to more general $\msc{X}$, as the results of Bridgeland-King-Reid are only given for global quotients.
\end{remark}

\begin{remark}
It is proposed in~\cite[Proposition 6.2]{ginzburgkaledin04} that Conjecture~\ref{conj:575} (i) holds for affine $X$, in which case there are no derived global sections and the orbifold cohomology is already graded by codimension.  So we have, in the affine case, a direct algebra isomorphism $\HH_\pi^\bullet(\msc{X})\cong H^\bullet_\mrm{orb}(\msc{X})$.
\end{remark}

\appendix

\section{Comparing definitions of Hochschild cohomology for algebraic stacks}
\begin{center}
{\sc by Pieter Belmans}
\end{center}
\label{sect:cat_coho}

In \cite{lowenvandenbergh05} the Hochschild cohomology of an abelian category was introduced, and in \cite{MR2238922} it was explained how this relates to the deformation theory of abelian categories. In this appendix we explain how one can relate the Hochschild cohomology of the abelian category~$\Qcoh(\msc{X})$ to the definition of Hochschild cohomology as self-extensions of the structure sheaf of the diagonal. This agreement, under the appropriate conditions, is not unsurprising, but one step of the proof depends on a not widely known result. The condition we will impose on~$\msc{X}$ is that it is perfect, we recall the definition of this notion from \cite{MR3705292}.

\begin{definition}
  Let~$\msc{X}$ be an algebraic stack. We say~$\msc{X}$ is \emph{perfect} if it has affine diagonal, $\mrm{D}_{\mathrm{qc}}(\msc{X})$ is compactly generated and the structure sheaf is a compact object.
\end{definition}

The orbifolds considered in the present work are perfect by~\cite[Corollary 9.2]{MR3705292}, or more immediately by Lemma~\ref{lem:aff_diag},~\cite[Theorem A]{MR3705292}, and~\cite[Lemma 4.5]{MR3705292}.  Because~$\mrm{D}_{\mathrm{qc}}(\msc{X})$ is compactly generated, we know by \cite[Theorem~1.2]{1405.1888v2} that~$\mrm{D}_{\mathrm{qc}}(\msc{X})\cong\mrm{D}(\Qcoh(\msc{X}))$. Moreover, as the diagonal is an affine morphism we see that~$\Delta_*{\mathcal{O}}_{\msc{X}}\cong\mrm{R}\Delta_*{\mathcal{O}}_{\msc{X}}$. Using the projection formula \cite[Corollary~4.12]{MR3705292} we see that this object indeed corresponds to the Fourier--Mukai transform associated to the identity functor. The main result of this appendix is the following proposition.

\begin{proposition}\label{prop:belmans}
  Let~$\msc{X}$ be an algebraic stack over~$\Spec k$, which is moreover perfect. There exists an isomorphism of graded~$k$-algebras
\[
    \HH^\bullet(\msc{X})\cong\HH_{\mathrm{ab}}^\bullet(\Qcoh(\msc{X})).
\]
\end{proposition}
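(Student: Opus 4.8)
The plan is to connect the two definitions through the Hochschild cohomology of the natural differential graded enhancement of the derived category. Recall that $\HH^\bullet(\msc{X})=\Ext^\bullet_{\msc{X}^2}(\Delta_*\O_\msc{X},\Delta_*\O_\msc{X})$ is the cohomology, with the Yoneda product, of the derived endomorphism algebra $\RHom_{\msc{X}^2}(\Delta_*\O_\msc{X},\Delta_*\O_\msc{X})$ in $\mrm{D}_{\mathrm{qc}}(\msc{X}^2)$. As noted in the setup, $\Delta_*\O_\msc{X}$ is the Fourier--Mukai kernel of the identity functor, and since $\msc{X}$ is perfect it is in particular a perfect stack in the sense of \cite{BFN-dag}, so both the K\"unneth equivalence $\mrm{QC}(\msc{X})\otimes_k\mrm{QC}(\msc{X})\simeq\mrm{QC}(\msc{X}^2)$ and the identification of endofunctors of $\mrm{QC}(\msc{X})$ with $\mrm{QC}(\msc{X}^2)$ hold by \cite[Theorem~1.2]{BFN-dag}. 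Hence $\RHom_{\msc{X}^2}(\Delta_*\O_\msc{X},\Delta_*\O_\msc{X})$ is quasi-isomorphic, \emph{as an algebra}, to the derived endomorphism algebra of the identity endofunctor of $\mrm{QC}(\msc{X})$, with composition of natural transformations matching the Yoneda product on self-extensions of the diagonal. In other words, $\HH^\bullet(\msc{X})$ is the Hochschild cohomology ring of the differential graded category $\mrm{Perf}(\msc{X})$ — equivalently of $\mrm{QC}(\msc{X})$, the two being Morita equivalent and so having the same Hochschild cohomology. Here we use that perfectness of $\msc{X}$ makes $\O_\msc{X}$ compact, so that $\mrm{Perf}(\msc{X})$ is precisely the subcategory of compact objects of $\mrm{D}_{\mathrm{qc}}(\msc{X})\cong\mrm{D}(\Qcoh(\msc{X}))$, the latter equivalence being \cite[Theorem~1.2]{1405.1888v2}, and that it compactly generates.

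The remaining, and decisive, point is to match this differential graded Hochschild cohomology with the Lowen--Van den Bergh Hochschild cohomology $\HH_{\mathrm{ab}}^\bullet(\Qcoh(\msc{X}))$ of the abelian category. For this I would invoke the comparison theorem of \cite{lowenvandenbergh05} identifying, for a Grothendieck category $\mathcal{A}$, the abelian Hochschild cohomology $\HH_{\mathrm{ab}}^\bullet(\mathcal{A})$ with the Hochschild cohomology of a differential graded enhancement of $\mrm{D}(\mathcal{A})$, under the hypothesis that $\mrm{D}(\mathcal{A})$ is compactly generated by a set of small objects — exactly what perfectness of $\msc{X}$ provides, via the generating family $\mrm{Perf}(\msc{X})$. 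Since a compactly generated triangulated category has an essentially unique differential graded enhancement, the Hochschild cohomology of the enhancement is well-defined, and the comparison map of \cite{lowenvandenbergh05} is a morphism of graded (in fact $B_\infty$-) algebras, so the identification respects cup products. Stringing the isomorphisms together gives $\HH_{\mathrm{ab}}^\bullet(\Qcoh(\msc{X}))\cong\HH^\bullet(\mrm{Perf}(\msc{X}))\cong\Ext^\bullet_{\msc{X}^2}(\Delta_*\O_\msc{X},\Delta_*\O_\msc{X})=\HH^\bullet(\msc{X})$ as graded $k$-algebras. The final assertion then follows, since, as observed above, every global quotient orbifold $[X/G]$ with $X$ quasi-projective is perfect (affine diagonal by Lemma~\ref{lem:aff_diag}, $\mrm{D}_{\mathrm{qc}}$ compactly generated by \cite[Theorem~A]{MR3705292}, and $\O$ compact by \cite[Lemma~4.5]{MR3705292}).

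I expect the main obstacle to be the second paragraph: checking that $\Qcoh(\msc{X})$ genuinely satisfies the hypotheses of the Lowen--Van den Bergh comparison, and — more delicately — that their comparison map is multiplicative and, after restriction to the compact generator $\mrm{Perf}(\msc{X})$, computes its Hochschild complex and not some a priori larger object. The equality of abelian and differential graded Hochschild cohomology is known to fail without a finiteness hypothesis, so the argument genuinely uses compact generation by perfect complexes; pinning this down, together with tracking the algebra structure through the whole chain of quasi-isomorphisms (including the Ben-Zvi--Francis--Nadler equivalence), is where the real work lies.
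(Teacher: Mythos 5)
Your overall route is the same as the paper's: funnel both sides through the Hochschild cohomology of a dg enhancement of $\mrm{D}(\Qcoh(\msc{X}))$, using \cite[Theorem~1.2(2)]{BFN-dag} on the geometric side and an abelian-versus-dg comparison on the other. The genuine gap is exactly at the step you yourself flag as "where the real work lies", and the justification you sketch for it is not right as stated. There is no comparison theorem in \cite{lowenvandenbergh05} identifying $\HH_{\mathrm{ab}}^\bullet$ of a Grothendieck category with the Hochschild cohomology of a dg enhancement of its derived category under a compact-generation hypothesis; what \cite{lowenvandenbergh05} provides is that for a Grothendieck category the abelian Hochschild complex is computed by the linear category of injectives. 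The input the paper actually uses to bridge to the enhancement is \cite[Proposition~A.5]{MR3062749}, a $B_\infty$-quasi-isomorphism $\mathrm{C}^\bullet(\operatorname{Inj}\Qcoh(\msc{X}))\cong\mathrm{C}^\bullet(\mrm{D}_{\mathrm{dg}}(\Qcoh(\msc{X})))$ for the homotopy-injective enhancement, valid for any Grothendieck category with no finiteness hypothesis (this is the "not widely known result" the appendix alludes to), together with uniqueness of enhancements \cite[Theorem~A]{1507.05509v4} to know this model agrees with the one appearing in derived algebraic geometry. Your intuition that finiteness is genuinely needed is correct, but it enters elsewhere: perfectness is what gives $\mrm{D}_{\mathrm{qc}}(\msc{X})\simeq\mrm{D}(\Qcoh(\msc{X}))$ via \cite[Theorem~1.2]{1405.1888v2} (whose negative results show this can fail in general), makes $\Delta_*\O_{\msc{X}}\simeq\mrm{R}\Delta_*\O_{\msc{X}}$ the kernel of the identity (affine diagonal plus the projection formula), and puts you in the setting of \cite{BFN-dag}.

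The second under-justified step is multiplicativity. You assert, on the strength of \cite[Theorem~1.2]{BFN-dag} alone, that $\RHom_{\msc{X}^2}(\Delta_*\O_{\msc{X}},\Delta_*\O_{\msc{X}})$ agrees \emph{as an algebra} with the derived endomorphisms of the identity functor, and that Morita invariance lets you pass to $\mrm{Perf}(\msc{X})$; but neither observation by itself produces a multiplicative comparison between the Hochschild complex of the enhancement and Yoneda composition of self-extensions of the diagonal. In the paper this is supplied by To\"en: \cite[Corollary~8.1]{MR2276263} identifies $\mathrm{C}^\bullet(\mrm{D}_{\mathrm{dg}}(\Qcoh(\msc{X})))$ with the endomorphisms of the identity in the internal Hom of dg categories, compatibly with products by \cite[Theorem~6.1]{MR2276263}, and only then does \cite[Theorem~1.2(2)]{BFN-dag} convert continuous endofunctors into $\mrm{D}_{\mathrm{dg}}(\Qcoh(\msc{X}\times_k\msc{X}))$, carrying the identity to $\Delta_*\O_{\msc{X}}$. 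So the strategy is the right one, but as written the two decisive steps are deferred to results that either are not in the cited source or are not shown to respect the ring structure; filling them requires precisely \cite[Proposition~A.5]{MR3062749} and \cite[Theorem~6.1 and Corollary~8.1]{MR2276263}.
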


\begin{proof}
  Recall that the Hochschild cohomology of an abelian category is defined in terms of the injective objects in its Ind-completion. For a Grothendieck abelian category we can ignore this last step, as it already has enough injectives, so on the level of Hochschild cochain complexes we have
\[
    \mathrm{C}_{\mathrm{ab}}^\bullet(\Qcoh(\msc{X}))=\mathrm{C}^\bullet(\operatorname{Inj}\Qcoh(\msc{X})),
\]
  as~$\Qcoh(\msc{X})$ is Grothendieck abelian by \cite[\href{https://stacks.math.columbia.edu/tag/0781}{tag 0781}]{stacks-project}.

  Let~$\mrm{D}_{\mathrm{dg}}(\Qcoh(\msc{X}))$ be the dg~enhancement for~$\mrm{D}(\Qcoh(\msc{X}))$ given by the dg~category of homotopy-injective complexes of quasicoherent sheaves. By \cite[Theorem~A]{1507.05509v4} this dg~enhancement is unique up to quasi-equivalence, and in particular agrees with the definition of the derived category in derived algebraic geometry. The first step is to compare the Hochschild cohomology of~$\Qcoh(\msc{X})$ with that of~$\mrm{D}_{\mathrm{dg}}(\Qcoh(\msc{X}))$. By \cite[Proposition~A.5]{MR3062749} we have an isomorphism
\[
    \mathrm{C}^\bullet(\operatorname{Inj}\Qcoh(\msc{X}))\cong\mathrm{C}^\bullet(\mrm{D}_{\mathrm{dg}}(\Qcoh(\msc{X})))
\]
  of~$\mathrm{B}_\infty$-algebras, and in particular of graded algebras when considering their cohomology.

  Now by \cite[Corollary~8.1]{MR2276263} we have an isomorphism
\[
    \mathrm{C}^\bullet(\mrm{D}_{\mathrm{dg}}(\Qcoh(\msc{X})))\cong\operatorname{\mrm{R}\mathcal{H}om}(\mrm{D}_{\mathrm{dg}}(\Qcoh(\msc{X})),\mrm{D}_{\mathrm{dg}}(\Qcoh(\msc{X})))(\mathrm{id}_{\mrm{D}_{\mathrm{dg}}(\Qcoh(\msc{X}))},\mathrm{id}_{\mrm{D}_{\mathrm{dg}}(\Qcoh(\msc{X}))})
\]
  which preserves the graded algebra structure on both sides when taking cohomology by \cite[Theorem~6.1]{MR2276263}. Recall that~$\operatorname{\mrm{R}\mathcal{H}om}$ is the internal Hom in the homotopy category of dg~categories for the model structure whose weak equivalences are the quasi-equivalences.

  By \cite[Theorem~1.2(2)]{BFN-dag} we have an equivalence
\[
    \operatorname{\mrm{R}\mathcal{H}om}_{\mathrm{cont}}(\mrm{D}_{\mathrm{dg}}(\Qcoh(\msc{X})),\mrm{D}_{\mathrm{dg}}(\Qcoh(\msc{X})))\cong\mrm{D}_{\mathrm{dg}}(\Qcoh(\msc{X}\times_k\msc{X})),
\]
  where the subscript in the left-hand side indicates we are considering the full subcategory of continuous dg~functors. As the identity functor is colimit-preserving, and is presented by the kernel~$\Delta_*\mathcal{O}_{\msc{X}}$ on the right-hand side, we obtain the desired isomorphism in cohomology.
\end{proof}

\begin{remark}
  There are some alternatives to considering the Hochschild cohomology of the abelian category~$\Qcoh(\msc{X})$ or the dg~category~$\mrm{D}_{\mathrm{dg}}(\Qcoh(\msc{X}))$. First, if~$\msc{X}$ is moreover assumed to be noetherian, then by \cite[Proposition~15.4]{MR1771927} we have that~$\operatorname{Ind}\mathrm{coh}(\msc{X})\cong\Qcoh(\msc{X})$ (as abelian categories), so~$\HH_{\mathrm{ab}}^\bullet(\mathrm{coh}(\msc{X}))\cong\HH_{\mathrm{ab}}^\bullet(\Qcoh(\msc{X}))$.

  We also have an identification~$\HH_{\mathrm{dg}}^\bullet(\mrm{D}_{\mathrm{dg}}(\Qcoh(\msc{X})))\cong\HH_{\mathrm{dg}}^\bullet(\mathrm{Perf}_{\mathrm{dg}}(\msc{X}))$. This follows from the similar observation that~$\mrm{D}_{\mathrm{dg}}(\Qcoh(\msc{X}))\cong\operatorname{Ind}\mathrm{Perf}_{\mathrm{dg}}(\msc{X})$, and the equivalence from \cite[\S3.1, \S4.1]{BFN-dag}.
\end{remark}

It would be interesting to extend other definitions of Hochschild cohomology for schemes to the setting of algebraic stacks, and understand when they agree with each other. This in particular applies to the sheafification of the Hochschild complex, as in \cite{swan96}, or a Gerstenhaber--Schack approach (e.g.~using simplicial schemes), generalising that of \cite{GS-acdt}.

\section{Some basic facts about finite group representations}
\label{sect:lin_alg}

We fix $G$ a finite group whose order is not a divisible by the
characteristic of $k$. For any subgroup $H < G$, since the order of
$H$ is not a multiple of the characteristic of $k$, we can consider
the symmetrizer element
$\int_H := |H|^{-1} \sum_{\gamma \in H} \gamma$ in the group algebra
$k[H]$. If $g \in G$, we also let $\int_g := \int_{\langle g \rangle}$
for $\langle g \rangle$ the cyclic subgroup generated by $g$.

\begin{lemma}\label{lem:gsplit}
For any $\langle g\rangle$-representation $V$, $g\in G$, the two
subspaces $V^g\to V$ and $(1-g)V\to V$ provide a canonical splitting
$V=V^g\oplus (1-g)V$.
\end{lemma}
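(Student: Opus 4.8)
The plan is to exhibit the splitting explicitly by means of the canonical symmetrizer idempotent. Write $n$ for the order of $g$; since $n$ divides $|G|$ and $\mathrm{char}\,k \nmid |G|$, the element $n$ is invertible in $k$, so the symmetrizer $\int_g = \frac{1}{n}\sum_{i=0}^{n-1} g^i \in k[\langle g\rangle]$ is well-defined. First I would record that $\int_g$ is idempotent: the identity $g\cdot\int_g = \int_g$ gives $g^i\int_g = \int_g$ for all $i$, hence $\int_g^2 = \int_g$. Therefore $V$ splits as $V = \int_g V \oplus (1-\int_g)V$ as $k[\langle g\rangle]$-modules.

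Next I would identify the two summands with $V^g$ and $(1-g)V$. From $g\int_g = \int_g$ we get $\int_g V \subseteq V^g$, and since $\int_g$ acts as the identity on any $g$-fixed vector we get the reverse inclusion, so $\int_g V = V^g$. For the complementary summand, the factorization $1 - g^i = (1-g)(1 + g + \cdots + g^{i-1})$ shows that $1 - \int_g = \frac{1}{n}\sum_{i=0}^{n-1}(1 - g^i)$ lies in the left ideal $(1-g)\,k[\langle g\rangle]$, whence $(1-\int_g)V \subseteq (1-g)V$; conversely $\int_g(1-g) = \int_g - \int_g = 0$ gives $(1-g)V \subseteq \ker(\int_g) = (1-\int_g)V$. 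Thus $(1-\int_g)V = (1-g)V$, and combining the two identifications yields the desired decomposition $V = V^g \oplus (1-g)V$.

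Finally I would note that this splitting is canonical: the projector onto $V^g$ along $(1-g)V$ is the element $\int_g$ itself, which depends only on $g$ (not on any chosen basis or complement), establishing the claim. I do not expect any real obstacle here; the single point requiring attention is the hypothesis $\mathrm{char}\,k \nmid |G|$, which is precisely what makes $\int_g$ a well-defined idempotent, and the rest is a formal computation in the group algebra $k[\langle g\rangle]$.
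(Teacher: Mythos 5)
Your proof is correct, but it takes a different route from the one in the paper. The paper argues in two steps: for finite-dimensional $V$ it applies rank--nullity to the endomorphism $(1-g)\colon V\to V$ (whose kernel is $V^g$ and image is $(1-g)V$, so the dimensions add up to $\dim V$), then shows the intersection $V^g\cap(1-g)V$ is zero by the computation $v=\int_g v$ and $\int_g(v'-gv')=0$; the infinite-dimensional case is then deduced by writing $V$ as the union of its finite-dimensional subrepresentations. You instead work directly with the idempotent $\int_g$: the identities $g\int_g=\int_g$, $\int_g^2=\int_g$, $\int_g(1-g)=0$ and $1-\int_g=\frac1n\sum_i(1-g^i)\in(1-g)k[\langle g\rangle]$ identify $\int_g V=V^g$ and $(1-\int_g)V=(1-g)V$, so the splitting is the eigenspace decomposition of the projector $\int_g$. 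What your argument buys is uniformity and explicitness: there is no dimension count, hence no need for the separate limiting step in the infinite-dimensional case, and the projector onto $V^g$ along $(1-g)V$ is exhibited as $\int_g$ itself, which also makes the canonicity (and, if one wants it, naturality in $V$ and equivariance under conjugation) immediate. The paper's dimension-counting argument is marginally shorter in the finite-dimensional case but uses the ambient finiteness in a way yours does not. Both proofs use exactly the same hypothesis, namely that $\operatorname{char} k\nmid |G|$ makes $\int_g$ well defined.
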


\begin{proof}
Suppose $V$ is finite dimensional.  We have the endomorphism
$(1-g):V\to V$ with kernel $V^g$ and image $(1-g)V$.  So the sum of
the dimensions of these subspaces is equal to the dimension of $V$.
We know that for any $v\in V^g\cap(1-g)V$ invariance implies $v=\int_g
v$ and any expression $v=v'-gv'$ gives $\int_g v=\int_g v'-\int_g
gv'=0$.  So $v=0$.  Hence the intersection is trivial, and we have
$V=V^g\oplus (1-g)V$.  The result for infinite dimensional
representations follows from the fact that any such representation is
the union of its finite dimensional subrepresentations.
\end{proof}

In~\cite[Lemma 2.1]{SW} the following result is proved over
$\mathbb{C}$.  Since we would like to be able to work over a more general  base
field we provide a generalization.

\begin{lemma}\label{lem:dualrels}
  For any finite dimensional $H:=\langle g,h\rangle$-representation $V$ the
  following conditions are all equivalent:
\begin{enumerate}
\item[(i)] $(1-g)V\subset (1-gh)V$; {\rm (i')} $(1-g)V^\ast\subset (1-gh)V^\ast$;
\item[(ii)] $(1-g)V+(1-h)V=(1-gh)V$; {\rm (ii')} $(1-g)V^\ast+(1-h)V^\ast=(1-gh)V^\ast$;
\item[(iii)] $(1-g)V\cap (1-h)V= 0$ ; {\rm (iii')} $(1-g)V^\ast\cap (1-h)V^\ast= 0$;
\item[(iv)] $V^H = V^{gh}$; {\rm (iv')} $(V^*)^H = (V^*)^{gh}$.
\end{enumerate}
These are also equivalent to (a) $(1-gh) V = (1-g)V \oplus (1-h) V$,
(b) $V^{gh} = V^g \cap V^h$, (c) $V=V^g+V^h$, and the same with $V$
replaced by $V^\ast$.
\end{lemma}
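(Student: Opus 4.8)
The plan is to show that every one of the listed conditions is equivalent to the single numerical identity
\[
(\star)\qquad \codim V^{gh}=\codim V^g+\codim V^h ,
\]
and to push the primed conditions back onto the unprimed ones by duality. Since every condition can be checked after a faithfully flat base change, I may first extend scalars to assume $k$ is algebraically closed; recall from Lemma~\ref{lem:gsplit} that $V=V^\sigma\oplus(1-\sigma)V$ canonically for each $\sigma\in H$, so $\codim V^\sigma=\dim(1-\sigma)V$. The first step is a duality dictionary: using the perfect pairing $V\times V^\ast\to k$ one checks, for each $\sigma\in H$, that the annihilator of $(1-\sigma)V$ is $(V^\ast)^\sigma$ and the annihilator of $V^\sigma$ is $(1-\sigma)V^\ast$ (the first because a functional is $\sigma$-fixed exactly when it kills $(1-\sigma^{-1})V=(1-\sigma)V$, the second by a dimension count using the first). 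Taking annihilators interchanges sums with intersections and converts each unprimed condition into a primed one — for instance (iii)$\Leftrightarrow$(c$'$), (c)$\Leftrightarrow$(iii$'$), (ii)$\Leftrightarrow$(b$'$), (b)$\Leftrightarrow$(ii$'$) — while (iv)$\Leftrightarrow$(b) trivially since $V^H=V^g\cap V^h$. Hence it suffices to prove the unprimed conditions mutually equivalent; applying that conclusion to the $H$-representation $V^\ast$ makes the primed ones mutually equivalent, and the dictionary supplies the bridge between the two families.

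Next I record the inclusions that hold unconditionally and show how $(\star)$ governs them. Trivially $V^g\cap V^h=V^H\subseteq V^{gh}$, and dually the identity $(1-gh)v=(1-h)v+(1-g)(hv)$ gives $(1-gh)V\subseteq(1-g)V+(1-h)V$; in particular $\codim V^{gh}\le \codim V^g+\codim V^h$ always. Now $(\star)$ forces $(1-gh)V=(1-g)V+(1-h)V$ with the sum direct, since the dimensions add up — this is condition (a), and it contains (ii), (iii) and (i). Counting dimensions the other way, $(\star)$ forces $\dim(V^g\cap V^h)=\dim V^{gh}$ and $\dim(V^g+V^h)=\dim V$, whence (b), (c) and (iv). This is the easy half: $(\star)$ implies everything. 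It remains to observe the cheap implications among the conditions themselves: (a) is by definition the conjunction of (ii) and (iii), and the identity $(1-h)v=(1-gh)v-(1-g)(hv)$ shows (i)$\Rightarrow$(ii).

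The remaining task — and the main obstacle — is the converse: each of the conditions (say each of (ii), (iii), (b), (c), equivalently by the dictionary each of their duals) forces $(\star)$. This cannot be dispatched by a bare dimension count, because for instance (iii) must be leveraged into the genuine direct-sum decomposition underlying $(\star)$ rather than merely into an inequality of codimensions. Here I will follow the argument of \cite[Lemma 2.1]{SW}, re-run over an algebraically closed field whose characteristic does not divide $|H|$: the $G$-invariant Hermitian form used there to identify $(1-\sigma)V$ with the orthogonal complement of $V^\sigma$ is replaced by the canonical splitting $V=V^\sigma\oplus(1-\sigma)V$ of Lemma~\ref{lem:gsplit}, together with the annihilator relations of the first step, which restore the symmetry between kernels and images that the metric provided. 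Assembling the easy half, these forward implications, and the duality dictionary then yields the equivalence of all the stated conditions, including the primed ones.
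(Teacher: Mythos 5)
Your preparatory material is correct: the annihilator dictionary, the unconditional inclusions $V^H\subseteq V^{gh}$ and $(1-gh)V\subseteq(1-g)V+(1-h)V$, the fact that the codimension identity $\codim V^{gh}=\codim V^g+\codim V^h$ implies all of (i)--(iv), (a), (b), (c), and the cheap implications (i)$\Rightarrow$(ii), (a)$=$(ii)$\,+\,$(iii) are all fine. But the entire content of the lemma is the converse direction, and there you give no argument: you announce that you ``will follow the argument of \cite[Lemma 2.1]{SW}'' with the inner-product replaced by the splitting of Lemma~\ref{lem:gsplit}. That is precisely the step the paper has to spell out, and it is not a routine transcription: the paper first proves $\dim V^\sigma=\dim (V^\ast)^\sigma$ and $\dim V^H=\dim (V^\ast)^H$ via the symmetrizer, uses annihilators to identify (i'), (ii'), (iv'), (iv) with the single condition $V^{gh}=V^H$ and to identify (iii), (iii') with $V=V^g+V^h$, and then runs the Shepler--Witherspoon trick in coordinate-free form: for $v\in V^{gh}$ write $v=v_1+v_2$ with $v_1\in V^g$, $v_2\in V^h$ (using the form $V=V^g+V^h$ of (iii')), compare $hv$ with $g^{-1}v$ to get $(1-h)v_1=(1-g^{-1})v_2\in(1-g)V\cap(1-h)V=0$, and conclude $v\in V^H$. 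Your proposal never produces this computation, so the hard half of the lemma is simply missing.

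More seriously, the converse as you organize it cannot be completed, because (i), (ii), (iv), (b) do \emph{not} force the codimension identity. Take $k=\mathbb{C}$, $H=\mathbb{Z}/3=\langle c\rangle$, $g=h=c$, and $V$ the one-dimensional representation on which $c$ acts by a primitive cube root of unity: then $(1-g)V=(1-h)V=(1-gh)V=V$ and $V^g=V^h=V^{gh}=V^H=0$, so (i), (ii), (iv), (b) hold while (iii), (a), (c) and your $(\star)$ all fail. Hence only (iii), (iii'), (c), (a) (and duals) are equivalent to $(\star)$; the conditions (i), (ii), (iv), (b) are equivalent among themselves (to $V^{gh}=V^H$) but strictly weaker, so the plan ``each listed condition $\Leftrightarrow(\star)$'' is unavailable for half the list. (The same example shows that the final step of the paper's own proof, that (ii) and (iv) imply (iii'), cannot be repaired: its dimension count silently drops the term $\dim\bigl((1-g)V\cap(1-h)V\bigr)$. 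The lemma should be read as two families of conditions with a one-way implication; the application in Lemma~\ref{lem:esslem} only uses the implications that do hold, namely (iii)$\Leftrightarrow$(a)$\Leftrightarrow$ the codimension identity.) To salvage your write-up, restrict the hard direction to (iii) and (c), and prove it by the explicit Shepler--Witherspoon computation above rather than by citation.
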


\begin{proof}
  Note that $(1-\gamma)V^\ast = (V^\gamma)^\perp$, the annihilator of
  $V^\gamma$ in $V^*$.  Let $\rho: k[H] \to \End(V)$ be the
  representation, with contragredient
  $\rho^\ast(\gamma):=\rho(\gamma^{-1})^\ast$. Then $\dim V^g$ is the
  dimension of the eigenspace of one of $\rho(g)$ and $\dim
  (V^\ast)^g$ is the dimension of the eigenspace of one of
  $\rho^\ast(g) = \rho(g^{-1})^\ast$, which are the same.   
 It follows that $\dim V^H
  = \rk(\rho(\int_H)) = \rk(\rho^\ast(\int_H))=\dim (V^*)^H$ (said differently,
  by Maschke's theorem, $V^H \to V \to V_H$ is an isomorphism
  and $(V^*)^H = V_H^*$ in this case).

  Now, consider condition (iv). Since $V^H \subseteq V^{gh}$, the
  condition is equivalent to $\dim V^H = \dim V^{gh}$. By the
  preceding paragraph, this is equivalent to $\dim (V^*)^H = \dim
  (V^*)^{gh}$, i.e., to (iv').

To prove these are equivalent to (i), (i'), (ii), and (ii'),
 by the symmetry of replacing $V$ by $V^\ast$, it is
enough to show that (iv) is equivalent to (i') and also to (ii').

We claim that (i') is equivalent to (iv). Indeed,
(i') states that $(V^g)^\perp \subset (V^{gh})^\perp$, i.e., that
$V^{gh} \supset V^g$.  Since $H$ is generated by $g$ and $gh$, this is
equivalent to saying that $V^{gh} = V^H$, as desired.

Next consider (ii'). This states that
$(V^g)^\perp + (V^h)^\perp = (V^{gh})^\perp$.  Equivalently, $(V^g \cap
V^h)^\perp = (V^{gh})^\perp$.  But, since $H$ is generated by $g$ and $h$,
this states that $(V^H)^\perp = (V^{gh})^\perp$. This is equivalent to $V^H
= V^{gh}$, as desired.

Now we prove (iii) and (iii') are equivalent to each other.
Condition (iii') states that the common annihilator of $V^g$
and $V^h$ is zero, i.e., $V=V^g+V^h$. This is equivalent to $\dim V =
\dim V^g + \dim V^h - \dim (V^g \cap V^h)$. Since $H$ is generated by
$g$ and $h$, this is again equivalent to $\dim V = \dim V^g + \dim V^h
- \dim V^H$. By our dimension equalities, this is equivalent to $\dim
V^\ast = \dim (V^\ast)^g + \dim (V^\ast)^h - \dim (V^\ast)^H$, i.e.,
to (iii).

Next we prove that (iii) and (iii') together imply (iv),
via the Shepler-Witherspoon
trick.  Suppose (iii) and (iii') hold. By (iii'), we have $V =
V^g + V^h$, and by (iii), we have $(1-g)V \cap (1-h)V = 0$.
Suppose that $v \in V^{gh}$. Then write $v = v_1 + v_2$ for $v_1 \in
V^g$ and $v_2 \in V^h$. Then $gv = v_1 + gv_2$ but also $gv=h^{-1}v =
h^{-1}v_1 + v_2$, so $(1-h^{-1})v_1 = -(1-g)v_2$. By assumption, this
must be zero. Thus $v \in V^g \cap V^h = V^H$. Therefore $V^{gh} =
V^H$.

Finally, we prove that (ii) and (iv) imply (iii').  Suppose (ii) and
(iv). By (ii), we have $(1-g)V + (1-h)V = (1-gh)V$, so $\dim V - \dim
V^g + \dim V - \dim V^h = \dim V - \dim V^{gh}$. By (iv), the RHS
equals $\dim V - \dim V^H = \dim V - (\dim V^g \cap \dim V^h)$. We
obtain $\dim V^g + \dim V^h = \dim V - \dim (V^g \cap V^h)$, i.e., $V
= V^g + V^h$. As we saw already, this is equivalent to (iii').

For the final statement, it is clear that (ii) and (iii) together are
equivalent to (a), and that (iv) is equivalent to (b).  We already saw
in the proof that (iii) is equivalent to (c).
\end{proof}

\section{Actions on symplectic vector spaces}\label{sect:pav}
\begin{center}
{\sc by Pavel Etingof, Cris Negron, and Travis Schedler}
\end{center}

\subsection{The main theorem}
Let $V$ be a finite dimensional symplectic vector space over $\Bbb C$ with a nondegenerate element $\pi\in \wedge^2V$ (defining a symplectic form on $V$). 
Let $g\in {\rm Sp}(V)$ be an element contained in a compact (e.g., finite) subgroup. To ease notation, we adopt a subscript notation $V_g$ for the subspace of $g$-invariants in $V$, and let $V_g^\perp\subset V^*$ be its orthogonal complement. Note that under the isomorphism $\pi: V^*\cong V$, the space $V_g^\perp$ gets identified with ${\rm Im}(1-g)$.  Take $d_g=\mrm{dim}(V_g^\perp)/2$.

Let $\pi_g^\perp$ be the restriction of $\pi$ to $V_g^\perp$ (a symplectic form), and let $\psi_g=(d_g!)^{-1}(\pi_g^\perp)^{d_g}$ be the top exterior power of $\pi_g^\perp$ (a volume form on $V_g^\perp$). 
Let  
$$
\mu_g:={\rm det}(1-g|_{V_g^\perp})^{1/2}\psi_g.
$$
Note that ${\rm det}(1-g|_{V_g^\perp})>0$, since the eigenvalues of $g$ have absolute value $1$ and for each eigenvalue $\lambda$ we have an eigenvalue $\lambda^{-1}$. Thus, the square root is well defined.

Now let $g,h\in {\rm Sp}(V)$ be contained in a compact subgroup. It is clear that $V_{gh}\supseteq V_g\cap V_h$, so $V_{gh}^\perp\subseteq V_g^\perp+V_h^\perp$. 

Assume that $V_g+V_h=V$. By Lemma~\ref{lem:dualrels}, in this case $V_{gh}=V_g\cap V_h$, hence $V_{gh}^\perp=V_g^\perp\oplus V_h^\perp$. 
\begin{comment}
Indeed, by replacing 
$V$ by the orthogonal complement of $V_g\cap V_h$ under the symplectic form, we may assume that $V_g\cap V_h=0$, i.e., $V=V_g\oplus V_h$. 
By taking orthogonal complements under the symplectic form, this implies that ${\rm Im}(1-g)\oplus {\rm Im}(1-h)=V$.  
Now, if $v\in V_{gh}$, we have $hv=g^{-1}v$, hence $(1-h)v=(1-g^{-1})v=0$. Thus $v\in V_g\cap V_h=0$, i.e. $V_{gh}=0$, as desired.
\end{comment}
Our main result is the following theorem. 

\begin{theorem}\label{mainthe}
If $V_g+V_h=V$ then 
$$
\mu_{gh}=\mu_g\wedge \mu_h
$$
(where we interpret $\mu_g,\mu_h$ as exterior forms on $V_{gh}^\perp$ using its direct sum decomposition). 
\end{theorem}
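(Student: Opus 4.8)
The plan is to reduce the identity to the evaluation of a single scalar — a ratio of symplectic volume forms — and then to pin down its sign by passing to a maximal compact subgroup. First, the hypothesis $V_g+V_h=V$ places us in the situation of Lemma~\ref{lem:dualrels}, so $V_{gh}=V_g\cap V_h$ and hence $V_{gh}^\perp=V_g^\perp\oplus V_h^\perp$; equivalently, under $\pi\colon V^*\overset{\sim}\to V$ this reads $(1-gh)V=(1-g)V\oplus(1-h)V$, and in particular $(1-g)V$ and $(1-h)V$ lie inside $(1-gh)V$. Since $d_{gh}=d_g+d_h$, both $\psi_{gh}$ and $\psi_g\wedge\psi_h$ live in the one-dimensional space $\wedge^{2d_{gh}}V_{gh}^\perp$, and $\psi_g\wedge\psi_h\ne 0$ because the two summands meet only in $0$; write $\psi_{gh}=c\,(\psi_g\wedge\psi_h)$ with $c\in\mbb{C}^\times$. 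Unwinding $\mu_\sigma=\det(1-\sigma|_{V_\sigma^\perp})^{1/2}\psi_\sigma$, the theorem becomes the assertion that
\[
c=\frac{\det(1-g|_{V_g^\perp})^{1/2}\,\det(1-h|_{V_h^\perp})^{1/2}}{\det(1-gh|_{V_{gh}^\perp})^{1/2}},
\]
all square roots positive (the three determinants being positive reals, exactly as in the proof of Lemma~\ref{lem:182}). One may, if convenient, replace $V$ by the symplectic subspace $V_{gh}^\perp$ throughout, remembering that $g$ and $h$ still act on $V_g^\perp$ and $V_h^\perp$ and that $gh$ acts on $V_{gh}^\perp$ without fixed vectors.

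Next I would compute $c$ up to sign by working inside $(V_{gh}^\perp,\pi|_{V_{gh}^\perp})$. There $V_g^\perp=(1-g)V$ is a symplectic subspace carrying $\pi_g^\perp$, and a one-line computation using $g\in\mathrm{Sp}(V)$, namely $\pi(x,(1-g)v)=\pi((1-g^{-1})x,v)$, identifies its symplectic orthogonal complement in $V_{gh}^\perp$ with $V_{gh}^\perp\cap V_g$; by the direct sum decomposition this complement is mapped isomorphically onto $V_h^\perp$ by the projection along $V_g^\perp$. Let $R\colon V_h^\perp\overset{\sim}\to V_{gh}^\perp\cap V_g$ be the inverse of that projection; one checks $R(w)=w-\bigl((1-g)|_{V_g^\perp}\bigr)^{-1}(1-g)w$, so $R-\mathrm{id}$ lands in $V_g^\perp$. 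Using the $\pi$-orthogonal splitting $V_{gh}^\perp=V_g^\perp\oplus(V_{gh}^\perp\cap V_g)$ one gets $\psi_{gh}=\psi_g\wedge\psi_h'$ for $\psi_h'$ the intrinsic symplectic volume of $V_{gh}^\perp\cap V_g$, while $\psi_g\wedge R_*\psi_h=\psi_g\wedge\psi_h$ because any term of $R_*\psi_h$ involving $R-\mathrm{id}$ dies when wedged with the top form $\psi_g$ of $V_g^\perp$; hence $c$ is the scalar with $R^*\psi_h'=c\,\psi_h$, i.e.\ the inverse of the symplectic distortion factor of $R$. It remains to verify $c^2=\det(1-g|_{V_g^\perp})\det(1-h|_{V_h^\perp})/\det(1-gh|_{V_{gh}^\perp})$, a polynomial identity in the entries of $g,h$ and $\pi$; equivalently, writing the Gram matrix of $\pi|_{V_{gh}^\perp}$ in a basis adapted to $V_g^\perp\oplus V_h^\perp$ as $\begin{pmatrix} A & B \\ -B^{\mathsf T} & C\end{pmatrix}$, this is the Pfaffian identity of Lemma~\ref{lem:pfaffian} expressing $\mathrm{Pf}\begin{pmatrix} A & B \\ -B^{\mathsf T} & C\end{pmatrix}$ in terms of $\mathrm{Pf}(A)$, $\mathrm{Pf}(C)$ and the maps $1-g$, $1-h$, $1-gh$. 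I expect this determinant/Pfaffian bookkeeping to be the main obstacle: one must keep careful track of the identification of $V_\sigma^\perp\subseteq V^*$ with $(1-\sigma)V\subseteq V$, of transposes, and of which projection defines $\pi_\sigma^\perp$ as opposed to the mixed block $B$.

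Finally I would fix the sign. The previous step gives $c=\pm$ the stated positive expression; to rule out the minus sign, use that $g,h$ lie in a compact subgroup, so there is a $\langle g,h\rangle$-invariant positive Hermitian form on $V$ compatible with $\pi$; this splits $V$ into mutually orthogonal, $g$-, $h$- and $gh$-stable coordinate symplectic planes on which the group acts by rotations by roots of unity. On such an adapted, compatibly oriented basis one writes out $\psi_g$, $\psi_h$, $\psi_{gh}$ and the three determinants explicitly and checks $\mu_{gh}=\mu_g\wedge\mu_h$ on the nose — this is the content of Proposition~\ref{p2}; it is precisely the factorials in $\psi_\sigma=\tfrac1{d_\sigma!}(\pi_\sigma^\perp)^{d_\sigma}$ together with the use of the positive square roots of $\det(1-\sigma|_{V_\sigma^\perp})$ that make the sign come out $+1$, which is the normalization absent from the references discussed in Section~\ref{sect:remark}. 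Since the sign is a locally constant function of the compatible data and that data forms a connected set, this one computation settles the general case; combining it with the previous paragraph gives $\mu_{gh}=\det(1-gh|_{V_{gh}^\perp})^{1/2}\,c\,(\psi_g\wedge\psi_h)=\det(1-g|_{V_g^\perp})^{1/2}\det(1-h|_{V_h^\perp})^{1/2}(\psi_g\wedge\psi_h)=\mu_g\wedge\mu_h$, as desired.
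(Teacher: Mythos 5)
Your overall skeleton (reduce to a single scalar identity, compute the scalar up to sign, then fix the sign using compactness) is the same as the paper's, and the first reduction via Lemma~\ref{lem:dualrels} to $V_{gh}^\perp=V_g^\perp\oplus V_h^\perp$ and a constant $c$ with $\psi_{gh}=c\,\psi_g\wedge\psi_h$ is correct. But the substantive middle step is not carried out: you write ``it remains to verify $c^2=\det(1-g|_{V_g^\perp})\det(1-h|_{V_h^\perp})/\det(1-gh|_{V_{gh}^\perp})$'' and defer it as ``Pfaffian bookkeeping'' that you ``expect to be the main obstacle.'' That identity \emph{is} the theorem, up to sign, and it is not supplied by Lemma~\ref{lem:pfaffian}, which only identifies the scalar with $\mathrm{Pf}(C^{-1})$ for $C$ the Gram matrix of $\omega$ on a basis of $(1-g)V$ adapted to the splitting. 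The actual work in the paper is Lemma~\ref{le1}: one shows $\det(C^{-1})=\det(P_W-P_U)$ for the symplectic-orthogonal projectors onto $W=(1-g)V$ and $U=(1-h)V$, hence $\det(C^{-1})=\det(XP_W-YP_U)/(\det X\det Y)$ for invertible $X,Y$, and then the specialization $X=1-g|_W$, $Y=1-h^{-1}|_U$ gives $XP_W-YP_U=h^{-1}-g$, producing exactly $\det_V(1-gh)/\big(\det_W(1-g)\det_U(1-h)\big)$ (Proposition~\ref{p1}). Your setup with the map $R$ and its ``symplectic distortion factor'' is a reasonable reformulation, but without some such evaluation the proposal proves nothing beyond the existence of a nonzero $c$.

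The sign argument also has a genuine gap. An invariant positive Hermitian form compatible with $\pi$ does exist, but a decomposition of $V$ into symplectic planes that are simultaneously stable under $g$, $h$ \emph{and} $gh$ does not exist unless $\langle g,h\rangle$ acts essentially abelianly (e.g.\ for $S_3$ on $\mathfrak{h}\oplus\mathfrak{h}^*$ there is no such splitting); and the commuting case is precisely the trivial one where $a(g,h)=1$, as Remark~\ref{r:agh-nz} points out --- the whole difficulty sits in the noncommuting case. So your explicit basis check only verifies the sign for commuting pairs, and the passage to general pairs rests on the bare assertion that ``the compatible data forms a connected set.'' That connectedness is exactly the content of Proposition~\ref{p2}, and it is not automatic: the paper has to pass to the maximal compact $U(n,\mathbb{H})$, observe that pairs of complementary quaternionic subspaces form the complement of a real-codimension-$4$ locus in a product of quaternionic Grassmannians, and use that the non-regular locus in $U(r,\mathbb{H})$ has codimension $3$, in order to connect an arbitrary transverse pair $(g,h)$ to a commuting one while keeping the transversality $V_g+V_h=V$ (so that the locally constant sign can be transported). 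Without an argument of this kind, the sign could a priori differ on different components, so both the determinant identity and the positivity need to be proved rather than asserted.
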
 

This corrects an error in the proof of Theorem 1.8(i) in \cite{etingofginzburg02}, where it is erroneously claimed in formula (2.11) that $\psi_g\wedge \psi_h=\psi_{gh}$ (i.e., the necessary normalization of $\psi_g$ by $\det(1-g|_{V_g^\perp})^{1/2}$ is missing). This omission already seems to occur in a previous paper of Alvarez, \cite{alvarez02}, where this result is proved for $c=0$, i.e., for the semidirect product of a finite group with a Weyl algebra, see \cite{alvarez02}, 3.9. Namely, the proof of Theorem 1.8(i) of \cite{etingofginzburg02} becomes valid if $\psi_g$ is replaced with $\mu_g$ (and the same applies to the  proof of the main result of \cite{alvarez02}). 

\subsection{Proof of Theorem~\ref{mainthe}} 

Let $V$ be a finite dimensional complex vector space with a symplectic structure defined by a nondegenerate element $\pi\in \wedge^2V$ (i.e., the symplectic form is 
$\omega=\pi^{-1}$). Let $W,U$ be nondegenerate subspaces 
of $V$ of dimensions $2r,2s$, and suppose that $V=W\oplus U$. Let $\psi_W, \psi_U, \psi_V=\frac{1}{(r+s)!}\pi^{r+s}$ be the canonical elements in $\wedge^{2r} W, \wedge^{2s} U, \wedge^{2r+2s}V$ defined by $\pi$. Then we may naturally regard $\psi_W,\psi_U$ as elements of $\wedge V$, and 
$$
\psi_W\wedge \psi_U=a(W,U)\psi_V, 
$$
where $a(W,U)\in \Bbb C$. 

Let us compute $a(W,U)$. To this end, choose a symplectic basis $v_i$ of $V$ (i.e., $\pi=\sum_{p=1}^{r+s} v_{2p-1}\wedge v_{2p}$), 
and assume that $v_{2r+1},...,v_{2r+2s}$ is a symplectic basis of $U$. Let $b_{ij}, 1\le i\le 2r, 1\le j\le 2s$ be the unique numbers such that the vectors 
$$
w_i:=v_i+\sum_{j=1}^{2s} b_{ij}v_{r+j}
$$
belong to $W$. Let $\omega(w_i,w_k)=c_{ik}$ and $(c_{ik})=C$. This is clearly an invertible skew-symmetric matrix. We have 
$$
w_1\wedge...\wedge w_{2r}=v_1\wedge...\wedge v_{2r}+...,   
$$
where $...$ means terms containing $v_{2r+j}$, $j=1,...,2s$. Hence 
\[
w_1\wedge...\wedge w_{2r}\wedge \psi_U=\psi_V.
\]
At the same time, it is clear that ${\rm Pf}(C^{-1})w_1\wedge...\wedge w_{2r}=\psi_W$, where ${\rm Pf}$ is the Pfaffian. 
Hence, we get 

\begin{lemma}\label{lem:pfaffian}
$$
a(W,U)={\rm Pf}(C^{-1}).
$$
\end{lemma}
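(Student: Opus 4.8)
The plan is to evaluate both sides in the symplectic basis $v_1,\dots,v_{2r+2s}$ fixed above and to reduce everything to the classical Pfaffian identity: for a bivector $\rho=\sum_{i<j}\rho_{ij}\,e_i\wedge e_j$ on a $2m$-dimensional space one has $\tfrac{1}{m!}\rho^m=\mathrm{Pf}\big((\rho_{ij})\big)\,e_1\wedge\cdots\wedge e_{2m}$. Applying this to $\pi$ itself, and to the Poisson bivector $\pi_U:=(\omega|_U)^{-1}\in\wedge^2 U$ in the symplectic basis $v_{2r+1},\dots,v_{2r+2s}$ of $U$, I obtain at once $\psi_V=v_1\wedge\cdots\wedge v_{2r+2s}$ and $\psi_U=v_{2r+1}\wedge\cdots\wedge v_{2r+2s}$, so the two reference forms are coordinate volume forms.

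Next I will justify the two ``Hence'' steps in the paragraph preceding the lemma. For the first, each $w_i$ lies in $W$ and differs from $v_i$ only by a vector in $U=\langle v_{2r+1},\dots,v_{2r+2s}\rangle$; expanding $w_1\wedge\cdots\wedge w_{2r}$ multilinearly and separating off the term that selects $v_i$ from every factor gives
\[
w_1\wedge\cdots\wedge w_{2r}=v_1\wedge\cdots\wedge v_{2r}+(\text{terms each containing some }v_{2r+j}),
\]
and wedging with $\psi_U=v_{2r+1}\wedge\cdots\wedge v_{2r+2s}$ annihilates every error term, leaving $w_1\wedge\cdots\wedge w_{2r}\wedge\psi_U=v_1\wedge\cdots\wedge v_{2r+2s}=\psi_V$. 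For the second, the $w_i$ form a basis of $W$ (they lie in $W$ and project to the basis $v_1,\dots,v_{2r}$ of $V/U$), the Gram matrix of $\omega|_W$ in this basis is $C=(c_{ik})$, hence the coefficient matrix of the Poisson bivector $\pi_W:=(\omega|_W)^{-1}\in\wedge^2 W$ in the same basis is $C^{-1}$, and the Pfaffian identity with $m=r$ gives $\psi_W=\tfrac{1}{r!}\pi_W^{r}=\mathrm{Pf}(C^{-1})\,w_1\wedge\cdots\wedge w_{2r}$.

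Combining the two facts completes the argument:
\[
\psi_W\wedge\psi_U=\mathrm{Pf}(C^{-1})\,(w_1\wedge\cdots\wedge w_{2r})\wedge\psi_U=\mathrm{Pf}(C^{-1})\,\psi_V,
\]
so $a(W,U)=\mathrm{Pf}(C^{-1})$. The only point requiring genuine care is the Pfaffian bookkeeping in the second step: one must remember that it is the inverse $C^{-1}$ of the symplectic Gram matrix, not $C$ itself, that serves as the coefficient matrix of $\pi_W$, because $\psi_W$ is built from the bivector $\pi$ rather than from the form $\omega$; the rest is formal manipulation in $\wedge^\bullet V$ together with the immediate vanishing of the cross terms after wedging with $\psi_U$.
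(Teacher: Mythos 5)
Your proof is correct and takes essentially the same route as the paper: the computation displayed immediately before the lemma is the paper's proof, and you are simply filling in the details it leaves implicit (the multilinear expansion of $w_1\wedge\cdots\wedge w_{2r}$, the vanishing of the cross terms after wedging with $\psi_U$, and the identity $\psi_W=\mathrm{Pf}(C^{-1})\,w_1\wedge\cdots\wedge w_{2r}$ coming from the fact that $C$ is the Gram matrix of $\omega|_W$ in the basis $w_i$). Nothing further is needed.
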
 

Now let $P_W, P_U: V\to V$ be the orthogonal projectors onto $W, U$ (with respect to the symplectic form).

\begin{lemma}\label{le1} (i) We have 
$$
\det(P_W-P_U)=\det(C^{-1}). 
$$

(ii) Let $X: W\to W$ and $Y: U\to U$ be invertible linear operators. 
Then 
$$
\frac{\det(XP_W-YP_U)}{\det(X)\det(Y)}=\det(C^{-1}).
$$
\end{lemma}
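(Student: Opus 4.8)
The plan is to prove part~(i) by a direct computation in the basis already introduced for Lemma~\ref{lem:pfaffian}, and then to deduce part~(ii) from it by a conjugation argument.

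For~(i), I would work in the basis $\{w_1,\dots,w_{2r},v_{2r+1},\dots,v_{2r+2s}\}$ adapted to $V=W\oplus U$ and write the Gram matrix of $\omega$ there in block form as $\Omega=\begin{pmatrix}C & B'\\ -B'^{T} & J\end{pmatrix}$, where $C=(\omega(w_i,w_k))$ is the (invertible, since $W$ is nondegenerate) matrix of Lemma~\ref{lem:pfaffian}, $J$ is the standard $2s\times 2s$ symplectic matrix, and $B'_{ij}=\omega(w_i,v_{2r+j})$. Expanding $w_i=v_i+\sum_m b_{im}v_{2r+m}$ and using that the symplectic basis $v_\bullet$ pairs $v_{2p-1}$ with $v_{2p}$ yields the two key identities $B'=bJ$ and $C=J_r+bJb^{T}$, with $J_r$ the standard $2r\times 2r$ symplectic matrix. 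Next I would record the $\omega$-orthogonal projectors: from the defining relation $\omega(P_S v,s)=\omega(v,s)$ for $s\in S$ one computes, for a nondegenerate coordinate block $S$ with Gram block $A$ and off-diagonal block $B_0$, that $P_S=\begin{pmatrix} I & A^{-1}B_0\\ 0 & 0\end{pmatrix}$; applied to $S=W$ and $S=U$ this gives $P_W=\begin{pmatrix}I & C^{-1}B'\\ 0 & 0\end{pmatrix}$ and $P_U=\begin{pmatrix}0 & 0\\ -J^{-1}B'^{T} & I\end{pmatrix}$, hence
\[
P_W-P_U=\begin{pmatrix} I & C^{-1}B'\\ J^{-1}B'^{T} & -I\end{pmatrix}.
\]
Taking the Schur complement with respect to the top-left block $I$ gives $\det(P_W-P_U)=\det\!\big(-I-J^{-1}B'^{T}C^{-1}B'\big)=\det\!\big(I+J^{-1}B'^{T}C^{-1}B'\big)$, the sign disappearing because that block has even size $2s$. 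Using $\det(I+MN)=\det(I+NM)$ this equals $\det(C^{-1})\det(C+B'J^{-1}B'^{T})$, and since $B'J^{-1}B'^{T}=(bJ)J^{-1}(bJ)^{T}=-bJb^{T}$ exactly cancels the $bJb^{T}$ summand of $C$, we are left with $\det(C^{-1})\det(J_r)=\det(C^{-1})$. This proves~(i).

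For~(ii), I would set $Q:=X\oplus Y\colon V\to V$, block diagonal for $V=W\oplus U$, so that $Q$ is invertible with $\det Q=\det X\,\det Y$. Since $XP_W$ has image in $W$, on which $Q^{-1}$ restricts to $X^{-1}$, we get $Q^{-1}(XP_W)=P_W$, and likewise $Q^{-1}(YP_U)=P_U$; therefore $Q^{-1}(XP_W-YP_U)=P_W-P_U$. Taking determinants and invoking~(i) gives $\det(XP_W-YP_U)=\det X\,\det Y\,\det(C^{-1})$, which is the assertion. (In particular~(i) is recovered as the case $X=\mathrm{id}_W$, $Y=\mathrm{id}_U$.)

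The one step requiring genuine care — the heart of the computation — is the collapse of the Schur complement: one must verify $B'J^{-1}B'^{T}=-bJb^{T}$ and notice that this is precisely the term needed to reduce $C$ to the standard symplectic matrix $J_r$, which has determinant $1$. The remaining steps (deriving the coordinate formulas for the $\omega$-orthogonal projectors, tracking the skew-symmetry conventions $J^{T}=-J$ and $C^{T}=-C$, and keeping the $-I$ block of even size $2s$ so no stray sign intervenes) are routine linear algebra.
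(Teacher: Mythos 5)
Your argument is correct, and for part (ii) it coincides with the paper's one-line proof (namely $XP_W-YP_U=\mathrm{diag}(X,Y)(P_W-P_U)$, which is exactly your $Q$). For part (i) you use the same coordinates and the same two ingredients as the paper --- the relation $C=J_r+BJ_sB^T$ and the identity $\det(1+MN)=\det(1+NM)$ --- but you extract $\det(P_W-P_U)$ differently: the paper never writes down the matrix of $P_W-P_U$ at all; instead it uses $\omega$-self-adjointness of the projectors to compute the pairing entries $\omega((P_W-P_U)w_i,v_k)$ and $\omega((P_W-P_U)v_{2r+j},v_{2r+l})$, obtaining a block-triangular array whose determinant reduces to $\det(D)$ with $D=-B^TC^{-1}B-J_s$ (this step tacitly uses that the determinant of an operator equals the determinant of its pairing matrix against two bases whose own pairing matrix has determinant $1$). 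You instead derive explicit block formulas for $P_W$ and $P_U$ in the basis $\{w_1,\dots,w_{2r},v_{2r+1},\dots,v_{2r+2s}\}$ and take a Schur complement of $P_W-P_U$. Your computation checks out: the projector template comes out with the $+A^{-1}B_0$ off-diagonal block precisely because $C$ and $J$ are skew (so $(A^T)^{-1}=-A^{-1}$), the even block size $2s$ removes the sign from the $-I$ block, and $B'J^{-1}B'^T=-BJ_sB^T$ cancels against $C-J_r$ as you say. So the two proofs are minor variants of the same calculation; yours is a bit more self-contained in that it makes explicit the step the paper compresses into ``Thus $\det(P_W-P_U)=\det(D)$'', at the cost of having to compute the projectors in coordinates, which the paper's self-adjointness trick avoids.
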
  

\begin{proof} 
(i) Let $J_n$ be the $2n$ by $2n$ matrix of the standard symplectic form (so $J_n^{-1}=-J_n$), and $B=(b_{ij})$.
Then it is easy to see that $C=J_r+BJ_sB^T$.

Since the projectors $P_W,P_U$ are self-adjoint with respect to the symplectic form, we have 
$$\omega((P_W-P_U)x,y)=\omega(P_Wx,y)-\omega(x,P_Uy).$$ Hence for $i\le 2r$ we have 
$$
\omega((P_W-P_U)w_i,v_k)=\omega(w_i,v_k)-\omega(w_i,P_Uv_k)=\omega(w_i,v_k)=\omega(v_i,v_k)
$$
if $k\le 2r$, and 
$$
\omega((P_W-P_U)w_i,v_k)=\omega(w_i,v_k)+\omega(w_i,P_Uv_k)=0
$$
if $k>2r$. 
Also for $j,l=1,...,2s$ we have 
$$
\omega((P_W-P_U)v_{j+2r},v_{l+2r})=\omega(P_Wv_{j+2r},v_{l+2r})-\omega(v_{j+2r},v_{l+2r})=d_{jl},
$$
where $(d_{jl})=D=-B^TC^{-1}B-J_s$.   
Thus, 
$$
\det(P_W-P_U)=\det(D)=\det(1-B^TC^{-1}BJ_s)=\det(1-C^{-1}BJ_sB^T)=
$$
$$
\det(1-C^{-1}(C-J_r))=\det(C^{-1}J_r)=\det(C^{-1}),
$$
as desired. 

(ii) follows from (i), since $XP_W-YP_U=\mrm{diag}(X,Y)(P_W-P_U)$. 
\end{proof}

Now let $g,h: V\to V$ be semisimple symplectic transformations such that $W={\rm Im}(1-g)$ and $U={\rm Im}(1-h)$. 
We can pick $X=1-g|_W$ and $Y=1-h^{-1}|_U$, then part (ii) of Lemma \ref{le1} gives: 
$$
\det(C^{-1})=\frac{\det(h^{-1}-g)}{\det(1-g|_V)\det(1-h^{-1}|_U)}=\frac{\det_V(1-gh)}{\det_W(1-g)\det_U(1-h)}.
$$

Combining the above results, we obtain 

\begin{proposition}\label{p1} We have
$$
a(W,U)=\left(\frac{\det_V(1-gh)}{\det_W(1-g)\det_U(1-h)}\right)^{1/2}.
$$
In particular, the branch of the square root on the right hand side is well defined by the condition that $a(W,U)=1$ when $gh=hg$ (i.e., 
$W$ and $U$ are orthogonal). 
\end{proposition}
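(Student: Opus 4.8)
The plan is to deduce Proposition~\ref{p1} by combining the Pfaffian formula $a(W,U)=\mathrm{Pf}(C^{-1})$ of Lemma~\ref{lem:pfaffian}, the classical identity relating the Pfaffian and the determinant of a skew-symmetric matrix, and Lemma~\ref{le1} — much of the work already having been carried out in the paragraph preceding the statement. First I would note that $C=(\omega(w_i,w_k))$ is an invertible skew-symmetric matrix, hence so is $C^{-1}$, so that $a(W,U)^2=\mathrm{Pf}(C^{-1})^2=\det(C^{-1})$.

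Next I would feed the choices $X=1-g|_W$ and $Y=1-h^{-1}|_U$ into Lemma~\ref{le1}(ii). This rests on the operator identities $1-g=XP_W$ and $1-h^{-1}=YP_U$ on all of $V$: here $W=\mathrm{Im}(1-g)$ is $g$-stable and (for semisimple symplectic $g$) satisfies $W^\perp=\ker(1-g)=V_g$, so $P_W$ is exactly the projection onto $W$ killing $V_g$, and the same applies to $h^{-1}$ after observing $\mathrm{Im}(1-h^{-1})=\mathrm{Im}(1-h)=U$ and $V_{h^{-1}}=V_h$. Since then $XP_W-YP_U=h^{-1}-g=h^{-1}(1-hg)$, while $\det h=\det(h|_U)=1$ because the eigenvalues of $h$ pair up as $\lambda,\lambda^{-1}$, Lemma~\ref{le1}(ii) yields $\det(C^{-1})=\det_V(1-gh)/\bigl(\det_W(1-g)\det_U(1-h)\bigr)$, using $hg\sim gh$. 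Combined with the squaring step, this gives $a(W,U)=\pm\bigl(\cdots\bigr)^{1/2}$.

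To pin down the branch I would argue that under the standing hypothesis $V=W\oplus U$, the condition $gh=hg$ forces $W\perp U$: simultaneously diagonalizing $g$ and $h$, the requirement $W\cap U=0$ kills every joint eigenspace on which both $g$ and $h$ act nontrivially, and the standard orthogonality of joint eigenspaces then gives $\omega(W,U)=0$. In that orthogonal case $\pi=\pi_W+\pi_U$, so $\psi_W\wedge\psi_U=\psi_V$ straight from the definitions of the canonical volume forms, i.e.\ $a(W,U)=1$; this is precisely the normalization in the statement, and it selects a consistent branch because $a(W,U)=\mathrm{Pf}(C^{-1})$ is a bona fide algebraic quantity, not merely a choice of $\pm\sqrt{\det}$.

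The heavy lifting is all in the preceding lemmas, so I do not expect a genuine obstacle; the fiddliest part will be the bookkeeping — verifying the operator identities $1-g=XP_W$ and $1-h^{-1}=YP_U$ with $P_W$ the correct symplectic projection, the determinant normalizations $\det h=\det(h|_U)=1$, and making the ``well-defined branch'' clause rigorous by following $\mathrm{Pf}(C^{-1})$ itself rather than its square.
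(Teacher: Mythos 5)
Your proposal is correct and follows essentially the same route as the paper: combine Lemma~\ref{lem:pfaffian} with Lemma~\ref{le1}(ii) applied to the very same choices $X=1-g|_W$, $Y=1-h^{-1}|_U$, then identify $\det(h^{-1}-g)$ with $\det_V(1-gh)$ and $\det_U(1-h^{-1})$ with $\det_U(1-h)$. The details you supply (the identities $1-g=XP_W$, $1-h^{-1}=YP_U$ via $W^\perp=V_g$, the determinant normalizations, and the check that commuting $g,h$ give orthogonal $W,U$ with $a(W,U)=1$) are exactly the bookkeeping the paper leaves implicit, so there is nothing genuinely different to compare.
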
 

We will also need 

\begin{proposition} \label{p2} Let $g,h$ be contained in a compact subgroup of ${\rm Sp}(V)$. Then $a(W,U)>0$ (i.e., we need to choose the positive value of the square root). 
\end{proposition}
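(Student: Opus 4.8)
The plan is to exploit that $a(W,U)$ depends only on the pair of subspaces $W=\mrm{Im}(1-g)$ and $U=\mrm{Im}(1-h)$ (together with $\pi$), not on $g$ and $h$ themselves; this will let us both deform $U$ and replace $g,h$ by convenient elements. First I would reduce, as in the proof of Theorem~\ref{mainthe}, to the case $V_g\cap V_h=0$, so that $V=W\oplus U$, and observe that $a(W,U)=\mrm{Pf}(C^{-1})\neq 0$ by Lemma~\ref{lem:pfaffian}, since $C$ is the Gram matrix of the nondegenerate form $\omega|_W$. So the issue is only to show that this nonzero complex number is in fact real and positive.

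Next I would equip $V$ with a Hermitian inner product $\langle-,-\rangle$ invariant under the given compact subgroup $K$ containing $g$ and $h$. Writing $\omega(u,v)=\langle u,Av\rangle$ produces a conjugate-linear, $K$-equivariant, invertible operator $A$, and skew-symmetry of $\omega$ forces $A^2$ to be negative-definite self-adjoint; replacing $\langle-,-\rangle$ by $\langle P\,\cdot\,,\,\cdot\,\rangle$ for $P$ the positive self-adjoint square root of $-A^2$, one obtains a compatible quaternionic structure, i.e.\ a conjugate-linear $K$-equivariant $J$ with $J^2=-1$ and $\omega(u,v)=\langle u,Jv\rangle$. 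Then $V$ is a quaternionic vector space on which $K$ acts by quaternionic-unitary transformations, every $g$- or $h$-stable subspace (in particular $W$, $U$, $W^{\perp_\omega}$, $U^{\perp_\omega}$) is an $\mbb{H}$-submodule, and on such subspaces $\omega$-orthogonality agrees with metric orthogonality. I expect the one step of genuine content here to be verifying that $A^2$ is \emph{negative} definite, so that one lands on $J^2=-1$ and $K\subseteq\mrm{Sp}(n)$ rather than a spurious real structure.

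Fixing the $\mbb{H}$-submodule $W$, its $\mbb{H}$-submodule complements are precisely the graphs $U_B=\{z+Bz: z\in W^{\perp_\omega}\}$ of $\mbb{H}$-linear maps $B\colon W^{\perp_\omega}\to W$, and each $\omega|_{U_B}$ is nondegenerate (it pulls back along $z\mapsto z+Bz$ to $\langle(1+B^{\ast}B)\,\cdot\,,J\,\cdot\,\rangle$, a nondegenerate form). Hence $B\mapsto\psi_{U_B}$, and so $B\mapsto a(W,U_B)$, is a continuous, nowhere-vanishing function on the connected (indeed real-affine) space $\mrm{Hom}_{\mbb{H}}(W^{\perp_\omega},W)$, with $a(W,W^{\perp_\omega})=1$ at $B=0$ since there $W\perp_\omega U$ and the definitions give directly $\psi_W\wedge\psi_{W^{\perp_\omega}}=\psi_V$. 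The subspace $U$ of the statement equals some $U_B$, so once I know $a(W,U_B)$ is real for every $B$, connectedness forces it to be positive for every $B$, and the proof is complete.

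To prove reality at a given $B$, I would realize $W=\mrm{Im}(1-g_0)$ and $U_B=\mrm{Im}(1-h_0)$ using the involutions $g_0$ equal to $-\mrm{Id}$ on $W$ and $\mrm{Id}$ on $W^{\perp_\omega}$, and $h_0$ equal to $-\mrm{Id}$ on $U_B$ and $\mrm{Id}$ on $U_B^{\perp_\omega}$; both are quaternionic-unitary, hence lie in a compact subgroup of $\mrm{Sp}(V)$, and therefore so does $g_0h_0$. Since $V=W\oplus U_B$ gives $V_{g_0h_0}=W^{\perp_\omega}\cap U_B^{\perp_\omega}=0$ (Lemma~\ref{lem:dualrels}), the operator $1-g_0h_0$ is invertible, with eigenvalues on the unit circle occurring in conjugate pairs, so $\det_V(1-g_0h_0)>0$ by the argument of Lemma~\ref{lem:182}; as $\det_W(1-g_0)=2^{\dim W}>0$ and $\det_{U_B}(1-h_0)=2^{\dim U_B}>0$, Proposition~\ref{p1} gives $a(W,U_B)^2=\det_V(1-g_0h_0)/\bigl(\det_W(1-g_0)\det_{U_B}(1-h_0)\bigr)>0$, whence $a(W,U_B)\in\mbb{R}\setminus\{0\}$. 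The crucial observation underlying the whole argument is that $a(W,U)$ is unchanged under the choice of $g,h$ realizing $W,U$, which is exactly what permits both the affine deformation of $U$ and the passage to involutions needed to invoke Proposition~\ref{p1}.
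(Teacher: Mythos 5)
Your proof is correct, and while it follows the paper's overall strategy---put a $K$-invariant quaternionic structure on $V$ (equivalently, conjugate $K$ into $U(n,\mathbb{H})$) and then fix the sign by a continuity/connectedness argument anchored at the $\omega$-orthogonal configuration, where $a=1$---your implementation of the connectedness step is genuinely different and arguably more economical. The paper works in the space $T$ of pairs $(g,h)\in U(n,\mathbb{H})^2$ whose images $\mathrm{Im}(1-g),\mathrm{Im}(1-h)$ are complementary of the given dimensions, and proves connectedness of $T$ in two nontrivial steps: connectedness of the space of pairs of complementary quaternionic subspaces (an open subset of a product of quaternionic Grassmannians, via a Schubert-cell codimension count), and connectedness of the fibers $U_*(r,\mathbb{H})\times U_*(s,\mathbb{H})$ (via the codimension-$3$ bound on non-regular elements in a compact simple group). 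You instead exploit the fact that $a(W,U)$ depends only on the subspaces: fixing $W$, you deform $U$ through the affine, hence trivially connected, family of quaternionic complements $U_B$, and you secure reality and nonvanishing of $a(W,U_B)$ at each parameter by realizing $(W,U_B)$ via the symplectic, quaternionic-unitary reflections $g_0,h_0$ and applying Lemma~\ref{le1}(ii)/Proposition~\ref{p1}, together with $\det_W(1-g_0)=2^{\dim W}>0$, $\det_{U_B}(1-h_0)=2^{\dim U_B}>0$ and positivity of $\det_V(1-g_0h_0)$ as in Lemma~\ref{lem:182}. This trades the paper's two connectivity lemmas for the easy verifications that the reflections lie in $U(n,\mathbb{H})$ and that $1$ is not an eigenvalue of $g_0h_0$; your identification of $a(W,W^{\perp_\omega})=1$ at $B=0$ is the same computation the paper invokes for commuting pairs.

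Two small points to tighten. The claim that \emph{every} $g$- or $h$-stable subspace is an $\mathbb{H}$-submodule is too strong (an eigenline for a non-real eigenvalue is not $J$-stable); what you need, and what holds, is that $\mathrm{Im}(1-g)$, $\mathrm{Im}(1-h)$ and their $\omega$-perpendiculars are $J$-stable, since $1-g$ and $1-h$ are $\mathbb{C}$-linear and commute with $J$. Also, Lemma~\ref{lem:dualrels} is stated for finite $\langle g,h\rangle$, while $\langle g_0,h_0\rangle$ may be infinite; either note that the lemma extends to subgroups of a compact group (as the paper itself implicitly does in the proof of Theorem~\ref{mainthe}), or argue directly: if $(g_0h_0)v=v$ then $(1-h_0)v=(1-g_0^{-1})v\in U_B\cap W=0$, so $v\in V_{g_0}\cap V_{h_0}=(W+U_B)^{\perp_\omega}=0$.
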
 

\begin{proof}  Our job is to show that ${\rm Pf}(C^{-1})>0$. Let $K\subset Sp(V)$ be a maximal compact subgroup containing $g,h$. 
Recall that all maximal compact subgroups of ${\rm Sp}(V)$ are conjugate to $U(n,\Bbb H)$, the unitary group over the quaternions, where $n=r+s={\rm dim}(V)/2$ and $V=\Bbb H^n$. 
If $g,h\in U(n,\Bbb H)$, then ${\rm Im}(1-g)$ and ${\rm Im}(1-h)$ are quaternionic subspaces of $V$.

To prove the positivity of ${\rm Pf}(C^{-1})$, it suffices to show that the space $T$ of pairs $g,h\in U(n,\Bbb H)$ with ${\rm Im}(1-g)$, ${\rm Im}(1-h)$ 
complementary of dimensions $r,s$ is connected, since when $g$ and $h$ commute, the Pfaffian is equal to $1$.

To this end, first note that any quaternionic subspace $U$ of $V$ of dimension $r$ is automatically nondegenerate, as it is conjugate under $U(n,\Bbb H)$ to the standard subspace 
$\Bbb H^r\subset \Bbb H^n$. Consider the space of pairs of complementary quaternionic subspaces $W,U$ with ${\rm dim}_{\Bbb H}(W)=r$. This is an open subset $Y$ of 
${\rm Gr}(r,\Bbb H^n)\times {\rm Gr}(s,\Bbb H^n)$ (where ${\rm Gr}(r,\Bbb H^n)$ is the Grassmannian of quaternionic subspaces in $\Bbb H^n$ of dimension $r$)  with complement of real codimension $4$ (codimension $1$ over $\Bbb H$). Indeed, we have a fibration 
$f: Y\to {\rm Gr}(r,\Bbb H^n)$, such that $f^{-1}(P)$ for $P\in {\rm Gr}(r,\Bbb H^n)$  is (a translate of) the big (quaternionic) Schubert cell in ${\rm Gr}(s,\Bbb H^n)$, hence the complement is the union of lower dimensional cells, which are affine spaces over $\Bbb H$. Hence $Y$ is connected.

Now, for each $(W,U)\in Y$, the set of possible $(g,h)$ giving this pair is $U_*(r,\Bbb H)\times U_*(s,\Bbb H)$, where
$U_*(r,\Bbb H)$ is the set of elements of $U(r,\Bbb H)$ without eigenvalue $1$. So it suffices to show that $U_*(r,\Bbb H)$ is connected. 
But it is well known that for simple compact Lie group, the set of non-regular elements has codimension $3$. In particular, since $U(r,\Bbb H)$ is simple,
the complement of $U_*(r,\Bbb H)$ in $U(r,\Bbb H)$ has codimension $3$, which implies that $U_*(r,H)$ is connected. Since $T$ fibers over $Y$ with fibers
 $U_*(r,\Bbb H)\times U_*(s,\Bbb H)$, it is connected, as desired. 
\end{proof} 

Now, Theorem \ref{mainthe} follows from Propositions \ref{p1}, \ref{p2}  in the case $V_{gh}=0$, and the general case easily follows from this case by replacing 
$V$ by the orthogonal complement of $V_{gh}$.

\bibliographystyle{abbrv}
%\bibliography{sba}

\def\cprime{$'$}

\end{document}